\newcommand\redsout{\bgroup\markoverwith{\textcolor{red}{\rule[0.5ex]{2pt}{0.4pt}}}\ULon}
\newcommand\gsout{\bgroup\markoverwith{\textcolor{green}{\rule[0.5ex]{2pt}{0.4pt}}}\ULon}
\newcommand{\PreserveBackslash}[1]{\let\temp=\\#1\let\\=\temp}
\newcolumntype{C}[1]{>{\PreserveBackslash\centering}p{#1}}
\newcolumntype{R}[1]{>{\PreserveBackslash\raggedleft}p{#1}}
\newcolumntype{L}[1]{>{\PreserveBackslash\raggedright}p{#1}}
\newenvironment{itemize*}%
  {\begin{itemize}%
    \setlength{\itemsep}{0.5em}%
    \setlength{\parskip}{0pt}}%
  {\end{itemize}}
\let\R\Real
\newcommand{\ubar}[1]{\text{\b{$#1$}}}
\newcommand{\eg}{{\it{e.g.}}, }
\let\save@mathaccent\mathaccent
\newcommand*\if@single[3]{%
  \setbox0\hbox{${\mathaccent'0362{#1}}^H$}%
  \setbox2\hbox{${\mathaccent'0362{\kern0pt#1}}^H$}%
  \ifdim\ht0=\ht2 #3\else #2\fi
  }
\newcommand*\rel@kern[1]{\kern#1\dimexpr\macc@kerna}
\newcommand*\widebar[1]{\@ifnextchar^{{\wide@bar{#1}{0}}}{\wide@bar{#1}{1}}}
\newcommand*\wide@bar[2]{\if@single{#1}{\wide@bar@{#1}{#2}{1}}{\wide@bar@{#1}{#2}{2}}}
\newcommand*\wide@bar@[3]{%
  \begingroup
  \def\mathaccent##1##2{%
%Enable nesting of accents:
    \let\mathaccent\save@mathaccent
%If there's more than a single symbol, use the first character instead (see below):
    \if#32 \let\macc@nucleus\first@char \fi
%Determine the italic correction:
    \setbox\z@\hbox{$\macc@style{\macc@nucleus}_{}$}%
    \setbox\tw@\hbox{$\macc@style{\macc@nucleus}{}_{}$}%
    \dimen@\wd\tw@
    \advance\dimen@-\wd\z@
%Now \dimen@ is the italic correction of the symbol.
    \divide\dimen@ 3
    \@tempdima\wd\tw@
    \advance\@tempdima-\scriptspace
%Now \@tempdima is the width of the symbol.
    \divide\@tempdima 10
    \advance\dimen@-\@tempdima
%Now \dimen@ = (italic correction / 3) - (Breite / 10)
    \ifdim\dimen@>\z@ \dimen@0pt\fi
%The bar will be shortened in the case \dimen@<0 !
    \rel@kern{0.6}\kern-\dimen@
    \if#31
      \overline{\rel@kern{-0.6}\kern\dimen@\macc@nucleus\rel@kern{0.4}\kern\dimen@}%
      \advance\dimen@0.4\dimexpr\macc@kerna
%Place the combined final kern (-\dimen@) if it is >0 or if a superscript follows:
      \let\final@kern#2%
      \ifdim\dimen@<\z@ \let\final@kern1\fi
      \if\final@kern1 \kern-\dimen@\fi
    \else
      \overline{\rel@kern{-0.6}\kern\dimen@#1}%
    \fi
  }%
  \macc@depth\@ne
  \let\math@bgroup\@empty \let\math@egroup\macc@set@skewchar
  \mathsurround\z@ \frozen@everymath{\mathgroup\macc@group\relax}%
  \macc@set@skewchar\relax
  \let\mathaccentV\macc@nested@a
%The following initialises \macc@kerna and calls \mathaccent:
  \if#31
    \macc@nested@a\relax111{#1}%
  \else
%If the argument consists of more than one symbol, and if the first token is
%a letter, use that letter for the computations:
    \def\gobble@till@marker##1\endmarker{}%
    \futurelet\first@char\gobble@till@marker#1\endmarker
    \ifcat\noexpand\first@char A\else
      \def\first@char{}%
    \fi
    \macc@nested@a\relax111{\first@char}%
  \fi
  \endgroup
}
\begin{document}
%%%%%%%%%%%%%%%%

 \RUNAUTHOR{Sturt}

\RUNTITLE{The value of robust assortment optimization under ranking-based choice models}

\TITLE{The Value of Robust Assortment Optimization Under Ranking-based Choice Models}

\ARTICLEAUTHORS{%
\AUTHOR{Bradley Sturt}
\AFF{Department of Information and Decision Sciences\\
University of Illinois at Chicago, \EMAIL{bsturt@uic.edu}}
% Enter all authors
} % end of the block
%\SingleSpacedXI

\ABSTRACT{%
We study a class of robust assortment optimization problems that was proposed by Farias, Jagabathula, and Shah (2013). The goal in these problems is to find an assortment that maximizes a firm's worst-case expected revenue under all ranking-based choice models that are consistent with the historical sales data generated by the firm's past assortments. We establish for various settings that these robust optimization problems can either be solved in polynomial-time or can be reformulated as  compact mixed-integer optimization problems.  To establish our results, we prove that optimal assortments  for these robust optimization problems have a  simple structure that is closely related to the structure of revenue-ordered assortments. 
We use our results to show how robust optimization can be used to overcome the risks of estimate-then-optimize and the need for experimentation with ranking-based choice models in the overparameterized regime.
\looseness=-1}%

% Sample
\KEYWORDS{assortment planning; robust optimization; nonparametric choice modeling.}

% Fill in data. If unknown, outcomment the field
%\KEYWORDS{Multi-stage optimization under uncertainty, data-driven decision making, stochastic optimization, robust optimization, distributionally robust optimization.} 
%\HISTORY{This version: December 9, 2021.}
\HISTORY{First version: Dec.\ 9, 2021. Revisions submitted on Feb.\ 16, 2023 and Nov.\ 6, 2023. Accepted for publication on Feb.\ 21, 2024. }
\maketitle

%\OneAndAHalfSpacedXI

\defcitealias{farias2013nonparametric}{FJS13}
%%%%%%%%%%%%%%%%%%%%%%%%%%%%%%%%%%%
%\vspace{-1.25em}
\vspace{-0.5em}
\begin{center}
``{First, do no harm}" - Hippocratic Oath
\end{center}
\vspace{-0.5em}
\section{Introduction} \label{sec:introduction}
The ranking-based choice model is one of the most fundamental and influential  discrete choice models in revenue management. It is used by firms in industries such as e-commerce and brick-and-mortar retail  to predict the demand for the firm's products as a function of the {subset} of products that the firm offers to their customers.  The popularity of the ranking-based choice model can be attributed to its generality: it can represent {any} random utility maximization model, and thus encompasses many other popular discrete choice models such as the multinomial logit model \citep{blockmarschak,farias2013nonparametric}.  The ranking-based choice model posits that customers who visit the firm  have preferences that are represented by randomly-chosen rankings, and, based on the subset of products offered by the firm, each customer will purchase the product that is most preferred according to their personal ranking.

Unfortunately, accurately estimating a ranking-based choice model from a firm's historical sales data is notoriously challenging. The key issue is that the ranking-based choice model is   comprised of around $n!$ parameters, where $n$ is the number of product alternatives that a firm can elect to offer to their customers. Because the number of past subsets of products $M$ that the firm has previously offered to their customers usually satisfies $M \ll n!$, many selections of these parameters can yield a ranking-based choice model that is consistent (has low or zero prediction error) with the firm's historical sales data. This leaves firms with the challenge of selecting \emph{which} ranking-based choice model, out of all of those   that are consistent with their historical sales data, to use when making operational planning decisions.

The challenge of estimation with ranking-based choice models is particularly acute in the context of \emph{assortment planning}. Here, the typical goal  of a firm is to identify a new subset of products (referred to as an {assortment}) to offer to their customers in order to increase the firm's expected revenue. Because the true relationship between assortments and expected revenue is unknown, firms will typically interpret one selection of a ranking-based choice model as the  ``ground truth" and subsequently solve an optimization problem to find an assortment which maximizes the ``predicted" expected revenue  \citep{aouad2018approximability,bertsimas2019exact,honhon2012optimal,van2015market,van2017expectation,feldman2019assortment,aouad2021assortment,desir2021mallows}. This widely-used technique for identifying a new assortment is referred to in the revenue management literature as \emph{estimate-then-optimize}. The {estimate-then-optimize} technique can be attractive from a computational  standpoint, due to its decoupling of the combinatorial problems related to estimation and optimization.  But the assortment that is optimal under one selection of a ranking-based choice model that is consistent with the historical  sales data may be highly \emph{suboptimal} under another ranking-based choice model that is also consistent with the historical sales data. This raises concerns about whether estimate-then-optimize can be trusted to identify a new assortment for the firm with an expected revenue that, at the very least, is no less than the firm’s highest expected revenue from their past assortments.

To overcome the risks of estimate-then-optimize in the overparameterized regime\footnote{We use the term `overparameterized regime' to refer to assortment planning problems in which many selections of ranking-based choice models are consistent with the historical sales data generated by the firm's past assortments.}, one  potential approach available to  firms is to perform {experimentation}.  That is, a firm can offer various new assortments to their customers for short durations, and then use the sales data obtained by this experimentation to further constrain the set of ranking-based choice models that are consistent with the firm's historical sales data. 
But engaging in experimentation may be undesirable for various practical reasons, ranging from a firm's aversion to short-term losses  incurred by experimentation to the implementation challenges for performing experimentation in settings such as brick-and-mortar retail~\citep{ArielyDan2010CWBD,Ignat_2022}. 
 Moreover, the large number of parameters in ranking-based choice models implies that experimentation with an extensive array of new assortments may not guarantee the identification of a unique ranking-based choice model~\citep{sher2011partial,susan2022active}.  Given that experimentation may be costly and may not lead to the identification of a  unique ranking-based choice model, our paper investigates an alternative approach to experimentation centered around \emph{robust optimization}.

\subsection{A Robust Optimization Approach} \label{sec:intro:robust} 
Our paper investigates a class of robust optimization problems proposed by  \citet*[henceforth abbreviated  as \citetalias{farias2013nonparametric}]{farias2013nonparametric} that aims to circumvent the risks of estimate-then-optimize  with ranking-based choice models in the overparameterized regime. The goal of the class of robust optimization problems, stated succinctly, is to find  an assortment that has high expected revenue under \emph{all} of the  ranking-based choice models that are consistent with the historical  sales data generated by the firm's past assortments. The robust optimization problems achieve this goal by yielding an assortment that maximizes the predicted expected revenue  under the {worst-case} ranking-based choice model that is consistent with  the historical  sales data generated by the firm's past assortments.  Formally, the class of robust optimization problems is given by
\begin{equation} \tag{RO} \label{prob:robust}
\begin{aligned}
 & \underset{S \in \mathcal{S}}{\textnormal{max}} \min_{\lambda \in \mathcal{U}}\mathscr{R}^{\lambda}(S),
\end{aligned}
\end{equation}
where $\mathcal{S}$ is the set of all assortments that can be offered by the firm to its customers,  $\mathcal{U}$ is the set of all ranking-based choice models that are consistent with the  historical sales data generated by the firm's past assortments, and $\mathscr{R}^\lambda(S)$ is the expected revenue for a new assortment $S$ under a ranking-based choice model $\lambda \in \mathcal{U}$. \looseness=-1 

The class of robust  optimization problems~\eqref{prob:robust} can be attractive from a managerial perspective for a variety of reasons. First, in contrast to estimate-then-optimize, the robust optimization problem~\eqref{prob:robust} aims to  find an assortment that can be trusted to improve the firm’s expected revenue in a way that is not exclusive to just one of the many ranking-based choice models that are consistent with the firm’s historical sales data. Second, the robust optimization problem~\eqref{prob:robust} can potentially obviate the need for experimentation---which can be costly or otherwise undesirable to firms---by identifying assortments with desirable expected revenue guarantees  even though a unique ranking-based choice model cannot be identified using the firm's historical sales data. 
 Third, \citetalias{farias2013nonparametric} showed using real-world data that the worst-case expected revenue $\min_{\lambda \in \mathcal{U}}\mathscr{R}^{\lambda}(S)$ can be a significantly more accurate prediction of expected revenue compared to the predictions made by parametric methods. Hence, there is empirical evidence that the assortments obtained from solving \eqref{prob:robust} may be desirable to firms from a revenue-improving perspective. 

Despite the appeal of the  class of robust optimization problems~\eqref{prob:robust}, very limited progress has been made on solving these robust optimization problems until now. As far as we can tell, the only prior work that considers the computational tractability of  \eqref{prob:robust} is that of \citetalias{farias2013nonparametric}, which presented algorithms for computing the worst-case expected revenue of a fixed assortment.  However, no algorithms to date have been developed for solving this class of robust optimization problems (\citet[p.867]{rusmevichientong2012robust}, \citet[p.8]{jagabathula2014assortment}), and “it is not clear how one may formulate the problem of optimizing the worst-case revenue as an efficiently solvable mathematical optimization reformulation.”  \citep[p.118]{mivsic2016data}.  This lack of progress is especially notable, in view of the fact that algorithms and structural results have been developed for other classes of robust assortment optimization problems  \citep{rusmevichientong2012robust,bertsimas2017robust,desir2019nonconvex,wang2020randomized}. 

The lack of practical algorithms and structural results for \eqref{prob:robust} has ultimately prevented a formal understanding of the settings in which the class of robust optimization problems can provide managerial value.  On  one hand, the robust optimization problem~\eqref{prob:robust} promises to help firms circumvent the risks of estimate-then-optimize by finding an assortment that delivers high expected revenue across all of the ranking-based choice models that are consistent with the firm's historical sales data.    But on the other hand, no previous work has established whether assortments $S \in \mathcal{S}$ that have high expected revenue $\mathscr{R}^\lambda(S)$  across all ranking-based choice models  $\lambda \in \mathcal{U}$ can ever \emph{exist}. Because the question of whether \eqref{prob:robust} is too conservative to be practically valuable has remained open, the extent to which the robust optimization approach can actually overcome the risks of estimate-then-optimize or serve as a viable alternative to experimentation  has been unknown.

\subsection{Main Questions}
In this paper, we pursue an agenda of understanding \emph{if} and \emph{when} the class of robust  optimization problems~\eqref{prob:robust} can be computationally tractable and provide  managerial value as an alternative to estimate-then-optimize and experimentation in the overparameterized regime.   In the pursuit of this agenda, we focus in this paper on four open questions.

 Our first question concerns the conservatism of the class of robust optimization problem~\eqref{prob:robust}: 
  \vspace{1em}
\begin{question}%[Identification] 
\label{question:1}
\emph{Can the optimal objective value of \eqref{prob:robust} ever be strictly greater than the expected revenue of the firm's best past assortment?} 
\end{question}
We show in \S\ref{sec:setting}  that Question~\ref{question:1}  is equivalent to asking whether there can ever exist an assortment with an expected revenue that is strictly greater than the expected revenue of the firm's best past assortment  across all of the ranking-based choice models that are consistent with the historical sales data generated by the firm's past assortments. An affirmative answer to Question~\ref{question:1} is thus necessary for it to be possible for the robust optimization problem~\eqref{prob:robust} to yield new assortments that can be trusted to improve the firm’s expected revenue in a way that is not exclusive to just a subset of the many ranking-based choice models that are consistent with the firm’s historical sales data.\looseness=-1

Our second question asks about the relationship between the conservatism of \eqref{prob:robust} and the past assortments that the firm has offered to its customers:
\vspace{1em}
\begin{question}%[Structural] 
\label{question:2}
\emph{If the answer to Question~\ref{question:1} is yes, then what structural properties of a firm's past assortments are \emph{necessary} for the optimal objective value of \eqref{prob:robust} to be strictly greater than the expected revenue of the firm's best past assortment?}\looseness=-1
\end{question}
Recall that Question~\ref{question:1} is an existential question, in the sense that Question~\ref{question:1} asks whether there exists a problem instance (meaning a collection of past assortments, prices, and historical sales data) for which the optimal objective value of \eqref{prob:robust} is strictly greater than the expected revenue of the firm's best past assortment. But even if the answer to Question~\ref{question:1} is yes, there exist collections of past assortments for which the optimal objective value of \eqref{prob:robust} will never be strictly greater than the expected revenue of the firm's best past assortment.\footnote{For example, we readily observe that the optimal objective value of \eqref{prob:robust} will never be strictly greater than the expected revenue of the firm's best past assortment if every assortment $S \in \mathcal{S}$ is a past assortment (i.e., has already been offered by the firm to its customers). }
In response, Question~\ref{question:2}  asks whether there are structural properties of the firm's collection of past assortments that are necessary  in order for \eqref{prob:robust} to have a chance of not being overly conservative.\looseness=-1

Our third question relates the  computational tractability of the robust optimization problem~\eqref{prob:robust} to the  collection of past assortments that the firm has offered to its customers: 
\vspace{1em}
\begin{question}
 \label{question:3}
\emph{What structural properties of a firm's past assortments are \emph{sufficient} for designing efficient algorithms for solving the robust optimization problem~\eqref{prob:robust}?  }
\end{question}
Even if the optimal objective value of \eqref{prob:robust} is strictly greater than the expected revenue of the firm's best past assortment for a given problem instance, the practical value of the robust optimization problem~\eqref{prob:robust} for that problem instance is limited if an optimal assortment for \eqref{prob:robust} cannot be found in practical computation times. 
Moreover, as discussed in \S\ref{sec:intro:robust}, the class of robust optimization problems~\eqref{prob:robust} is widely believed in the revenue management community to be computationally intractable, and no practical
algorithms for solving these problems for any collection of past assortments have been developed thus far. Against this backdrop, Question~\ref{question:3} asks whether there are structural properties of a firm's past assortments that make it possible to design theoretically- and practically-efficient algorithms for solving \eqref{prob:robust}.  

Our fourth question asks how the robust optimization problem~\eqref{prob:robust} can be deployed in settings in which the robust optimization approach is overly conservative: 
\vspace{1em}
\begin{question} \label{question:4}
\emph{If the optimal objective value of \eqref{prob:robust} is not strictly greater than the expected revenue of the firm's best past assortment for a particular problem instance, then what assortment should the firm offer to its customers?}
\end{question}
If the optimal objective value  of \eqref{prob:robust} is strictly greater than the expected revenue of the firm's best past assortment for a given problem instance, then we recommend that the firm offer the optimal assortment for \eqref{prob:robust} to its customers, as such an assortment can be trusted to improve the firm’s expected revenue in a way that is not exclusive to just one of the many ranking-based choice models that are consistent with the firm’s historical sales data. But if the optimal objective value  of the robust optimization problem~\eqref{prob:robust} is not strictly greater than the expected revenue of the firm's best past assortment for the given problem instance, then can the robust optimization problem~\eqref{prob:robust} provide any guidance as to what should a firm do?

\subsection{Contributions} \label{sec:contributions}
Our main contributions of this paper consist of 
providing the first answers to Questions~\ref{question:1}-\ref{question:4}. 

First, we establish an affirmative answer to Question~\ref{question:1}. Specifically,  we present an example in \S\ref{sec:question1} of an assortment planning problem with  two past assortments in which the optimal objective value of the robust optimization problem~\eqref{prob:robust} is strictly  greater than the expected revenue of the firm's best past assortment.   The example thus shows that the robust optimization problem~\eqref{prob:robust}  can yield practical value to firms, even when a small number of assortments have been offered in the past.\looseness=-1

Second, we  address Question~\ref{question:2} by 
characterizing  the structure of optimal assortments for the class of robust optimization problems~\eqref{prob:robust}   (Theorem~\ref{thm:main} in \S\ref{sec:characterization}). Our characterization reveals that  optimal assortments for these robust  optimization problems have a simple and interpretable structure that is similar to the structure of the widely-studied class of revenue-ordered assortments. We use Theorem~\ref{thm:main} to show that it is necessary for the  firm's  past assortments to deviate from revenue-ordered assortments  in order for it be possible for the optimal objective value of \eqref{prob:robust} to be strictly greater than the expected revenue of the firm's best past assortment (\S\ref{sec:characterization:question}). We further show that Theorem~\ref{thm:main} can be used to drastically reduce the number of candidate assortments that need to be checked in order to solve the robust optimization problem~\eqref{prob:robust} when the number of past assortments is small (\S\ref{sec:characterization:upperbound}-\S\ref{sec:characterization:reverse}).  Our proof of Theorem~\ref{thm:main}  is based on a simple yet intricate analysis of reachability conditions for vertices in a data-driven class of directed acyclic graphs (\S\ref{sec:characterization:proof}).

Third, we provide an affirmative answer to Question~\ref{question:3} by developing the first polynomial-time algorithms and compact mixed-integer optimization reformulations for the class of robust optimization problems~\eqref{prob:robust}.   The existence of polynomial-time algorithms resolves the open question in the literature regarding the theoretical tractability of \eqref{prob:robust} for settings with small numbers of past assortments, and we provide numerical evidence that our algorithms and mixed-integer optimization reformulations are practical. 
In greater detail, we make the following algorithmic developments:\looseness=-1
\begin{enumerate}[label=(\roman*)]

  \item We use our structure of optimal assortments to develop a strongly polynomial-time algorithm for solving the robust optimization problem when the firm has offered two past assortments (Theorem~\ref{thm:two} in \S\ref{sec:algorithm:twoassortments}). 
Our algorithm reduces the robust  optimization problem to a sequence of minimum-cost network flow problems,  and our algorithm has a total running time of $\mathcal{O}(n^5 \log (n r_n))$, where $n$ is the number of available products and $r_n$ is the integral price of the most expensive product. We provide numerical experiments showing that our algorithm can require less than 30 seconds for problem instances with $n=100$ products. 
 
 \item  We generalize our algorithm for two past assortments to show that the robust optimization problem can be solved in weakly polynomial-time for any fixed number of past assortments (Theorem~\ref{thm:poly} in \S\ref{sec:algorithms:fixed_dim}).   The algorithm provides evidence that it can be possible to develop practical algorithms for solving the robust optimization problem~\eqref{prob:robust} for real-world problem instances, where the composition of products in the past assortments do not exhibit any convenient structure and where there may be no ranking-based choice models that have zero prediction error on the historical sales data.

 \item  To address applications in which the number of past assortments is large, we consider a common structural assumption in the revenue management literature in which the past assortments are nested (see \S\ref{sec:algorithms:nested}). When the past assortments are nested, we show that our technical developments from \S\ref{sec:characterization} can be used to develop a  mixed-integer optimization reformulation of the robust optimization problem~\eqref{prob:robust}   in which the number of decision variables and constraints scales polynomially in both the number of products as well as  the number of past assortments (Theorem~\ref{thm:nested} in \S\ref{sec:algorithms:nested}).  We show using numerical experiments that our mixed-integer linear optimization problem can require less than 30 seconds to solve \eqref{prob:robust} to optimality on problem instances with  up to $M= 20$  past assortments.

\end{enumerate}

Finally, we address Question~\ref{question:4} by showing that our algorithms can yield  managerial value even when the optimal objective value of \eqref{prob:robust} is not strictly greater than the expected revenue of the firm's best past assortment.  Specifically, we show that firms can compute an upper bound on the benefits of experimentation by extending our algorithms from \S\ref{sec:algorithms} to solve an optimistic version of the robust optimization  problem~\eqref{prob:robust} (Theorems~\ref{thm:poly:optimistic} and \ref{thm:nested:optimistic} in \S\ref{sec:question4:bestcase}). If the gap between the upper bound and the expected revenue of the firm's best past assortment is small, then we recommend that the firm should not perform experimentation and continue offering their best past assortment.  If the gap is large, then the firm can potentially benefit from experimentation, and we show in this case that the robust and optimistic optimization problems can be combined into a Pareto approach for finding new assortments for experimentation that have potential to increase the firm's best-case expected revenue while maintaining bounds on the worst-case decrease in expected revenue. We illustrate the practical value of the aforementioned techniques using synthetic and real data. % historical sales data. 

In summary, the techniques proposed in this paper can provide value to firms such as brick-and-mortar and small e-commerce retailers who  change their assortments infrequently, wish to find new assortments that generate higher expected revenue than their past assortments, and lack the infrastructure or willingness to experiment with new assortments that lead to short-run declines in expected revenue. Indeed, we show using synthetic and real data in Appendix~\ref{appx:numerics} that our proposed algorithms require practical computation times and yield new assortments with desirable worst-case revenue guarantees when the number of past assortments is small or when the past assortments are nested. The numerical experiments thus show that the methodology developed in this paper is well positioned to provide value for the sizes and types of historical sales data faced by real-world firms.\looseness=-1

The rest of this paper is organized as follows.  In \S\ref{sec:setting}, we present our problem setting.  In \S\ref{sec:question1}, we give an affirmative answer to Question~\ref{question:1} by  presenting  an example of an assortment planning problem with two past assortments in which the optimal objective value of the robust optimization problem~\eqref{prob:robust} is strictly greater than the expected revenue  of the firm's best  past assortment. In \S\ref{sec:characterization}, we  develop our key structural result which characterizes the optimal solutions for robust optimization problems under ranking-based choice models. In \S\ref{sec:algorithms}, we develop polynomial-time algorithms and mixed-integer optimization reformulations for the robust optimization problem. In \S\ref{sec:question4}, we show how our algorithms can provide value in problem instances in which the  optimal objective value of the robust optimization problem~\eqref{prob:robust} is not strictly greater than the expected revenue  of the firm's best  past assortment. In \S\ref{sec:conclusion}, we offer concluding thoughts and directions for future research. The numerical experiments from this paper are found in Appendix~\ref{appx:numerics}, and the proofs of the technical results from the paper are found in Appendices~\ref{appx:characterization_proofs}-\ref{appx:optimistic_algorithms}.

\paragraph{Notation and Terminology. }
We use $\R$ to denote the real numbers, $\R_+$ to denote the nonnegative real numbers, and $y^\intercal x$ to denote the inner product of two vectors. We use the phrase `{collection}' to refer to a set of sets. We let the set of all probability distributions which are supported on a finite set $\mathscr{A}$ be denoted by $\Delta_{\mathscr{A}} \triangleq \{ \lambda: \sum_{a \in \mathscr{A}} \lambda_a = 1, \; \lambda_a \ge 0 \; \forall a \in \mathscr{A} \}$.  We assume throughout that a norm $\| \cdot \|$ is either the $\ell_1$-norm or $\ell_\infty$-norm, and so it follows that optimization problems of the form $\min_{x,y} \{ c^\intercal x + d^\intercal y \mid Ax + By \le b, \; \| y \| \le \eta \}$ can be referred to as linear optimization problems.  We let $\mathbb{I} \{\cdot \}$ denote the indicator function, which equals one if $\cdot$  is true and equals zero otherwise.

\paragraph*{Code Availability.}
The code for conducting the numerical experiments in this paper is  freely available and can be accessed at \url{https://github.com/brad-sturt/IdentificationQuestion}.

\section{Problem Setting} \label{sec:setting}
 We adopt the perspective of a firm that must select a subset of products to offer to their customers.  Let the universe of products available to the firm be denoted by $\mathcal{N} \triangleq \{1,\ldots,n\}$, where the no-purchase option is denoted by index $0$ and $\mathcal{N}_0 \triangleq \mathcal{N} \cup \{0\}$.  The revenue generated by selling one unit of product $i \in \mathcal{N}$ is represented by $r_i > 0$, and the revenue associated with the no-purchase option is  $r_0 = 0$. We assume throughout the paper that the revenues of the products are unique and that products have been sorted in ascending order  by revenue, $0 < r_1 < \cdots < r_n$.  
An assortment is defined as any subset  of products $S \subseteq \mathcal{N}_0$, and we let $\mathcal{S} \triangleq \{ S \subseteq \mathcal{N}_0: 0 \in S \}$ denote the collection of all assortments that  include the no-purchase option. 
 
 We study a problem setting in which the underlying relationship between assortment and customer demand is unknown, and our only information on this relationship comes from historical sales data generated by the firm's past assortments. Let the past assortments be denoted by $\mathscr{M} \triangleq \{S_1,\ldots,S_M\} \subseteq \mathcal{N}_0,$ and let the indices of these past assortments be denoted by $\mathcal{M} \triangleq \{1,\ldots,M\}$. Unless stated otherwise, we will  make no assumptions on the mechanism by which the firm selected the assortments to offer in the past. That is, the firm could have chosen the past assortments by drawing products randomly; alternatively,  the past assortments could have been chosen using managerial intuition or some other systematic approach. 
We assume that the firm offered each past assortment $S_m \in \mathscr{M}$ to their customers for  a sufficient duration to obtain an accurate estimate of the purchase frequencies, \emph{i.e.}, the fraction of customers $v_{m,i} \in [0,1]$ that  purchase product $i \in S_m$ when offered assortment $S_m$. This historical sales data is assumed to be normalized such that $\sum_{i \in \mathcal{N}_0}v_{m,i} = 1$, and the purchase frequencies for products that are not in  an assortment are defined equal to zero, that is, $v_{m,i} = 0$ for all $i \notin S_m$ and $m \in \mathcal{M}$. The expected revenue generated by the past assortment $S_m$ is written compactly as $\sum_{i \in \mathcal{N}_0} r_i v_{m,i} = r^\intercal v_m$.

A discrete choice model is a function that predicts purchase frequencies for the firm based on the assortment that the firm offers to their customers. A \emph{ranking-based choice model} is a type of choice model which is parameterized by a probability distribution $\lambda$ over the set of all distinct rankings of the products,  where a ranking refers to a one-to-one mapping of the form $\sigma: \{0,\ldots,n\} \to \{0,\ldots,n\}$. Specifically, a ranking $\sigma$ encodes a preference for product $i$ over product $j$ if and only if $\sigma(i) < \sigma(j)$. Let the set of all distinct rankings over the products be denoted by $\Sigma$, and we readily observe that the number of distinct rankings in this set satisfies $| \Sigma| = (n+1)!$. Given a probability distribution over rankings $\lambda \in \Delta_\Sigma$ and an assortment $S \in \mathcal{S}$, the prediction made by the ranking-based choice model for the purchase frequency of each product $i \in \mathcal{N}_0$ is given by
\begin{align*}
\mathscr{D}^\lambda_i(S) \triangleq  \sum_{\sigma \in \Sigma} \mathbb{I} \left \{ i = \argmin_{j \in S} \sigma(j)\right \}  \lambda_\sigma. 
\end{align*} 
It is straightforward  to see from the above definition that a ranking-based choice model always satisfies the equality $\mathscr{D}^\lambda_i(S) = 0$ for all  products $i$ that are not in the assortment $S$.  
The predicted expected revenue for a firm that offers assortment $S$ under the ranking-based choice model with parameter $\lambda$ is given by 
\begin{align*}
\mathscr{R}^{\lambda}(S) \triangleq \sum_{i \in \mathcal{N}_0} r_i \mathscr{D}^\lambda_i(S) = r^\intercal \mathscr{D}^\lambda(S). 
\end{align*}

We say that a ranking-based choice model is \emph{consistent} with the historical sales data generated by the firm's past assortments if the difference between the predicted purchase frequency $\mathscr{D}^\lambda_i(S_m)$ and the historical sales data $v_{m,i}$ is small for each of the products $i \in S_m$ that were offered in each of the past assortments $m \in \mathcal{M}$. We define the set of all ranking-based choice models that are consistent with the historical sales data as 
\begin{align*}
\mathcal{U} \triangleq \left \{\lambda \in \Delta_\Sigma: \quad \begin{aligned}
&\textnormal{there exists a vector } \epsilon \textnormal{ such that }\| \epsilon \| \le \eta \textnormal{ and }  \\
&\mathscr{D}^\lambda_i(S_m) - v_{m,i}  = \epsilon_{m,i} \textnormal{ for all }  i \in S_m \text{ and } m \in \mathcal{M} \end{aligned} \right \},
\end{align*} 
where the radius $\eta \ge 0$ of the set $\mathcal{U}$ is a parameter that is selected by the firm, and where $\| \cdot \|$ is defined at the end of \S\ref{sec:introduction} as either the $\ell_1$ or $\ell_\infty$ norm.

To develop an understanding for the above set of ranking-based choice models $\mathcal{U}$, let us consider the case in which the radius $\eta$ of the above set is equal to zero. 
In that case,  we observe that the above set contains exactly  the  probability distributions for which the corresponding ranking-based choice models have perfect accuracy on the historical sales data generated by the past assortments. In other words, if $\eta = 0$, then the set $\mathcal{U}$ is comprised of all of the probability distributions $\lambda \in \Delta_\Sigma$ that satisfy $\mathscr{D}^\lambda_i(S_m) = v_{m,i}$ for each of the products $i \in S_m$ that were offered in each of the past assortments $m \in \mathcal{M}$.   From a theoretical perspective, it is known that the set $\mathcal{U}$ with $\eta = 0$ is guaranteed to be nonempty  if the firm's customers' behavior is captured by a random utility maximization model and if the historical sales data has been observed without noise; see \cite{blockmarschak}. 
From a practical perspective, it can be reasonable to expect that the set $\mathcal{U}$ will be nonempty with $\eta = 0$  in problem instances in which the number of past assortments $M$ is much smaller than the number of parameters in the ranking-based choice model, $| \Sigma| = (n+1)!$. Nonetheless, if there are no ranking-based choice models that have perfect accuracy on the historical sales data, then the radius $\eta$ can always be made sufficiently large to ensure that the set of ranking-based choice models $\mathcal{U}$ is nonempty.\footnote{A sufficiently large choice of $\eta$  which ensures that $\mathcal{U}$ is nonempty can easily be found by applying binary search over the choice of $\eta$, provided that one has an algorithm that solves \eqref{prob:robust}.} For simplicity, we make the standing assumption throughout our paper that  the set of ranking-based choice models $\mathcal{U}$ is nonempty for the firm's selection of the radius $\eta$.\looseness=-1

We conclude the present section with a theoretical derivation of the class of robust optimization problems~\eqref{prob:robust}. Indeed, consider a firm that wishes to identify an assortment with an expected revenue that is strictly greater than expected revenues of the firm's  past assortments under all of the ranking-based choice models that are consistent with firm's historical sales data.  In this case, the firm seeks an assortment $S \in \mathcal{S}$ that satisfies
\begin{align}
 \mathscr{R}^{\lambda}(S) &> \max \left \{  \mathscr{R}^{\lambda}(S_1),\ldots,  \mathscr{R}^{\lambda}(S_M)  \right \} \quad \forall \lambda \in \mathcal{U}. \tag{IMP'}\label{line:improvement_old}
\end{align}
If $\eta = 0$, then it follows from algebra\footnote{If $\eta = 0$, then we observe for each past assortment $m \in \mathcal{M}$ that $\mathscr{R}^\lambda(S_m) = r^\intercal \mathscr{D}^\lambda(S_m) = r^\intercal v_m$ for all $ \lambda \in \mathcal{U}$.} that  an assortment $S \in \mathcal{S}$  satisfies \eqref{line:improvement} if and only if 
\begin{align}
 \min_{\lambda \in \mathcal{U}} \mathscr{R}^\lambda(S) > \max_{m \in \mathcal{M}} r^\intercal v_m \tag{IMP}.\label{line:improvement}
\end{align}
The above analysis shows that there exists an assortment that satisfies \eqref{line:improvement_old} if and only if the optimal objective value of the robust optimization problem~\eqref{prob:robust} is strictly greater than $\max_{m \in \mathcal{M}} r^\intercal v_m$, i.e., the expected revenue generated by the firm's best past assortment. Moreover, if there exists an assortment that satisfies \eqref{line:improvement}, then every optimal solution of the robust optimization problem~\eqref{prob:robust} satisfies~\eqref{line:improvement}. Hence, we conclude that the robust optimization problem~\eqref{prob:robust} enjoys theoretical justification as a tool for identifying a new assortment with an expected revenue that is strictly greater than expected revenues of the firm's  past assortments across all of the ranking-based choice models that are consistent with firm's historical sales data.

\section{Answer to Question~\ref{question:1}} \label{sec:question1}

In this section, we present  an example of an assortment planning problem with two past assortments in which the optimal objective value of the robust optimization problem~\eqref{prob:robust} is strictly greater than the expected revenue  of the firm's best  past assortment. The example is insightful for three reasons. 
 First, the example shows that there exist problem instances in which the  assortment that satisfies~\eqref{line:improvement} is not a revenue-ordered assortment (the formal definition of revenue-ordered assortments is found in \S\ref{sec:characterization}).  Second, the example shows that estimate-then-optimize can perform poorly---that is, yield an assortment with an expected revenue that is strictly less than the  expected revenue of the firm's best past assortment---even in problem instances for which there exist assortments that satisfy~\eqref{line:improvement}. Third,  the example illustrates that  assortments that satisfy~\eqref{line:improvement} can have a non-trivial structure, and discovering such assortments appears to be  challenging without the aid of structural results and algorithms like those developed in \S\ref{sec:characterization} and \S\ref{sec:algorithms}.

We begin by defining the assortment planning problem that will be our focus throughout this section. The problem instance is comprised by a universe of four products $\mathcal{N} \equiv \{1,2,3,4\}$, and the two past assortments that the firm has previously offered to its customers are denoted by $S_1 = \{0,2,3,4\}$ and $S_2 = \{0,1,2,4\}$. 
 The revenues corresponding to the four products are $r_1 = \$10$, $r_2 = \$20$, $r_3 = \$30$, and $r_4 = \$100$. The historical sales data generated by the  past assortments are the purchase frequencies $(v_{1,0}, v_{1,2}, v_{1,3}, v_{1,4}) = (0.3,0.3,0.3,0.1)$ and $(v_{2,0}, v_{2,1}, v_{2,2}, v_{2,4}) = (0.3,0.3,0.1,0.3)$.   We analyze the robust optimization problem~\eqref{prob:robust} in which the uncertainty set is constructed as all ranking-based choice models that perfectly fit this historical sales data, i.e., the radius of $\mathcal{U}$ is $\eta  = 0$.\looseness=-1
%\subsection{Useful Facts for Problem Instance} \label{appx:simple:facts}

Let us offer some useful facts about the problem instance. We first observe that the expected revenues generated by the two past assortments  $S_1 = \{0,2,3,4\}$ and $S_2 = \{0,1,2,4\}$ are
\begin{align*}
r^\intercal v_1 &= \$0 \times 0.3 + \$20 \times 0.3 + \$30 \times 0.3 + \$100 \times 0.1 = \$25,\\
r^\intercal v_2 &= \$0 \times 0.3 + \$10 \times 0.3 + \$20 \times 0.1 + \$100 \times 0.3 = \$35. 
\end{align*}
Hence, the expected revenue generated by the firm's best past assortment is $\max \{\$25, \$35\} = \$35$.  We next show that  the set of ranking-based choice models $\mathcal{U}$  is nonempty when  the radius is $\eta = 0$. To show this, we will construct a ranking-based choice model $\lambda$ that perfectly fits perfectly fits the historical sales data. Specifically, consider the following five rankings:
\begin{align*}
\sigma^1(0) &= 0,&\sigma^2(1) &= 0,&\sigma^3(1) &= 0,& \sigma^4(2)&= 0,&\sigma^5(3)&=0,\\
\sigma^1(\phantom{0}) &= 1,&\sigma^2(2)&= 1,&\sigma^3(4) &= 1,&\sigma^4(4) &= 1,&\sigma^5(4) &= 1,\\
\sigma^1(\phantom{0})&=2,&  \sigma^2(4)&=2 ,&\sigma^3(0)&= 2,& \sigma^4(0) &= 2,&\sigma^5(0)&= 2,\\
\sigma^1(\phantom{0})&=3,&\sigma^2(0) &= 3,& \sigma^3(\phantom{0})&=3,& \sigma^4(\phantom{0}) &= 3,&\sigma^5(\phantom{0}) &= 3,\\
\sigma^1(\phantom{0}) &=4,&\sigma^2(\phantom{0}) &= 4,&\sigma^3(\phantom{0}) &= 4,&\sigma^4(\phantom{0})&=4,&\sigma^5(\phantom{0}) &= 4.
\end{align*}
We recall from \S\ref{sec:setting} that a ranking $\sigma \in \Sigma$ prefers product $i \in \mathcal{N}_0$ over product $j \in \mathcal{N}_0$ if $\sigma(i) < \sigma(j)$. For the sake of simplicity, we have chosen without loss of generality to not specify the preference ordering among products that are less preferred than the no-purchase option in each ranking. We observe that the most preferred products from the past assortments $S_1 = \{0,2,3,4\}$ and $S_2 = \{0,1,2,4\}$ under each of the five rankings are
\begin{align*}
\argmin_{j \in S_1} \sigma^1(j) &= 0,&\argmin_{j \in S_1} \sigma^2(j) &= 2,&\argmin_{j \in S_1} \sigma^3(j) &= 4,&\argmin_{j \in S_1} \sigma^4(j) &= 2,&\argmin_{j \in S_1} \sigma^5(j) &= 3,\\
 \argmin_{j \in S_2} \sigma^1(j) &= 0,& \argmin_{j \in S_2} \sigma^2(j) &= 1,& \argmin_{j \in S_2} \sigma^3(j) &= 1,& \argmin_{j \in S_2} \sigma^4(j) &= 2,& \argmin_{j \in S_2} \sigma^5(j) &= 4.
\end{align*} 
It follows from the above observations that the historical sales data $(v_{1,0}, v_{1,2}, v_{1,3}, v_{1,4}) = (0.3,0.3,0.3,0.1)$ and $(v_{2,0}, v_{2,1}, v_{2,2}, v_{2,4}) = (0.3,0.3,0.1,0.3)$ is perfectly fit by the ranking-based choice model  $\hat{\lambda} \in \Delta_\Sigma$ that satisfies the following equalities:
\begin{align*}
\hat{\lambda}_{\sigma^1} &= 0.3,& \hat{\lambda}_{\sigma^2} &= 0.2,& \hat{\lambda}_{\sigma^3} &= 0.1,&\hat{\lambda}_{\sigma^4} &= 0.1,&\hat{\lambda}_{\sigma^5}& = 0.3.
\end{align*}

Equipped with the above useful facts, we now consider the performance of estimate-then-optimize for this specific problem instance.  Specifically, suppose the firm uses  estimate-then-optimize in which the selected ranking-based choice model is the probability distribution $\hat{\lambda}$. In that case,  estimate-then-optimize will output a new assortment $S^{\hat{\lambda}}$ that is an optimal solution to the combinatorial optimization problem $\max_{S \in \mathcal{S}} \mathscr{R}^{\hat{\lambda}}(S)$. 
To determine the optimal solution to this  combinatorial optimization problem, let us inspect the five rankings $\sigma^1,\ldots,\sigma^5$ that have nonzero probability under the estimated probability distribution. We observe that the first of these five rankings satisfies $\argmin_{j \in S} \sigma^1(j) = 0$ for all assortments $S \in \mathcal{S}$. Moreover, we observe that the assortment $\{0,4\} \in \mathcal{S}$ satisfies $\argmin_{j \in  \{0,4\}} \sigma^2(j) = \cdots = \argmin_{j \in  \{0,4\}} \sigma^5(j) = 4$. We conclude that $\{0,4\}$ is an optimal solution for the above combinatorial optimization problem, and the predicted expected revenue for this assortment is $\mathscr{R}^{\hat{\lambda}}(\{0,4\}) = 0.7 \times \$100 = \$70$. 

We will now show that the assortment $S^{\hat{\lambda}} \triangleq  \{0,4\}$ obtained from estimate-then-optimize can  have an expected revenue  that is strictly less than the expected revenue of the firm's best past assortment under a ranking-based choice model that is consistent with the historical sales data. To show this, we consider the following seven rankings:
\begin{align*}
\sigma^6(0) &= 0,&\sigma^7(1) &= 0,&\sigma^8(2) &= 0,& \sigma^9(3)&= 0,&\sigma^{10}(4)&=0,&\sigma^{11}(1)&=0,&\sigma^{12}(3)&=0, \\
\sigma^6(\phantom{0}) &= 1,&\sigma^7(0) &= 1,&\sigma^8(0) &= 1,& \sigma^9(0)&= 1,&\sigma^{10}(0)&=1,&\sigma^{11}(2)&=1,&\sigma^{12}(4)&=1, \\
\sigma^6(\phantom{0}) &= 2,&\sigma^7(\phantom{0}) &= 2,&\sigma^8(\phantom{0}) &= 2,& \sigma^9(\phantom{0})&= 2,&\sigma^{10}(\phantom{0})&=2,&\sigma^{11}(0)&=2,&\sigma^{12}(0)&=2, \\
\sigma^6(\phantom{0}) &= 3,&\sigma^7(\phantom{0}) &= 3,&\sigma^8(\phantom{0}) &= 3,& \sigma^9(\phantom{0})&= 3,&\sigma^{10}(\phantom{0})&=3,&\sigma^{11}(\phantom{0})&=3,&\sigma^{12}(\phantom{0})&=3, \\
\sigma^6(\phantom{0}) &= 4,&\sigma^7(\phantom{0}) &= 4,&\sigma^8(\phantom{0}) &= 4,& \sigma^9(\phantom{0})&= 4,&\sigma^{10}(\phantom{0})&=4,&\sigma^{11}(\phantom{0})&=4,&\sigma^{12}(\phantom{0})&=4.
\end{align*}
We observe that the most preferred products from the first past assortment $S_1 = \{0,2,3,4\}$ are 
\begin{gather*}
\argmin_{j \in S_1} \sigma^6(j) = 0,\quad\argmin_{j \in S_1} \sigma^7(j) = 0,\quad\argmin_{j \in S_1} \sigma^8(j) = 2,\quad \argmin_{j \in S_1} \sigma^9(j) = 3,\quad \argmin_{j \in S_1} \sigma^{10}(j) = 4, \\
\argmin_{j \in S_1} \sigma^{11}(j) = 2, \quad \argmin_{j \in S_1} \sigma^{12}(j) = 3,
\end{gather*}
the most preferred products from the second past assortment $S_2 = \{0,1,2,4\}$ are
\begin{gather*}
\argmin_{j \in S_2} \sigma^6(j) = 0,\quad \argmin_{j \in S_2} \sigma^7(j) = 1,\quad \argmin_{j \in S_2} \sigma^8(j) = 2,\quad \argmin_{j \in S_2} \sigma^9(j) = 0,\quad \argmin_{j \in S_2} \sigma^{10}(j) = 4,  \\
\argmin_{j \in S_2} \sigma^{11}(j) = 1, \quad \argmin_{j \in S_2} \sigma^{12}(j) = 4, 
\end{gather*}
and the most preferred products from the assortment $S^{\hat{\lambda}} = \{0,4\}$  are
\begin{gather*}
 \argmin_{j \in S^{\hat{\lambda}}} \sigma^6(j) = 0,\quad \argmin_{j \in S^{\hat{\lambda}}} \sigma^7(j) = 0,\quad \argmin_{j \in S^{\hat{\lambda}}} \sigma^8(j) = 0,\quad \argmin_{j \in S^{\hat{\lambda}}} \sigma^9(j) = 0,\quad \argmin_{j \in S^{\hat{\lambda}}} \sigma^{10}(j) = 4,\\
\argmin_{j \in S^{\hat{\lambda}}} \sigma^{11}(j) = 0, \quad  \argmin_{j \in S^{\hat{\lambda}}} \sigma^{12}(j) = 4. 
\end{gather*}
It follows from the above observations that the historical sales data $(v_{1,0}, v_{1,2}, v_{1,3}, v_{1,4}) = (0.3,0.3,0.3,0.1)$ and $(v_{2,0}, v_{2,1}, v_{2,2}, v_{2,4}) = (0.3,0.3,0.1,0.3)$ are perfectly fit  by the ranking-based choice model  $\bar{\lambda} \in \Delta_\Sigma$ that satisfies the following equalities:
\begin{align*}
\bar{\lambda}_{\sigma^6} &= 0.2,& \bar{\lambda}_{\sigma^7} &= 0.1,& \bar{\lambda}_{\sigma^8} &= 0.1,&\bar{\lambda}_{\sigma^9} &= 0.1,&\bar{\lambda}_{\sigma^{10}}& = 0.1,&\bar{\lambda}_{\sigma^{11}}& = 0.2,&\bar{\lambda}_{\sigma^{12}}& = 0.2.
\end{align*}
However, we observe that the expected revenue for the assortment $S^{\hat{\lambda}} = \{0,4\}$ under the ranking-based choice model $\bar{\lambda}$ is $\mathscr{R}^{\bar{\lambda}}(S^{\hat{\lambda}}) = 0.3 \times \$100 = \$30$.  Since this is strictly less than the expected revenue generated by the firm's best past  assortment, $\$35$, we have  thus shown that estimate-then-optimize can yield an assortment for the problem instance with an expected revenue that is strictly worse than the expected revenue generated by the firm's best past assortment under a ranking-based choice model that is consistent with the historical sales data.

We conclude  by showing that $\{0,2,4\}$ is the unique optimal solution for the robust optimization problem~\eqref{prob:robust} and is the unique assortment that satisfies~\eqref{line:improvement}. To show this, we present below an exhaustive list of the worst-case expected revenues $\min_{\lambda \in \mathcal{U}} \mathscr{R}^\lambda(S)$  for all assortments $S \in \mathcal{S}$ that satisfy $4 \in S$.\footnote{For each assortment $S \in \mathcal{S}$ that satisfies $4 \notin S$ and for every  ranking-based choice model $\lambda \in \Delta_\Sigma$, we observe that $\mathscr{R}^\lambda(S) \le 1 \times r_3 = \$30$. Because $\$30$ is strictly less than the expected revenue generated by the firm's best past assortment, $\$35$, we can without loss of generality restrict our analysis to assortments that satisfy $4 \in S$.}  The following quantities are computed using the algorithm from Lemma~\ref{lem:two:flow} in \S\ref{sec:algorithm:twoassortments}. 
\begin{align*}
\begin{aligned}
&\min_{\lambda \in \mathcal{U}} \mathscr{R}^\lambda \left(\{0,4\} \right) &=  \$30, \quad&\min_{\lambda \in \mathcal{U}} \mathscr{R}^\lambda \left(\{0,1,4\} \right) &=  \$33,\\
 &\min_{\lambda \in \mathcal{U}} \mathscr{R}^\lambda \left(  \{0,2,4\} \right) &= \$36, \quad  &\min_{\lambda \in \mathcal{U}} \mathscr{R}^\lambda \left(  \{0,3,4\} \right) &= \$19,\\
 &  \min_{\lambda \in \mathcal{U}} \mathscr{R}^\lambda \left(  \{ 0,1,2,4\} \right) &= \$35,  \quad &\min_{\lambda \in \mathcal{U}} \mathscr{R}^\lambda \left(  \{ 0,1,3,4\} \right) &= \$12, \\
&\min_{\lambda \in \mathcal{U}} \mathscr{R}^\lambda \left( \{0,2,3,4\} \right) &=  \$25,\quad&\min_{\lambda \in \mathcal{U}} \mathscr{R}^\lambda \left(\{0,1,2,3,4\} \right) &=  \$14.
\end{aligned}
\end{align*}
We observe that the assortment $\{0,2,4\}$ satisfies \eqref{line:improvement}, since this assortment satisfies $\mathscr{R}^\lambda(\{0,2,4\}) \ge \$36 > \$35$ for all ranking-based choice models $\lambda \in \mathcal{U}$. In particular, we note that $\{0,2,4\}$ is not a revenue-ordered assortment. We remark that the  correctness of the above equalities $\min_{\lambda \in \mathcal{U}} \mathscr{R}^\lambda \left(  \{ 0,1,2,4\} \right) = \$35$ and $\min_{\lambda \in \mathcal{U}} \mathscr{R}^\lambda \left( \{0,2,3,4\} \right) =  \$25$ for the two past assortments follows immediately from the fact that $\eta = 0$, which implies that $\mathscr{D}^\lambda(S_m) = v_m$ for all $m \in \{1,2\}$ and  all $\lambda \in \mathcal{U}$. We thus conclude that that there exist problem instances for which there exists an assortment that satisfies \eqref{line:improvement}, and have therefore provided an affirmative answer to Question~\ref{question:1}.

\section{Characterization of Optimal Assortments for \eqref{prob:robust}} \label{sec:characterization}
In this section, we establish the first characterization of optimal assortments for the robust optimization problem~\eqref{prob:robust}.
In particular, we show that there are optimal assortments for \eqref{prob:robust}  with a simple structure that is closely related  to the structure of revenue-ordered assortments.  Recall the following definition of the collection of revenue-ordered assortments:
\begin{align*}
 \bar{\mathcal{S}} \triangleq \left \{ S \in \mathcal{S}:\; \textnormal{if } i^* \in S \textnormal{ and } r_{i^*} < r_{i}, \textnormal{ then }i \in S\right \}.
\end{align*}
A fundamental result in the theory of assortment optimization is that revenue-ordered assortments are optimal under the multinomial logit choice model \citep{talluri2004revenue,gallego2004managing,rusmevichientong2014assortment}. Revenue-ordered assortments also have attractive approximation guarantees for assortment optimization problems under mixture-of-logits and ranking-based choice models with known parameters \citep{rusmevichientong2014assortment,aouad2018approximability,berbeglia2020assortment}. Due to their simplicity and strong theoretical and empirical performance, revenue-ordered assortments are widely recommended in the revenue management  literature.% and used in industry.%\looseness=-1

In view of the above background, 
we proceed to develop our main result regarding the structure of optimal assortments for the robust optimization problem~\eqref{prob:robust}. To this end, we first define the following set of past assortments for each product $i \in \mathcal{N}_0$: 
\begin{align*}
\mathcal{M}_i \triangleq \left \{ m \in \mathcal{M}: i \in S_m \right \}.
\end{align*} 
The above set contains all of the past assortments in which the firm offered product $i$ to their customers. In particular, we observe from this definition that the statement $\mathcal{M}_i \subseteq \mathcal{M}_j$ holds if, for all of the past assortments in which the firm offered product $i$, the firm also offered product $j$. 
We now introduce the following new collection of assortments:
\begin{align*}
\widehat{\mathcal{S}} &\triangleq \left \{ S \in \mathcal{S}:\; \textnormal{if } i^* \in S, \;  r_{i^*} <  r_{i},  \textnormal{ and } \mathcal{M}_{i^*} \subseteq \mathcal{M}_i, \textnormal{ then } i \in S\right \}.
\end{align*}
The above collection has a natural interpretation as the collection of all assortments which, speaking informally, can be viewed as revenue-ordered \emph{relative to} the firm's past assortments. Indeed,  consider two products $i^*, i \in \mathcal{N}_0$ for which the revenue $r_{i^*}$ from the first product $i^*$  is strictly less than the revenue $r_{i}$ from the second product $i$. Then every assortment $S \in \widehat{\mathcal{S}}$ which offers the first product  must also offer the second product \emph{unless} there is historical sales data from  a past assortment in  which the first product $i^*$ was offered and the second product $i$ was not offered. Our main result is the following:
\begin{theorem} \label{thm:main}
There exists an assortment $S \in \widehat{\mathcal{S}}$ that is optimal for \eqref{prob:robust}. 
\end{theorem}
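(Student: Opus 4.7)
The plan is to prove Theorem~\ref{thm:main} by an iterative augmentation argument. Start from any optimal assortment $S^*$ for \eqref{prob:robust}. If $S^* \in \widehat{\mathcal{S}}$, we are done; otherwise, by the definition of $\widehat{\mathcal{S}}$, there exist products $i^* \in S^*$ and $i \in \mathcal{N}_0 \setminus S^*$ with $r_{i^*} < r_i$ and $\mathcal{M}_{i^*} \subseteq \mathcal{M}_i$. The key step is to establish that $\min_{\lambda \in \mathcal{U}} \mathscr{R}^\lambda(S^* \cup \{i\}) \ge \min_{\lambda \in \mathcal{U}} \mathscr{R}^\lambda(S^*)$, so that $S^* \cup \{i\}$ is also optimal for \eqref{prob:robust}. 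Because $|S|$ is bounded above by $n+1$ and strictly increases with each augmentation, the procedure terminates at an optimal assortment in $\widehat{\mathcal{S}}$.

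To establish the key inequality, I would fix any $\lambda^* \in \arg\min_{\lambda \in \mathcal{U}} \mathscr{R}^\lambda(S^* \cup \{i\})$ and attempt to construct a distribution $\tilde{\lambda} \in \mathcal{U}$ with $\mathscr{R}^{\tilde{\lambda}}(S^*) \le \mathscr{R}^{\lambda^*}(S^* \cup \{i\})$. The natural first attempt is a ``swap'': for every ranking $\sigma$ in the support of $\lambda^*$ at which $i$ is the product selected in $S^* \cup \{i\}$, interchange the ranks of $i$ and $i^*$; leave the remaining rankings unchanged. Under this modification, the rankings that previously contributed $r_i$ to the objective value $\mathscr{R}^{\lambda^*}(S^* \cup \{i\})$ now contribute only $r_{i^*} < r_i$ to $\mathscr{R}^{\tilde{\lambda}}(S^*)$, which would immediately deliver the desired inequality.

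The main obstacle is that this direct swap generally does not preserve membership in $\mathcal{U}$. For any past assortment $S_m$ with $m \in \mathcal{M}_{i^*}$, both $i$ and $i^*$ lie in $S_m$, and the swap transfers marginal purchase-frequency mass from $i$ to $i^*$, which may violate the tolerance constraint $\|\mathscr{D}^{\tilde{\lambda}}(S_m) - v_m\| \le \eta$. An analogous issue arises at past assortments $S_m$ with $m \in \mathcal{M}_i \setminus \mathcal{M}_{i^*}$ (which are admissible, since the hypothesis $\mathcal{M}_{i^*} \subseteq \mathcal{M}_i$ does not preclude $i$ from being offered in isolation of $i^*$), where the swap may even redirect the selected product in $S_m$ to a third product.

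To overcome this obstacle, I would interpret $\lambda^*$ as a flow over a directed acyclic graph whose vertices encode the relevant product and past-assortment data and whose edges respect the partial order over products induced by the sets $\mathcal{M}_j$. The hypothesis $\mathcal{M}_{i^*} \subseteq \mathcal{M}_i$ guarantees the existence of reachability paths in this DAG along which mass originally assigned to rankings that select $i$ can be rerouted toward $i^*$, with compensating adjustments at intermediate products that cancel out in every marginal constraint defining $\mathcal{U}$. The delicate combinatorial task of constructing this rerouting so that the net revenue on $S^*$ remains bounded above by $\mathscr{R}^{\lambda^*}(S^* \cup \{i\})$ while preserving $\mathcal{U}$-feasibility is, I expect, the principal difficulty---and is presumably what the paragraph preceding the theorem refers to as the ``elementary yet intricate'' analysis of DAG reachability.
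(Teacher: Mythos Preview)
Your high-level augmentation scheme is exactly right and matches the paper: start from an optimal $S^*$, repeatedly add a product $i$ witnessing failure of the $\widehat{\mathcal{S}}$-condition, and argue that the worst-case revenue cannot decrease. The single-step inequality
\[
\min_{\lambda\in\mathcal{U}}\mathscr{R}^{\lambda}(S)\ \le\ \min_{\lambda\in\mathcal{U}}\mathscr{R}^{\lambda}(S\cup\{i\})
\]
whenever $i^*\in S$, $r_{i^*}<r_i$, $\mathcal{M}_{i^*}\subseteq\mathcal{M}_i$, is precisely the content of the paper's Proposition~\ref{prop:plus_one_inequality_prop} (via Claim~\ref{claim:plus_one_inequality}).

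Where your proposal diverges---and where the real gap lies---is in \emph{how} to prove this inequality. You attempt to construct $\tilde\lambda\in\mathcal{U}$ directly from a worst-case $\lambda^*$ by swapping $i$ and $i^*$ in individual rankings, correctly notice this breaks the marginal constraints defining $\mathcal{U}$, and then gesture at a ``rerouting'' fix that is never carried out. The paper sidesteps this difficulty entirely through a reformulation you are missing (Proposition~\ref{prop:reform_wc}): it aggregates rankings into equivalence classes indexed by tuples $(i_1,\ldots,i_M)\in\mathcal{L}$, where $i_m$ is the product selected from $S_m$. The resulting LP has a feasible set that is \emph{independent of $S$}; the assortment enters only through the objective coefficients $\rho_{i_1\cdots i_M}(S)=\min\{r_j:j\in S,\ \cap_m\mathcal{D}_{i_m}(S_m)\cap\mathcal{D}_j(S)\neq\emptyset\}$. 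With this decoupling, your inequality reduces to the pointwise comparison $\rho_{i_1\cdots i_M}(S)\le\rho_{i_1\cdots i_M}(S\cup\{i\})$ for every tuple---no $\tilde\lambda$ needs to be built, and no feasibility repair is required.

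The DAG in the paper is also not what you conjecture. It is not a single graph over which $\lambda^*$ is a flow to be rerouted; rather, there is one DAG $\mathcal{G}_{i_1\cdots i_M}$ \emph{per tuple}, with edges $(j,i_m)$ for $j\in S_m\setminus\{i_m\}$, and $\rho_{i_1\cdots i_M}(S)$ is characterized (Proposition~\ref{prop:cost_reform}) as the minimum $r_j$ over $j\in S$ from which no $i_m\in S$ is reachable. The hypothesis $\mathcal{M}_{i^*}\subseteq\mathcal{M}_i$ is used in this reachability analysis to show that adding $i$ cannot decrease $\rho$: either $i$ already has a path to some $i_m\in S$ (so $\rho$ is unchanged), or the presence of $i^*\in S$ forces $\rho(S)\le r_{i^*}<r_i$, so the new candidate $r_i$ cannot lower the minimum.
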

The remainder of \S\ref{sec:characterization} focuses on the implications and derivation of Theorem~\ref{thm:main}. In \S\ref{sec:characterization:question}, we use Theorem~\ref{thm:main} to show that the past assortments must deviate from revenue-ordered assortments for the optimal objective value of \eqref{prob:robust} to be strictly greater than the expected revenue of the firm's best past assortment. In \S\ref{sec:characterization:upperbound}, we develop upper bounds on the cardinality of $\widehat{\mathcal{S}}$ based on the number of past assortments and number of products. In \S\ref{sec:characterization:reverse}, we prove that $\widehat{\mathcal{S}}$ can be an exact description of the assortments that are optimal for \eqref{prob:robust}.   In \S\ref{sec:characterization:proof}, we provide an overview of the proof of Theorem~\ref{thm:main}. 

\subsection{Answer to Question~\ref{question:2}}  \label{sec:characterization:question}
Due to revenue-ordered assortment's simplicity and desirable theoretical guarantees, a large body of literature has advocated to firms for offering revenue-ordered assortments across numerous application domains.  Equipped with Theorem~\ref{thm:main}, we show in the following Lemma~\ref{lem:impossibility} and Corollary~\ref{cor:impossibility} that a firm which has offered the revenue-ordered assortments has operated under a worst-possible behavior  from the perspective of finding an assortment that satisfies~\eqref{line:improvement}. 

\begin{lemma}\label{lem:impossibility}
If $\mathscr{M} = \bar{\mathcal{S}}$, then $\widehat{\mathcal{S}} = \mathscr{M}$. \end{lemma}
\begin{corollary}\label{cor:impossibility}
 If $\mathscr{M} = \bar{\mathcal{S}}$ and $\eta = 0$, then $\max \limits_{S \in \mathcal{S}} \min \limits_{\lambda \in \mathcal{U}} \mathscr{R}^{\lambda}(S) = \max \limits_{m \in \mathcal{M}} r^\intercal v_m$. 
 \end{corollary}
 
 The above lemma and corollary establish that it is impossible to find an assortment that satisfies \eqref{line:improvement} when the firm's past assortments are the revenue-ordered assortments. Specifically, it follows from Lemma~\ref{lem:impossibility} and Theorem~\ref{thm:main} that if the firm's past assortments are the  revenue-ordered assortments, then there must exist a revenue-ordered assortment that is an optimal solution for the robust optimization problem~\eqref{prob:robust}, regardless of the prices of the products $r$ and the observations of the historical sales data $v_1,\ldots,v_M$.  
It thus follows as an immediate corollary of Lemma~\ref{lem:impossibility} and Theorem~\ref{thm:main} in the case of $\eta = 0$ that the past assortments must deviate from revenue-ordered assortments in order for it to be possible for the optimal objective value of \eqref{prob:robust} to be strictly greater than the expected revenue of the firm's best past assortment.

 Motivated by Lemma~\ref{lem:impossibility} and Corollary~\ref{cor:impossibility}, we perform numerical experiments in Appendix~\ref{appx:riskseto} to assess  the performance of assortments obtained using estimate-then-optimize when the historical sales data is randomly generated from revenue-ordered assortments.  The results of the experiments are striking: in more than 98\% of the problem instances in which  estimate-then-optimize  recommended a new assortment, the worst-case decline in expected revenue from implementing the new assortment  (relative to the expected revenue from the best past assortment)  exceeded the best-case increase in expected revenue. The difference in magnitude between the worst-case and best-case change in expected revenue from implementing the new assortment found from  estimate-then-optimize  is also significant; the average best-case improvement of the new assortment over the best past assortment is 6.56\%, while the  average worst-case improvement of the new assortment over the best past assortment is  -21.71\%. Our results in Appendix~\ref{appx:riskseto} thus demonstrate that estimate-then-optimize with ranking-based choice models can risk leading to significant declines in a firm's expected revenue, even in settings in which the firm has implemented a celebrated and  widely-recommended class of  assortments.

\subsection{Upper Bounds} \label{sec:characterization:upperbound}
In \S\ref{sec:algorithms}, we will design algorithms for solving the class of robust optimization problems~\eqref{prob:robust} that consist of exhaustive search over the assortments in the collection $\widehat{\mathcal{S}}$.   To motivate these algorithms, we show in the present subsection that the cardinality of $\widehat{\mathcal{S}}$ scales as a polynomial in $n$ for any fixed number of past assortments.

We begin by considering the case of two past assortments. In the following Lemma~\ref{lem:two:S}, we develop a closed-form representation of the collection of assortments $\widehat{\mathcal{S}}$. This  representation will be useful in \S\ref{sec:algorithm:twoassortments} because it provides an efficient procedure for iterating over the assortments in $\widehat{\mathcal{S}}$. Moreover, the following Lemma~\ref{lem:two:S} is useful because it immediately implies that the number of assortments in the collection $\widehat{\mathcal{S}}$ scales quadratically in the number of products $n$ when there are two past assortments.

\begin{lemma} \label{lem:two:S}
If $M = 2$, then
\begin{align}
\widehat{\mathcal{S}} = \left \{ S \in \mathcal{S}: \quad \begin{aligned} &\textnormal{there exists }i_1 \in S_1 \setminus S_2 \textnormal{ and }  i_2 \in S_2 \setminus S_1 \textnormal{ such that}\\
&S =  \left(S_1 \cap S_2 \right) \cup \left \{j \in  S_1 \setminus S_2: j \ge i_1 \right \}  \cup \left \{j \in  S_2 \setminus S_1: j \ge i_2 \right \}\end{aligned} \right \}. \label{line:S_two}
\end{align}
\end{lemma}
Lemma~\ref{lem:two:S} shows that the assortments in the collection $\widehat{\mathcal{S}}$ can be parameterized by the pairs of products from the sets $S_1 \setminus S_2$ and $S_2 \setminus S_1$. In particular, Lemma~\ref{lem:two:S} shows for the case of two past assortments that each assortment $S \in \widehat{\mathcal{S}}$ contains all of the products in $S_1 \cap S_2$, contains all of the products in $S_1 \setminus S_2$ with revenue greater than or equal to $r_{i_1}$ for some $i_1 \in S_1 \setminus S_2$, and  contains all of the products in $S_2 \setminus S_1$ with revenue greater than or equal to $r_{i_2}$ for some $i_2 \in S_1 \setminus S_2$. Hence, the number of assortments in the collection $\widehat{\mathcal{S}}$ is at most $|S_1 \setminus S_2| \times |S_2 \setminus S_1|= \mathcal{O}(n^2)$. 

We now develop an upper bound on the cardinality of $\widehat{\mathcal{S}}$ for general numbers of past assortments. To motivate our upper bound,   we recall  that the number of assortments  in the collection $\widehat{\mathcal{S}}$ can indeed be small in special cases of problem instances:  namely, we recall from Lemma~\ref{lem:impossibility} in \S\ref{sec:characterization:question} that $|\widehat{\mathcal{S}}| = \mathcal{O}(n)$ when the past assortments are the revenue-ordered assortments, and we showed in Lemma~\ref{lem:two:S} that $|\widehat{\mathcal{S}}| = \mathcal{O}(n^2)$ when there are two past assortments. In the next intermediary result, denoted by Lemma~\ref{lem:fixed_dim:S}, we develop a more general result  along these lines. Specifically, the following lemma establishes that the number of assortments in the collection $\widehat{\mathcal{S}}$ can be upper bounded by a polynomial of the number of products $n$ for \emph{any} fixed number of past assortments $M$. While the bound in the following lemma is not the tightest possible, the bound will be sufficient for its theoretical purpose in designing polynomial-time algorithms in \S\ref{sec:algorithms:fixed_dim} for solving the robust optimization problem~\eqref{prob:robust} when the number of past assortments $M$  is fixed. % of proving Theorem~\ref{thm:poly}. 
\begin{lemma}\label{lem:fixed_dim:S}
$| \widehat{\mathcal{S}} | \le (n+2)^{2^M}$.
\end{lemma}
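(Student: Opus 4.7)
The plan is to partition the universe of products $\mathcal{N}_0$ according to their signatures $\mathcal{M}_i \subseteq \mathcal{M}$ and observe that within each signature class the restriction of $S$ must have a rigid suffix structure, which makes $|\widehat{\mathcal{S}}|$ easy to bound by a straightforward counting argument.

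First, for each $T \subseteq \mathcal{M}$, I would define the signature class $\mathcal{N}_T \triangleq \{i \in \mathcal{N}_0 : \mathcal{M}_i = T\}$. The sets $\{\mathcal{N}_T : T \subseteq \mathcal{M}\}$ are pairwise disjoint and their union equals $\mathcal{N}_0$, so they partition $\mathcal{N}_0$ into at most $2^M$ (possibly empty) parts, one for each subset of $\mathcal{M}$.

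Second, I would argue that for every $S \in \widehat{\mathcal{S}}$ and every $T \subseteq \mathcal{M}$, the restriction $S \cap \mathcal{N}_T$ must be a suffix of $\mathcal{N}_T$ when the elements of $\mathcal{N}_T$ are listed in ascending revenue order. Indeed, whenever $i^*, i \in \mathcal{N}_T$ satisfy $r_{i^*} < r_i$, we have $\mathcal{M}_{i^*} = T = \mathcal{M}_i$, so in particular $\mathcal{M}_{i^*} \subseteq \mathcal{M}_i$, and the defining implication of $\widehat{\mathcal{S}}$ forces $i \in S$ whenever $i^* \in S$. Since $\mathcal{N}_T$ has exactly $|\mathcal{N}_T| + 1$ suffixes (the empty one together with the $|\mathcal{N}_T|$ nonempty ones), there are at most $|\mathcal{N}_T| + 1$ admissible choices for $S \cap \mathcal{N}_T$.

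Finally, because the classes partition $\mathcal{N}_0$, each $S \in \widehat{\mathcal{S}}$ is uniquely determined by the tuple of per-class restrictions $(S \cap \mathcal{N}_T)_{T \subseteq \mathcal{M}}$. Multiplying the per-class bounds yields
$$|\widehat{\mathcal{S}}| \;\le\; \prod_{T \subseteq \mathcal{M}} \bigl(|\mathcal{N}_T| + 1\bigr) \;\le\; (n+2)^{2^M},$$
where the second inequality uses the crude estimate $|\mathcal{N}_T| + 1 \le n+2$, valid since $|\mathcal{N}_T| \le |\mathcal{N}_0| = n+1$, together with $|\{T : T \subseteq \mathcal{M}\}| = 2^M$. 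The substance of the argument lies entirely in the first observation—recognizing that grouping products by their signature turns the global implication defining $\widehat{\mathcal{S}}$ into independent per-class suffix constraints—after which the counting is immediate, so I do not anticipate any real obstacle.
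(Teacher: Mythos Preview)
Your proof is correct and takes essentially the same approach as the paper: partition $\mathcal{N}_0$ into signature classes $\mathcal{N}_T = \{i : \mathcal{M}_i = T\}$, observe that within each class the definition of $\widehat{\mathcal{S}}$ forces a revenue-ordered suffix, and multiply the $|\mathcal{N}_T|+1$ per-class choices over the $2^M$ subsets $T$. The paper inserts an intermediate superset $\widetilde{\mathcal{S}}$ (where the inclusion $\mathcal{M}_{i^*}\subseteq\mathcal{M}_i$ is replaced by equality) before doing the same counting, but this is cosmetic; your direct argument is equivalent.
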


We conclude the present \S\ref{sec:characterization:upperbound} by developing an algorithm for computing the  collection $\widehat{\mathcal{S}}$. 
 The algorithm which achieves the specified running time of the following lemma is based on dynamic programming over a compact graphical representation of the collection of assortments  $\widehat{\mathcal{S}}$. In particular, the running time of the algorithm from Lemma~\ref{lem:fixed_dim:S:time} can be viewed as attractive due to its mild dependence on the number of assortments in the collection $\widehat{\mathcal{S}}$. For example, it follows from Lemma~\ref{lem:fixed_dim:S} that the algorithm from the following lemma has a running time that is polynomial in the number of products $n$ for any fixed number of past assortments $M$. 
\begin{lemma} \label{lem:fixed_dim:S:time}
The collection  of assortments $\widehat{\mathcal{S}}$ can be constructed in $\mathcal{O}(n^{2} (M+| \widehat{\mathcal{S}}|))$ time. 
  \end{lemma}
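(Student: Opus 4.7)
The plan is to recast $\widehat{\mathcal{S}}$ in terms of a dominance partial order on $\mathcal{N}_0$ and then enumerate its elements via a backtracking procedure that produces each assortment exactly once in amortized $\mathcal{O}(n^2)$ work. Declare that product $i$ \emph{dominates} product $i^*$ whenever $r_i > r_{i^*}$ and $\mathcal{M}_{i^*} \subseteq \mathcal{M}_i$. The definition of $\widehat{\mathcal{S}}$ is precisely the statement that $S \in \widehat{\mathcal{S}}$ if and only if $0 \in S$ and $S$ is upward-closed in this partial order, so enumerating $\widehat{\mathcal{S}}$ reduces to enumerating the upward-closed subsets of $\mathcal{N}_0$ that contain $0$.

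I would first carry out a preprocessing phase that computes $\mathcal{M}_i$ for every product in $\mathcal{O}(nM)$ time and then tests $\mathcal{M}_{i^*} \subseteq \mathcal{M}_i$ for every ordered pair in $\mathcal{O}(n^2 M)$ time; for each $i$ I would record, as an adjacency list, the set $J(i)$ of products that $i$ dominates. The second phase enumerates $\widehat{\mathcal{S}}$ by processing products in decreasing order of revenue, $r_n > r_{n-1} > \cdots > r_0$, via a recursion that maintains an integer counter $c(i^*)$ for every unprocessed $i^*$ equal to the number of already-excluded dominators of $i^*$. At each current product $i$: if $c(i) > 0$, then some dominator has been excluded, so the closure property forces exclusion of $i$ and we recurse; otherwise we branch into the include-$i$ child (counters unchanged) and the exclude-$i$ child (which increments $c(i^*)$ for every $i^* \in J(i)$ before recursing, and decrements upon backtracking). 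When the recursion reaches the end of the list we output the accumulated assortment. Correctness is immediate: since all dominators of the current product have higher revenue and have therefore already been decided, $c(i)=0$ holds if and only if including $i$ is consistent with the closure property, and in that case both branches extend to valid upward-closed sets.

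The running-time analysis rests on two observations. Every branching node has two children each leading to at least one valid leaf assortment, so a standard counting argument for mixed trees gives exactly $|\widehat{\mathcal{S}}|-1$ branching nodes and $|\widehat{\mathcal{S}}|$ leaves in the recursion tree. Moreover, every root-to-leaf path has length $n+1$, so the total number of recursion nodes is $\mathcal{O}(n\,|\widehat{\mathcal{S}}|)$. The counter updates along $J(i)$ at each node cost $\mathcal{O}(n)$ time, giving $\mathcal{O}(n^2\,|\widehat{\mathcal{S}}|)$ for the enumeration; adding the preprocessing cost yields the claimed $\mathcal{O}(n^2(M+|\widehat{\mathcal{S}}|))$ total. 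The main subtlety that will need care in the write-up is the accounting of the forced (single-child) recursion nodes, which can outnumber the branching nodes by up to a factor of $n$: avoiding a naive $\mathcal{O}(n^2)$ re-scan of the dominance DAG at each such node, and instead performing only the incremental $\mathcal{O}(|J(i)|)$ counter update, is what keeps the enumeration bounded by $\mathcal{O}(n^2\,|\widehat{\mathcal{S}}|)$ rather than $\mathcal{O}(n^3\,|\widehat{\mathcal{S}}|)$.
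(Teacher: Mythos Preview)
Your approach mirrors the paper's: both compute the dominance relation in $\mathcal{O}(n^2M)$ time, identify $\widehat{\mathcal{S}}$ with the upward-closed subsets of the resulting poset that contain $0$, and enumerate them with $\mathcal{O}(n^2)$ amortised work per output. The enumeration differs in mechanics---the paper picks an arbitrary vertex $i$ and deletes it together with all predecessors (for the exclude-$i$ branch) or all successors (for the include-$i$ branch), recursing on the reduced graph and using that the transitive-closure property is preserved, whereas you sweep in decreasing revenue order and maintain excluded-dominator counters. Your version is more elementary and avoids the paper's chain of auxiliary claims about induced subgraphs.

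One step does not go through as written: the assertion that ``every branching node has two children each leading to at least one valid leaf assortment'' fails once the constraint $0\in S$ is imposed. If $j$ dominates $0$ (equivalently $j\in\bigcap_m S_m$), then the exclude-$j$ branch forces $c(0)>0$ and hence $0\notin S$, so that entire subtree contributes no element of $\widehat{\mathcal{S}}$. In the extreme case $M=1$ with $S_1=\mathcal{N}_0$, every product dominates $0$, so $|\widehat{\mathcal{S}}|=1$, yet your recursion tree has $\Theta(n^2)$ nodes and $\Theta(n^3)$ total work rather than $\mathcal{O}(n^2)$. (The paper's write-up conflates $|\textsc{RecursiveStep}(\mathscr{G})|$ with $|\widehat{\mathcal{S}}|$ and thus glosses over the same point.) The fix is immediate: every $S\in\widehat{\mathcal{S}}$ must contain $\{0\}\cup D_0$ where $D_0=\{j\neq 0:\mathcal{M}_j=\mathcal{M}\}$, so pre-commit to including these products and run the sweep only over $\mathcal{N}_0\setminus(\{0\}\cup D_0)$. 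With that restriction every leaf is genuinely an element of $\widehat{\mathcal{S}}$, the branching-node count becomes exactly $|\widehat{\mathcal{S}}|-1$, and your node and work bounds hold as stated.
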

  
   \subsection{Discussion of Upper Bound} \label{sec:characterization:reverse}
In Lemma~\ref{lem:fixed_dim:S} of the previous subsection, we developed an upper bound on $|\widehat{\mathcal{S}}|$ that grows exponentially with respect to the number of past assortments.  In this subsection, we provide an example that shows that $|\widehat{\mathcal{S}}|$ can indeed grow exponentially with respect to the number of past assortments, and we show for that example that $\widehat{\mathcal{S}}$ is an exact description of the assortments that can be optimal for the robust optimization problem~\eqref{prob:robust}.   Our discussion will focus on the following specific collection of past assortments:
\begin{definition}
We say that the firm's  past assortments are \emph{reverse revenue-ordered} if  
$S_1 \triangleq \{0,n\}$, $S_m \triangleq \left \{ 0,1,\ldots,m-1,n \right \}$ for each $m \in  \{2,\ldots,n\}$, and $\mathscr{M} = \tilde{\mathcal{S}} \triangleq \left \{S_1,\ldots,S_n\right \}$.
\end{definition}
We readily observe from the above definition that the reverse revenue-ordered assortments are nested, in the sense that $S_1 \subset \cdots \subset S_n$. Moreover, we observe that the following lemma follows immediately from the definition of the collection of assortments $\widehat{S}$ from \S\ref{sec:characterization}. 
\begin{lemma} \label{lem:S_hat_for_S_tilde}
If $\mathscr{M} = \tilde{\mathcal{S}}$, then $\widehat{\mathcal{S}} = \left \{ S \subseteq \mathcal{N}_0: \{0,n\} \subseteq S \right \}$.
\end{lemma}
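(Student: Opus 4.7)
The plan is to unpack the definition of $\widehat{\mathcal{S}}$ directly using the explicit form of the reverse revenue-ordered assortments, so that the condition defining $\widehat{\mathcal{S}}$ collapses to the single requirement $n \in S$.

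First, I would compute $\mathcal{M}_i$ for each product under $\mathscr{M} = \tilde{\mathcal{S}}$. Since $S_1 = \{0,n\}$ and $S_m = \{0,1,\ldots,m-1,n\}$ for $m \in \{2,\ldots,n\}$, the products $0$ and $n$ lie in every past assortment, so $\mathcal{M}_0 = \mathcal{M}_n = \{1,\ldots,n\}$. For $i \in \{1,\ldots,n-1\}$, product $i$ lies in $S_m$ if and only if $m \ge i+1$, hence $\mathcal{M}_i = \{i+1,\ldots,n\}$.

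Next, I would enumerate all ordered pairs $(i^*,i)$ with $i^*,i \in \mathcal{N}_0$, $r_{i^*} < r_i$, and $\mathcal{M}_{i^*} \subseteq \mathcal{M}_i$, since these are precisely the pairs that force $i \in S$ whenever $i^* \in S$. Using $0 = r_0 < r_1 < \cdots < r_n$, the strict inequality $r_{i^*} < r_i$ is equivalent to $i^* < i$ under the standard order (treating $0$ as the smallest index). The three cases are:
\begin{itemize}
\item $i^* = 0$: then $\mathcal{M}_0 = \{1,\ldots,n\}$, so $\mathcal{M}_0 \subseteq \mathcal{M}_i$ forces $\mathcal{M}_i = \{1,\ldots,n\}$, which holds only for $i = n$.
\item $i^* \in \{1,\ldots,n-1\}$: then $\mathcal{M}_{i^*} = \{i^*+1,\ldots,n\}$. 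For $i \in \{1,\ldots,n-1\}$ with $i > i^*$, we have $\mathcal{M}_i = \{i+1,\ldots,n\} \not\supseteq \{i^*+1,\ldots,n\}$, so the only admissible $i$ is $i = n$.
\item $i^* = n$: no $i$ satisfies $r_n < r_i$, so no constraint is generated.
\end{itemize}

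Combining these cases, the defining implication of $\widehat{\mathcal{S}}$ reduces to ``if $i^* \in S$ for some $i^* \in \mathcal{N}_0 \setminus \{n\}$, then $n \in S$,'' which, since every $S \in \mathcal{S}$ must contain $0$, is equivalent to the single requirement $n \in S$. Therefore $\widehat{\mathcal{S}} = \{S \in \mathcal{S} : n \in S\} = \{S \subseteq \mathcal{N}_0 : \{0,n\} \subseteq S\}$, completing the proof. The argument is purely a bookkeeping exercise in the definitions, and there is no substantive obstacle beyond the careful case split on $i^*$.
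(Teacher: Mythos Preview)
Your proof is correct and is essentially the same approach as the paper's: the paper simply states that the lemma ``follows immediately from the definition of the collection of assortments $\widehat{\mathcal{S}}$,'' and your argument is precisely the careful unpacking of that definition via the computation of $\mathcal{M}_i$ and the case split on $i^*$. There is nothing to add.
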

The above lemma shows that if the past assortments $\mathscr{M} $ are the reverse revenue-ordered assortments, then the corresponding collection of assortments $\widehat{\mathcal{S}}$ is the collection of all subsets of products that include the most expensive product $n$ and the no-purchase option $0$. It follows immediately from Lemma~\ref{lem:S_hat_for_S_tilde} and from the fact that $M = n$ that $| \widehat{\mathcal{S}} | = 2^{M-2}$ when the past assortments are the  reversed revenue-ordered assortments. We have thus shown that the cardinality of the collection of assortments $\widehat{\mathcal{S}}$ can grow exponentially in the number of past assortments. 

Our main contribution of \S\ref{sec:characterization:reverse}, which is presented below as Theorem~\ref{thm:not_improveable},  consists of establishing that the characterization of optimal assortments from Theorem~\ref{thm:main} cannot be refined for reverse revenue-ordered assortments. More precisely, Theorem~\ref{thm:not_improveable} shows that $\widehat{\mathcal{S}}$ is exactly the collection of assortments that can be optimal for the robust optimization problem~\eqref{prob:robust} when the past assortments are reverse revenue-ordered.  This theorem is significant because  it proves that the collection  of assortments $\widehat{\mathcal{S}}$ is an accurate description of the range of possible optimal solutions for the robust optimization problem~\eqref{prob:robust}, at least for the case when the past assortments are reverse revenue-ordered. It also is significant because it implies that the exponential growth of the cardinality of the collection of assortments $\widehat{\mathcal{S}}$ that is established in Lemma~\ref{lem:S_hat_for_S_tilde} is not the result of constructing an overly-conservative  collection  of assortments $\widehat{\mathcal{S}}$.

\begin{theorem} \label{thm:not_improveable}
Suppose that $\mathscr{M} = \tilde{\mathcal{S}}$, that $\eta = 0$, and that the revenues $r_1 < \cdots < r_n$ are fixed. Then for every assortment $\bar{S} \in \widehat{\mathcal{S}}$, there exists a realization of the historical data $v \equiv (v_{m,i}: m \in \mathcal{M}, i \in S_m)$ such that $\bar{S}$ is the unique optimal solution of the robust optimization problem~\eqref{prob:robust}.
\end{theorem}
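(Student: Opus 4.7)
The plan is to prove Theorem~\ref{thm:not_improveable} by constructing, for each target $\bar{S} \in \widehat{\mathcal{S}}$, a ranking-based choice model $\lambda^*$ whose induced historical data $v$ makes $\bar{S}$ the unique optimum of~\eqref{prob:robust}. By Theorem~\ref{thm:main} together with Lemma~\ref{lem:S_hat_for_S_tilde}, the optimum of~\eqref{prob:robust} always lies in $\widehat{\mathcal{S}} = \{S \subseteq \mathcal{N}_0 : \{0,n\} \subseteq S\}$, so it suffices to show that the worst-case revenue of $\bar{S}$ strictly exceeds that of every other $S' \in \widehat{\mathcal{S}}$.

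Write $\bar{S} = \{0,n\} \cup T$ with $T \subseteq \{1,\ldots,n-1\}$ and $\bar{T} = \{1,\ldots,n-1\} \setminus T$. I take $\lambda^*$ to be a mixture of $n$ rankings $\sigma_1,\ldots,\sigma_n$ with strictly positive weights summing to one: for each $t \in T$, the ranking $\sigma_t$ places $t$ first and $0$ immediately second; for each $j \in \bar{T}$, the ranking $\sigma_j$ places $j$ first and $n$ immediately second; and $\sigma_n$ places $n$ first, with the remaining products in each ranking arranged in a fixed order. The weights are $p_j = \epsilon$ for $j \in \bar{T}$, $p_t = \alpha$ for $t \in T$ (of order $1/|T|$ when $T \neq \emptyset$), and $p_n = 1 - |\bar{T}|\epsilon - |T|\alpha$, where $\epsilon > 0$ is a small constant to be fixed. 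Defining $v$ as the data induced by $\lambda^*$ on the past assortments, a direct calculation yields $v_{m,i} = p_i$ for $i \in \{1,\ldots,m-1\}$, $v_{m,n} = p_n + \sum_{j \in \bar{T},\, j \geq m} p_j$, and $v_{m,0} = \sum_{t \in T,\, t \geq m} p_t$.

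I will compute worst-case revenues using the LP reformulation from Proposition~\ref{prop:reform_wc}. For past assortments $S_m \in \tilde{\mathcal{S}}$ the value is simply $r^\intercal v_m$. For non-past $S \in \widehat{\mathcal{S}}$ I exploit that, under nested $\tilde{\mathcal{S}}$, each tuple in $\mathcal{L}$ is parameterized by a pair $(i_1, F)$ with $i_1 \in \{0,n\}$ and a set of ``jump times'' $F \subseteq \{2,\ldots,n\}$ at which $i_m$ switches to the newly introduced element $m-1$. Enumerating these signatures and imposing the marginal constraints derived from $v$, a direct case analysis of the $\rho_{i_1 \cdots i_n}(S)$ values yields a closed-form LP optimum: for $\bar{S}$, it equals $p_n r_n + \sum_{t \in T} p_t r_t + \epsilon \sum_{j \in \bar{T}} \rho_j(\bar{S})$, where $\rho_j(\bar{S}) = \min(\{r_n\} \cup \{r_t : t \in T,\, t > j\})$ captures the adversary's freedom to reroute the $\sigma_j$ mass to some $t \in T$ above $j$. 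Analogous expressions hold for every other $S' \in \widehat{\mathcal{S}}$, and strict domination of $\bar{S}$ follows by case analysis on how $S'$ differs from $\bar{S}$: removing $t \in T$ reroutes the $\sigma_t$ mass to product $0$, creating an $\Omega(\alpha r_t)$ deficit; adding $j \in \bar{T}$ reroutes the $\sigma_j$ mass from $r_n$ to $r_j < r_n$, creating an $\Omega(\epsilon(r_n - r_j))$ deficit. Choosing $\alpha$ of order $1/|T|$ and $\epsilon$ sufficiently small makes each such deficit strictly positive after absorbing the $O(\epsilon)$ corrections from the adversary's reallocations within signature classes.

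The main technical obstacle is the LP analysis for non-past $S$: because for $n \geq 3$ the polytope $\mathcal{U}$ has positive dimension, the adversary may redistribute mass across signature classes while preserving the marginals, so $\lambda^*$ itself need not be the worst-case adversary. The delicate part is thus to enumerate $\mathcal{L}$ carefully via the jump-time encoding, compute $\rho_{i_1 \cdots i_n}(S)$ for each signature and each relevant $S$, and verify that the sparsity of the construction confines the LP optimum to within $O(\epsilon)$ corrections whose magnitude is controlled by the choice of $\epsilon$ relative to $\alpha$ and the revenue gaps $r_{i+1} - r_i$, thereby preserving all of the strict inequalities above.
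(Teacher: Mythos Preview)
Your construction is the same family as the paper's: both support $\lambda^*$ on the $n$ tuples $(\ubar{i}^t_1,\ldots,\ubar{i}^t_n)$ for $t\in T$, $(\bar{i}^j_1,\ldots,\bar{i}^j_n)$ for $j\in\bar T$, and $(n,\ldots,n)$, with the paper taking uniform weights $1/n$ where you take $(\alpha,\epsilon,p_n)$. The simplification you are missing is the paper's central lemma in its appendix: for data induced by \emph{any} $\lambda^*$ of this form, the feasible set of the LP~\eqref{prob:robust_simplified} over tuple-level distributions is a single point. The argument uses only the structural identities $v_{n,0}=0$ and $v_{j+1,j}=v_{n,j}$, both of which your data satisfy; these force every feasible tuple-distribution to be supported on the same $n$ tuples, after which the marginals pin down the weights. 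So there is no redistribution of mass across signature classes --- the only adversarial freedom is within each tuple class, and that is exactly what $\rho_{i_1\cdots i_n}(S)$ already minimizes. Your closed form $p_n r_n+\sum_{t\in T}p_t r_t+\epsilon\sum_{j\in\bar T}\rho_j(\bar S)$ is therefore the \emph{exact} worst-case revenue, and your last paragraph's perturbation analysis can be replaced by a direct computation.

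Your $\epsilon$-parameterization, however, is not a mere convenience. The paper asserts that $\rho_{\bar{i}^j_1\cdots\bar{i}^j_n}(S)=r_n$ whenever $j\notin S$ and $n\in S$, but the correct value is your $\min_{k\in S,\,k>j}r_k$, which falls below $r_n$ whenever $S$ contains a product strictly between $j$ and $n$. With uniform weights this gap actually breaks the paper's final optimality step: for $n=3$, $\bar S=\{0,2,3\}$, and revenues with $r_3>2r_2$, one gets worst-case revenue $\tfrac{1}{3}(0+r_3+r_3)$ for $\{0,3\}$ versus $\tfrac{1}{3}(r_2+r_2+r_3)$ for $\bar S$, so $\bar S$ is not optimal under the paper's data. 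Your device repairs this cleanly. For any $S'=\{0,n\}\cup T'$ with $T'\neq T$: if $T\not\subseteq T'$, the $\ubar{i}^t$-terms lose at least $\alpha r_1$ while the $\bar{i}^j$-terms shift by at most $|\bar T|\epsilon r_n$, so $\bar S$ wins once $\epsilon$ is small enough; if $T\subseteq T'$, then every $\bar{i}^j$-term weakly decreases and the one for any added $j\in T'\setminus T$ strictly decreases, so $\bar S$ wins outright. Keep the $\epsilon$, invoke the singleton property to eliminate the LP perturbation argument, and run this two-case comparison directly.
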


In summary, we have shown in the present \S\ref{sec:characterization:reverse} that there exist examples in which $|\widehat{\mathcal{S}}|$  grows exponentially in the number of past assortments and in which $\widehat{\mathcal{S}}$ is an accurate description of the range of possible optimal solutions for the robust optimization problem~\eqref{prob:robust}.  Therefore, unless the  firm's past assortments have a special structure such as revenue-ordered assortments (see Lemma~\ref{lem:impossibility}), we conclude that solving \eqref{prob:robust} via an 
exhaustive search over the assortments in $\widehat{\mathcal{S}}$ will be computationally prohibitive when the number of past assortments is large. For such settings, we show in \S\ref{sec:algorithms:nested} that our proof techniques from the following \S\ref{sec:characterization:proof} can nonetheless be extended  to develop compact mixed-integer optimization reformulations of \eqref{prob:robust} when the collection of past assortments enjoys a common structure.

\subsection{Overview of Proof of Theorem~\ref{thm:main}} \label{sec:characterization:proof}
Our proof of Theorem~\ref{thm:main} is split into two steps. 
 In the first step of the proof of Theorem~\ref{thm:main}, which is formalized below as Proposition~\ref{prop:reform_wc}, we show that computing the worst-case expected revenue $\min_{\lambda \in \mathcal{U}} \mathscr{R}^\lambda(S)$ corresponding  to any fixed assortment $S \in \mathcal{S}$ can be reformulated as a linear optimization problem with $\mathcal{O}(n^M)$ decision variables and $\mathcal{O}(nM)$ constraints.  Indeed, for each assortment $S \in \mathcal{S}$ and each product in the assortment $i \in S$, let the set of rankings that prefer product $i$ to all other products in the assortment $S$ be defined as follows:
\begin{definition}\label{defn:D}
$\mathcal{D}_i(S) \triangleq \left \{ \sigma \in \Sigma: \;  i = \argmin_{j \in S} \sigma (j) \right \}$.
\end{definition} 
\noindent Given the past assortments $S_1,\ldots,S_M$, we also define the following set of tuples of products:
\begin{definition} \label{defn:L}
$\mathcal{L} \triangleq \left \{(i_1,\ldots,i_M) \in S_1 \times \cdots \times S_M: \; \bigcap_{m \in \mathcal{M}} \mathcal{D}_{i_m}(S_m) \neq \emptyset \right \}$. 
\end{definition}
We readily observe from the above definitions that a tuple of products satisfies $(i_1,\ldots,i_M) \in \mathcal{L}$  if and only if there exists a ranking  $\sigma \in \Sigma$ for which $i_1$ is the most preferred product under that ranking in assortment $S_1$, $i_2$ is the most preferred product under that ranking in assortment $S_2$,  and so on and so forth.
 For convenience, we henceforth say that a ranking $\sigma \in \Sigma$ corresponds to a tuple $(i_1,\ldots,i_M) \in \mathcal{L}$ if and only if $\sigma \in \bigcap_{m \in \mathcal{M}} \mathcal{D}_{i_m}(S_m)$.  
 Finally, for each  assortment $S \in \mathcal{S}$ and each tuple of products $(i_1,\ldots,i_M) \in \mathcal{L}$, we let $\rho_{i_1 \cdots i_M}(S)$ be defined as   the minimum revenue among the products in the assortment $S $ that can be the most preferred product in $S$ under a ranking that corresponds to the tuple $(i_1,\ldots,i_M)$. 
\begin{definition} \label{defn:rho}
$\rho_{i_1 \cdots i_M}(S) \triangleq \min \limits_{i \in S: \; \cap_{m \in \mathcal{M}} \mathcal{D}_{i_m}(S_m) \cap \mathcal{D}_i(S) \neq \emptyset} r_i$. 
\end{definition}
 Equipped with the above notation, the following result shows that the worst-case expected revenue of any fixed assortment  can be reformulated as a linear  optimization problem: 
\begin{proposition} \label{prop:reform_wc}
 For each  $S \in \mathcal{S}$, $\min_{\lambda \in \mathcal{U}} \mathscr{R}^\lambda(S)$ is equal to the optimal objective value of the following linear optimization problem:
\begin{equation} 
\label{prob:robust_simplified}
 \begin{aligned}
& \; \underset{\lambda, \epsilon}{\textnormal{minimize}} && \sum_{(i_1,\ldots,i_M) \in \mathcal{L}} \rho_{i_1 \cdots i_M}(S) \lambda_{i_1\cdots i_M}\\
&\textnormal{subject to}&&  \sum_{(i_1,\ldots,i_M) \in \mathcal{L}: \; i_m = i} \lambda_{i_1 \cdots i_M} - \epsilon_{m,i}= v_{m,i} \quad \forall m \in \mathcal{M}, \; i \in S_m\\
&&& \sum_{(i_1,\ldots,i_M) \in \mathcal{L}} \lambda_{i_1 \cdots i_M} = 1 \\
&&& \| \epsilon \| \le \eta\\
&&& \lambda_{i_1 \cdots i_M} \ge 0 \quad \forall (i_1,\ldots,i_M) \in \mathcal{L}.
\end{aligned}
\end{equation}
\end{proposition}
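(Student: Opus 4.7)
}

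The plan is to show that the linear program \eqref{prob:robust_simplified} arises by aggregating the decision variable $\lambda \in \Delta_\Sigma$ in the definition of $\mathcal{U}$ according to the restriction map that records, for each ranking $\sigma$, its top choice in each past assortment. Concretely, every ranking $\sigma \in \Sigma$ induces a tuple $\phi(\sigma) = (i_1,\ldots,i_M)$ where $i_m = \arg\min_{j \in S_m} \sigma(j)$, and from the definitions we have $\phi(\sigma) \in \mathcal{L}$ automatically because $\sigma \in \bigcap_m \mathcal{D}_{i_m}(S_m)$. The partition of $\Sigma$ induced by $\phi$ is exactly what both the objective $\mathscr{R}^\lambda(S)$ and the moment constraints depend on; the proof will exploit this by aggregating along fibers of $\phi$.

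Going forward, I will prove the equality by showing both inequalities. For $\min_{\lambda \in \mathcal{U}} \mathscr{R}^\lambda(S) \ge $ opt\eqref{prob:robust_simplified}, fix any $\lambda \in \mathcal{U}$ and define the aggregated weights $\tilde\lambda_{i_1\cdots i_M} \triangleq \sum_{\sigma \in \bigcap_m \mathcal{D}_{i_m}(S_m)} \lambda_\sigma$ for each $(i_1,\ldots,i_M) \in \mathcal{L}$, together with $\tilde\epsilon_{m,i} = \mathscr{D}^\lambda_i(S_m) - v_{m,i}$. The moment constraints $\sum_{(i_1,\ldots,i_M) : i_m = i} \tilde\lambda_{i_1\cdots i_M} = \mathscr{D}^\lambda_i(S_m)$ and $\sum \tilde\lambda_{i_1\cdots i_M} = 1$ follow directly from the definition of $\mathcal{D}_i(S_m)$ and from $\lambda \in \Delta_\Sigma$, and $\|\tilde\epsilon\|\le \eta$ follows from feasibility of $\lambda$ in $\mathcal{U}$. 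For the objective, the key observation is that for any $\sigma$ lying in $\bigcap_m \mathcal{D}_{i_m}(S_m)$, the top choice of $\sigma$ in $S$ is some $i \in S$ with $\bigcap_m \mathcal{D}_{i_m}(S_m) \cap \mathcal{D}_i(S) \neq \emptyset$, so $r_{i} \ge \rho_{i_1 \cdots i_M}(S)$ by Definition~\ref{defn:rho}. Summing gives $\mathscr{R}^\lambda(S) \ge \sum_{(i_1,\ldots,i_M) \in \mathcal{L}} \rho_{i_1 \cdots i_M}(S)\,\tilde\lambda_{i_1 \cdots i_M}$, which is an upper bound on opt\eqref{prob:robust_simplified}.

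For the reverse inequality, take any feasible $(\tilde\lambda, \tilde\epsilon)$ in \eqref{prob:robust_simplified} and disaggregate as follows: for each $(i_1,\ldots,i_M) \in \mathcal{L}$, select a single ranking $\sigma^\star_{i_1\cdots i_M} \in \bigcap_m \mathcal{D}_{i_m}(S_m) \cap \mathcal{D}_{i^\star}(S)$ whose top choice $i^\star$ in $S$ attains the minimum in Definition~\ref{defn:rho}; such a ranking exists precisely because $\rho$ is defined by a minimum over a non-empty finite set. Assign $\lambda_{\sigma^\star_{i_1\cdots i_M}} = \tilde\lambda_{i_1\cdots i_M}$ and $\lambda_\sigma = 0$ for all other $\sigma$. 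By construction this $\lambda$ lies in $\Delta_\Sigma$, satisfies $\mathscr{D}^\lambda_i(S_m) = \sum_{(i_1,\ldots,i_M): i_m = i}\tilde\lambda_{i_1 \cdots i_M} = v_{m,i} + \tilde\epsilon_{m,i}$ so that $\lambda \in \mathcal{U}$, and attains $\mathscr{R}^\lambda(S) = \sum_{(i_1,\ldots,i_M) \in \mathcal{L}} \rho_{i_1\cdots i_M}(S)\,\tilde\lambda_{i_1 \cdots i_M}$. Therefore $\min_{\lambda \in \mathcal{U}} \mathscr{R}^\lambda(S) \le$ opt\eqref{prob:robust_simplified}, completing the proof.

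The main conceptual obstacle is small but has to be handled carefully: one must verify that $\phi$ always lands in $\mathcal{L}$ (automatic, since $\sigma$ itself witnesses the non-emptiness), and that the minimum in Definition~\ref{defn:rho} is attained by a ranking that simultaneously belongs to $\bigcap_m \mathcal{D}_{i_m}(S_m)$ and to some $\mathcal{D}_i(S)$. The latter is guaranteed because $\{\mathcal{D}_i(S)\}_{i \in S}$ partitions $\Sigma$, so any ranking in $\bigcap_m \mathcal{D}_{i_m}(S_m)$ automatically picks out a unique $i \in S$ as its top choice in $S$. Once this bookkeeping is in place, the two-sided argument above is a direct aggregation-and-extreme-point construction typical of linear programming reformulations of moment-based ambiguity sets.
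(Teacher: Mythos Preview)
Your proof is correct and rests on the same core idea as the paper's: the partition of $\Sigma$ by the map $\phi(\sigma) = (\arg\min_{j\in S_1}\sigma(j),\ldots,\arg\min_{j\in S_M}\sigma(j))$, together with the observation that, within each fiber, the minimum achievable revenue under assortment $S$ is exactly $\rho_{i_1\cdots i_M}(S)$. The execution differs only in style: the paper proceeds through a chain of equivalent linear programs (introducing auxiliary variables $\omega_{i,i_1\cdots i_M}$ that record the top choice in $S$, then solving the inner minimization over $\omega$ in closed form), whereas you prove the two inequalities directly by aggregation and by an explicit extreme-point disaggregation that selects one witness ranking per tuple. Your route is slightly more elementary and constructive; the paper's route makes the LP structure a bit more visible. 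One small point worth making explicit in your write-up is that the representatives $\sigma^\star_{i_1\cdots i_M}$ are distinct across tuples (since the fibers $\bigcap_m \mathcal{D}_{i_m}(S_m)$ partition $\Sigma$), so the assignment $\lambda_{\sigma^\star_{i_1\cdots i_M}} = \tilde\lambda_{i_1\cdots i_M}$ is well-defined without collisions.
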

The decision variables $\lambda_{i_1 \cdots i_M} \ge 0$ in the linear optimization problem~\eqref{prob:robust_simplified} can be interpreted as the likelihood that a randomly-arriving customer will have preferences that follow a ranking that corresponds to the tuple $(i_1,\ldots,i_M) \in \mathcal{L}$. The following two remarks articulate the key properties of \eqref{prob:robust_simplified} that will be important to our developments in the  subsequent discussions.
\begin{remark} \label{remark:rho}
The assortment $S$ does not appear in any of the constraints of \eqref{prob:robust_simplified}. 
\end{remark}
\begin{remark}\label{remark:tractability}
 \eqref{prob:robust_simplified} has $| \mathcal{L}| = \mathcal{O}(n^M)$ decision variables and $\mathcal{O}(nM)$ constraints.
  \end{remark}
  We note that the asymptotic upper bound  in Remark~\ref{remark:tractability}  on the number of decision variables in the linear optimization problem~\eqref{prob:robust_simplified} follows immediately from the fact that the set of tuples of products $\mathcal{L}$ is a subset of $S_1 \times \cdots \times S_M$. 
  
The second step of the proof of Theorem~\ref{thm:main}, which is formalized below as Proposition~\ref{prop:plus_one_inequality_prop}, consists of analyzing the behavior of the functions  $S \mapsto \rho_{i_1 \cdots i_M}(S)$. To motivate this second step, let us reflect on the structure of the linear optimization problem from 
Proposition~\ref{prop:reform_wc}. We recall from Remark~\ref{remark:rho} that the set of feasible solutions of the linear optimization problem~\eqref{prob:robust_simplified} is independent of the assortment $S \in \mathcal{S}$. We thus observe that if two assortments $S, S' \in\mathcal{S}$ satisfy $\rho_{i_1 \cdots i_M}(S) \le \rho_{i_1 \cdots i_M}(S')$ for all tuples of products $(i_1,\ldots,i_M) \in \mathcal{L}$, then Proposition~\ref{prop:reform_wc} implies that the worst-case expected revenue of the first assortment, $\min_{\lambda \in \mathcal{U}} \mathscr{R}^\lambda(S),$ will be less than or equal to the worst-case expected revenue of the second assortment, $\min_{\lambda \in \mathcal{U}} \mathscr{R}^\lambda(S')$. Against this backdrop, the following Proposition~\ref{prop:plus_one_inequality_prop} readily implies that every assortment $S \in \mathcal{S}$  can be exchanged for an assortment $S' \in \widehat{\mathcal{S}}$ with the same or better worst-case expected revenue.
\begin{proposition}\label{prop:plus_one_inequality_prop}
Let $S \in \mathcal{S}$ and $i \notin S$. If there exists  $i^* \in S$ that satisfies $r_{i^*} < r_i$ and $\mathcal{M}_{i^*} \subseteq \mathcal{M}_i$, then $\rho_{i_1 \cdots i_M}(S) \le \rho_{i_1 \cdots i_M}(S \cup \{i\})$ for each $(i_1,\ldots,i_M) \in \mathcal{L}$. 
\end{proposition}

Our proof of Proposition~\ref{prop:plus_one_inequality_prop} consists of developing a graphical interpretation of the quantities $\rho_{i_1 \cdots i_M}(S)$. Specifically, we show in Appendix~\ref{appx:graphical}  that every tuple of products $(i_1,\ldots,i_M) \in \mathcal{L}$ can be represented as a directed acyclic graph in which there is a vertex for each product $i \in \mathcal{N}_0$, and we show that $\rho_{i_1 \cdots i_M}(S)$ can be computed by searching over the products in the directed acyclic graph corresponding to the tuple  $(i_1,\ldots,i_M) \in \mathcal{L}$ that do not have a directed path to any vertices corresponding to the products in the assortment $S$. We then exploit this graphical  interpretation of $\rho_{i_1 \cdots i_M}(S)$ in Appendix~\ref{appx:prop2} to relate the quantities $\rho_{i_1 \cdots i_M}(S)$ and $\rho_{i_1 \cdots i_M}(S \cup \{i\})$ and complete the proof of  Proposition~\ref{prop:plus_one_inequality_prop}.  The proof of Theorem~\ref{thm:main}, which follows readily from Propositions~\ref{prop:reform_wc} and \ref{prop:plus_one_inequality_prop},  is found in Appendix~\ref{appx:proof_main}.

\section{Algorithms} \label{sec:algorithms}
In this section, we use our developments from \S\ref{sec:characterization}  to develop the first polynomial-time algorithms and the first compact mixed-integer  optimization reformulations for the robust optimization problem~\eqref{prob:robust}.

\begin{remark} \label{rem:N_0}
 We assume throughout \S\ref{sec:algorithms} that every product $i \in \mathcal{N}_0$ has been offered by the firm to its customers in at least one of the past assortments, that is,  $\cup_{m \in \mathcal{M}} S_m = \mathcal{N}_0$. This assumption simplifies our notation and can always be satisfied by redefining $\mathcal{N}_0$ to be equal to $\cup_{m \in \mathcal{M}} S_m$. Such a redefining of $\mathcal{N}_0$ does not impact the optimal objective value of the robust optimization problem~\eqref{prob:robust}; see Appendix~\ref{appx:N_0_assumption} for further details.  
 \end{remark}

\subsection{Polynomial-Time Algorithm for Two Past Assortments} \label{sec:algorithm:twoassortments}
We begin by leveraging Theorem~\ref{thm:main} to develop the first algorithm for solving the robust  optimization problem~\eqref{prob:robust} with running time that is strongly polynomial in the number of products $n$ for two past assortments. Our development of an algorithm for the case of $M = 2$ is  important because it marks  the first step towards  the development of efficient \emph{general} algorithms for solving the robust optimization problem~\eqref{prob:robust} in settings with large numbers of products. 
 We will show that the algorithms from this subsection extend to general settings with any fixed number of past assortments in the following \S\ref{sec:algorithms:fixed_dim}.  In Appendix~\ref{sec:twoassortments:numerics}, we provide numerical experiments showing that our algorithm for two past assortments can be practically efficient, running in less than 30 seconds for problem instances with $n=100$ products.

At a high level, our algorithm for solving the robust optimization problem~\eqref{prob:robust} in the case of $M = 2$ consists of reducing \eqref{prob:robust} to solving a sequence of minimum-cost network flow problems. 
Stated formally, the main contribution of \S\ref{sec:algorithm:twoassortments} is the following: 
\begin{theorem} \label{thm:two}
If $M = 2$ and $\eta = 0$, then \eqref{prob:robust} can be solved in $\mathcal{O}(n^5 \log (n r_n))$  time.
\end{theorem}
In the above theorem and throughout the rest of \S\ref{sec:algorithm:twoassortments}, we assume that the revenues $r_1,\ldots,r_n $ are represented as positive integers. We also assume without any loss of generality  
that $n \in S_1 \cap S_2$.\footnote{To see why the assumption that $n \in S_1 \cap S_2$ can be made without loss of generality, suppose for the sake of argument that the product $n$ is not contained in $S_1 \cap S_2$. In that case, we can create a fictitious product with index $n+1$ that is associated with any arbitrary revenue in the range $r_{n+1} = (r_n,\infty)$, and we can augment the historical sales data to be $v_1' \triangleq (v_1,0) \in \R^{n+2}_+$ and $v_2' \triangleq (v_2,0) \in \R^{n+2}_+$. It is straightforward  to see that any feasible solution to \eqref{prob:robust_simplified} with the augmented historical sales data will satisfy $\lambda_{n+1,i_2} = \lambda_{i_1,n+1} = 0$ for all $i_1 \in S_1$ and $i_2 \in S_2$ that satisfy $(n+1,i_2) \in \mathcal{L}$ and $(i_1,n+1) \in \mathcal{L}$. This implies that the optimal objective value of \eqref{prob:robust_simplified} will be unchanged using the augmented historical sales data for all assortments $S \in \mathcal{S}$. Hence, we have shown that we can assume without loss of generality that the past assortments satisfy $n \in S_1 \cap S_2$. } 

To prove Theorem~\ref{thm:two}, and to develop an algorithm with the desired computation time, we utilize three intermediary results. The first intermediary result is Lemma~\ref{lem:two:S} from \S\ref{sec:characterization:upperbound}. In that lemma, we developed a closed-form representation of the collection of assortments $\widehat{\mathcal{S}}$. That  representation is useful in the proof of Theorem~\ref{thm:two} because it provides an efficient procedure for iterating over the assortments in $\widehat{\mathcal{S}}$ and established that $| \widehat{\mathcal{S}}| = \mathcal{O}(n^2)$. In our second intermediary result, denoted by Lemma~\ref{lem:two:L}, we develop a closed-form representation of the set of tuples of products $\mathcal{L}$. The following representation of $\mathcal{L}$ will be useful in the proof of Theorem~\ref{thm:two} because it will allow us to show that the worst-case expected revenue $\min_{\lambda \in \mathcal{U}} \mathscr{R}^{\lambda}(S)$ for each assortment $S \in \mathcal{S}$ can be computed by solving a minimum-cost network flow problem. 
\begin{lemma} \label{lem:two:L}
If $M = 2$, then
$\mathcal{L} = \left((S_1 \setminus S_2) \times S_2 \right) \cup \left( S_1 \times (S_2 \setminus S_1) \right) \cup  \left\{(i,i): i \in S_1 \cap S_2 \right \}$. 
\end{lemma}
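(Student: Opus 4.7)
The plan is to prove Lemma~\ref{lem:two:L} via a double inclusion, working from the definition $\mathcal{L} = \{(i_1,i_2) \in S_1 \times S_2: \mathcal{D}_{i_1}(S_1) \cap \mathcal{D}_{i_2}(S_2) \neq \emptyset\}$. The key organizational device is to partition $S_1 \times S_2$ into four disjoint pieces determined by whether each coordinate lies in $S_1 \cap S_2$ or in its respective set difference:
\begin{equation*}
S_1 \times S_2 \;=\; \bigl[(S_1\cap S_2)\times(S_1\cap S_2)\bigr] \,\cup\, \bigl[(S_1\cap S_2)\times(S_2\setminus S_1)\bigr] \,\cup\, \bigl[(S_1\setminus S_2)\times(S_1\cap S_2)\bigr] \,\cup\, \bigl[(S_1\setminus S_2)\times(S_2\setminus S_1)\bigr].
\end{equation*}
I will determine membership in $\mathcal{L}$ piece by piece, then observe that the result collapses into the three-set union in the lemma.

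For the inclusion $\mathcal{L} \supseteq \text{RHS}$, I will exhibit, for each tuple on the RHS, an explicit ranking $\sigma \in \mathcal{D}_{i_1}(S_1) \cap \mathcal{D}_{i_2}(S_2)$. For the diagonal case $i_1=i_2=i \in S_1 \cap S_2$, I place $i$ at the top of $\sigma$ and let the rest of the ranking be arbitrary; then $i$ is trivially the minimum of $\sigma$ over both $S_1$ and $S_2$. For the off-diagonal case, I need $i_1 \ne i_2$ with at least one of them outside the other assortment; say $i_1 \in S_1 \setminus S_2$ and $i_2 \in S_2$ (the symmetric case is analogous). I will build $\sigma$ by placing $i_1$ first, $i_2$ second, and then the remaining products in arbitrary order. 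Since $i_1 \notin S_2$, the minimum of $\sigma$ over $S_2$ is attained at $i_2$, while $i_1$ is the global minimum and hence the minimum over $S_1$.

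For the inclusion $\mathcal{L} \subseteq \text{RHS}$, the only piece of the partition that needs nontrivial work is $(S_1 \cap S_2) \times (S_1 \cap S_2)$: I will show that here membership in $\mathcal{L}$ forces $i_1 = i_2$. Suppose $\sigma \in \mathcal{D}_{i_1}(S_1) \cap \mathcal{D}_{i_2}(S_2)$ with $i_1,i_2 \in S_1 \cap S_2$. Since $i_2 \in S_1$ and $i_1 = \argmin_{j \in S_1}\sigma(j)$, we have $\sigma(i_1) \le \sigma(i_2)$; symmetrically $\sigma(i_2) \le \sigma(i_1)$. Injectivity of $\sigma$ then forces $i_1 = i_2$. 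The other three pieces of the partition are contained in the RHS by inspection: a tuple in $(S_1\setminus S_2) \times (S_1\cap S_2)$ or $(S_1\setminus S_2)\times(S_2\setminus S_1)$ lies in $(S_1\setminus S_2)\times S_2$, and a tuple in $(S_1\cap S_2)\times(S_2\setminus S_1)$ lies in $S_1 \times (S_2\setminus S_1)$.

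I do not anticipate any serious obstacle; the only thing to be careful about is the argument that $i_1=i_2$ in the diagonal case, which rests on the fact that $i_1,i_2$ belong to \emph{both} assortments simultaneously so that the two top-of-assortment conditions yield mutually contradictory strict orderings unless the indices coincide. Combining the two inclusions and recognizing the two unions
\begin{equation*}
(S_1\setminus S_2) \times S_2 \;=\; \bigl[(S_1\setminus S_2)\times(S_1\cap S_2)\bigr] \cup \bigl[(S_1\setminus S_2)\times(S_2\setminus S_1)\bigr],
\end{equation*}
\begin{equation*}
S_1 \times (S_2\setminus S_1) \;=\; \bigl[(S_1\cap S_2)\times(S_2\setminus S_1)\bigr] \cup \bigl[(S_1\setminus S_2)\times(S_2\setminus S_1)\bigr],
\end{equation*}
will yield the stated closed form for $\mathcal{L}$.
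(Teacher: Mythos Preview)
Your proposal is correct and follows essentially the same approach as the paper: both arguments partition $S_1 \times S_2$ according to whether each coordinate lies in $S_1 \cap S_2$, construct an explicit witness ranking by placing $i_1$ first and $i_2$ second in the off-diagonal cases, and use the mutual inequalities $\sigma(i_1)\le\sigma(i_2)$ and $\sigma(i_2)\le\sigma(i_1)$ to force $i_1=i_2$ on the diagonal piece. The only cosmetic difference is that you write out the four-piece partition explicitly and then recombine, whereas the paper groups into three cases from the start.
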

 In our third and final intermediary result, denoted by Lemma~\ref{lem:two:flow}, we show that the worst-case expected revenue $ \min_{\lambda \in \mathcal{U}}\mathscr{R}^{\lambda}(S)$  for each assortment $S \in {\mathcal{S}}$ can be computed by solving a minimum-cost network flow problem. 
 \begin{lemma} \label{lem:two:flow}
If $M = 2$ and $\eta = 0$, then $\min_{\lambda \in \mathcal{U}} \mathscr{R}^\lambda(S)$ can be evaluated for each assortment $S \in \mathcal{S}$ by solving a minimum-cost network flow problem in a directed acyclic graph with $ \mathcal{O}(n)$ vertices and $\mathcal{O}(n^2)$ directed edges.

 \end{lemma}
 
Using the above three intermediary results, we now describe our algorithm for solving the robust optimization problem~\eqref{prob:robust}, which  follows a brute-force strategy. Namely, our algorithm  iterates over each of the assortments $S \in \widehat{\mathcal{S}}$, and, for each such assortment, the algorithm computes the corresponding worst-case expected revenue $\min_{\lambda \in \mathcal{U}}\mathscr{R}^{\lambda}(S)$  by solving a minimum-cost network flow problem. The algorithm concludes by returning the maximum value of $\min_{\lambda \in \mathcal{U}}\mathscr{R}^{\lambda}(S)$ across the assortments $S \in \widehat{\mathcal{S}}$. The correctness of this algorithm for solving the robust optimization problem~\eqref{prob:robust} follows immediately from Theorem~\ref{thm:main}. The proof that this algorithm achieves the desired running time of Theorem~\ref{thm:two} can be found in Appendix~\ref{appx:thm:two}.

\subsection{Polynomial-Time Algorithm for Fixed Number of Past Assortments} \label{sec:algorithms:fixed_dim}
We next develop a polynomial-time algorithm for solving the robust optimization problem~\eqref{prob:robust} 
for any fixed number of past assortments $M$. 
The algorithm developed in this subsection is particularly attractive due to its generality: it does not require any assumptions on the composition of products in the past assortments. Moreover, the algorithm allows for the radius $\eta$ in the set of ranking-based choice models to be strictly positive (see \S\ref{sec:setting}); hence, the algorithm can be applied in settings in which there are no ranking-based choice models that perfectly fit  the firm's historical sales data.

Our polynomial-time algorithm for solving the robust optimization problem~\eqref{prob:robust} for any fixed number of past assortments  follows the same strategy as developed in \S\ref{sec:algorithm:twoassortments}. Specifically, our algorithm reduces the  robust optimization problem~\eqref{prob:robust} to computing the worst-case expected revenue $\min_{\lambda \in \mathcal{U}} \mathscr{R}^\lambda(S)$ for each candidate assortment $S \in  \widehat{\mathcal{S}}$ and outputting the assortment that has the maximum worst-case expected revenue. 
The correctness of such a brute-force algorithm for solving the robust optimization problem \eqref{prob:robust}  follows immediately from Theorem~\ref{thm:main}.  In the remainder of \S\ref{sec:algorithms:fixed_dim}, we  prove that there exists an implementation of the aforementioned brute-force algorithm which yields the following theoretical guarantee on its computational tractability: 
\begin{theorem} \label{thm:poly}
\eqref{prob:robust} can be solved in weakly $\mathcal{O}(\textnormal{poly}(n))$  time for every fixed $M$.
\end{theorem}
The above theorem holds for any choice of the radius $\eta \ge 0$ in the set of ranking-based choice models and any composition of products in the past assortments. We note that our algorithm is guaranteed to run in weakly, as opposed to strongly,  polynomial-time due to its reduction to solving linear optimization problems. 

Our proof of Theorem~\ref{thm:poly} makes use of several intermediary results. First, we recall from Lemmas~\ref{lem:fixed_dim:S} and \ref{lem:fixed_dim:S:time} in  \S\ref{sec:characterization:upperbound} that if the number of past assortments is fixed, then  the collection of assortments $\widehat{\mathcal{S}}$ contains $ \mathcal{O}(\text{poly}(n))$ assortments and can be constructed in $\mathcal{O}(\text{poly}(n))$ time.  Therefore, all that remains is showing that the worst-case expected revenue $ \min_{\lambda \in \mathcal{U}}\mathscr{R}^{\lambda}(S)$ can be computed for each assortment $S \in \widehat{\mathcal{S}}$ in weakly polynomial-time for every fixed number of past assortments $M$. The following Lemmas~\ref{lem:fixed_dim:construct_L} and \ref{lem:fixed_dim:construct_rho}, together with Proposition~\ref{prop:reform_wc} from \S\ref{sec:characterization:proof}, show for every fixed $M \ge 2$ and assortment $S \in \widehat{\mathcal{S}}$ that the worst-case expected revenue  $ \min_{\lambda \in \mathcal{U}}\mathscr{R}^{\lambda}(S)$ can be computed by solving a linear optimization problem that can be constructed in polynomial-time and has a polynomial number of decision variables and constraints. 

\begin{lemma} \label{lem:fixed_dim:construct_L}
 $|\mathcal{L}| = \mathcal{O}(n^M)$, and $\mathcal{L}$ can be constructed in $\mathcal{O}(Mn^{M+1})$ time. 
\end{lemma}
\begin{lemma} \label{lem:fixed_dim:construct_rho}
 For each assortment $S \in \widehat{\mathcal{S}}$ and  each tuple of products $(i_1,\ldots,i_M) \in \mathcal{L}$, the quantity $\rho_{i_1 \cdots i_M}(S)$ can be computed in $\mathcal{O}(M^2 n)$  time. 
\end{lemma}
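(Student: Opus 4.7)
The plan is to establish a graphical characterization of $\rho_{i_1 \cdots i_M}(S)$ and then exploit a structural property of the underlying graph to carry out the computation efficiently. Specifically, to each tuple $(i_1,\ldots,i_M) \in \mathcal{L}$ I would associate the directed acyclic graph $G$ on vertex set $\mathcal{N}_0$ with edge set $\{(i_m, j) : m \in \mathcal{M}, \ j \in S_m \setminus \{i_m\}\}$, where an edge $(a,b)$ encodes the preference constraint $\sigma(a) < \sigma(b)$ that arises from the requirement that $i_m$ be the most preferred product in $S_m$. Under this encoding, a standard linear-extension argument shows that a product $i \in S$ satisfies $\bigcap_{m \in \mathcal{M}} \mathcal{D}_{i_m}(S_m) \cap \mathcal{D}_i(S) \neq \emptyset$ if and only if no $j \in S \setminus \{i\}$ has a directed path to $i$ in $G$; call such an $i$ \emph{top-able}. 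Then $\rho_{i_1 \cdots i_M}(S) = \min\{r_i : i \in S \text{ and } i \text{ is top-able}\}$.

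The key structural observation enabling an $\mathcal{O}(M^2 n)$ computation is that only the vertices in $A \triangleq \{i_1,\ldots,i_M\}$ possess outgoing edges in $G$, so $|A| \leq M$ and every other vertex is a sink. Consequently, any $j \in S \setminus \{i\}$ that has a path to $i$ must itself lie in $A$, and so $i \in S$ is top-able if and only if no $j \in (A \cap S) \setminus \{i\}$ reaches $i$ in $G$. This reduces the task from checking reachability for all pairs in $S \times S$ to checking reachability from only the at most $M$ source vertices in $A$.

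With the characterization in hand, the algorithm proceeds as follows. Build $G$ and the set $A$ in $\mathcal{O}(Mn)$ time, as $|V(G)| = n+1$ and $|E(G)| \leq \sum_m (|S_m|-1) = \mathcal{O}(Mn)$. Next, for each $v \in A$ run a single breadth-first search from $v$ in $G$, producing a reachability indicator vector $T(v) \in \{0,1\}^{\mathcal{N}_0}$ with $T(v)_w = 1$ iff $v$ reaches $w$; each BFS costs $\mathcal{O}(n + Mn) = \mathcal{O}(Mn)$, and the $|A| \leq M$ searches together cost $\mathcal{O}(M^2 n)$. Then, for each $i \in S$, test top-ability by scanning the at most $M$ indices $j \in (A \cap S) \setminus \{i\}$ and querying $T(j)_i$ in constant time, for $\mathcal{O}(Mn)$ in total. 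Finally, return the minimum $r_i$ over top-able $i$, which takes $\mathcal{O}(n)$. The total running time is dominated by the $|A|$ reachability searches and is therefore $\mathcal{O}(M^2 n)$.

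The principal obstacle, which must be dispatched before the accounting above can proceed, is rigorously justifying the reduction of $\rho_{i_1 \cdots i_M}(S)$ to the reachability question on $G$. This is exactly the content of Proposition~\ref{prop:cost_reform} in Appendix~\ref{appx:graphical}, which the present proof would invoke; once that graphical reformulation is in hand, the remainder is a careful but routine accounting that leverages the sparsity property $|A| \leq M$ to convert what would otherwise be an all-pairs reachability problem into $M$ single-source BFS computations.
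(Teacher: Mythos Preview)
Your proposal is correct and follows essentially the same approach as the paper: invoke the graphical reformulation (Proposition~\ref{prop:cost_reform}), exploit that only the at most $M$ vertices $\{i_1,\ldots,i_M\}$ carry nontrivial reachability information, and perform $M$ single-source traversals at $\mathcal{O}(Mn)$ each. The only cosmetic difference is that the paper orients the edges of $\mathcal{G}_{i_1\cdots i_M}$ \emph{into} the vertices $i_m$ (so those are the only vertices with incoming edges) and runs backward searches from each $i_m\in S$, whereas you reverse the orientation and run forward BFS; the two are equivalent.
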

Combining the above intermediary lemmas  with Lemmas~\ref{lem:fixed_dim:S} and \ref{lem:fixed_dim:S:time} from   \S\ref{sec:characterization:upperbound} and Proposition~\ref{prop:reform_wc} from \S\ref{sec:characterization:proof}, we obtain an algorithm that satisfies the running time requirements of Theorem~\ref{thm:poly}. The detailed description of the algorithm and the proof of its running time can be found in Appendix~\ref{appx:thm:poly}.

 \subsection{Mixed-Integer Optimization Reformulation} \label{sec:algorithms:nested}
  In  \S\ref{sec:algorithm:twoassortments} and \S\ref{sec:algorithms:fixed_dim}, we developed algorithms  for solving the robust optimization problem~\eqref{prob:robust} that are theoretically efficient in applications in which the number of products $n$ is large and the number of past assortments $M$ is small. The crux of those algorithms was the observation that solving the robust optimization problem~\eqref{prob:robust} can be reduced to an exhaustive search over the collection of assortments $\widehat{\mathcal{S}}$. However, we showed in Theorem~\ref{thm:not_improveable} from \S\ref{sec:characterization:reverse} that the cardinality of $\widehat{\mathcal{S}}$ can grow exponentially in the number of past assortments, thereby preventing the algorithm from  \S\ref{sec:algorithms:fixed_dim} from being computationally efficient in general in applications with large numbers of past assortments.   In this subsection, we show nonetheless that our technical developments from \S\ref{sec:characterization:proof}  can also be used to develop practical algorithms for solving the robust optimization problem in applications in which the number of past assortments and number of products are both reasonably large.

   Our discussion in the present subsection focuses on applications in which the collection of past assortments has a widely-studied structure called \emph{nested}. Formally, the past assortments are nested if and only if there is an ordering of the past assortments such as that each assortment is a subset of the subsequent assortment: 
  \begin{assumption}[Nested]\label{ass:nested} 
The past assortments $\mathscr{M} \equiv \{S_1,\ldots,S_M\} \subseteq \mathcal{N}_0$ satisfy $S_1 \subset \cdots \subset S_M$. 
\end{assumption}
Applications in which historical sales data is obtained from nested past assortments are ``common in retail and revenue management settings"  \citep[p.2202]{jagabathula2019limit}. For example, the past assortments will be nested if a retailer initially offers a large assortment of products to its customers and does not replenish inventory, in which the case the initial assortment will dwindle to smaller and smaller assortments as products run out of inventory. Nested past assortments also arise as new products become available in the marketplace, as well as in online retail as customers scroll through products across multiple pages of a website \citep{davis2015assortment}. 
Finally,  nested assortments are a generalization of revenue-ordered assortments, and thus arise in any situations in which a firm has  only offered revenue-ordered assortments.

Our main contribution of the present subsection is showing that our technical developments from \S\ref{sec:characterization:proof} can be used to obtain a compact mixed-integer  optimization reformulation of the robust optimization problem~\eqref{prob:robust} when the past assortments are nested. This reformulation can offer significant value compared to the algorithm from \S\ref{sec:algorithms:fixed_dim} when the number of past assortments as large; indeed, the number of decision variables and constraints of the mixed-integer  optimization reformulation scale as a polynomial in the number of past assortments as well as the number of products. As a result, we show through numerical experiments in Appendix~\ref{appx:numerical:nested} that our compact mixed-integer optimization reformulation can solve the robust optimization problem~\eqref{prob:robust} in less than 30 seconds in applications with as many as $M = 20$ past assortments.
 More generally,  this subsection lays a roadmap for combining the technical developments behind Theorem~\ref{thm:main}  in \S\ref{sec:characterization:proof} with  structural properties of a collection of  past assortments to develop practical algorithms for solving  the robust optimization problem~\eqref{prob:robust} when $| \widehat{\mathcal{S}}|$ is large.

To motivate the construction of our mixed-integer optimization reformulation of the robust optimization problem~\eqref{prob:robust}, 
we begin by showing for the case of nested past assortments that the linear optimization problem~\eqref{prob:robust_simplified} has a number of decision variables that grows exponentially in the number of past assortments. Indeed, for each $m \in \mathcal{M}$, let the set of products that appear in the past assortment $S_m$ and do not appear in  any preceding past assortments be denoted by 
\begin{align}
\mathcal{B}_m \triangleq S_{m} \setminus \left( \bigcup_{m'=1}^{m-1} S_{m'} \right) = \begin{cases}
S_1,&\text{if } m = 1,\\
S_m \setminus S_{m-1},&\text{otherwise},
\end{cases} \label{defn:B_m}
\end{align}
where the right-most equality follows from algebra and from the assumption in \S\ref{sec:algorithms:nested} that the past assortments are nested. We observe from the above definition that the sets $\mathcal{B}_1,\ldots,\mathcal{B}_M$ are disjoint.
   In view of this notation,  the following lemma and its corollary   show that the number of decision variables in the linear optimization problem~\eqref{prob:robust_simplified}  grows exponentially with respect to the number of past assortments $M$ when the past assortments are nested. 
\begin{lemma} \label{lem:reform_of_L} 
If Assumption~\ref{ass:nested} holds, then
$$\mathcal{L} = \left \{(i_1,\ldots,i_M) \in S_1 \times \cdots \times S_M:\textnormal{ for each }m \in \{1,\ldots,M-1\},  \;  i_{m+1} \in \{i_m \} \cup \mathcal{B}_{m+1}  \right \}.$$
\end{lemma}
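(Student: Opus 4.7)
The plan is to prove both inclusions directly from the definitions of $\mathcal{L}$ and $\mathcal{D}_i(S)$. For the forward inclusion, suppose $(i_1,\ldots,i_M) \in \mathcal{L}$, so there exists a ranking $\sigma \in \Sigma$ with $i_m = \argmin_{j \in S_m} \sigma(j)$ for every $m$. Fix $m \in \{1,\ldots,M-1\}$. By Assumption~\ref{ass:nested}, $i_m \in S_m \subseteq S_{m+1}$, so the minimality of $i_{m+1}$ in $S_{m+1}$ gives $\sigma(i_{m+1}) \le \sigma(i_m)$. Now split into two cases: if $i_{m+1} \in S_m$, then the minimality of $i_m$ in $S_m$ gives $\sigma(i_m) \le \sigma(i_{m+1})$, and since $\sigma$ is injective this forces $i_{m+1} = i_m$; otherwise $i_{m+1} \in S_{m+1} \setminus S_m = \mathcal{B}_{m+1}$ by \eqref{defn:B_m}. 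Either way, $i_{m+1} \in \{i_m\} \cup \mathcal{B}_{m+1}$.

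For the reverse inclusion, the plan is to explicitly construct a witness ranking. Take a tuple $(i_1,\ldots,i_M) \in S_1 \times \cdots \times S_M$ satisfying the stepwise condition, and let $j_1, j_2, \ldots, j_K$ denote the distinct values in the sequence $i_1, \ldots, i_M$ listed in order of first appearance, so $j_1 = i_1$. For each $k \ge 2$, let $m_k$ be the first index with $i_{m_k} = j_k$; the stepwise condition then forces $i_{m_k} \in \mathcal{B}_{m_k}$, i.e.\ $j_k \in \mathcal{B}_{m_k}$, while $j_1 \in \mathcal{B}_1 = S_1$ trivially. Define the ranking $\sigma$ by $\sigma(j_k) = K - k$ for $k = 1,\ldots,K$, and assign the values $K, K+1, \ldots, n$ in any order to the remaining products in $\{0,1,\ldots,n\} \setminus \{j_1,\ldots,j_K\}$.

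It remains to verify that $i_m = \argmin_{j \in S_m} \sigma(j)$ for every $m$. Given $m$, let $k(m)$ be the largest $k$ with $m_k \le m$, so that by construction $i_m = j_{k(m)}$. The key bookkeeping step is to show that $S_m \cap \{j_1,\ldots,j_K\} = \{j_1,\ldots,j_{k(m)}\}$: for $l \le k(m)$ we have $j_l \in S_{m_l} \subseteq S_m$ by nesting, while for $l > k(m)$ we have $m_l > m$, and since $j_l \in \mathcal{B}_{m_l}$ implies $j_l \notin S_{m_l - 1} \supseteq S_m$, we conclude $j_l \notin S_m$. Consequently, every $j \in S_m$ with $j \neq i_m$ satisfies $\sigma(j) > K - k(m) = \sigma(i_m)$, which establishes the required minimality. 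I expect the main (though modest) obstacle to be this last bookkeeping step, which depends crucially on the fact that $\mathcal{B}_{m_l}$ is disjoint from every preceding $S_{m'}$, a property that is itself a consequence of the nesting assumption.
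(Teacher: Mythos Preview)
Your proof is correct. The paper's own proof takes a different route: it invokes the graph characterization from Appendix~\ref{appx:graphical} (Lemma~\ref{lem:dag}), which says $(i_1,\ldots,i_M)\in\mathcal{L}$ if and only if the directed graph $\mathcal{G}_{i_1\cdots i_M}$ is acyclic. For the forward inclusion the paper argues by contraposition that if $i_{m+1}\notin\{i_m\}\cup\mathcal{B}_{m+1}$ then $i_{m+1}\in S_m\setminus\{i_m\}$ and $i_m\in S_{m+1}\setminus\{i_{m+1}\}$, producing a $2$-cycle between $i_m$ and $i_{m+1}$; for the reverse inclusion it identifies exactly the same distinct values $i_{m_1},\ldots,i_{m_K}$ you call $j_1,\ldots,j_K$, observes (as you do) that $i_{m_k}\in\mathcal{B}_{m_k}$, and concludes there is no edge from $i_{m_{k'}}$ to $i_{m_k}$ for $k'>k$, so the graph is acyclic. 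Your argument is more self-contained: you bypass Lemma~\ref{lem:dag} entirely, proving the forward inclusion directly from the argmin inequalities and the reverse inclusion by exhibiting an explicit witness ranking (which amounts to writing down a topological sort of the paper's acyclic graph). The two proofs share the same combinatorial core---the bookkeeping around the first-appearance indices $m_k$ and the fact that $j_k\in\mathcal{B}_{m_k}$ forces $j_k\notin S_m$ for $m<m_k$---and differ mainly in whether that core is packaged through the graph lemma or through an explicit ranking.
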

\begin{corollary} \label{cor:reform_of_L} 
If Assumption~\ref{ass:nested} holds, then $|\mathcal{L}|  \ge 2^{M-1}$.
 \end{corollary}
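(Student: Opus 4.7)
The plan is to combine the characterization of $\mathcal{L}$ given by Lemma~\ref{lem:reform_of_L} with a step-by-step construction argument that exhibits $2^{M-1}$ distinct tuples explicitly. The key structural observation is that the sets $\mathcal{B}_1,\ldots,\mathcal{B}_M$ defined in~\eqref{defn:B_m} are pairwise disjoint, and the strict inclusions $S_1 \subset \cdots \subset S_M$ imposed by Assumption~\ref{ass:nested} guarantee that $\mathcal{B}_m = S_m \setminus S_{m-1}$ is nonempty for every $m \ge 2$.

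Concretely, I would begin by fixing any single choice of $i_1 \in S_1$; such a choice exists because $0 \in S_1$ ensures $S_1 \ne \emptyset$. I would then argue inductively that the tuple $(i_1,\ldots,i_{m-1})$ built so far satisfies $i_{m-1} \in S_{m-1} = \bigcup_{k=1}^{m-1} \mathcal{B}_k$, and therefore, by disjointness of the $\mathcal{B}_k$'s, $i_{m-1} \notin \mathcal{B}_m$. Consequently, the set $\{i_{m-1}\} \cup \mathcal{B}_m$ from Lemma~\ref{lem:reform_of_L} is a disjoint union, and since $\mathcal{B}_m$ is nonempty, its cardinality is at least $1 + 1 = 2$. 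Hence the next coordinate $i_m$ has at least two admissible values, each of which independently produces a valid continuation.

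Multiplying these counts across $m = 2, \ldots, M$ yields at least $2^{M-1}$ distinct tuples belonging to $\mathcal{L}$ (keeping the first coordinate $i_1$ fixed throughout), which establishes $|\mathcal{L}| \ge 2^{M-1}$. The argument uses nothing beyond Lemma~\ref{lem:reform_of_L}, the definition of $\mathcal{B}_m$, and the strictness of the nesting. The only subtle point—which is the main, though still modest, obstacle—is the need to verify that the two choices at each step really are distinct, i.e., that $i_{m-1}$ is not already an element of $\mathcal{B}_m$; this is precisely where the disjointness of the $\mathcal{B}_m$'s (coupled with $\mathcal{B}_m \ne \emptyset$ from strict nesting) is invoked.
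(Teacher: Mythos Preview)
Your proposal is correct and follows essentially the same approach as the paper: both arguments use the recursive characterization of $\mathcal{L}$ from Lemma~\ref{lem:reform_of_L} and count at each step $m \in \{2,\ldots,M\}$ the admissible choices for $i_m$ in $\{i_{m-1}\} \cup \mathcal{B}_m$, relying on $i_{m-1} \in S_{m-1}$ (hence $i_{m-1} \notin \mathcal{B}_m$) and $|\mathcal{B}_m| \ge 1$. The paper packages this as an exact product formula $|\mathcal{L}| = |\mathcal{B}_1| \cdot \prod_{m=2}^M (1 + |\mathcal{B}_m|)$ via auxiliary sets $\mathcal{L}_m$, whereas you extract only the lower bound directly, but the substance is identical.
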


With the above motivation, we now demonstrate 
that our technical developments behind Theorem~\ref{thm:main} in \S\ref{sec:characterization:proof} can be used to reformulate \eqref{prob:robust_simplified} as a linear optimization problem with numbers of decision variables and constraints that scale polynomially in both $n$ and $M$.  To explain our reformulation strategy,  we observe in the case of $\eta = 0$ that the linear optimization problem~\eqref{prob:robust_simplified} can be rewritten equivalently as 
\begin{align} \label{prob:robust_simplified_compact}
  \begin{aligned}
& \; \underset{y \in \mathcal{Y}(S)}{\textnormal{minimize}} && \sum_{i \in \mathcal{N}_0} r_i y_i, 
\end{aligned} 
\end{align}
where 
\begin{align*}
\mathcal{Y}(S) \triangleq \left \{ y \in \R^{\mathcal{N}_0}: \begin{gathered}\textnormal{there exists a vector } \lambda \equiv (\lambda_{i_1 \cdots i_M}: (i_1,\ldots,i_M) \in \mathcal{L}) \ge 0 \textnormal{ such that:}  \\[5pt]
\begin{aligned}
\sum_{(i_1,\ldots,i_M) \in \mathcal{L}: \rho_{i_1 \cdots i_M}(S) = r_i} \lambda_{i_1 \cdots i_M}  &=  y_i && \textnormal{for all } i \in S\\[5pt]
\sum_{(i_1,\ldots,i_M) \in \mathcal{L}: i_m = i} \lambda_{i_1 \cdots i_M} &=  v_{m,i}   && \textnormal{for all } m \in \mathcal{M}, \; i \in S_m\\[5pt]
 \sum_{(i_1,\ldots,i_M) \in \mathcal{L}} \lambda_{i_1 \cdots i_M} &= 1
\end{aligned}
\end{gathered} \right \}.
\end{align*}
The number of decision variables in the linear optimization problem~\eqref{prob:robust_simplified_compact} is equal to $| \mathcal{N}_0| = n+1$. 
Hence, we observe that the computational tractability of the linear optimization problem~\eqref{prob:robust_simplified_compact} depends on our ability to develop a compact representation of the   set $\mathcal{Y}(S)$.\footnote{A linear optimization problem over the feasible set $\mathcal{Y}(S)$ bears resemblance to the \textsc{RankAggregationLP} optimization  problem that is studied by  \cite{jagabathula2019limit}. In contrast, our set $\mathcal{Y}(S)$ requires an aggregation of the decision variables $\lambda_{i_1 \cdots i_M}$ in such a way that preserves the quantity $\rho_{i_1 \cdots i_M}(S)$, where we recall  from \S\ref{sec:characterization:proof} that $\rho_{i_1 \cdots i_M}(S)$ can be interpreted as the worst-case revenue that is earned by the firm when assortment $S$ is offered to a customer whose preferences are represented by a ranking that corresponds to the tuple of products $(i_1,\ldots,i_M) \in \mathcal{L}$. Because of this difference, the reformulations developed by \citet[\S4 and \S5]{jagabathula2019limit} are not readily applicable to the set $\mathcal{Y}(S)$.}
 In view of this observation, we proceed to show that the constraints on the decision variables $y \in \R^{\mathcal{N}_0}$ can be represented compactly when the past assortments are nested.  The key insight that will enable the compact  reformulation of the constraints on the decision variables $y \in \R^{\mathcal{N}_0}$ is the following Lemma~\ref{lem:rho_m}, which shows that the quantities $\rho_{i_1 \cdots i_M}(S \cap S_m)$ can be written \emph{recursively}. 
Similarly to the proof of Theorem~\ref{thm:main}, 
  the proof of the following Lemma~\ref{lem:rho_m} is based on the graphical interpretation of $\rho_{i_1 \cdots i_M}(S)$ that is developed in  Appendix~\ref{appx:graphical}. 

\begin{lemma} \label{lem:rho_m}
If Assumption~\ref{ass:nested} holds and the assortment $S  \subseteq \mathcal{N}_0$ satisfies $S \cap S_1 \neq \emptyset$,\footnote{The condition that $S \cap S_1 \neq \emptyset$ in Lemma~\ref{lem:rho_m} is satisfied, for example,  when $S, S_1 \in \mathcal{S}$.} then the following equality holds for each tuple $(i_1,\ldots,i_M) \in \mathcal{L}$ and $m \in \mathcal{M}$:
\begin{align} \label{defn:rho_m}
\rho_{i_1 \cdots i_M}(S \cap S_m) &= \begin{cases}
r_{i_m},&\textnormal{if } i_m \in S, \\
\min \limits_{i \in \mathcal{B}_m \cap S} r_i, &\textnormal{if }   i_m \notin S \textnormal{ and } m = 1,\\
\min \left \{ \min \limits_{i \in \mathcal{B}_m \cap S} r_i ,\; \rho_{i_1 \cdots i_M} \left(S \cap S_{m-1}  \right) \right \}, &\textnormal{if }  i_m \notin S \textnormal{ and }  m \ge 2. 
\end{cases}
\end{align}
\end{lemma}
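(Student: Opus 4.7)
The proof strategy invokes the graphical characterization of $\rho_{i_1 \cdots i_M}(S')$ developed in Appendix~\ref{appx:graphical}: the tuple $(i_1,\ldots,i_M)$ determines a directed acyclic graph on $\mathcal{N}_0$ whose edges encode the relations ``$j$ is forced to be less preferred than $i_m$'' (one edge for each $m$ and each $j \in S_m \setminus \{i_m\}$). A product $i \in S'$ qualifies as most preferred in $S'$ under some consistent ranking if and only if no other product of $S'$ is a forced predecessor of $i$ in this DAG (i.e., reachable from $i$ via directed edges), and $\rho_{i_1 \cdots i_M}(S')$ is the minimum of $r_i$ over such qualifying $i \in S'$. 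The plan is to combine this characterization with Lemma~\ref{lem:reform_of_L}: under Assumption~\ref{ass:nested}, the sequence $i_1, \ldots, i_M$ either repeats or jumps into a new block $\mathcal{B}_{m+1}$ at each step, so each distinct value in the sequence lies in exactly one of the disjoint blocks $\mathcal{B}_1, \ldots, \mathcal{B}_M$. In particular, the only chain element that can possibly lie in $\mathcal{B}_m$ is $i_m$ itself.

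With this setup, Case 1 ($i_m \in S$) is immediate: the tuple constraint forces $i_m$ to be preferred over every other element of $S_m$, so $i_m$ is a forced predecessor of every other element of $S \cap S_m$, making $i_m$ the unique qualifying product and yielding $\rho_{i_1 \cdots i_M}(S \cap S_m) = r_{i_m}$. For Case 2 ($i_m \notin S$, $m = 1$), observe that every $i \in S \cap S_1 = S \cap \mathcal{B}_1$ has forced predecessors lying entirely within the chain $\{i_{m'} : m' \in \mathcal{M}\}$, and any chain element that lies in $S_1$ must equal $i_1 \notin S$; hence no forced predecessor of $i$ lies in $S \cap S_1 \setminus \{i\}$, so every $i \in S \cap \mathcal{B}_1$ qualifies and $\rho_{i_1 \cdots i_M}(S \cap S_1) = \min_{i \in \mathcal{B}_1 \cap S} r_i$.

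For the recursive case ($i_m \notin S$, $m \geq 2$), I decompose $S \cap S_m = (S \cap S_{m-1}) \cup (S \cap \mathcal{B}_m)$ and establish two claims. First, every $i \in S \cap \mathcal{B}_m$ qualifies for $S \cap S_m$: the forced predecessors of $i$ are chain elements with first appearance at position $\geq m$, and by the chain structure the only such chain element lying in $S_m$ is $i_m$, which is excluded from $S$ by hypothesis. Second, for $i \in S \cap S_{m-1}$, the product $i$ qualifies for $S \cap S_m$ if and only if it qualifies for $S \cap S_{m-1}$; the forward direction is trivial by inclusion, and the reverse direction requires ruling out a forced predecessor of $i$ lying in $S \cap \mathcal{B}_m$. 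Such a predecessor would be a chain element in $\mathcal{B}_m$, which by the single-block observation must equal $i_m$, contradicting $i_m \notin S$. Combining the two claims yields the stated recursion.

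The main obstacle is verifying the second claim of the recursive case, which requires tracking the transitive reach in the DAG from an element of $S \cap S_{m-1}$ and isolating what it can contain inside the specific block $\mathcal{B}_m$. The decisive structural fact driving the proof is that Lemma~\ref{lem:reform_of_L} forces each distinct chain value to land in a \emph{single} block $\mathcal{B}_{m_0}$, so any chain element inside $\mathcal{B}_m$ must be $i_m$; the case hypothesis $i_m \notin S$ then rules it out and drives the recursion.
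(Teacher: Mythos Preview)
Your proposal is correct and follows essentially the same approach as the paper's proof: both use Proposition~\ref{prop:cost_reform}, handle the case $i_m\in S$ by noting $i_m$ is a forced predecessor of every other element of $S\cap S_m$, and for $i_m\notin S$, $m\ge 2$ decompose $S\cap S_m=(S\cap S_{m-1})\cup(S\cap\mathcal{B}_m)$ and appeal to the single-block structure coming from Lemma~\ref{lem:reform_of_L} (the paper packages the $S\cap\mathcal{B}_m$ part as a separate cycle-based Claim, but the content is the same). One minor imprecision to fix: in your first claim of the recursive case, the reachable set from $i\in S\cap\mathcal{B}_m$ is $\{i_{m'}:m'\ge m\}$, but $i_m$ itself need not have first appearance at position $\ge m$ (it may equal $i_{m-1}$); your conclusion is unaffected because the only element of this reachable set lying in $S_m$ is still $i_m\notin S$, so simply drop the ``first appearance $\ge m$'' qualifier and argue directly that the reachable chain values for $m'>m$ with $i_{m'}\neq i_m$ lie in blocks $\mathcal{B}_{m''}$ with $m''>m$ and hence outside $S_m$.
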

   Equipped with the observation that the quantities $\rho_{i_1 \cdots i_M}(S \cap S_m)$ can be written recursively, we now introduce a directed acyclic graph that will serve as a low-dimensional network representation of the decision variables of the linear optimization problem~\eqref{prob:robust_simplified}. 
  Indeed, we define the vertices and directed edges of the graph as follows:
\begin{align*}
\mathfrak{V} &\triangleq \left \{ (m,i,\kappa): \; m \in \mathcal{M},\; i \in S_m, \; \kappa \in \{r_j: j \in S_m\} \right \};\\
\mathfrak{E} &\triangleq \left \{((m,i,\kappa),(m',i',\kappa')) \in \mathfrak{V} \times \mathfrak{V}: \;
m' = m+1,\; i' \in \{i\} \cup \mathcal{B}_{m+1},\;\kappa' \in \{ \kappa\} \cup \left \{ r_j: j \in \mathcal{B}_{m+1}  \right \} \right \}.
\end{align*}
We observe that each of the directed edges in the graph $\mathfrak{G} \equiv (\mathfrak{V},\mathfrak{E})$ is of the form $((m,i,\kappa),(m+1,i',\kappa'))$,  which readily implies that the directed graph $\mathfrak{G} \equiv (\mathfrak{V},\mathfrak{E})$  is acyclic. Moreover, the number of vertices $|\mathfrak{V}|$ and the number of directed edges $|\mathfrak{E}|$ clearly scale as a polynomial with respect to both the number of products $n$ as well as the number of past assortments $M$.  Using this directed acyclic graph, we now state our compact reformulation of the linear optimization problem~\eqref{prob:robust_simplified}. 
\begin{proposition} \label{prop:lp_reform_nested}
Let Assumption~\ref{ass:nested} hold. Then for each  $S \subseteq \mathcal{N}_0$ that satisfies $S \cap S_1 \neq \emptyset$, $\min_{\lambda \in \mathcal{U}} \mathscr{R}^\lambda(S)$ is equal to the optimal objective value of the following linear optimization problem:
{\small
 \begin{equation}  \label{prob:lp_reform_nested}
\begin{aligned}
& \; \underset{f, g,\epsilon}{\textnormal{minimize}} && \sum_{i,\kappa: (M,i,\kappa) \in \mathfrak{V}} \kappa  g_{M,i,\kappa}    \\
&\textnormal{subject to}&& 
\sum_{\kappa: (m,i,\kappa) \in \mathfrak{V}}  g_{m,i,\kappa}  - \epsilon_{m,i} = v_{m,i} && \forall m \in \mathcal{M}, \; i \in S_m \\
&&& \sum_{i',\kappa': ((m,i,\kappa),(m+1,i',\kappa')) \in \mathfrak{E}} f_{m,i,\kappa,i',\kappa'}  = g_{m,i,\kappa} && \forall (m,i,\kappa) \in \mathfrak{V}: m \in \{1,\ldots,M-1\} \\
&&& \sum_{i',\kappa': ((m-1,i',\kappa'),(m,i,\kappa)) \in \mathfrak{E}} f_{m-1,i',\kappa',i,\kappa}  = g_{m,i,\kappa} && \forall (m,i,\kappa) \in \mathfrak{V}: m \in \{2,\ldots,M\} \\
&&& g_{m,i,\kappa} = 0 && \forall (m,i,\kappa)\in  \mathfrak{V}: i \in S,\; i \in \mathcal{B}_m,  \; \kappa \neq r_i  \\
&&& g_{m,i,\kappa} = 0 && \forall (m,i,\kappa)\in  \mathfrak{V}: i \in S,\; i \notin \mathcal{B}_m, \;  \kappa \in \{r_j: j \in \mathcal{B}_m \}  \\
&&& g_{m,i,\kappa} = 0 && \forall (m,i,\kappa)\in  \mathfrak{V}: \kappa \in \{ r_j: j \in  \mathcal{B}_m \setminus S \} \\
&&& \sum_{i,\kappa: (M,i,\kappa) \in \mathfrak{V}} g_{M,i,\kappa} = 1 \\
&&& \| \epsilon \| \le \eta \\
&&& f_{m,i,\kappa,i',\kappa'} \ge 0 && \forall ((m,i,\kappa),(m+1,i',\kappa')) \in \mathfrak{E}  \\
&&& g_{m,i,\kappa} \ge 0 && \forall (m,i,\kappa) \in \mathfrak{V}.
\end{aligned} 
\end{equation}}
\end{proposition}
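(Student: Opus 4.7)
The plan is to establish the claimed equality by exhibiting a value-preserving correspondence between feasible solutions of the LP~\eqref{prob:robust_simplified} supplied by Proposition~\ref{prop:reform_wc} and feasible solutions of~\eqref{prob:lp_reform_nested}. The guiding idea is that, under Assumption~\ref{ass:nested}, the two pieces of data that determine both the constraints and the objective of~\eqref{prob:robust_simplified} evolve in a Markovian fashion along the stages $m = 1, \ldots, M$: Lemma~\ref{lem:reform_of_L} shows that the coordinate $i_{m+1}$ of a tuple $(i_1, \ldots, i_M) \in \mathcal{L}$ depends on the past only through $i_m$, and Lemma~\ref{lem:rho_m} shows that $\rho_{i_1 \cdots i_M}(S \cap S_{m+1})$ depends on the past only through $\rho_{i_1 \cdots i_M}(S \cap S_m)$ together with $i_{m+1}$. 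The directed acyclic graph $\mathfrak{G}$ is precisely the transition diagram of these two quantities, and the variables $g_{m,i,\kappa}$ and $f_{m,i,\kappa,i',\kappa'}$ are designed to carry the probability mass and the corresponding transitions, respectively.

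For the forward direction, given any feasible $(\lambda, \epsilon)$ of~\eqref{prob:robust_simplified}, I would define
\[
g_{m,i,\kappa} \triangleq \sum_{\substack{(i_1,\ldots,i_M) \in \mathcal{L}:\\ i_m = i,\; \rho_{i_1 \cdots i_M}(S \cap S_m) = \kappa}} \lambda_{i_1 \cdots i_M},
\qquad
f_{m,i,\kappa,i',\kappa'} \triangleq \sum_{\substack{(i_1,\ldots,i_M) \in \mathcal{L}:\\ (i_m,i_{m+1}) = (i,i'),\\ (\rho(S \cap S_m),\rho(S \cap S_{m+1})) = (\kappa,\kappa')}} \lambda_{i_1 \cdots i_M}.
\]
Flow conservation at each $(m,i,\kappa) \in \mathfrak{V}$ follows by marginalization; the constraint $\sum_\kappa g_{m,i,\kappa} - \epsilon_{m,i} = v_{m,i}$ reduces to the marginal constraint of~\eqref{prob:robust_simplified}; and the normalization $\sum g_{M,i,\kappa} = 1$ follows from $\sum \lambda = 1$. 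The edge structure $\mathfrak{E}$ is consistent with the support of $f$ by Lemma~\ref{lem:reform_of_L} (for the $i$-coordinate) together with Lemma~\ref{lem:rho_m} (for the $\kappa$-coordinate, since $\rho(S \cap S_{m+1}) \in \{\rho(S \cap S_m)\} \cup \{r_j: j \in \mathcal{B}_{m+1}\}$). Each of the three zero-support constraints is a direct consequence of Lemma~\ref{lem:rho_m}: the first two enforce $\kappa = r_{i_m}$ whenever $i_m \in S$, and the third enforces that $\rho$ is always realized by a revenue $r_j$ with $j \in S$. Since $S_M = \mathcal{N}_0$ under Assumption~\ref{ass:nested} and Remark~\ref{rem:N_0}, we have $S \cap S_M = S$, so $\sum_{i,\kappa} \kappa \, g_{M,i,\kappa} = \sum \rho_{i_1 \cdots i_M}(S) \lambda_{i_1 \cdots i_M}$ matches the original objective.

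The main obstacle is the reverse direction. Given any feasible $(f, g, \epsilon)$ of~\eqref{prob:lp_reform_nested}, I would apply standard path decomposition on the layered DAG $\mathfrak{G}$ to write $f$ as a nonnegative combination $\{w_P\}$ of paths from layer $1$ to layer $M$, with total weight $1$. Each such path $P$ visits vertices $(m, i_m^P, \kappa_m^P)$ satisfying the edge and zero-support constraints, so by Lemma~\ref{lem:reform_of_L} the coordinate sequence $(i_1^P, \ldots, i_M^P)$ belongs to $\mathcal{L}$; setting $\lambda_{i_1 \cdots i_M} \triangleq \sum_{P: \, (i_m^P) = (i_m)} w_P$ then produces a feasible solution of~\eqref{prob:robust_simplified} with the same $\epsilon$. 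The delicate point is bounding the objective, and this is where the zero-support constraints do real work. I would prove by induction on $m$ that along every path $P$ supported by the decomposition, $\kappa_m^P \ge \rho_{i_1^P \cdots i_M^P}(S \cap S_m)$. The base case $m = 1$ uses the third zero constraint (which forces $\kappa_1 \in \{r_j : j \in S_1 \cap S\}$ when $i_1 \notin S$) and the first zero constraint (which forces $\kappa_1 = r_{i_1}$ when $i_1 \in S$). In the inductive step, the subtle case is $i_m \in S$ with $i_m \notin \mathcal{B}_m$: here Lemma~\ref{lem:reform_of_L} forces $i_m = i_{m-1}$, so $\rho(S \cap S_{m-1}) = r_{i_m}$, while the second zero constraint forbids $\kappa_m \in \{r_j : j \in \mathcal{B}_m\}$, leaving only $\kappa_m = \kappa_{m-1} \ge r_{i_m}$; the remaining subcases follow similarly. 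Specializing to $m = M$ gives $\kappa_M^P \ge \rho_{i_1^P \cdots i_M^P}(S)$, so the objective of the constructed $\lambda$ in~\eqref{prob:robust_simplified} is at most the LP objective of $(f, g)$, completing the proof.
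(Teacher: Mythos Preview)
Your proposal is correct and follows essentially the same approach as the paper: the forward direction defines $g$ and $f$ by aggregating $\lambda$ over tuples with matching $(i_m,\rho_{i_1\cdots i_M}(S\cap S_m))$ (the paper's Lemma~\ref{lem:simplified_to_compact}), and the reverse direction applies flow decomposition followed by the same induction $\kappa_m \ge \rho_{i_1\cdots i_M}(S\cap S_m)$ driven by Lemma~\ref{lem:rho_m} and the three zero-support constraints (the paper's Claim~\ref{claim:decomposition:kappa} inside Lemma~\ref{lem:compact_to_bound}). The only packaging difference is that the paper isolates the reverse direction as a slightly more general lemma with a penalty term $\sum_{(m,i,\kappa)\in\mathfrak N(S)} r_n g_{m,i,\kappa}$, which vanishes under the zero constraints and is reused later for Theorem~\ref{thm:lp_reform_nested}.
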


Let us make several remarks  about the linear optimization problem~\eqref{prob:lp_reform_nested}.  First, we remark that the decision variables $f,g$ can be interpreted as  a network flow through the directed acyclic graph $\mathfrak{G} \equiv (\mathfrak{V},\mathfrak{E})$.   Indeed, the decision variable $g_{m,i,\kappa}$ represents the amount of flow that passes through  each vertex $(m,i,\kappa) \in \mathfrak{V}$, the decision variable $f_{m,i,\kappa,i',\kappa'}$ represents the amount of flow through the directed edge from vertex $(m,i,\kappa) \in \mathfrak{V}$ to vertex $(m+1,i',\kappa') \in \mathfrak{V}$, and the constraints of the form $\sum_{i',\kappa': ((m,i,\kappa),(m+1,i',\kappa')) \in \mathfrak{E}} f_{m,i,\kappa,i',\kappa'}  = g_{m,i,\kappa}$ and  $\sum_{i',\kappa': ((m-1,i',\kappa'),(m,i,\kappa)) \in \mathfrak{E}} f_{m-1,i',\kappa',i,\kappa}  = g_{m,i,\kappa}$ ensure that flow is conserved through each  vertex $(m,i,\kappa) \in \mathfrak{V}$ with $m \in \{2,\ldots,M-1\}$. Second,  we  show in the proof of Proposition~\ref{prop:lp_reform_nested} that the decision variables in the linear optimization problem~\eqref{prob:lp_reform_nested} can be interpreted as an aggregation of the decision variables in the linear optimization problem~\eqref{prob:robust_simplified}. In particular, 
we show in Appendices~\ref{appx:mip:intermediary} and \ref{appx:lp_reform_nested} that each decision variable $\lambda_{i_1 \cdots i_M}$ can be interpreted as flow through the path $(1,i_1,\rho_{i_1 \cdots i_M}(S \cap S_1)),\ldots, (M,i_M,\rho_{i_1 \cdots i_M}(S \cap S_M))$ in the directed acyclic graph $\mathfrak{G} \equiv (\mathfrak{V},\mathfrak{E})$. Hence, any  optimal solution for the linear optimization problem~\eqref{prob:lp_reform_nested} can be transformed back into the decision variables  of the linear optimization problem~\eqref{prob:robust_simplified} by applying the flow decomposition theorem \citep[Theorem 3.5]{ahuja1988network}.

In summary, we have shown in Proposition~\ref{prop:lp_reform_nested} that  the worst-case expected revenue $\min_{\lambda \in \mathcal{U}} \mathscr{R}^\lambda(S)$ in the case of nested past assortments can be evaluated by solving a linear optimization problem with size that is polynomial in the number of products $n$ and the number of past assortments $M$. The key step behind developing the compact reformulation is exploiting the graphical interpretation of the quantities $\rho_{i_1 \cdots i_M}(S)$ that is developed in  Appendix~\ref{appx:graphical} to design a low-dimensional  network representation of the decision variables in the linear optimization problem~\eqref{prob:robust_simplified}.   By applying strong duality to the linear optimization problem from Proposition~\ref{prop:lp_reform_nested} and introducing binary decision variables to represent the assortment $S$, we readily obtain a mixed-integer optimization reformulation of the robust optimization problem  \eqref{prob:robust}: 
\begin{theorem}\label{thm:nested}
Let Assumption~\ref{ass:nested} hold and $S_1,\ldots,S_M \in \mathcal{S}$. Then the robust optimization problem~\eqref{prob:robust} can be reformulated as a mixed-integer linear optimization problem with $\mathcal{O}(n)$ binary decision variables, $\mathcal{O}(\textnormal{poly}(n,M))$ continuous decision variables, and $\mathcal{O}(\textnormal{poly}(n,M))$ constraints. 
\end{theorem}
The description of the mixed-integer optimization reformulation from Theorem~\ref{thm:nested}   is found at the end of Appendix~\ref{appx:mip:lp_reform_nested}. 

\begin{remark}The reformulation in Appendix~\ref{appx:mip:lp_reform_nested} is presented for cases in which the firm does not have any business constraints on the assortments $S \in \mathcal{S}$. In many applications of assortment optimization, firms have additional business constraints (e.g., due to limited shelf space or bundling requirements) that limit the space of possible new assortments; see \citet[p.113]{mivsic2016data}. Because our algorithm from Theorem~\ref{thm:nested} consists of solving a mixed-integer optimization problem with a binary decision variable $x_i \in \{0,1\}$ for each product $i \in \mathcal{N}_0$ that satisfies $x_i = 1$ if and only if product $i$ is included in the assortment $S$, such business   constraints can be easily integrated directly into our mixed-integer optimization problem. In particular, as long as the number of such business constraints is not exponential in $n$ or $M$,  the size of our mixed-integer optimization reformulation of \eqref{prob:robust}  will remain polynomial in the number of products and the number of past assortments. \end{remark}

 \section{Answer to Question~\ref{question:4}} \label{sec:question4}
In the previous sections, we showed that the robust optimization problem~\eqref{prob:robust} can find assortments that outperform the past assortments (see \S\ref{sec:question1}), and we developed algorithms for solving the robust optimization problem~\eqref{prob:robust} that are theoretically and practically efficient (see \S\ref{sec:algorithms}).  Therefore, when confronted with an assortment planning problem, we recommend that firms begin by solving the robust optimization problem~\eqref{prob:robust}. If  the optimal objective value  of the robust optimization problem~\eqref{prob:robust} is strictly greater than the expected revenue of the firm's best past assortment, then the optimal assortment for \eqref{prob:robust} should be offered by the firm to its customers, as such an assortment   can be trusted to improve the firm’s expected revenue in a way that is not exclusive to just one of the many ranking-based choice models that are consistent with the firm’s historical sales data.\looseness=-1
 
  In this section, we provide guidance to firms for what  assortment to offer to customers in cases in which the optimal objective value  of the robust optimization problem~\eqref{prob:robust} is \emph{not} strictly greater than the expected revenue of the firm's best past assortment. We begin in \S\ref{sec:question4:bestcase} by recommending that the firm first solve an optimistic version of the robust optimization problem~\eqref{prob:robust} to  obtain  a data-driven upper bound on the potential increase in expected revenue that a firm can hope to gain by performing experimentation. 
If the  gap between the optimal objective value of the optimistic optimization problem and the expected revenue of the firm's best past assortment is small, then we can conclude that  the potential increase in expected revenue that a firm can hope to gain by exploring with new assortments is small, thereby guiding the firm to avoid performing experimentation.   If the gap is large, then we conclude that the firm can potentially benefit from experimentation. For this case, we show in \S\ref{sec:question4:pareto} that the robust and optimistic optimization problems  can be combined into a Pareto optimization problem for finding new assortments for experimentation that have potential to increase the firm's best-case expected revenue while maintaining bounds on the worst-case decrease in expected revenue. 

 \subsection{Upper Bound on the Value of Experimentation} \label{sec:question4:bestcase}
If the optimal objective value  of the robust optimization problem~\eqref{prob:robust} is not strictly greater than the expected revenue of the firm's best past assortment, then there are several options available to the firm. One option is for the firm to continue offering their  best  past assortment, i.e., the past assortment with the highest expected revenue. This option can be interpreted as  `maintaining the status-quo'. A second option is for the firm to conduct experimentation by offering new assortments to their customers, with the goal of further constraining the set of ranking-based choice models that are consistent with the firm's historical sales data. However, experimentation  can be costly or otherwise undesirable to firms as discussed in  \S\ref{sec:introduction}. As such, a natural question faced by firms is whether the potential benefits of conducting experimentation will outweigh the costs. 

In this subsection, we show that our algorithms from \S\ref{sec:algorithms} for solving the robust optimization problem~\eqref{prob:robust} can be repurposed to  quantify the potential benefits of experimentation. Specifically, we observe that an upper bound on the potential increase in expected revenue that the firm can hope to gain by performing experimentation is given by the optimal objective value of an `optimistic' optimization problem of the form 
 \begin{equation} \tag{OO} \label{prob:optimistic}
\begin{aligned}
 & \underset{S \in \mathcal{S}}{\textnormal{max}} \max_{\lambda \in \mathcal{U}}\mathscr{R}^{\lambda}(S).
\end{aligned}
\end{equation}
The above optimization problem can be viewed as an `best-case' alternative to the robust optimization problem~\eqref{prob:robust}, in the sense that \eqref{prob:optimistic} will yield the assortment that has the maximum increase in the firm's predicted expected revenue on a ranking-based choice model that is consistent with the historical sales data generated by the firm's past assortments. 
 The optimization problem~\eqref{prob:optimistic} is attractive because it delivers an upper bound on the increase in  expected revenue that a firm can hope to gain by performing experimentation. 
If the  gap between the optimal objective values of \eqref{prob:optimistic} and \eqref{prob:robust} is small, then the firm can conclude that  the potential increase in expected revenue that a firm can hope to gain by experimentation with new assortments is small, thereby guiding the firm to avoid performing experimentation. 

The main technical contribution of the present subsection is showing that our polynomial-time algorithms and mixed-integer optimization reformulation  from \S\ref{sec:algorithms} that were designed for the robust optimization problem~\eqref{prob:robust}  can be extended to the optimistic optimization problem~\eqref{prob:optimistic}. These technical contributions are summarized as follows.
\begin{theorem} \label{thm:poly:optimistic}
\eqref{prob:optimistic} can be solved in weakly $\mathcal{O}(\textnormal{poly}(n))$  time for every fixed $M$.
\end{theorem}
\begin{theorem} \label{thm:nested:optimistic}
If Assumption~\ref{ass:nested} holds and $S_1,\ldots,S_M \in \mathcal{S}$, then \eqref{prob:optimistic} can be reformulated as a mixed-integer linear optimization problem with $\mathcal{O}(n)$ binary decision variables, $\mathcal{O}(\textnormal{poly}(n,M))$ continuous decision variables, and $\mathcal{O}(\textnormal{poly}(n,M))$ constraints. 
\end{theorem}
Theorems~\ref{thm:poly:optimistic} and \ref{thm:nested:optimistic}, which are analogues of Theorems~\ref{thm:poly} and \ref{thm:nested} from \S\ref{sec:algorithms}, establish that the optimistic optimization problem~\eqref{prob:optimistic} enjoys similar  tractability as the robust optimization problem~\eqref{prob:robust}. The proofs of Theorems~\ref{thm:poly:optimistic} and \ref{thm:nested:optimistic}, which are found in Appendix~\ref{appx:optimistic_algorithms}, follow using similar arguments as those from Theorems~\ref{thm:poly} and \ref{thm:nested}. In Appendix~\ref{appx:numerical:optimistic}, we show using synthetic data with nested past assortments that the optimistic optimization problem~\eqref{prob:optimistic} can yield practical upper bounds on the potential value of experimentation that can help a firm determine that the benefits of conducting experimentation will not outweigh the costs.

 \subsection{Low-Risk Approaches to Experimentation} \label{sec:question4:pareto}
 Suppose that the gap between the upper bound given by \eqref{prob:optimistic} and the expected revenue of the firm's best past assortment is deemed by the firm to be large enough to warrant experimentation.  The firm, in this case, must decide which new assortments it will offer to its customers. Here, we take the perspective of a firm that wishes to perform experimentation in such a way that mitigates the worst-case short-run declines in expected revenue that come from experimenting with new assortments.  That is, we assume that the firm is interested in experimenting with new assortments $S \in \mathcal{S}$ for offering to their customers that can increase the firm's expected revenue while ensuring the worst-case expected revenue of the new assortment, $\min_{\lambda \in \mathcal{U}} \mathscr{R}^\lambda(S)$,  remains close to the expected revenue of the firm's  best past assortment, $ \max_{m \in \mathcal{M}} r^\intercal v_m$. We offer two approaches below for conducting low-risk experimentation  that utilize our algorithms for solving \eqref{prob:robust} and \eqref{prob:optimistic}.\looseness=-1
 
 \subsubsection{Exact Algorithm for Pareto Optimization.}  \label{sec:question4:pareto:exact}
Our first proposed approach to finding new assortments for experimentation is to solve the following `Pareto' optimization problem:
\begin{equation} \tag{PO} \label{prob:pareto}
 \begin{aligned}
 & \underset{S \in \mathcal{S}}{\textnormal{maximize}} && \max_{\lambda \in \mathcal{U}}\mathscr{R}^{\lambda}(S)\\
 &\textnormal{subject to}&& \min_{\lambda \in \mathcal{U}}\mathscr{R}^{\lambda}(S) \ge \theta.
\end{aligned}
\end{equation}
In contrast to the robust optimization problem~\eqref{prob:robust} and the optimistic optimization problem~\eqref{prob:optimistic}, the Pareto optimization problem~\eqref{prob:pareto} aims to find an assortment that maximizes the {best-case} expected revenue while imposing a lower bound on the worst-case expected revenue. 
By varying the lower bound $\theta$, the above optimization problem thus leads to a \emph{Pareto frontier} of new assortments that a firm can offer to its customers that offer varying degrees of possible improvement to the firm's expected revenue while enjoying guarantees on the worst-case decrease in expected revenue that the firm might incur by experimenting with the new assortment. 
Such a Pareto frontier of new assortments provided by \eqref{prob:pareto} can be attractive to risk-averse firms in situations in which there do not exist any assortments that satisfy \eqref{line:improvement}, as there may exist optimal assortments for the optimization problem~\eqref{prob:pareto} that are different from the firm's past assortments when choosing the parameter  $\theta$ to be slightly less than the expected revenue of the firm's best past assortment.\looseness=-1

In the case of nested past assortments, we can use the same reformulation techniques from Theorems~\ref{thm:nested} and \ref{thm:nested:optimistic} to reformulate \eqref{prob:pareto} as a compact mixed-integer optimization problem: 
\begin{theorem}\label{thm:nested_reform_pareto}
Let Assumption~\ref{ass:nested} hold and $S_1,\ldots,S_M \in \mathcal{S}$. Then the optimization problem~\eqref{prob:pareto} can be reformulated as a mixed-integer linear optimization problem with $\mathcal{O}(n)$ binary decision variables, $\mathcal{O}(\textnormal{poly}(n,M))$ continuous decision variables, and $\mathcal{O}(\textnormal{poly}(n,M))$ constraints. 
\end{theorem}
The description of the mixed-integer optimization reformulation from Theorem~\ref{thm:nested_reform_pareto}   is found at the end of Appendix~\ref{appx:mip:nested_reform_pareto}. In contrast to the polynomial-time algorithms that were developed for \eqref{prob:robust} and \eqref{prob:optimistic},  we do not have an algorithm for solving \eqref{prob:pareto} that runs in polynomial-time for any fixed number of past assortments (see \S\ref{sec:question4:pareto:approximate} for discussion). In Appendix~\ref{appx:numerical:pareto}, we show using synthetic data with various collections of nested past assortments that \eqref{prob:pareto} can find new assortments that offer best-case increases in expected revenue of 20\%-60\% while enjoying worst-case decreases in expected revenues as low as 1\%-10\%.

\subsubsection{Heuristic for Pareto Optimization.} \label{sec:question4:pareto:approximate}
It is straightforward to reformulate \eqref{prob:pareto} as a compact mixed-integer optimization problem in the case of nested past assortments (see Theorem~\ref{thm:nested_reform_pareto}) by combining the compact mixed-integer optimization reformulations for \eqref{prob:robust} and \eqref{prob:optimistic} (see Theorems~\ref{thm:nested} and \ref{thm:nested:optimistic}). The same reasoning, however,  does not extend to solving \eqref{prob:pareto} in polynomial-time for any fixed number of past assortments. The issue stems from the fact that our polynomial-time algorithms for solving  \eqref{prob:robust} and \eqref{prob:optimistic}  (see Theorems~\ref{thm:poly} and \ref{thm:poly:optimistic}) rely on performing an exhaustive search over polynomial-size collections of assortments that are guaranteed to contain optimal assortments for \eqref{prob:robust} and \eqref{prob:optimistic}. In contrast to \eqref{prob:robust} and \eqref{prob:optimistic}, we do not have a characterization of the structural of optimal assortments for \eqref{prob:pareto}, and thus cannot perform a polynomial-time exhaustive search that is guaranteed to deliver an optimal assortment for \eqref{prob:pareto}. 

To get around this, we propose a simple heuristic for finding new assortments for risk-averse experimentation. The heuristic runs in polynomial-time for any fixed number of past assortments and consists of three steps. In the first step, we construct the collection of assortments $\widehat{\mathcal{S}}$ from \S\ref{sec:characterization} that is guaranteed to contain an optimal solution to \eqref{prob:robust}. In the second step,  we compute the worst-case expected revenue $\min_{\lambda \in \mathcal{U}} \mathscr{R}^\lambda(S)$ and the best-case expected revenue $\max_{\lambda \in \mathcal{U}} \mathscr{R}^\lambda(S)$ for each assortment $S \in \widehat{\mathcal{S}}$. In the third step, we compute the following \emph{approximate Pareto frontier}: 
% $$\left\{S \in \widehat{\mathcal{S}}: \text{ there does not exist } S' \in S \text{ that satisfies } \left(\min_{\lambda \in \mathcal{U}} \mathscr{R}^\lambda(S), \max_{\lambda \in \mathcal{U}} \mathscr{R}^\lambda(S) \right) \right \}$$
\begin{align}
\left\{S \in \widehat{\mathcal{S}}: \left \{S' \in \widehat{\mathcal{S}}: \begin{gathered} \left[ \min_{\lambda \in \mathcal{U}} \mathscr{R}^\lambda(S') \ge \min_{\lambda \in \mathcal{U}} \mathscr{R}^\lambda(S)  \text{ and } \max_{\lambda \in \mathcal{U}} \mathscr{R}^\lambda(S') > \max_{\lambda \in \mathcal{U}} \mathscr{R}^\lambda(S) \right] \\ \text{or} \\\left[ \min_{\lambda \in \mathcal{U}} \mathscr{R}^\lambda(S') > \min_{\lambda \in \mathcal{U}} \mathscr{R}^\lambda(S)  \text{ and } \max_{\lambda \in \mathcal{U}} \mathscr{R}^\lambda(S') \ge \max_{\lambda \in \mathcal{U}} \mathscr{R}^\lambda(S) \right]  \end{gathered} \right \}  = \emptyset \right \}. \label{line:paretofrontier}
\end{align}
The collection of assortments from line~\eqref{line:paretofrontier} can be interpreted as an approximation of the Pareto frontier that would be obtained by solving \eqref{prob:pareto} for all $\theta \ge 0$. In Appendix~\ref{appx:numerical:pareto:approximate}, we show using real data from a conjoint dataset \citep{toubia2003fast}  that the approximate Pareto frontier from line~\eqref{line:paretofrontier} can contain new assortments that offer increases in expected revenue with respect to the true ranking-based choice model, while simultaneously enjoying guarantees that implementing the new assortments will not lead to meaningful worst-case decreases in the firm's expected revenue.

\section{Conclusion and Future Research} \label{sec:conclusion}

In this work, we investigated a popular class of high-dimensional discrete choice models, known as {ranking-based} choice models, in the context of assortment planning problems. 
Motivated by the fact that many ranking-based choice models can be consistent with a firm's historical sales data, we considered a class of robust optimization problems in which the goal is find an assortment that has high expected revenue across all of the ranking-based choice models that are consistent with the firm's historical sales data. By developing the first structural results and practical algorithms for solving these robust optimization problems, our paper showed in a variety of applications that these robust optimization problems can be solved in reasonable computation times and offer   value to firms in comparison to estimate-then-optimize in the context of data-driven assortment optimization with ranking-based choice models. 

We believe this work opens up a number of promising directions for future research. First, our work showed for the first time that the question of whether it can be possible to identify assortments that satisfy condition~\eqref{line:improvement} can be answered for one popular class of high-dimensional discrete choice models. Yet there are many other high-dimensional discrete choice models  beyond the ranking-based choice model for which this identification question can be asked, 
such as models for capturing irrational customer choice \citep{berbeglia2018generalized,chen2020decision,jena2021estimation}. Second, our work showed that polynomial-time algorithms can be developed for solving the robust optimization problem~\eqref{prob:robust} for ranking-based choice models.  However, 
it may be possible in certain settings that existing algorithms for estimating high-dimensional discrete choice models such as expectation-maximization \citep{talluri2004revenue,van2017expectation,csimcsek2018expectation}, in combination with algorithms for finding assortments that maximize the predicted expected revenue, 
can lead to assortments with provable performance guarantees with respect to the identification question. Establishing such guarantees would provide new assurances to firms for using estimate-then-optimize in high-stakes assortment planning problems. Finally, we believe that the present and related work (\eg \cite{kallus2018confounding,sturt2021nonparametric}) provide a starting point for using robust optimization to develop efficient algorithms that are valuable for operations management problems in which good \emph{average} performance is paramount.  In particular, future work may extend the algorithms developed in the present paper to numerous other revenue management problems, ranging from multi-product pricing to dynamic assortment planning.  

\SingleSpacedXI
\bibliographystyle{informs2014} % outcomment this and next line in Case 1
\bibliography{robust_assortment_optimization_eprint}
\OneAndAHalfSpacedXI 
 
\ECSwitch

%\ECDisclaimer
%%%%%%%%%%%%%%%%%%%%%%%%%%%%%%%%%%%%%%%%%%%%%%%%%%%%%%%%%%
%\OneAndAHalfSpacedXI

\AppendixTitle{Technical Proofs and Additional Results}
\SingleSpacedXI

%\setlength{\parskip}{1em}

%%% Main head for the e-companion
\begin{APPENDICES}

\section{Numerical Experiments} \label{appx:numerics}
This appendix contains the numerical experiments for the paper. \begin{itemize}
\item Appendix~\ref{appx:riskseto} shows that estimate-then-optimize with ranking-based choice models can risk leading to significant declines in a firm's expected revenue when the firm's past assortments are revenue-ordered. The motivation for Appendix~\ref{appx:riskseto} is found at the end of  \S\ref{sec:characterization:question}. % and Footnote~\ref{footnote:eto} in \S\ref{sec:introduction} 
\item Appendix~\ref{sec:twoassortments:numerics} shows the practical efficiency of the polynomial-time algorithm from \S\ref{sec:algorithm:twoassortments} for solving the robust optimization problem~\eqref{prob:robust} in the case of two past assortments. 
\item Appendix~\ref{appx:numerical:nested} shows the practical tractability of the compact mixed-integer optimization reformulation of the robust optimization problem~\eqref{prob:robust} from \S\ref{sec:algorithms:nested}   for the case of nested past assortments.
\item Appendix~\ref{appx:numerical:optimistic}  shows the managerial value of  using the optimistic optimization problem~\eqref{prob:optimistic} from \S\ref{sec:question4:bestcase} to compute upper bounds on the maximum increase in expected revenue that a firm can hope to gain by experimenting with new assortments.
\item  Appendix~\ref{appx:numerical:pareto}  shows the managerial value of the exact algorithm from \S\ref{sec:question4:pareto:exact} for solving the Pareto optimization problem~\eqref{prob:pareto} for finding new assortments that can be used by firms for  low-risk experimentation.\looseness=-1
\item Appendix~\ref{appx:numerical:pareto:approximate} uses real data to show the managerial value of the heuristic from \S\ref{sec:question4:pareto:approximate} for the Pareto optimization problem~\eqref{prob:pareto} for finding new assortments that can be used by firms for  low-risk experimentation.\looseness=-1
\end{itemize} 

\subsection{Numerical Illustration  of the Risks of Estimate-Then-Optimize} \label{appx:riskseto}
In this appendix, we perform numerical experiments to assess  the performance of assortments obtained using estimate-then-optimize when the historical sales data is randomly generated from revenue-ordered assortments.  
 To motivate our experiment setup, we recall the details of estimate-then-optimize from \S\ref{sec:introduction}. Indeed, suppose that one estimates a ranking-based choice model from the historical sales data generated by the revenue-ordered assortments and then recommends that the firm implement a new assortment 
which maximizes the predicted expected revenue
 under the estimated ranking-based choice model. For this setting,  Corollary~\ref{cor:impossibility} from \S\ref{sec:characterization:question}
 guarantees that this estimate-then-optimize technique will never offer \emph{fidelity} to the firm, in the sense that there  will always exist  a ranking-based choice model which is consistent with the historical sales data for which the expected revenue for the new assortment will be less than or equal to the expected revenue of the best past assortment. Moreover, as we will further see through the following numerical experiment, the expected revenue from the assortment recommended by  estimate-then-optimize  can be {strictly} and significantly {less} than the expected revenue of the best past assortment.

 To perform our numerical experiment, we begin by constructing randomly-generated problem instances. In each problem instance,  the revenues for the products are drawn from the distribution $r_1,\ldots,r_n \sim \textnormal{Uniform}[0,1]$, and a base choice   for the parameters $\lambda^*$ of a ranking-based choice model is drawn uniformly over the $(n+1)!$-dimensional probability simplex.\footnote{If the rankings in $\Sigma$ are indexed by $\{\sigma_1,\ldots,\sigma_{(n+1)!}\}$, then uniform sampling over the probability simplex is obtained by drawing $u_1,\ldots,u_{(n+1)!} \sim \textnormal{Uniform}[0,1]$ and setting $\lambda_{\sigma_k} \leftarrow \log(u_k) /  \sum_{k' =1}^{(n+1)!}\log(u_{k'})$ for each $k=1,\ldots,(n+1)!$. }  Using this base choice for the parameters, we generate historical sales data of the form $v_1,\ldots,v_n$, where each $v_m$ is the historical sales data generated by the revenue-ordered assortment $\{0,m,m+1,\ldots,n-1,n\}$ under the ranking-based choice model with the base parameter $\lambda^*$.\footnote{We sort the products in ascending order by revenue before performing our analysis, which ensures that $\{0,m,m+1,\ldots,n\}$ for each $m  \in \{1,\ldots,n\}$ is a revenue-ordered assortment.}  After we compute the historical sales data, we \emph{forget} the base parameters $\lambda^*$ of the ranking-based choice model and apply the estimate-then-optimize technique to obtain a new assortment.  Specifically, we first estimate the parameters $\hat{\lambda}$ of a ranking-based choice model  using the historical sales data; since many selections of the parameters may be consistent with the historical  sales data, we choose our estimate $\hat{\lambda}$ as the optimal solution to the linear optimization problem $\min_{\lambda \in \mathcal{U}} c^\intercal \lambda$, where the cost vector $c$ is drawn uniformly over $[0,1]^{(n+1)!}$.\footnote{Alternative approaches for estimating the parameters of a ranking-based choice model from historical sales data are provided in \cite{mivsic2016data,van2015market,van2017expectation,desir2021mallows}. Our approach of estimating the parameters as $\hat{\lambda} \in \argmin_{\lambda \in \mathcal{U}} c^\intercal \lambda$ for a randomly-chosen cost vector $c$ is motivated by our desire to decouple any potential biases associated with any particular estimation procedure from an empirical assessment of the estimate-then-optimize technique. In particular, our approach is viewed as a simple way of randomly sampling the parameters from the set of all parameters of ranking-based choice models which are consistent with the historical sales data. }\footnote{An obvious downside of our simple estimation procedure for the parameters of the ranking-based choice model is that it requires solving a linear optimization problem with $\mathcal{O}(n!)$ decision variables, and, thus, our simple estimation procedure does not scale efficiently  to problem instances with many products. Nonetheless, this estimation procedure is sufficiently fast for the purposes of this numerical study, where the aim is simply to assess the performance of assortments obtained by the estimate-then-optimize technique over revenue-ordered assortments.  In particular, we believe it is a reasonable assumption that similar findings  from our numerical experiment with $n=4$ (see Figures~\ref{fig:roa} and \ref{fig:boxplot}) would be found in experiments with larger values of $n$.} We then obtain a new assortment $S'$ as any optimal solution to the combinatorial optimization problem $\max_{S \in \mathcal{S}} \mathscr{R}^{\hat{\lambda}}(S)$ which maximizes the predicted expected revenue under the estimated ranking-based choice model.\footnote{We solve this optimization problem using the mixed-integer optimization reformulation given by \citet[\S3.2]{bertsimas2019exact}, which is implemented using the Julia programming language with JuMP and solved using Gurobi.} Finally, we evaluate the new assortment obtained using estimate-then-optimize by computing the worst-case expected revenue of the new assortment under all ranking-based choice models that are consistent with the historical sales data, $\min_{\lambda \in \mathcal{U}} \mathscr{R}^{{\lambda}}(S')$, the best-case expected revenue of the new assortment under all ranking-based choice models that are consistent with the historical sales data, $\max_{\lambda \in \mathcal{U}} \mathscr{R}^{{\lambda}}(S')$, and the expected revenue of the best past assortment, $\max_{m \in \mathcal{M}} r^\intercal v_m$.

\afterpage{%
%\clearpage%
\null
\vfill
 \begin{figure}[H]
\centering
\FIGURE{
\includegraphics[width=0.6\linewidth]{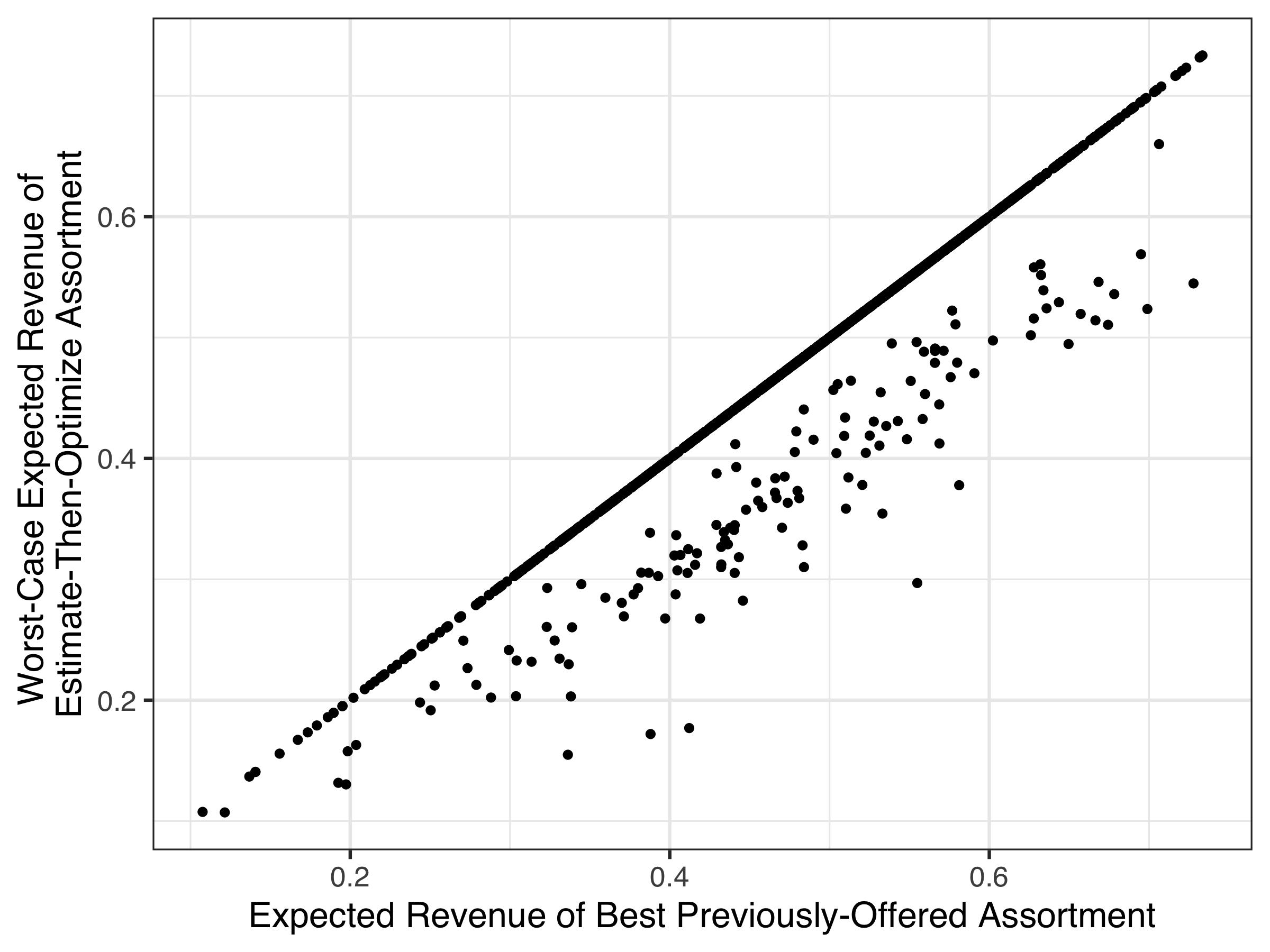}
}
{Performance of new assortments obtained using  estimate-then-optimize  when $\mathscr{M} = \bar{\mathcal{S}}$. \label{fig:roa} }
{\TABLEfootnotesizeIX Each point corresponds to a randomly-generated problem instance. The $x$-axis shows the expected revenue of the best past assortment,  $\max_{m \in \mathcal{M}} r^\intercal v_m$. The $y$-axis shows the worst-case expected revenue $\min_{\lambda \in \mathcal{U}} \mathscr{R}^{\lambda}(S')$ for the new assortment $S'$ obtained using the estimate-then-optimize technique. } 
\end{figure}

\begin{figure}[H]
\centering
\FIGURE{
\includegraphics[width=0.6\linewidth]{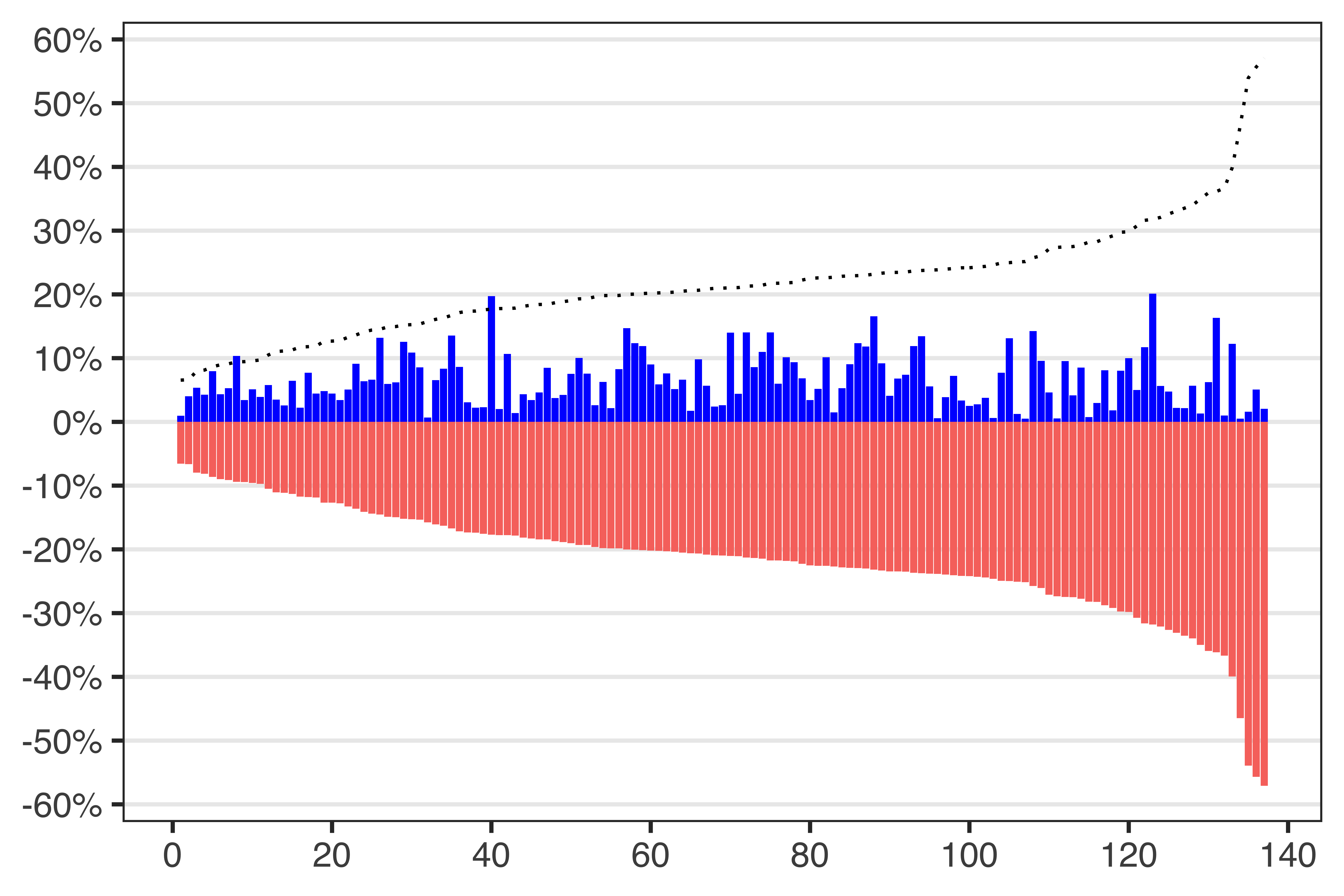}
}
{Relative improvement in expected revenue of new assortments obtained using  estimate-then-optimize  when $\mathscr{M} = \bar{\mathcal{S}}$. \label{fig:boxplot} }
{\TABLEfootnotesizeIX Each of the $x$-values corresponds to one of the 137 randomly-generated problem instances from Figure~\ref{fig:roa} for which the expected revenue of the best past assortment,  $\max_{m \in \mathcal{M}} r^\intercal v_m$, was strictly greater than the worst-case expected revenue $\min_{\lambda \in \mathcal{U}} \mathscr{R}^{\lambda}(S')$ for the new assortment $S'$ obtained using the estimate-then-optimize technique. The corresponding interval of $y$-values formed by the red and blue bars is the interval $[\Pi_\lambda: \lambda \in \mathcal{U}]$,  where $\Pi_\lambda \triangleq 100\% \times (\mathscr{R}^{\lambda}(S') - \max_{m \in \mathcal{M}} r^\intercal v_m) / ( \max_{m \in \mathcal{M}} r^\intercal v_m)$ is the relative percentage improvement of the expected revenue of the new assortment  obtained using estimate-then-optimize over the expected revenue of the firm's best past assortment for a given ranking-based choice model $\lambda \in \mathcal{U}$.  For clarity, the problem instances are sorted along the $x$-axis by the endpoints of the red bars, and the dotted line shows the reflection of the endpoints of red bars over the horizontal line at zero. }
\end{figure}
\null
\vfill
\clearpage
}

In Figures~\ref{fig:roa} and \ref{fig:boxplot}, we present the results of these numerical experiments for the case of $n = 4$ products over 1000 randomly-generated problem instances. In Figure~\ref{fig:roa}, we compare the expected revenue under the best past assortment,   $\max_{m \in \mathcal{M}} r^\intercal v_m$, to the expected revenue of the new assortments obtained using estimate-then-optimize under the worst-case ranking-based choice model that is consistent with the historical sales data, $\min_{\lambda \in \mathcal{U}} \mathscr{R}^{{\lambda}}(S')$. We observe that the results in Figure~\ref{fig:roa} are consistent with the impossibility result from Corollary~\ref{cor:impossibility}; indeed, the worst-case expected revenues of the new assortments obtained using estimate-then-optimize  never exceed the expected revenues of the best past assortments.  Furthermore, we observe for many of the problem instances that there are ranking-based choice models $\lambda \in \mathcal{U}$ that are consistent with the historical sales data for which the resulting expected revenue of the new assortment $\mathscr{R}^{\lambda}(S')$ is {strictly less} than the expected revenue under the best past assortment,   $\max_{m \in \mathcal{M}} r^\intercal v_m$.

To assess whether the above findings are overly conservative from a practical standpoint, we turn to a detailed analysis of the $137$ problem instances for which the expected revenue of the best past assortment,  $\max_{m \in \mathcal{M}} r^\intercal v_m$, is strictly greater than the worst-case expected revenue $\min_{\lambda \in \mathcal{U}} \mathscr{R}^{\lambda}(S')$ for the new assortment $S'$ obtained using  estimate-then-optimize. In Figure~\ref{fig:boxplot} we present, for each of these 137 problem instances, a visualization of the range of relative percentage improvements of the predicted expected revenue of the new assortment, $\mathscr{R}^{\lambda}(S')$, over the expected revenue of the firm's best past assortment,   $\max_{m \in \mathcal{M}} r^\intercal v_m$, which are possible to obtain under a ranking-based choice model which is consistent with the historical sales data, $\lambda \in \mathcal{U}$. Stated more precisely, each of the $x$ values in Figure~\ref{fig:boxplot} corresponds to one of these 137 problem instances, and the corresponding interval of $y$-values formed by the red and blue bars is the interval $[\Pi_\lambda: \lambda \in \mathcal{U}]$,  where $\Pi_\lambda \triangleq 100\% \times (\mathscr{R}^{\lambda}(S') - \max_{m \in \mathcal{M}} r^\intercal v_m) / ( \max_{m \in \mathcal{M}} r^\intercal v_m)$ is the relative percentage improvement of the predicted expected revenue of the new assortment $S'$ over the expected revenue of the firm's best past assortment for a given ranking-based choice model $\lambda \in \mathcal{U}$.  Hence, the red bars are the negative values of $\Pi_\lambda$ that can be attained under the ranking-based choice models $\lambda \in \mathcal{U}$, and the dotted line shows the reflection of the endpoints of red bars over the horizontal line at zero. 
We do not assign any likelihood to the values in each interval $[\Pi_\lambda: \lambda \in \mathcal{U}]$, as there is no information available for inferring which of the ranking-based choice models are more likely to be the `truth' among the ranking-based choice models $\lambda \in \mathcal{U}$ that are  consistent with the historical sales data. 

The results in  Figure~\ref{fig:boxplot} reveal a striking {asymmetry} between the downside and upside of implementing a new assortment found by estimate-then-optimize. 
In all but two of the 137 instances, the worst-case decline in expected  revenue from implementing the new assortment exceeded in magnitude the best-case increase in expected revenue. The difference in magnitude between the downside and upside is also found to be significant: the average best-case improvement of the new assortment over the best past assortment (i.e., the average of the blue endpoints) is 6.56\%, while the  average worst-case improvement of the new assortment over the best past assortment (i.e., the average of the red endpoints) is  -21.71\%. These numerical findings demonstrate that the downside risks to a firm from implementing a new assortment found by estimate-then-optimize can significantly exceed the potential upsides. 

In conclusion, our theoretical and numerical analysis in this subsection  lead to three main takeaways. 
The first takeaway is that there exist collections of  past assortments in which it can be impossible to identify assortments that satisfy~\eqref{line:improvement}. 
The second takeaway is that  estimate-then-optimize can lead to a strictly worse expected revenue than those of the past assortments. In fact, the numerical results in Figures~\ref{fig:roa} and \ref{fig:boxplot} show that this decline in expected revenue can be significant and outweigh the potential upside for implementing the new assortment. The third takeaway is that all of the aforementioned issues  arise  when the past assortments are comprised of one of the most celebrated and widely-used classes of assortments from the literature, namely, the revenue-ordered assortments. All together, these takeaways raise concerns about whether the estimate-then-optimize technique with ranking-based choice models should be trusted to ``first, do no harm" in assortment planning problems. 

\subsection{Numerical Experiments for Two  Past Assortments}\label{sec:twoassortments:numerics}
In this appendix, we perform numerical experiments to show the practical efficiency of our polynomial-time algorithm from \S\ref{sec:algorithm:twoassortments} for solving the robust optimization problem~\eqref{prob:robust} in the case of $M = 2$ past assortments.

To perform our numerical experiments, we begin by constructing randomly-generated problem instances in a manner that is similar to that taken in Appendix~\ref{appx:riskseto}. In each randomly-generated problem instance,  the revenues are drawn from the distribution $r_1,\ldots,r_n \sim \textnormal{Uniform}[0,1]$, and we sort the products such that $r_1 < \cdots < r_n$.  The two past assortments $S_1,S_2 \in \mathcal{S}$ are also constructed randomly, whereby the two assortments satisfy $\{0,n\} \subseteq S_1 \cap S_2$ and, for each of the remaining products $j \in \{1,\ldots,n-1\}$, we randomly assign the product to the assortments with distribution given by $P(j \in S_1 \cap S_2) = \frac{1}{3}$, $P(j \in S_1 \setminus S_2) =  \frac{1}{3}$, and $P(j \in S_2 \setminus S_1) =  \frac{1}{3}$. 
In order to generate historical sales data for these two assortments, we generate a base choice for the parameters $\lambda^*$ of the ranking-based choice model. 
Because we  will be performing numerical experiments on problem instances with larger values of $n$ than were considered in Appendix~\ref{appx:riskseto},  
it will not be viable from a computational tractability standpoint to generate base parameters $\lambda^*$ that have nonzero values for each of the $(n+1)!$ parameters of a  ranking-based choice model.  To get around this, we restrict the numerical experiments to generating base parameters $\lambda^*$ which are \emph{sparse}. Specifically,  we generate the base parameters in each problem instance by  first randomly selecting a subset of rankings $\Sigma' \subseteq \Sigma$ of length $| \Sigma'| = K$;\footnote{We use rejection sampling to ensure that each of the $\binom{(n+1)!}{K}$  subsets of rankings  is selected with equal probability.} we then assign $\lambda_\sigma^* \leftarrow 0$ for each ranking $\sigma \notin \Sigma'$, and we choose the remaining parameters  $\{ \lambda_\sigma: \sigma \in \Sigma'\}$   by drawing uniformly over the $K$-dimensional probability simplex. 
Using this base choice for the parameters, we generate historical sales data of the form $v_1$ and $v_2$ corresponding to the two assortments $S_1$ and $S_2$ under the ranking-based choice model with the base parameters $\lambda^*$.  After we compute the historical sales data, we \emph{forget} the base parameters $\lambda^*$ of the ranking-based choice model as well as the choice of $K$, and we apply the algorithm from \S\ref{sec:algorithm:twoassortments} to obtain a new assortment, denoted by $S'$.

\afterpage{%
%\clearpage%
\null
\vfill
\begin{figure}[H]
\centering
\FIGURE{
\includegraphics[width=0.6\linewidth]{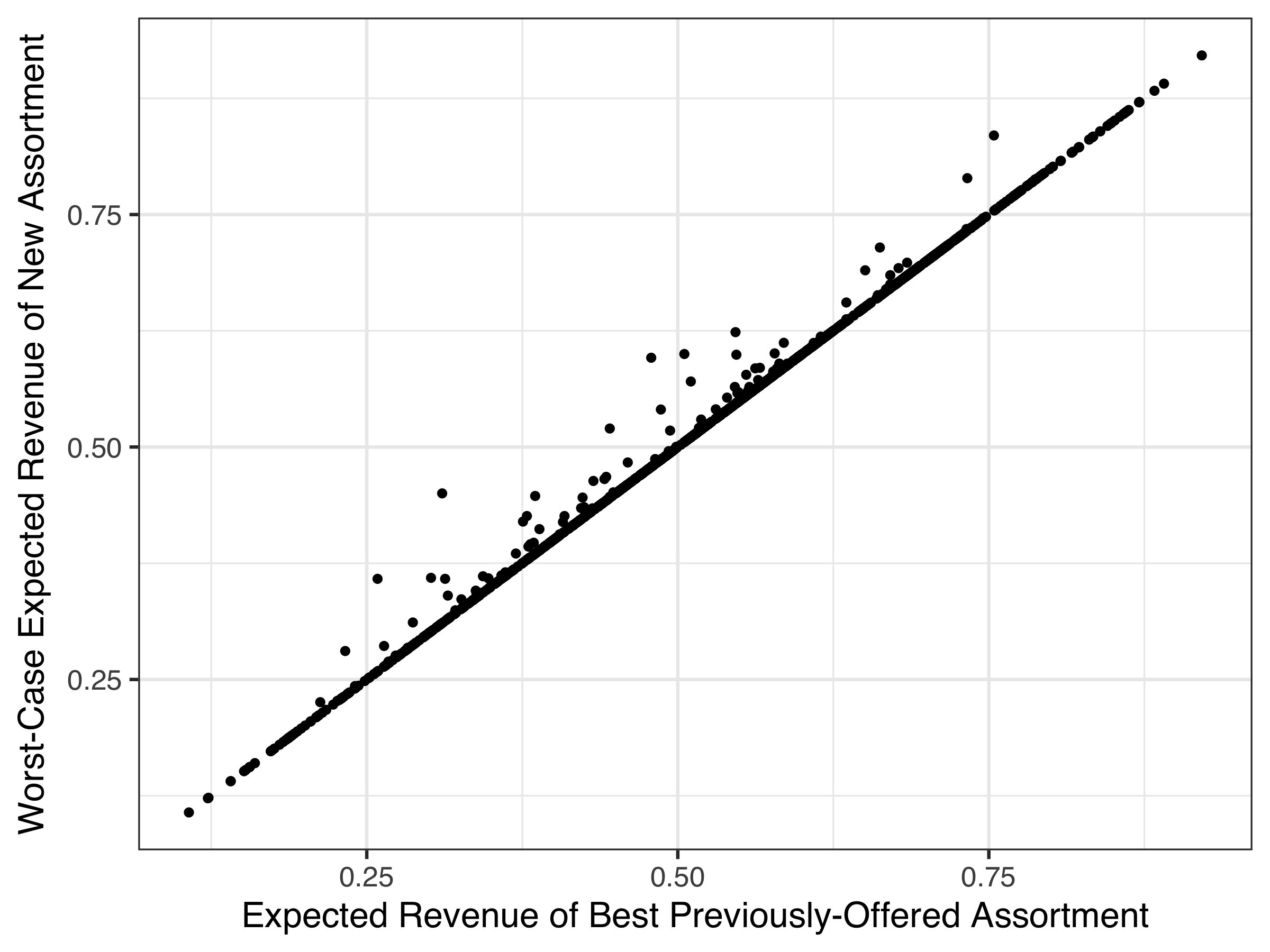}
}
{Performance of assortments obtained using algorithm from  \S\ref{sec:algorithm:twoassortments} when $M=2$, $K = 10$, and $n = 10$.  \label{fig:twoassortments:plots}}{ \TABLEfootnotesizeIX  Each point corresponds to a randomly-generated problem instance. The $x$-axis shows the expected revenue of the best past assortment,  $\max_{m \in \mathcal{M}} r^\intercal v_m$. The $y$-axis shows the worst-case expected revenue $\min_{\lambda \in \mathcal{U}} \mathscr{R}^{\lambda}(S')$ for the new assortment $S'$ obtained by using the algorithm from  \S\ref{sec:algorithm:twoassortments}. }
\end{figure}

\begin{figure}[H]
\centering
\FIGURE{
\includegraphics[width=0.6\linewidth]{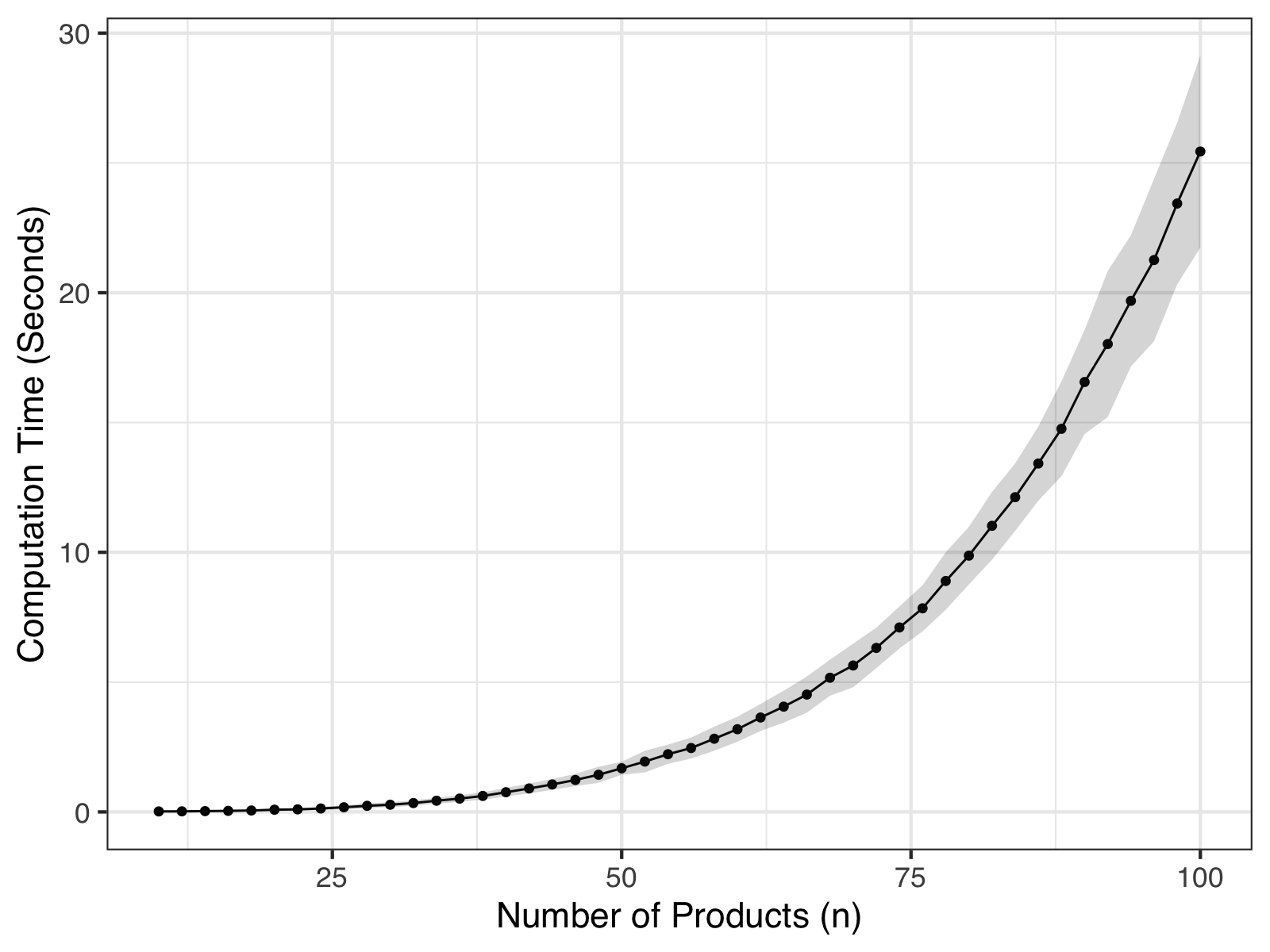}
}
{Computation time for the algorithm from  \S\ref{sec:algorithm:twoassortments} when $M=2$ and $K = 1000$. \label{fig:speed_twoassortments} }{\TABLEfootnotesizeIX Results are averaged over 100 replications for each $n \in \{10,12,14,\ldots,100\}$.}
\end{figure}
\null
\vfill
\clearpage
}

 In Figures~\ref{fig:twoassortments:plots} and \ref{fig:speed_twoassortments}, we present the results of the numerical experiments conducted as described above.  In Figure~\ref{fig:twoassortments:plots}, we present the results of these numerical experiments in the case of $n = 10$ products over 1000 randomly-generated problem instances and for $K  = 10$. This figure compares the expected revenue under the best past assortment,   $\max_{m \in \mathcal{M}} r^\intercal v_m$, to the predicted expected revenue of the new assortment under the worst-case ranking-based choice model that is consistent with the historical sales data, $\min_{\lambda \in \mathcal{U}} \mathscr{R}^{{\lambda}}(S')$. These results, in addition to the example from \S\ref{sec:question1}, establish that there can exist assortments that satisfy line~\eqref{line:improvement}  in the case of $M = 2$; indeed, we observe from Figure~\ref{fig:twoassortments:plots} that there are problem instances for which the predicted expected revenue of the new assortment obtained by our algorithm from \S\ref{sec:algorithm:twoassortments}  is strictly greater than the expected revenue of the best past assortment under all ranking-based choice model that are consistent with the historical  sales data.  In Figure~\ref{fig:speed_twoassortments}, we show the average computation times for our algorithm from \S\ref{sec:algorithm:twoassortments} on problem instances in which the number of products is varied across $n \in \{10,12,14,\ldots,100\}$ and $K = 1000$. Here, we use the larger $K = 1000$ to ensure that the sparsity of the randomly-generated base parameters $\lambda^*$ does not  introduce any biases on the resulting computation times of our algorithm.  The results in Figure~\ref{fig:speed_twoassortments} show that the computation time of the algorithm remains under 30 seconds even when there are one hundred products. This is viewed as promising from a practical perspective, as it shows that  a general algorithm for solving the robust optimization problem~\eqref{prob:robust} in the case of $M = 2$ can scale to problem instances with  numbers of products that are realistic in applications such as brick-and-mortar retail.

\subsection{Numerical Experiments for Nested Past  Assortments}\label{appx:numerical:nested}

In this appendix, we show that the compact mixed-integer optimization problem from Theorem~\ref{thm:nested} can be solved in reasonable computation times for robust optimization problems~\eqref{prob:robust}  with  up to $M=20$ nested past assortments. 

To show this, we conduct numerical experiments on randomly-generated problem instances with nested past assortments. Specifically, in each problem instance, we generate a sparse base parameter vector $\lambda^*$ by  drawing a subset of rankings $\Sigma' \subseteq \Sigma$ of length $| \Sigma'| = 80$  uniformly at random, and then choosing the values of the nonzero base parameters  $\{ \lambda_\sigma^*: \sigma \in \Sigma'\}$   by drawing uniformly over the $80$-dimensional probability simplex. The revenues of the products are drawn from the distribution $r_1,\ldots,r_n \sim \textnormal{Uniform}\{1,\ldots,10000\}$, and we sort the products such that $r_1 < \cdots < r_n$. To construct a random collection of nested past assortments of length $M$, we draw a  permutation of products $\sigma: \mathcal{N} \to \mathcal{N}$ uniformly at random, draw a subset of distinct integers $1 \le q_1 < \cdots < q_{M-1} \le n-1$ uniformly at random,  and define the past assortments as $S_m \triangleq \{0 \} \cup  \{\sigma(i): i \in \{1,\ldots,q_m \}\}$ for each $m \in \{1,\ldots,M-1\}$ and $S_M \triangleq \mathcal{N}_0$.  We generate the historical sales data for each of the past assortments under the ranking-based choice model with the base parameters $\lambda^*$, and  we then use the algorithm from Theorem~\ref{thm:nested} to solve the robust optimization problem~\eqref{prob:robust} in the case of $\eta = 0$.

\begin{figure}[t]
\centering
\FIGURE{
\includegraphics[width=0.6\linewidth]{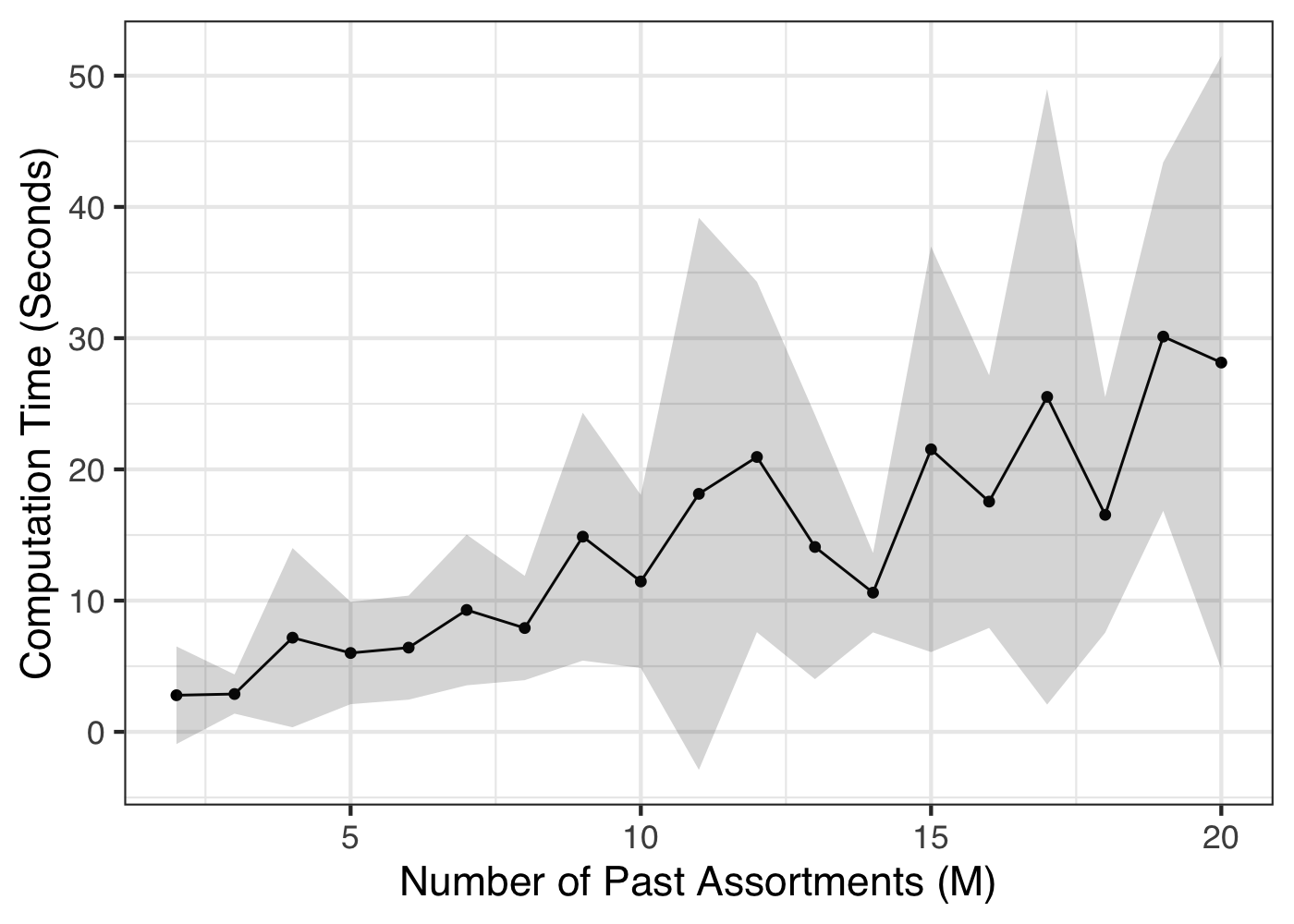}
}%
{Computation time for solving mixed-integer optimization problem from Theorem~\ref{thm:nested} with $n = 20$ products and $M \in \{2,\ldots,20\}$ nested past assortments.  \label{fig:nested:time}}%
{ Results are averaged over 10 replications for each $M \in \{2,\ldots,20\}$.}
\end{figure}

 In Figure~\ref{fig:nested:time}, we present the computation times of our numerical experiments averaged over ten replications for each selection of $M \in \{2,\ldots,20\}$ and $n = 20$.  To appreciate the computation times in Figure~\ref{fig:nested:time}, let us recall the algorithm from \S\ref{sec:algorithms:fixed_dim} for solving the robust optimization problem~\eqref{prob:robust}.  In the case of $M = 20$  past assortments, it follows from Lemma~\ref{lem:S_hat_for_S_tilde} that our algorithm from \S\ref{sec:algorithms:fixed_dim} would require iterating over as many as $2^{20-2} \approx 260000 $ assortments, and each iteration would have required solving a linear optimization problem with more than $2^{20-1} \approx 520000$ decision variables (see Corollary~\ref{cor:reform_of_L} and Proposition~\ref{prop:reform_wc}). In contrast, we observe from Figure~\ref{fig:nested:time} that the computation time of our mixed-integer optimization problem remained below 30 seconds on average. More generally, we conclude from Figure~\ref{fig:nested:time} that our mixed-integer optimization reformulation of the robust optimization problem~\eqref{prob:robust} from Theorem~\ref{thm:nested} can scale to applications with numbers of products and past assortments that are realistic in applications such as brick-and-mortar retail.

 \subsection{Numerical Experiments for Optimistic Optimization} \label{appx:numerical:optimistic}
In this appendix, we present numerical experiments that showcase the managerial value of  using the optimistic optimization problem~\eqref{prob:optimistic} from \S\ref{sec:question4:bestcase} to compute upper bounds on the maximum increase in expected revenue that a firm can hope to gain by experimenting with new assortments.\looseness=-1

To perform our numerical experiments,  we begin by constructing randomly-generated problem instances with nested past assortments. Specifically, in each problem instance, we generate a sparse base parameter vector $\lambda^*$ by randomly selecting a subset of rankings $\Sigma' \subseteq \Sigma$ of length $| \Sigma'| = 100$ and assigning  the  parameters  $\{ \lambda_\sigma: \sigma \in \Sigma'\}$   by drawing uniformly over the $100$-dimensional probability simplex.  Using this base choice for the parameters, we then generate historical sales data for each of the past assortments under the ranking-based choice model with the base parameters $\lambda^*$, after which we {forget} the base parameters $\lambda^*$. In each randomly-generated problem instance,  the revenues are drawn from the distribution $r_1,\ldots,r_n \sim \textnormal{Uniform}\{1,\ldots,10000\}$, and we sort the products such that $r_1 < \cdots < r_n$.  Finally, we use the algorithm from Theorem~\ref{thm:nested:optimistic} to compute the optimal objective value of the optimistic optimization problem~\eqref{prob:optimistic}. 

In Figure~\ref{fig:best_case}, we show the results of our experiments in cases where the collection of past assortments is the revenue-ordered assortments, the reverse revenue-ordered assortments, and neither revenue-ordered nor reverse revenue-ordered. In each of these three experiments, we randomly generate 100 problem instances and compute the optimal objective value of the optimistic optimization problem~\eqref{prob:optimistic}. Figure~\ref{fig:best_case} shows the value of $100\% \times (\max_{S \in \mathcal{S}} \max_{\lambda \in \mathcal{U}} \mathscr{R}^{\lambda}(S) - \max_{m \in \mathcal{M}} r^\intercal v_m) / ( \max_{m \in \mathcal{M}} r^\intercal v_m)$ for each problem instance and each experiment, which represents the upper bound on the potential increase in expected revenue (relative to the firm's best past assortment) that can be gained by experimentation. 

 The results from Figure~\ref{fig:best_case} show that the upper bound provided by \eqref{prob:optimistic} can provide practical value in helping firms decide whether to perform experimentation. For example, each of the three experiments shown in Figure~\ref{fig:best_case}  contain problem instances in which the optimal objective value of \eqref{prob:optimistic} is within 30\% of the expected revenue of the firm's best past assortment. %The quality of the upper bound improves when the past assortments are revenue-ordered; 
 We further observe from Figure~\ref{fig:best_case:rev_ordered} that the optimal objective value of \eqref{prob:optimistic} is within 15\% of the expected revenue of the firm's best past assortment for nearly every problem instance when the past assortments are the revenue-ordered assortments. %Furthermore, we observe from Figure~\ref{fig:best_case:rev_ordered} that the optimal objective value of \eqref{prob:optimistic} is within 5\% of the expected revenue of the firm's best past assortment for more than half of the problem instances. 
Such upper bounds observed for the problem instances in Figure~\ref{fig:best_case:rev_ordered} and many of the problem instances from Figure~\ref{fig:best_case:reverse_rev_ordered} and \ref{fig:best_case:variety} can be viewed as compelling evidence to firms that the benefits of experimentation can be limited. % limitations of experimentation in improving the firm's expected revenue. 
 
At first glance, the upper bounds shown in Figures~\ref{fig:best_case:reverse_rev_ordered} and \ref{fig:best_case:variety} appear to be  worse than the upper bounds shown in Figure~\ref{fig:best_case:rev_ordered}. However, the assessment of the quality of the upper bounds in Figures~\ref{fig:best_case:reverse_rev_ordered} and \ref{fig:best_case:variety} should be calibrated by the quality of the collections of past assortments used in those two subfigures. Indeed, we observe that the collections of past assortments used in Figures~\ref{fig:best_case:reverse_rev_ordered} and \ref{fig:best_case:variety} are not revenue-ordered assortments, and we have no theoretical or intuitive reasons to believe that the collections of past assortments used in Figures~\ref{fig:best_case:reverse_rev_ordered} and \ref{fig:best_case:variety} will typically contain assortments that are near-optimal with respect to the true ranking-based choice models $\lambda^*$ that generated the historical sales data. With these points in mind, the observations that the optimal objective value of \eqref{prob:optimistic} in  Figures~\ref{fig:best_case:reverse_rev_ordered} and \ref{fig:best_case:variety} are  sometimes tight (say, within 50\% or 30\% of the expected revenue of the firm's best past assortment) provides compelling evidence that \eqref{prob:optimistic} is capable of giving actionable bounds even when the firm's past assortments are not chosen strategically.

 \begin{figure}[t]
{\centering
\caption{Best-case improvement for different collections of past assortments  \label{fig:best_case}}
\subfloat[$\mathscr{M} = \left\{ \{0,10\},\{0,9,10\},\{0,8,9,10\},\ldots,\{0,1,\ldots,9,10\}\right\}$]{
\includegraphics[width=1\textwidth]{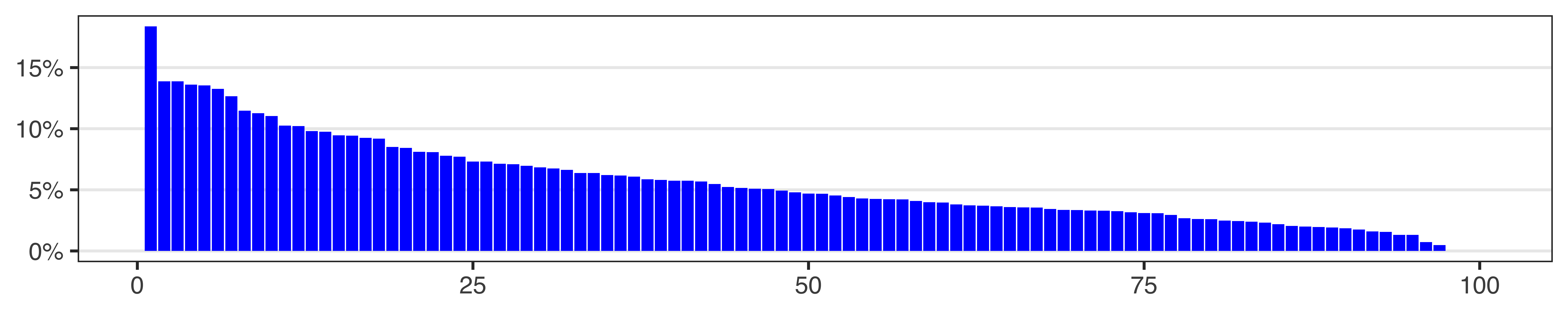}
\label{fig:best_case:rev_ordered}}

\subfloat[$\mathscr{M} = \left\{ \{0,10\},\{0,1,10\},\{0,1,2,10\},\ldots,\{0,1,\ldots,9,10\}\right\}$]{
\includegraphics[width=1\textwidth]{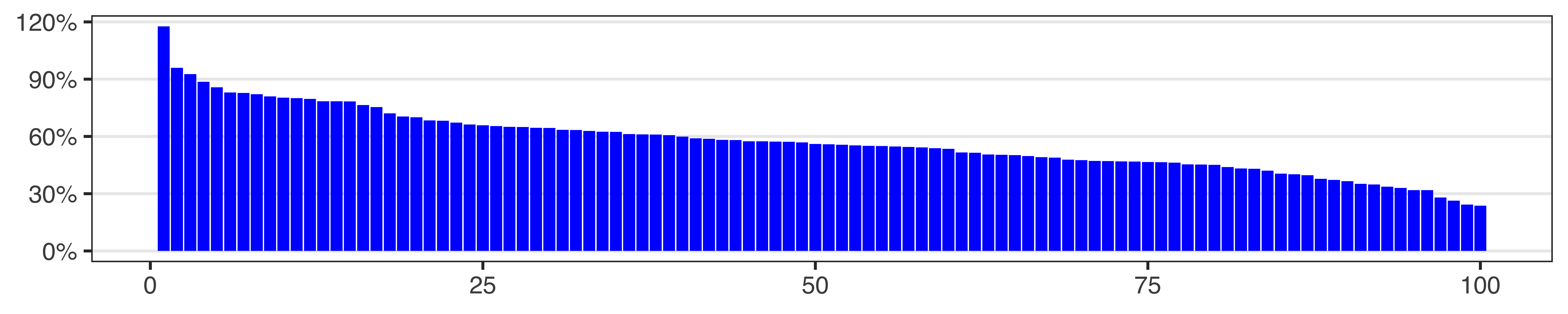}
\label{fig:best_case:reverse_rev_ordered}}

\subfloat[$\mathscr{M} = \{ \{0,3,8,13\},\{0,3,5,8,10,13,15\},\{0,1,3,5,6,8,10,11,13,15\}, \{0,1,3,4,5,6,8,9,10,11,13,14,15 \},\{0,\ldots,15\} \}$]{
\includegraphics[width=\linewidth]{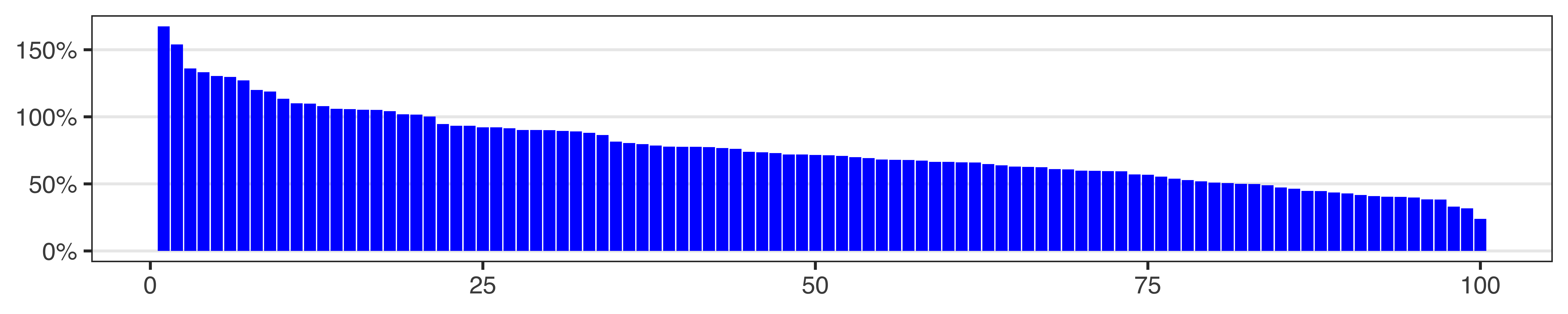}
 \label{fig:best_case:variety}
}}

\medskip

{\par\noindent  \FigureNoteStyle\HD{11}{0}{\it\FigureNoteName}\enskip \TABLEfootnotesizeIX {Each of the $x$-values in each experiment  corresponds to one of the 100 randomly-generated problem instances. For each randomly-generated problem instance, the corresponding $y$-value shown by the  blue bar is $100\% \times (\max_{S \in \mathcal{S}} \max_{\lambda \in \mathcal{U}} \mathscr{R}^{\lambda}(S) - \max_{m \in \mathcal{M}} r^\intercal v_m) / ( \max_{m \in \mathcal{M}} r^\intercal v_m)$, which is the  percentage increase of the optimal objective value of  \eqref{prob:optimistic} compared to the expected revenue of the firm's best past assortment.  The top figure shows problem instances in which the past assortments are the revenue-ordered assortments with $n = 10$ products; the middle figure shows problem instances in which the past assortments are the reverse revenue-ordered assortments with $n = 10$ products; the bottom figure shows problem instances with a collection of nested  past assortments with a complicated and more realistic structure, $n = 15$ products, and $M = 5$ past assortments.} \endgraf}
  
\end{figure}

 \subsection{Numerical Experiments for Pareto Optimization: The Exact Case} \label{appx:numerical:pareto}
In this appendix, we showcase the managerial value of the exact algorithm from \S\ref{sec:question4:pareto:exact} for solving the Pareto optimization problem~\eqref{prob:pareto} to find new assortments that can be used by firms for  low-risk experimentation.

 To perform our numerical experiments,  we begin by constructing randomly-generated problem instances with nested past assortments. Specifically, in each problem instance, we generate a sparse base parameter vector $\lambda^*$ by randomly selecting a subset of rankings $\Sigma' \subseteq \Sigma$ of length $| \Sigma'| = 80$ and assigning  the  parameters  $\{ \lambda_\sigma: \sigma \in \Sigma'\}$   by drawing uniformly over the $80$-dimensional probability simplex.  Using this base choice for the parameters, we then generate historical sales data for each of the past assortments under the ranking-based choice model with the base parameters $\lambda^*$, after which we {forget} the base parameters $\lambda^*$. In each randomly-generated problem instance,  the revenues are drawn from the distribution $r_1,\ldots,r_n \sim \textnormal{Uniform}\{1,\ldots,10000\}$, and we sort the products such that $r_1 < \cdots < r_n$.  Finally, we use the algorithm from Theorem~\ref{thm:nested} to compute the optimal objective value of the robust optimization problem~\eqref{prob:robust}, and we then use the algorithm from Theorem~\ref{thm:nested_reform_pareto} to solve the optimization problem~\eqref{prob:pareto} for each parameter $\theta \in \{q \times \eqref{prob:robust}: q \in \{0, 0.01,\ldots,0.99,1.00\}\}$ to obtain a new assortment, denoted by $S'$.

\begin{figure}[t]
{\centering
\caption{Pareto frontiers of new assortments from different collections of past assortments  \label{fig:nested}}
\subfloat[$\mathscr{M} = \left\{ \{0,10\},\{0,9,10\},\{0,8,9,10\},\ldots,\{0,1,\ldots,9,10\}\right\}$]{
\includegraphics[width=1\textwidth]{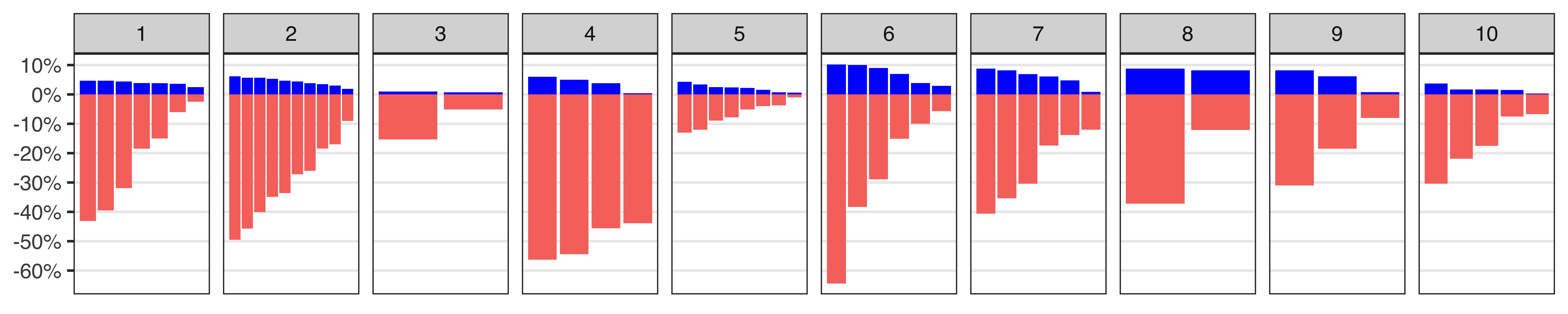}
\label{fig:nested:rev_ordered}}

\subfloat[$\mathscr{M} = \left\{ \{0,10\},\{0,1,10\},\{0,1,2,10\},\ldots,\{0,1,\ldots,9,10\}\right\}$]{
\includegraphics[width=1\textwidth]{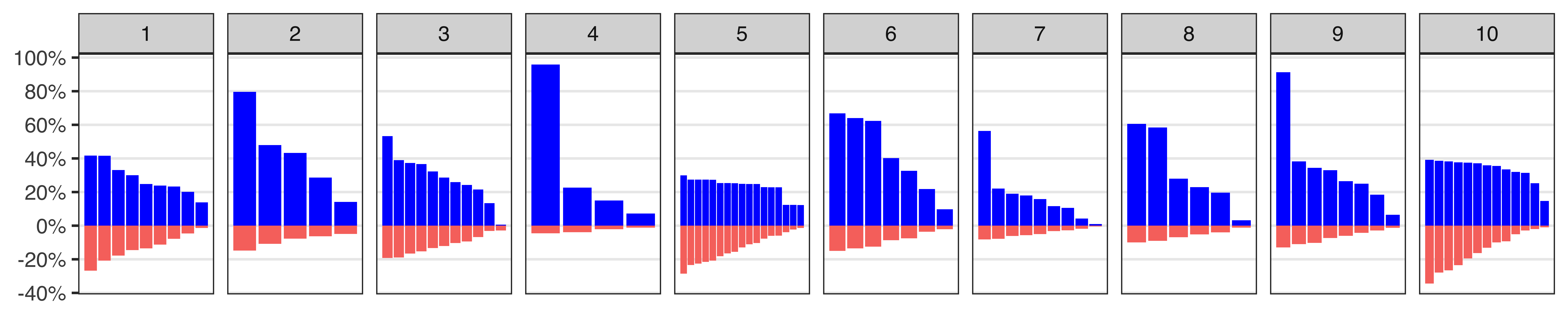}
\label{fig:nested:reverse_rev_ordered}}

\subfloat[$\mathscr{M} = \{ \{0,3,8,13\},\{0,3,5,8,10,13,15\},\{0,1,3,5,6,8,10,11,13,15\}, \{0,1,3,4,5,6,8,9,10,11,13,14,15 \},\{0,\ldots,15\} \}$]{
\includegraphics[width=\linewidth]{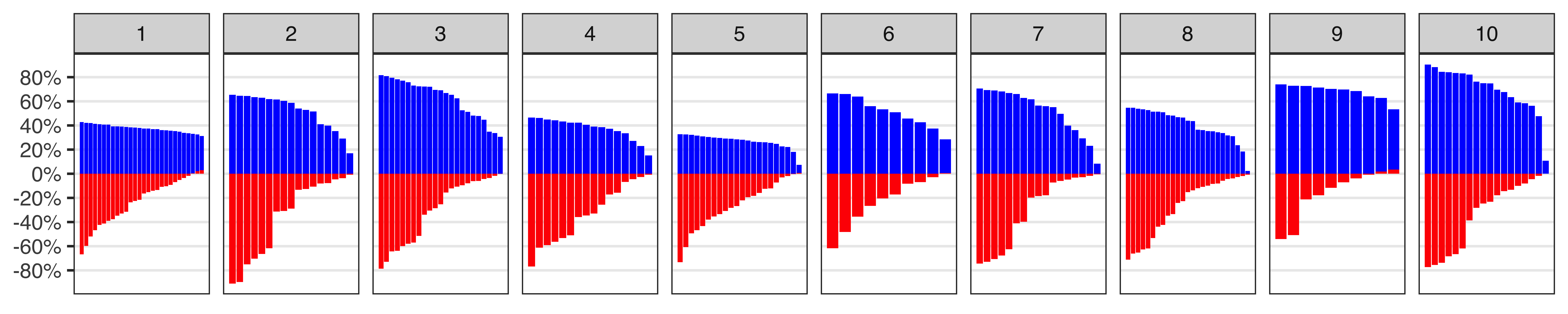}
 \label{fig:nested:variety}
}
}

\medskip

{\par\noindent  \FigureNoteStyle\HD{11}{0}{\it\FigureNoteName}\enskip \TABLEfootnotesizeIX {Each of the $x$-values in each problem instance corresponds to one of the \emph{unique} assortments} obtained from solving the optimization problem~\eqref{prob:pareto} across the choices of parameters $\theta \in \{q \times \eqref{prob:robust}: q \in \{0, 0.01,\ldots,0.99,1.00\}\}$. The corresponding interval of $y$-values formed by the red and blue bars is the interval $[\Pi_\lambda: \lambda \in \mathcal{U}]$,  where $\Pi_\lambda \triangleq 100\% \times (\mathscr{R}^{\lambda}(S') - \max_{m \in \mathcal{M}} r^\intercal v_m) / ( \max_{m \in \mathcal{M}} r^\intercal v_m)$ is the relative percentage improvement of the expected revenue of the new assortment over the expected revenue of the firm's best past assortment for a given ranking-based choice model $\lambda \in \mathcal{U}$.  The top figure shows problem instances in which the past assortments are the revenue-ordered assortments with $n = 10$ products; the middle figure shows problem instances in which the past assortments are the reverse revenue-ordered assortments with $n = 10$ products; the bottom figure shows problem instances with a collection of nested  past assortments with a complicated and more realistic structure, $n = 15$ products, and $M = 5$ past assortments. \endgraf}
  
\end{figure}

In Figure~\ref{fig:nested}, we show the results of our experiments in cases where the collection of past assortments is the revenue-ordered assortments, the reverse revenue-ordered assortments, and neither revenue-ordered nor reverse revenue-ordered. In each of these experiments, we visualize the Pareto frontier of new assortments generated under ten randomly-generated instances of the specified collections of past assortments. Compared to new assortments found using estimate-then-optimize,  the Pareto frontier of new assortments found from solving \eqref{prob:pareto}  enables firms to select a new assortment that maximizes their best-case improvement in expected revenue for any guaranteed level of risk that a firm is willing to incur. This distinction is particularly salient in the case where the past assortments are the revenue-ordered assortments (top of Figure~\ref{fig:nested}). Indeed, we recall from \S\ref{sec:question1} and Appendix~\ref{appx:riskseto} that the assortments found using estimate-then-optimize can have significant declines in expected revenue compared to the best expected revenue from the firm's past assortments.  In contrast, we observe from Figure~\ref{fig:nested}a that nine out of the ten problem instances yielded new assortments that firms can offer to their customers that, under any ranking-based choice model that is consistent with the firm's historical sales data, will cause the firm's expected revenue to decline by  no more than 12\%. Hence, the results from Figure~\ref{fig:nested} show that our algorithms can offer firms a low-risk way to experiment with the assortments that they offer to their customers.
%\item For the more realistic example in Figure~\ref{fig:nested}c, we observe that the Pareto frontier of new assortments found from solving~\eqref{prob:pareto} includes new assortments that offer very high best-case increases in expected revenue (20\%-60\%) while enjoying very low worst-case decreases in expected revenues (1\%-10\%). In fact, several of the problem instances shown in Figure~6c (instances 1, 9, and 10) show that there exist new assortments in which the worst-case change in expected revenue is strictly positive, that is, there exist assortments that satisfy~\eqref{line:improvement}.
%\end{itemize}

 \subsection{Numerical Experiments for Pareto Optimization: The Approximate Case} \label{appx:numerical:pareto:approximate}
In this appendix, we showcase the managerial value of the heuristic from \S\ref{sec:question4:pareto:approximate} for the Pareto optimization problem~\eqref{prob:pareto} to find new assortments that can be used by firms for  low-risk experimentation.

Our experiments in this appendix are based on a real conjoint dataset involving laptop bags sold by the firm  Timbuk2. This conjoint dataset was first described by \cite{toubia2003fast} and has subsequently been studied in the context of assortment optimization by \cite{belloni2008optimizing} and \cite{bertsimas2019exact}. The dataset from Timbuk2\footnote{We accessed the publicly available dataset from  \url{https://github.com/vvmisic/optimalPLD}.} consists of $3584$ variations of laptop bags, and the goal of the firm in the assortment optimization context to select a subset of the laptop bag variations to that maximizes the firm's expected revenue. The conjoint study performed by  \cite{toubia2003fast} included $330$ respondents, each of whom conveyed their preferences for different variations of the products. Based on their responses, \cite{toubia2003fast} estimated the utilities for each of the variations of laptop bags for each respondent, which was then converted into rankings at various price points by \cite{belloni2008optimizing}. The resulting problem faced in this example is an assortment optimization problem in which the number of products is $n = 3594$, the ranking-based choice model satisfies $\|\lambda^*\|_0 = 330$, and the weights of the rankings corresponding to each of the respondents satisfies $\lambda^*_\sigma = \frac{1}{330}$.

We take several steps to convert the real data from the conjoint study into the setting studied in this paper.  We begin by assuming that  true ranking-based choice model $\lambda^*$ is unknown, and that the only information available to the firm about the true choice model comes from historical sales data generated by $M \in \{3,4,5\}$ past assortments. We also restrict the universe of products from 3594 products to $15$ products by drawing the subset uniformly at random and rejecting any subset that contains products with the identical revenues. We then randomly generate the  past assortments $S_1,\ldots,S_M \in \mathcal{S}$, whereby the assortments satisfy $\{0,n\} \subseteq S_1 \cap  \cdots \cap  S_M$ and, for each of the remaining products $j \in \{1,\ldots,n-1\}$, we randomly assign the product to the past assortments $I \subseteq \mathcal{M}$ with distribution given by 
\begin{align*}
P \left(j \in \bigcap_{m \in I}  S_m \setminus \bigcup_{m \notin I} S_m \right) = \begin{cases}
\frac{1}{2^M-1},&\text{if } | I | \ge 1,\\
0,&\text{otherwise}. 
\end{cases}
%P \left(j \in S_m \forall m \in I \text{ and } j \notin S_m \forall m \in \{1,\ldots,M\} \setminus I \right) = \frac{1}{2^M-1} \forall I \subseteq \{1,\ldots,M\} \text{ that satisfy } | I| \ge 1
\end{align*}
The above approach to constructing past assortments is a generalization of the approach used in Appendix~\ref{sec:twoassortments:numerics}, and our approach for constructing past assortments ensures that each product appears in at least one past assortment. Finally, we use the heuristic from \S\ref{sec:question4:pareto:approximate} to compute  the approximate Pareto frontier of new assortments defined on line~\eqref{line:paretofrontier}. We repeat this entire procedure ten times for each $M \in \{3,4,5\}$.

In Figure~\ref{fig:conjoint}, we show the results of our experiments with $M \in \{3,4,5\}$ past assortments.  % In each of these three experiments (which are shown as the rows of Figure~\ref{fig:conjoint}), we visualize the approximate Pareto frontier of new assortments defined on line~\eqref{line:paretofrontier} in \S\ref{sec:question4:pareto:approximate} across ten randomly-generated collections of past assortments and $n=15$ products.
 The results of Figure~\ref{fig:conjoint} show that the approximate Pareto frontier often contains new assortments that have high best-case expected revenue and high true expected revenue  relative to the  expected revenue of the firm's best past assortment. Moreover, the results from Figure~\ref{fig:conjoint} indicate that the assortments from the approximate Pareto frontier with the highest true expected revenue (i.e., the assortments with the highest black dots) are often the assortments from the approximate Pareto frontier with worst-case expected revenue that is rather close (e.g., within 10\%) of the expected revenue of the firm's best past assortment. These results suggest that the approximate Pareto frontier obtained using the heuristic from \S\ref{sec:question4:pareto:approximate} can contain new assortments that lead to  increases in the firm's expected revenue with respect to a true but unknown ranking-based choice model $\lambda^*$, while simultaneously enjoying guarantees that implementing the new assortments will not lead to meaningful worst-case decreases in the firm's expected revenue.

\begin{figure}[t]
{\centering
\caption{Approximate Pareto frontiers for Conjoint Dataset \label{fig:conjoint}}
\includegraphics[width=1\textwidth]{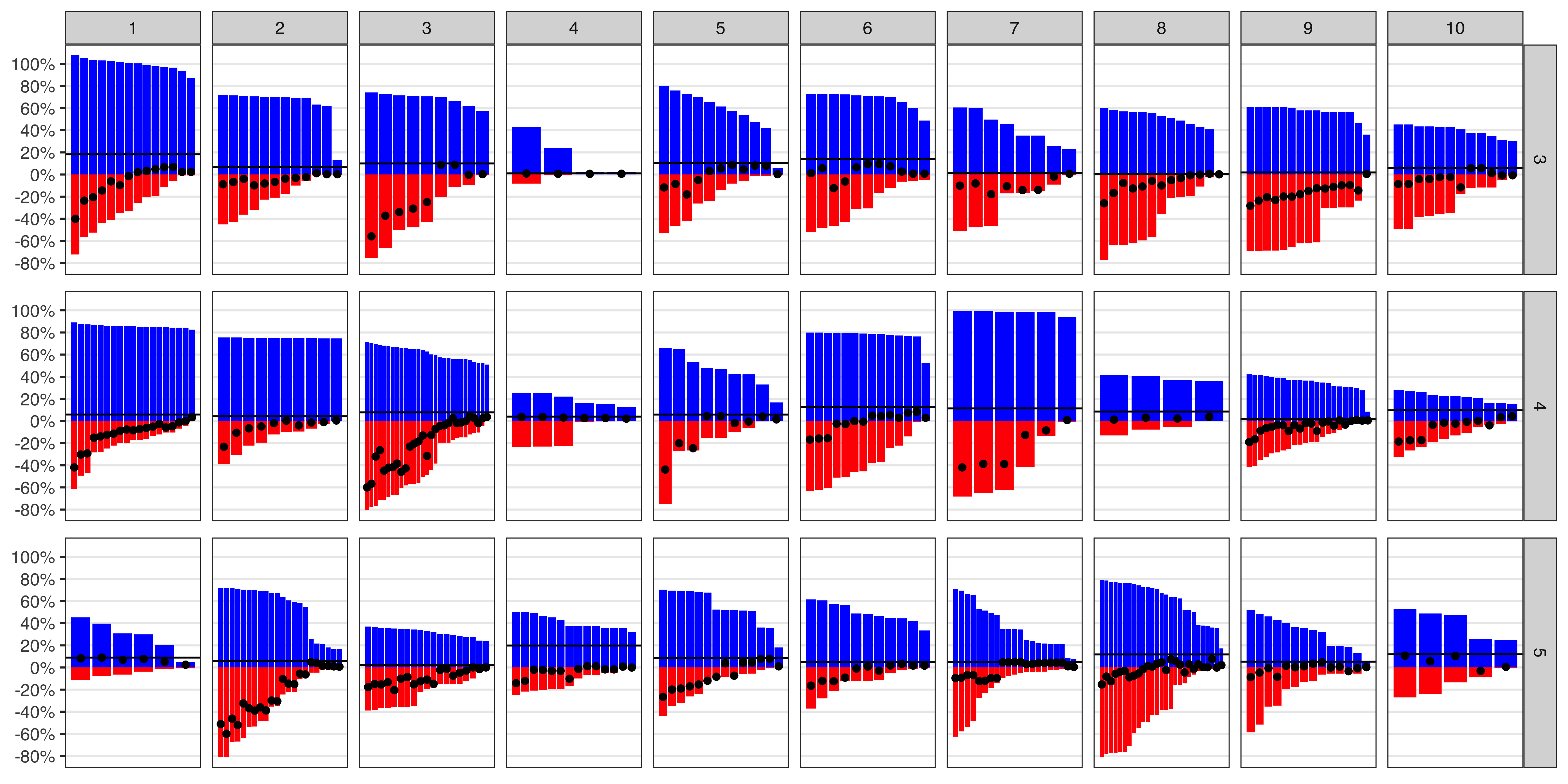}
}

\medskip

{\par\noindent  \FigureNoteStyle\HD{11}{0}{\it\FigureNoteName}\enskip \TABLEfootnotesizeIX Each row corresponds to the number of past assortments, $M \in \{3,4,5\}$. For each number of past assortments $M$, we randomly generate ten problem instances (where each problem instance consists of a randomly-generated collection of past assortments and a random selection of the $n=15$ products).  For each problem instance, the distinct $x$-values have a one-to-one correspondence with the approximate Pareto frontier of assortments defined on line~\eqref{line:paretofrontier} in \S\ref{sec:question4:pareto:approximate}. For each assortment $S'$ in the approximate Pareto frontier, the corresponding interval of $y$-values formed by the red and blue bars is the interval $[\Pi_\lambda: \lambda \in \mathcal{U}]$,  where $\Pi_\lambda \triangleq 100\% \times (\mathscr{R}^{\lambda}(S') - \max_{m \in \mathcal{M}} r^\intercal v_m) / ( \max_{m \in \mathcal{M}} r^\intercal v_m)$ is the relative percentage improvement of the expected revenue of the new assortment over the expected revenue of the firm's best past assortment for a given ranking-based choice model $\lambda \in \mathcal{U}$.  The black dot for each assortment in the approximate Pareto frontier shows $100\% \times (\mathscr{R}^{\lambda^*}(S') - \max_{m \in \mathcal{M}} r^\intercal v_m) / ( \max_{m \in \mathcal{M}} r^\intercal v_m$, i.e., the percentage improvement of the true expected revenue of assortment $S'$ with respect to the true ranking-based choice model $\lambda^*$ from the conjoint dataset over the expected revenue of the firm's best past assortment. The solid horizontal line in each problem instance shows $100\% \times (\max_{S \in \mathcal{S}} \mathscr{R}^{\lambda^*}(S) - \max_{m \in \mathcal{M}} r^\intercal v_m) / ( \max_{m \in \mathcal{M}} r^\intercal v_m$, i.e., the percentage improvement of the expected revenue of the optimal assortment with respect to the true ranking-based choice model $\lambda^*$ from the conjoint dataset over the expected revenue of the firm's best past assortment.    \endgraf}
  
\end{figure}

\section{Proofs of Theorem~\ref{thm:main} from \S\ref{sec:characterization} and Technical Results from \S\ref{sec:characterization:proof}} \label{appx:characterization_proofs}
This appendix contains the proof of Theorem~\ref{thm:main} by following the proof outline described in \S\ref{sec:characterization:proof}. 
We begin in Appendix~\ref{appx:characterization_proofs:reformulation} by  presenting the proof of Proposition~\ref{prop:reform_wc} from \S\ref{sec:characterization:proof}. In Appendix~\ref{appx:graphical}, we present the graphical interpretation of Definition~\ref{defn:rho} from \S\ref{sec:characterization:proof}.  In Appendix~\ref{appx:prop2}, we use this graphical interpretation  to prove   Proposition~\ref{prop:plus_one_inequality_prop} from \S\ref{sec:characterization:proof}. We conclude in Appendix~\ref{appx:proof_main} by using Propositions~\ref{prop:reform_wc} and \ref{prop:plus_one_inequality_prop}  to prove Theorem~\ref{thm:main}.

\subsection{Proof of Proposition~\ref{prop:reform_wc}} \label{appx:characterization_proofs:reformulation}
Following the notation from \S\ref{sec:setting}, we  readily observe that the worst-case expected revenue  for the fixed assortment $S \in \mathcal{S}$ is equal to the optimal objective value of the following linear optimization 
 problem:
\begin{align}  \tag{WC-$S$} \label{prob:wc}
 &\begin{aligned}
& \; \underset{\lambda, \epsilon}{\textnormal{minimize}} && \sum_{\sigma \in \Sigma}  \sum_{i \in S} r_i  \mathbb{I} \left \{ i = \argmin_{j \in S} \sigma(j)\right \} \lambda_\sigma   \\
&\textnormal{subject to}&&   \sum_{\sigma \in \Sigma}\mathbb{I} \left \{ i = \argmin_{j \in S_m} \sigma(j)\right \} \lambda_\sigma  - \epsilon_{m,i} =  v_{m,i} \quad  \forall m \in \mathcal{M} \text{ and } i \in S_m\\
&&& \sum_{\sigma \in \Sigma} \lambda_\sigma = 1\\
&&& \| \epsilon \| \le \eta \\
&&& \lambda_{\sigma} \ge 0 \quad \forall \sigma \in \Sigma.
\end{aligned}
 \end{align}
For every assortment $S' \in \mathcal{S}$, product in the assortment $i \in S'$, and ranking $\sigma \in \Sigma$, it follows immediately from Definition~\ref{defn:D} that the equality $\mathbb{I} \left \{ i = \argmin_{j \in S'} \sigma(j) \right \} = 1$ is satisfied if and only if $\sigma \in \mathcal{D}_i(S')$. Therefore, for every assortment $S \in \mathcal{S}$, we observe that the linear optimization problem~\eqref{prob:wc} is equivalent to
\begin{align}  
 &\begin{aligned}
& \; \underset{\lambda, \epsilon}{\textnormal{minimize}} &&   \sum_{i \in S} \left(\sum_{\sigma \in \mathcal{D}_i(S)}  \lambda_\sigma \right) r_i    \\
&\textnormal{subject to}&&   \sum_{\sigma \in \mathcal{D}_i(S_m)} \lambda_\sigma  - \epsilon_{m,i} = v_{m,i}  &&  \forall m \in \mathcal{M} \text{ and } i \in S_m\\
&&&  \sum_{\sigma \in \Sigma} \lambda_\sigma = 1\\
&&& \| \epsilon \| \le \eta \\
&&& \lambda_{\sigma} \ge 0 \quad \forall \sigma \in \Sigma. 
\end{aligned}\tag{WC-$S$-1} \label{prob:wc:1}
 \end{align}
 Moreover,  we recall from Definitions~\ref{defn:D} and \ref{defn:L}  that there is a unique tuple of products $(i_1,\ldots,i_M) \in \mathcal{L}$ corresponding to each ranking $\sigma \in \Sigma$ that satisfies  $\sigma \in \cap_{m \in \mathcal{M}} \mathcal{D}_{i_m}(S_m)$. Therefore,   it follows immediately from Definitions~\ref{defn:D} and \ref{defn:L}  that the linear optimization problem~\eqref{prob:wc:1} is equivalent to 
 \begin{align}  
 &\begin{aligned}
& \; \underset{\lambda, \epsilon}{\textnormal{minimize}} &&   \sum_{(i_1,\ldots,i_M) \in \mathcal{L}} \sum_{i \in S} r_i \left( \sum_{\sigma \in  \cap_{m \in \mathcal{M}} \mathcal{D}_{i_m}(S_m) \cap \mathcal{D}_i(S)  }  \lambda_\sigma \right)    \\
&\textnormal{subject to}&&  \sum_{(i_1,\ldots,i_M) \in \mathcal{L}: \; i_m = i} \left(  \sum_{\sigma \in  \cap_{m \in \mathcal{M}} \mathcal{D}_{i_m}(S_m) } \lambda_\sigma \right) - \epsilon_{m,i} = v_{m,i}  &&  \forall m \in \mathcal{M} \text{ and } i \in S_m\\
&&& \sum_{(i_1,\ldots,i_M) \in \mathcal{L}}\left(  \sum_{\sigma \in \cap_{m \in \mathcal{M}} \mathcal{D}_{i_m}(S_m)} \lambda_\sigma \right) = 1\\
&&& \| \epsilon \| \le \eta \\
&&& \lambda_{\sigma} \ge 0 \quad \forall \sigma \in \Sigma.
\end{aligned}\tag{WC-$S$-2} \label{prob:wc:2}
 \end{align}
We now simplify the linear optimization problem~\eqref{prob:wc:2} by performing a transformation on its decision variables. Specifically, we transform \eqref{prob:wc:2} by creating the following new decision variables for each tuple of products $(i_1,\ldots,i_M) \in \mathcal{L}$ and each product $i \in S$: 
\begin{align*}
\lambda_{i_1 \cdots i_M} &\leftarrow \sum_{\sigma \in \cap_{m \in \mathcal{M}} \mathcal{D}_{i_m}(S_m)} \lambda_\sigma; &  \omega_{i,i_1 \cdots i_M} &\leftarrow \sum_{\sigma \in \cap_{m \in \mathcal{M}} \mathcal{D}_{i_m}(S_m) \cap \mathcal{D}_i(S)} \lambda_\sigma. 
 \end{align*}
Let us make three observations about the new decision variables defined above. First, we observe immediately from the definition of the new decision variables $\omega_{i,i_1 \cdots i_M}$ that the equality $\omega_{i,i_1 \cdots i_M}  = 0$ must hold for each tuple of products $(i_1,\ldots,i_M) \in \mathcal{L}$ and product $i \in S$ that satisfy $\cap_{m \in \mathcal{M}} \mathcal{D}_{i_m}(S_m) \cap \mathcal{D}_i(S) = \emptyset$. Second, we observe that the nonnegativity constraints $\lambda_\sigma \ge 0$ in the linear optimization problem~\eqref{prob:wc:2} imply that the new  decision variables must also satisfy the nonnegativity constraints $\lambda_{i_1 \cdots i_M} \ge 0$ and $\omega_{i, i_1 \cdots i_M} \ge 0$. Third, we observe from the  definitions of the new decision variables that the equality  $\sum_{i \in S} \omega_{i,i_1 \cdots i_M}  = \lambda_{i_1 \cdots i_M}$ must hold for every tuple of products $(i_1,\ldots,i_M) \in \mathcal{L}$.  Based on the definitions of the new decision variables and by applying the aforementioned three observations, we  have shown that the linear optimization problem~\eqref{prob:wc:2} is equivalent to
   \begin{align}  &\begin{aligned}
& \; \underset{\lambda, \epsilon, \omega}{\textnormal{minimize}} &&   \sum_{(i_1,\ldots,i_M) \in \mathcal{L}}  \sum_{i \in S} r_i \omega_{i,i_1 \cdots i_M} \\
&\textnormal{subject to}&&
\begin{aligned}[t]
& \sum_{i \in S} \omega_{i,i_1 \cdots i_M}  = \lambda_{i_1 \cdots i_M} && \forall (i_1,\ldots,i_M) \in \mathcal{L}\\
 & \sum_{(i_1,\ldots,i_M) \in \mathcal{L}: \; i_m = i}  \lambda_{i_1 \cdots i_M}  - \epsilon_{m,i} = v_{m,i}  && \forall m \in \mathcal{M} \text{ and } i \in S_m\\
& \sum_{(i_1,\ldots,i_M) \in \mathcal{L}} \lambda_{i_1 \cdots i_M} = 1\\
\end{aligned}\\
&&&\begin{aligned}[t]
&\| \epsilon \| \le \eta \\
& \lambda_{i_1 \cdots i_M} \ge 0  && \forall (i_1,\ldots,i_M) \in \mathcal{L}\\
 & \omega_{i,i_1 \cdots i_M} \ge 0 &&  \forall i \in S \text{ and } (i_1,\ldots,i_M) \in \mathcal{L}\\
 & \omega_{i,i_1 \cdots i_M} = 0 &&  \forall i \in S   \text{ and } (i_1,\ldots,i_M) \in \mathcal{L} \textnormal{ such that } \cap_{m \in \mathcal{M}} \mathcal{D}_{i_m}(S_m) \cap \mathcal{D}_i(S) = \emptyset.
\end{aligned}
\end{aligned}\tag{WC-$S$-3} \label{prob:wc:3}
 \end{align}
After rearranging terms, we observe that the linear optimization problem~\eqref{prob:wc:3} is equivalent to
   \begin{align} 
 &\begin{aligned}
& \; \underset{\lambda, \epsilon}{\textnormal{minimize}} &&   \sum_{(i_1,\ldots,i_M) \in \mathcal{L}} \left[  \begin{aligned}
&\underset{\omega 
}{\textnormal{minimize}}&& \sum_{i \in S} r_i \omega_{i,i_1 \cdots i_M}\\
&\textnormal{subject to}&& \sum_{i \in S} \omega_{i,i_1 \cdots i_M}  = \lambda_{i_1 \cdots i_M}\\
&&& \omega_{i,i_1 \cdots i_M} \ge 0 \quad \forall i \in S\\
 &&& \omega_{i,i_1 \cdots i_M} = 0  \quad \forall i \in S   \textnormal{ such that}  \cap_{m \in \mathcal{M}} \mathcal{D}_{i_m}(S_m) \cap \mathcal{D}_i(S) = \emptyset
\end{aligned}\right]   \\
&\textnormal{subject to}&& \begin{aligned}[t]
& \sum_{(i_1,\ldots,i_M) \in \mathcal{L}: \; i_m = i}  \lambda_{i_1 \cdots i_M}  - \epsilon_{m,i} = v_{m,i}  \quad  \forall m \in \mathcal{M} \text{ and } i \in S_m\\
& \sum_{(i_1,\ldots,i_M) \in \mathcal{L}} \lambda_{i_1 \cdots i_M} = 1\\
%&&&\sum_{\sigma \in \Sigma} \lambda_\sigma = 1 \\
& \| \epsilon \| \le \eta \\
& \lambda_{i_1 \cdots i_M} \ge 0 \quad \forall (i_1,\ldots,i_M) \in \mathcal{L}.
\end{aligned}
\end{aligned}\tag{WC-$S$-4} \label{prob:wc:4}
 \end{align}
\begin{comment}
   \begin{align}  %\tag{WC} \label{prob:wc}
 &\begin{aligned}
& \; \underset{\lambda, \epsilon}{\textnormal{minimize}} &&   \sum_{(i_1,\ldots,i_M) \in \mathcal{L}} F_{i_1 \cdots i_M}(S, \lambda_{i_1 \cdots i_M})   \\
&\textnormal{subject to}&& \begin{aligned}[t]
& \sum_{(i_1,\ldots,i_M) \in \mathcal{L}: \; i_m = i}  \lambda_{i_1 \cdots i_M}  - \epsilon_{m,i} = v_{m,i}  &&  \forall m \in \mathcal{M} \text{ and } i \in S_m\\
& \sum_{(i_1,\ldots,i_M) \in \mathcal{L}} \lambda_{i_1 \cdots i_M} = 1\\
%&&&\sum_{\sigma \in \Sigma} \lambda_\sigma = 1 \\
& \| \epsilon \| \le \eta \\
& \lambda_{i_1 \cdots i_M} \ge 0 && \forall (i_1,\ldots,i_M) \in \mathcal{L}.
\end{aligned}
\end{aligned}\tag{WC-$S$-4} \label{prob:wc:4}
 \end{align}
 \end{comment}
Finally, we readily observe from Definition~\ref{defn:rho} that the following equality holds for every $(i_1,\ldots,i_M) \in \mathcal{L}$:
\begin{align*}
 \left[  \begin{aligned}
&\underset{\omega %\omega_{i,i_1 \cdots i_M} \; \forall i \in S
}{\textnormal{minimize}}&& \sum_{i \in S} r_i \omega_{i,i_1 \cdots i_M}\\
&\textnormal{subject to}&& \sum_{i \in S} \omega_{i,i_1 \cdots i_M}  = \lambda_{i_1 \cdots i_M}\\
&&& \omega_{i,i_1 \cdots i_M} \ge 0 \quad \forall i \in S\\
 &&& \omega_{i,i_1 \cdots i_M} = 0  \quad \forall i \in S   \textnormal{ such that}  \cap_{m \in \mathcal{M}} \mathcal{D}_{i_m}(S_m) \cap \mathcal{D}_i(S) = \emptyset
\end{aligned}\right]  =  \rho_{i_1 \cdots i_M}(S) \lambda_{i_1\cdots i_M}.
\end{align*}
It follows from the above equality that \eqref{prob:wc:4} is equivalent to the linear optimization problem~\eqref{prob:robust_simplified}, which concludes our proof of Proposition~\ref{prop:reform_wc}. \halmos 

\subsection{A Graphical Interpretation of Definition~\ref{defn:rho}} %$\rho_{i_1 \cdots i_M}(S)$}
\label{appx:graphical}
In view of our overarching strategy for the proof of Proposition~\ref{prop:plus_one_inequality_prop} that is described at the end of \S\ref{sec:characterization}, 
we now proceed to analyze the behavior of the functions  $S \mapsto \rho_{i_1 \cdots i_M}(S)$.  In particular, we will show in the rest of Appendix~\ref{appx:graphical} that 
$\rho_{i_1 \cdots i_M}(S)$ can be computed by analyzing  the {reachability} of vertices in a directed acyclic graph. In the subsequent  Appendix~\ref{appx:prop2}, we will use this graphical interpretation of Definition~\ref{defn:rho} to show for every arbitrary assortment $S \in \mathcal{S}$ that we can construct  an assortment ${S}' \in \widehat{\mathcal{S}}$ that   satisfies $\rho_{i_1 \cdots i_M}(S) \le \rho_{i_1 \cdots i_M}(S')$ for all tuples of products $(i_1,\ldots,i_M) \in \mathcal{L}$, thereby completing the proof of Theorem~\ref{thm:main}.

  To develop our alternative representation of $\rho_{i_1 \cdots i_M}(S)$, we begin  by showing that the set of tuples of products $\mathcal{L}$ can be interpreted as a set of {directed acyclic graphs}. 
Indeed, consider any selection of products from each of the past assortments,  $(i_1,\ldots,i_M) \in S_1 \times \cdots \times S_M$. From this tuple of products, we will construct a directed graph, denoted by $\mathcal{G}_{i_1 \cdots i_M}$, in which the set of vertices in the graph is equal to $\mathcal{N}_0$, and the graph has a directed edge $(i,i_m)$ from vertex $i$ to vertex $i_m$  for each past assortment $m \in \mathcal{M}$ and each product $i \in S_m \setminus \{i_m\}$. 
In Figure~\ref{fig:exhaustive}, we present visualizations of the directed graphs generated by this construction procedure. In the first intermediary result of this subsection, presented below as Lemma~\ref{lem:dag}, we show that the tuple of products $(i_1,\ldots,i_M)$  is an element of $\mathcal{L}$ if and only if the directed graph $\mathcal{G}_{i_1 \cdots i_M}$ is acyclic.

\begin{lemma} \label{lem:dag}
$(i_1,\ldots,i_M) \in \mathcal{L}$ if and only if $\mathcal{G}_{i_1 \cdots i_M}$ is acyclic.
\end{lemma}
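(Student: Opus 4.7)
The plan is to prove both directions by identifying rankings $\sigma \in \Sigma$ with linear orderings of the vertex set $\mathcal{N}_0$, and then observing that the edges of $\mathcal{G}_{i_1 \cdots i_M}$ are exactly the pairs $(i,i_m)$ that a ranking $\sigma \in \bigcap_{m \in \mathcal{M}} \mathcal{D}_{i_m}(S_m)$ must direct from a ``higher-rank'' vertex $i$ to a ``lower-rank'' vertex $i_m$. Under this correspondence, the existence of such a $\sigma$ becomes equivalent to the existence of a (reverse) topological ordering of $\mathcal{G}_{i_1 \cdots i_M}$, and the equivalence with acyclicity is then standard.

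For the forward direction, I would start by assuming $(i_1,\ldots,i_M) \in \mathcal{L}$ and fixing a witnessing ranking $\sigma \in \bigcap_{m \in \mathcal{M}} \mathcal{D}_{i_m}(S_m)$. By Definition~\ref{defn:D}, this means that for every $m \in \mathcal{M}$ and every $i \in S_m \setminus \{i_m\}$ we have $\sigma(i_m) < \sigma(i)$. Since by construction every edge of $\mathcal{G}_{i_1 \cdots i_M}$ has the form $(i, i_m)$ with $i \in S_m \setminus \{i_m\}$, each edge $(u,v)$ satisfies $\sigma(u) > \sigma(v)$. Any directed cycle $v_0 \to v_1 \to \cdots \to v_k = v_0$ would then force $\sigma(v_0) > \sigma(v_1) > \cdots > \sigma(v_k) = \sigma(v_0)$, a contradiction, so $\mathcal{G}_{i_1 \cdots i_M}$ is acyclic.

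For the reverse direction, I would assume $\mathcal{G}_{i_1 \cdots i_M}$ is acyclic and invoke the existence of a topological ordering of its $n+1$ vertices. Concretely, I would produce a bijection $\sigma: \mathcal{N}_0 \to \{0,1,\ldots,n\}$ such that $\sigma(u) > \sigma(v)$ whenever $(u,v)$ is a directed edge (for example, by repeatedly removing a sink vertex and assigning it the next-smallest available label). Then for every $m \in \mathcal{M}$ and every $i \in S_m \setminus \{i_m\}$, the edge $(i, i_m) \in \mathcal{G}_{i_1 \cdots i_M}$ yields $\sigma(i) > \sigma(i_m)$, so $i_m = \argmin_{j \in S_m} \sigma(j)$, which by Definition~\ref{defn:D} means $\sigma \in \mathcal{D}_{i_m}(S_m)$. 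Hence $\sigma \in \bigcap_{m \in \mathcal{M}} \mathcal{D}_{i_m}(S_m) \neq \emptyset$, and therefore $(i_1,\ldots,i_M) \in \mathcal{L}$ by Definition~\ref{defn:L}.

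I do not anticipate any substantive obstacle in this argument; the only delicate point is keeping the direction of the inequalities straight (edges point from less-preferred to more-preferred products, so rankings correspond to \emph{reverse} topological orders), and making sure the topological-ordering construction produces a bijection onto $\{0,1,\ldots,n\}$ rather than an arbitrary partial order, so that the resulting $\sigma$ is a valid element of $\Sigma$.
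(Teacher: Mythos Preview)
Your proposal is correct and follows essentially the same approach as the paper: both arguments rest on the equivalence between acyclicity of $\mathcal{G}_{i_1\cdots i_M}$ and the existence of a ranking $\sigma$ with $\sigma(i_m) < \sigma(j)$ for every edge $(j,i_m)$, which is exactly the condition $\sigma \in \bigcap_{m} \mathcal{D}_{i_m}(S_m)$. The only difference is presentational---the paper invokes the standard ``acyclic $\Leftrightarrow$ topologically orderable'' fact once (citing \citet[p.~77]{ahuja1988network}) and handles both directions simultaneously, whereas you spell out the forward direction via a cycle contradiction and the reverse via an explicit topological-sort construction.
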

\begin{proof}{Proof.}
Consider any tuple of products $(i_1,\ldots,i_M) \in S_1 \times \cdots \times S_M$. 
First, we recall the fundamental result in graph theory that a directed graph is {acyclic} if and only if there exists a ranking $\sigma \in \Sigma$ that satisfies $\sigma(i) < \sigma(j)$ for every directed edge $(j,i)$ in the graph \cite[p. 77]{ahuja1988network}. 
Second, we recall from our construction of $\mathcal{G}_{i_1 \cdots i_M}$ that the directed edges in $\mathcal{G}_{i_1 \cdots i_M}$ are  exactly those of the form $(j,i_m)$ for each past assortment $m \in \mathcal{M}$ and product $j \in S_m \setminus \{i_m\}$. Combining these two recollections, we see that the directed graph $\mathcal{G}_{i_1 \cdots i_M}$ is acyclic if and only if there exists a ranking $\sigma \in \Sigma$ that satisfies $\sigma(i_m) < \sigma(j)$ for each $m \in \mathcal{M}$ and $j \in S_m \setminus \{i_m\}$. Since
\begin{align*}
\left[ \sigma(i_m) < \sigma(j) \textnormal{ for all }m \in \mathcal{M} \textnormal{ and }j \in S_m \setminus \{i_m\}\right] &\iff \left[i_m = \argmin_{j \in S_m} \sigma(j) \textnormal{ for all }m \in \mathcal{M} \right]\\
&\iff \left[ \sigma \in \cap_{m \in \mathcal{M}} \mathcal{D}_{i_m}(S_m) \right],
\end{align*}
we conclude that $\mathcal{G}_{i_1 \cdots i_M}$ is acyclic if and only if $(i_1,\ldots,i_M) \in \mathcal{L}$. 
\halmos  \end{proof}

 \begin{figure}[t]
\centering
\FIGURE{
\begin{minipage}{\linewidth}
\centering
\vspace{0.5em}
\subfloat[$(i_1,i_2,i_3) = (0,0,0)$]{%
\begin{tikzpicture}
\tikzset{vertex/.style = {shape=circle,draw,minimum size=1.5em}}
\tikzset{edge/.style = {->,> = latex'}}
% vertices
\node[vertex] (b) at  (0,0) {1};
\node[vertex] (a) at  (1.4,2.8) {0};
\node[vertex] (c) at  (2.8,0) {2};
%edges
%\draw[edge,line width=0.9pt] (b) to (a);
\draw [-{Stealth[scale=1.25]}] (b) -- (a); 
\draw [-{Stealth[scale=1.25]}] (c) -- (a); 
\end{tikzpicture}%
}\qquad\quad
\subfloat[$(i_1,i_2,i_3) = (1,1,0)$]{%
\begin{tikzpicture}
\tikzset{vertex/.style = {shape=circle,draw,minimum size=1.5em}}
\tikzset{edge/.style = {->,> = latex'}}
% vertices
\node[vertex] (b) at  (0,0) {1};
\node[vertex] (a) at  (1.4,2.8) {0};
\node[vertex] (c) at  (2.8,0) {2};
%edges
\draw [-{Stealth[scale=1.25]}] (a) -- (b); 
\draw [-{Stealth[scale=1.25]}] (c) -- (b); 
\draw [-{Stealth[scale=1.25]}] (c) -- (a); 
\end{tikzpicture}
}\qquad \quad
\subfloat[$(i_1,i_2,i_3) = (1,1,2)$]{%
\begin{tikzpicture}
\tikzset{vertex/.style = {shape=circle,draw,minimum size=1.5em}}
\tikzset{edge/.style = {->,> = latex'}}
% vertices
\node[vertex] (b) at  (0,0) {1};
\node[vertex] (a) at  (1.4,2.8) {0};
\node[vertex] (c) at  (2.8,0) {2};
%edges
\draw [-{Stealth[scale=1.25]}] (a) -- (b); 
\draw [-{Stealth[scale=1.25]}] (c) -- (b); 
\draw [-{Stealth[scale=1.25]}] (a) -- (c); 
\end{tikzpicture}
}\\
\subfloat[$(i_1,i_2,i_3) = (2,0,2)$]{%
\begin{tikzpicture}
\tikzset{vertex/.style = {shape=circle,draw,minimum size=1.5em}}
\tikzset{edge/.style = {->,> = latex'}}
% vertices
\node[vertex] (b) at  (0,0) {1};
\node[vertex] (a) at  (1.4,2.8) {0};
\node[vertex] (c) at  (2.8,0) {2};
%edges
\draw [-{Stealth[scale=1.25]}] (a) -- (c); 
\draw [-{Stealth[scale=1.25]}] (b) -- (c); 
\draw [-{Stealth[scale=1.25]}] (b) -- (a); 
\end{tikzpicture}
}\qquad  \quad
\subfloat[$(i_1,i_2,i_3) = (2,1,2)$]{%
\begin{tikzpicture}
\tikzset{vertex/.style = {shape=circle,draw,minimum size=1.5em}}
\tikzset{edge/.style = {->,> = latex'}}
% vertices
\node[vertex] (b) at  (0,0) {1};
\node[vertex] (a) at  (1.4,2.8) {0};
\node[vertex] (c) at  (2.8,0) {2};
%edges
\draw [-{Stealth[scale=1.25]}] (a) -- (c); 
\draw [-{Stealth[scale=1.25]}] (b) -- (c); 
\draw [-{Stealth[scale=1.25]}] (a) -- (b); 
\end{tikzpicture}
}
\vspace{0.5em}
\end{minipage}}
{Visualizations of directed graphs $\mathcal{G}_{i_1 \cdots i_M}$ corresponding to tuples of products $(i_1,\ldots,i_M) \in \mathcal{L}$.\label{fig:exhaustive}}
{\TABLEfootnotesizeIX Each of the five figures presents a visualization of the directed graph $\mathcal{G}_{i_1 i_2 i_3}$ corresponding to a tuple of products $(i_1,i_2,i_3) \in \mathcal{L}$ in the case where there are $M=3$ past assortments of the form $S_1 = \{0,1,2\}$, $S_2 = \{0,1\}$, and $S_3 = \{0,2\}$. We observe that there exists an incoming edge to vertex $i$ if and only if there exists a past assortment $m \in \{1,2,3\}$ that satisfies $i = i_m$.} 
\end{figure}

We next introduce the definition of \emph{reachability} for vertices in the directed graph $\mathcal{G}_{i_1 \cdots i_M}$. 
 In particular, the following definition is standard in the study of directed graphs, and we will make extensive use of Definition~\ref{def:reachable} throughout the  appendices of this paper. 
\begin{definition}  \label{def:reachable}
Let $(i_1,\ldots,i_M) \in S_1 \times \cdots \times S_M$ and $i, j \in \mathcal{N}_0$. We say that  vertex $j$ is \emph{reachable} from vertex $i$, denoted by $j \prec_{i_1 \cdots i_M} i$, if there exists a directed path in $\mathcal{G}_{i_1 \cdots i_M}$ from $i$ to $j$.
\end{definition}
 We adopt the  convention throughout this paper that a directed path must contain at least one directed edge. Hence, we observe that if the  directed graph $\mathcal{G}_{i_1 \cdots i_M}$ is acyclic, then it must be the case that a vertex may never be reachable from itself, that is, $i \nprec_{i_1 \cdots i_M} i$  for  all $i \in \mathcal{N}_0$.  

We now use Definition~\ref{def:reachable} to develop several intermediary results regarding the structure of the directed acyclic graph $\mathcal{G}_{i_1 \cdots i_M}$ for each tuple of products  $(i_1,\ldots,i_M) \in \mathcal{L}$. We begin with a simple intermediary result, denoted below by Lemma~\ref{lem:reachable_im}, in which we characterize the vertices in the directed acyclic graph  $\mathcal{G}_{i_1 \cdots i_M}$ that can be reachable from other vertices. 
\begin{lemma}\label{lem:reachable_im}
Let $(i_1,\ldots,i_M) \in \mathcal{L}$ and $i,j \in \mathcal{N}_0$. If $j \prec_{i_1 \cdots i_M} i$, then there exists a past assortment $m \in \mathcal{M}$ that satisfies $j = i_m$.
\end{lemma}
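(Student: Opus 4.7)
The plan is to prove this lemma by directly unpacking the definition of the directed graph $\mathcal{G}_{i_1 \cdots i_M}$ and observing a structural restriction on which vertices can serve as the head of any directed edge in the graph.

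First, I would recall the construction of $\mathcal{G}_{i_1 \cdots i_M}$: its directed edges are precisely those of the form $(k, i_m)$ where $m \in \mathcal{M}$ and $k \in S_m \setminus \{i_m\}$. The crucial structural observation is therefore that the set of vertices that appear as the \emph{head} of at least one directed edge is contained in $\{i_1, \ldots, i_M\}$. Equivalently, if a vertex $v \in \mathcal{N}_0$ is not equal to $i_m$ for any $m \in \mathcal{M}$, then $v$ has no incoming edges in $\mathcal{G}_{i_1 \cdots i_M}$.

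Next, I would translate the hypothesis $j \prec_{i_1 \cdots i_M} i$ using Definition~\ref{def:reachable}: there exists a directed path in $\mathcal{G}_{i_1 \cdots i_M}$ from $i$ to $j$. Because the path contains at least one directed edge (per the convention adopted just before the lemma), it has a final edge whose head is $j$. Applying the structural observation from the previous paragraph to this final edge immediately yields that $j = i_m$ for some $m \in \mathcal{M}$, which is the desired conclusion.

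This proof is a short direct argument from definitions, so there is no substantive obstacle; the only care needed is in invoking the convention that a directed path must contain at least one directed edge, which guarantees the existence of a final edge terminating at $j$. Notably, the hypothesis $(i_1,\ldots,i_M) \in \mathcal{L}$ (and hence acyclicity of $\mathcal{G}_{i_1 \cdots i_M}$ via Lemma~\ref{lem:dag}) is not actually needed for the argument and can be omitted; the conclusion follows from the construction of $\mathcal{G}_{i_1 \cdots i_M}$ alone.
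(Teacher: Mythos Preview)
Your proposal is correct and follows essentially the same approach as the paper: both argue that the only vertices with incoming edges in $\mathcal{G}_{i_1 \cdots i_M}$ are $i_1,\ldots,i_M$, so any vertex that is the terminus of a directed path must be among these. Your observation that the hypothesis $(i_1,\ldots,i_M)\in\mathcal{L}$ (and hence acyclicity via Lemma~\ref{lem:dag}) is not actually needed is accurate; the paper invokes acyclicity here, but as you note, the conclusion already follows from the edge structure alone together with the convention that a directed path contains at least one edge.
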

\begin{proof}{Proof.}
Consider any tuple of products $(i_1,\ldots,i_M) \in \mathcal{L}$. 
We recall from our construction of $\mathcal{G}_{i_1 \cdots i_M}$ that  $i_1,\ldots,i_M$ are the only vertices in $\mathcal{G}_{i_1 \cdots i_M}$ that have incoming edges.  Since Lemma~\ref{lem:dag} implies that $\mathcal{G}_{i_1 \cdots i_M}$ is acyclic, we conclude that $i_1,\ldots,i_M$  are the only vertices in $\mathcal{G}_{i_1 \cdots i_M}$  that can be reachable from other vertices.  \halmos 
\end{proof}
In our next intermediary result, denoted by Lemma~\ref{lem:reachable}, we relate the reachability of vertices in a directed acyclic graph $\mathcal{G}_{i_1 \cdots i_M}$ to the set of rankings that correspond to $(i_1,\ldots,i_M) \in \mathcal{L}$. 
\begin{lemma} \label{lem:reachable}
Let $(i_1,\ldots,i_M) \in \mathcal{L}$ and $i,j \in \mathcal{N}_0$. Then, there exists a ranking  $\sigma \in \cap_{m \in \mathcal{M}} \mathcal{D}_{i_m}(S_m)$ that satisfies $\sigma(i) < \sigma(j)$ if and only if $j \nprec_{i_1 \cdots i_M} i$.
\end{lemma}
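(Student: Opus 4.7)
The plan is to prove Lemma~\ref{lem:reachable} by interpreting the set $\cap_{m \in \mathcal{M}} \mathcal{D}_{i_m}(S_m)$ as the set of topological orderings of $\mathcal{G}_{i_1 \cdots i_M}$. Indeed, in the proof of Lemma~\ref{lem:dag} it was already shown that a ranking $\sigma$ lies in $\cap_{m \in \mathcal{M}} \mathcal{D}_{i_m}(S_m)$ if and only if $\sigma(i_m) < \sigma(k)$ for every $m \in \mathcal{M}$ and every $k \in S_m \setminus \{i_m\}$, which, by the construction of $\mathcal{G}_{i_1 \cdots i_M}$, is precisely the condition that $\sigma(v) < \sigma(u)$ for every directed edge $(u,v)$ in $\mathcal{G}_{i_1 \cdots i_M}$. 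Since $(i_1,\ldots,i_M) \in \mathcal{L}$, Lemma~\ref{lem:dag} guarantees that this graph is acyclic, so such topological orderings exist.

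For the ``only if'' direction, I would suppose that $j \prec_{i_1 \cdots i_M} i$ and exhibit a directed path $i = v_0 \to v_1 \to \cdots \to v_k = j$ in $\mathcal{G}_{i_1 \cdots i_M}$. By the correspondence above, any ranking $\sigma \in \cap_{m \in \mathcal{M}} \mathcal{D}_{i_m}(S_m)$ must satisfy $\sigma(v_\ell) < \sigma(v_{\ell-1})$ for each $\ell = 1,\ldots,k$, and by chaining these inequalities I obtain $\sigma(j) < \sigma(i)$. This contradicts the existence of a ranking in $\cap_{m \in \mathcal{M}} \mathcal{D}_{i_m}(S_m)$ with $\sigma(i) < \sigma(j)$, completing the contrapositive.

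For the ``if'' direction, I assume (without loss of generality) that $i \neq j$ and that $j \nprec_{i_1 \cdots i_M} i$, and I construct a ranking by augmenting the graph. Specifically, I define $\mathcal{G}'$ by adding a single directed edge $(j,i)$ to $\mathcal{G}_{i_1 \cdots i_M}$. A cycle in $\mathcal{G}'$ would have to use the new edge $(j,i)$ and therefore correspond to a directed path from $i$ to $j$ in the original graph $\mathcal{G}_{i_1 \cdots i_M}$; but such a path is ruled out by the assumption $j \nprec_{i_1 \cdots i_M} i$, so $\mathcal{G}'$ remains acyclic. Invoking the standard fact that every directed acyclic graph admits a topological ordering, I obtain a ranking $\sigma \in \Sigma$ on $\mathcal{N}_0$ satisfying $\sigma(v) < \sigma(u)$ for every edge $(u,v) \in \mathcal{G}'$. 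Restricting attention to the edges of $\mathcal{G}_{i_1 \cdots i_M}$, the characterization at the top of this plan yields $\sigma \in \cap_{m \in \mathcal{M}} \mathcal{D}_{i_m}(S_m)$, and applying it to the added edge $(j,i)$ yields $\sigma(i) < \sigma(j)$, as required.

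I do not anticipate a serious obstacle: the argument is a clean application of the topological-sort characterization that is already used in Lemma~\ref{lem:dag}. The only delicate point is the bookkeeping concerning edge orientation, since an edge $(u,v) \in \mathcal{G}_{i_1 \cdots i_M}$ encodes $\sigma(v) < \sigma(u)$ rather than $\sigma(u) < \sigma(v)$; I will need to state this convention explicitly to keep the direction of each inequality straight throughout the chain-of-edges argument and the topological-sort step.
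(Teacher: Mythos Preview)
Your proposal is correct and follows essentially the same route as the paper's proof: both directions are handled identically, namely by chaining inequalities along a directed path for the contrapositive of the ``only if'' direction, and by augmenting $\mathcal{G}_{i_1\cdots i_M}$ with the edge $(j,i)$ and extracting a topological ordering for the ``if'' direction. Your explicit flagging of the edge-orientation convention and the $i\neq j$ caveat is a nice touch that the paper leaves implicit.
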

\begin{proof}{Proof.}
Let $i,j \in \mathcal{N}_0$ and $(i_1,\ldots,i_M) \in \mathcal{L}$, in which case it follows from Lemma~\ref{lem:dag} that the directed graph ${\mathcal{G}}_{i_1 \cdots i_M}$ is acyclic.

 To show the first direction of the desired result, let us suppose that $j \nprec_{i_1 \cdots i_M} i$. In this case, we observe that the directed edge $(j,i)$ can be added to $\mathcal{G}_{i_1 \cdots i_M}$ without inducing any cycles. %
 For notational convenience, let this augmented graph with the directed edge $(j,i)$ be denoted by $\tilde{\mathcal{G}}_{i_1 \cdots i_M}$. %
  Since the augmented graph  $\tilde{\mathcal{G}}_{i_1 \cdots i_M}$ is acyclic, it follows from  \citet[p. 77]{ahuja1988network} that there exists a ranking $\sigma \in \Sigma$ that satisfies $\sigma(j') < \sigma(i')$ for all directed edges $(i',j')$ in the augmented graph. %
  Moreover, since the set of directed edges in the augmented graph $\tilde{\mathcal{G}}_{i_1 \cdots i_M}$ is a superset of the set of directed edges in the original graph $\mathcal{G}_{i_1 \cdots i_M}$, it follows from the construction of the original graph that  this ranking satisfies $\sigma(i_m) < \sigma(i')$ for all $m \in \mathcal{M}$ and all $i' \in S_m \setminus \{i_m\}$, which implies that  $\sigma \in \cap_{m \in \mathcal{M}} \mathcal{D}_{i_m}(S_m)$. Since the augmented graph has the directed edge $(j,i)$, we observe that this ranking satisfies $\sigma(i) < \sigma(j)$. In summary, we have shown that if $j \nprec_{i_1 \cdots i_M} i$, then there exists a ranking $\sigma \in \cap_{m \in \mathcal{M}} \mathcal{D}_{i_m}(S_m)$ that satisfies $\sigma(i) < \sigma(j)$. This concludes our proof of the first direction of Lemma~\ref{lem:reachable}. 

To show the other direction, let us suppose that  $j \prec_{i_1 \cdots i_M} i$. In this case, consider any  arbitrary ranking $\sigma \in \Sigma$  that satisfies $\sigma \in \cap_{m \in \mathcal{M}} \mathcal{D}_{i_m}(S_m)$. 
It follows from the fact that $\sigma \in \cap_{m \in \mathcal{M}} \mathcal{D}_{i_m}(S_m)$ and from the construction of $\mathcal{G}_{i_1 \cdots i_M}$ that the inequality $\sigma(j') < \sigma(i')$ holds for each directed edge $(i',j')$ in this graph. %
 Moreover, without loss of generality, let the directed path from vertex $i$ to vertex $j$  be denoted  by the sequence of vertices $i,j_1,\ldots,j_\nu,j$ which satisfies the property that $(i,j_1),(j_1,j_2),\ldots,(j_{\nu-1},j_\nu),(j_\nu,j)$ are directed edges in $\mathcal{G}_{i_1 \cdots i_M}$. It then follows from our earlier reasoning that $\sigma(i) > \sigma(j_1) > \cdots > \sigma(j_\nu) > \sigma(j)$. Since the ranking $\sigma \in \cap_{m \in \mathcal{M}} \mathcal{D}_{i_m}(S_m)$ was chosen arbitrarily, our proof of the other direction is complete. 
\halmos \end{proof}

Intuitively, Lemma~\ref{lem:reachable} shows that the reachability of vertices in a directed acyclic graph $\mathcal{G}_{i_1 \cdots i_M}$ provides an encoding of the rankings that correspond to a tuple of products $(i_1,\ldots,i_M) \in \mathcal{L}$.  Said another way,  Lemma~\ref{lem:reachable} implies that  if vertex $i$ has a directed path to vertex $j$ in a directed acyclic graph $\mathcal{G}_{i_1 \cdots i_M}$, then product $j$ is always preferred to product $i$ under all rankings that correspond to the tuple of products $(i_1,\ldots,i_M) \in \mathcal{L}$.  
In our final intermediary result in Appendix~\ref{appx:graphical}, denoted below by Lemma~\ref{lem:reachable_v2}, we develop a generalization of Lemma~\ref{lem:reachable} that relates the reachability of vertices in a directed acyclic graph $\mathcal{G}_{i_1 \cdots i_M}$  to the most preferred products in an assortment $S \in \mathcal{S}$.  
\begin{lemma} \label{lem:reachable_v2}
Let $(i_1,\ldots,i_M) \in \mathcal{L}$, $S \in \mathcal{S}$, and $i \in S$. Then, there exists a ranking $\sigma \in \cap_{m \in \mathcal{M}} \mathcal{D}_{i_m}(S_m)$  that satisfies  $i = \argmin_{j \in S} \sigma(j)$ %$\sigma \in \mathcal{D}_i(S)$ 
if and only if $j \nprec_{i_1 \cdots i_M} i$ for all $j \in S$.
\end{lemma}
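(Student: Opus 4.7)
My plan is to prove the two directions separately, with the forward direction following almost immediately from Lemma~\ref{lem:reachable} and the reverse direction requiring a graph-augmentation argument that generalizes the ``add a single edge'' trick used in the proof of Lemma~\ref{lem:reachable}. The main new ingredient is showing that one can simultaneously add \emph{all} the edges that encode ``$i$ is most preferred in $S$'' without creating a cycle.

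For the forward direction, suppose $\sigma \in \cap_{m \in \mathcal{M}} \mathcal{D}_{i_m}(S_m)$ satisfies $i = \argmin_{j \in S} \sigma(j)$. Then $\sigma(i) < \sigma(j)$ for every $j \in S \setminus \{i\}$, so Lemma~\ref{lem:reachable} yields $j \nprec_{i_1 \cdots i_M} i$ for every such $j$. Finally, $i \nprec_{i_1 \cdots i_M} i$ follows because $(i_1,\ldots,i_M) \in \mathcal{L}$ implies $\mathcal{G}_{i_1 \cdots i_M}$ is acyclic by Lemma~\ref{lem:dag}. This covers all $j \in S$.

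For the reverse direction, assume $j \nprec_{i_1 \cdots i_M} i$ for every $j \in S$. Form $\tilde{\mathcal{G}}_{i_1 \cdots i_M}$ by adjoining to $\mathcal{G}_{i_1 \cdots i_M}$ the directed edge $(j,i)$ for every $j \in S \setminus \{i\}$. The crux is to show $\tilde{\mathcal{G}}_{i_1 \cdots i_M}$ is acyclic; once this is established, the same topological-ordering result cited in the proof of Lemma~\ref{lem:reachable} (\citet[p.~77]{ahuja1988network}) produces a ranking $\sigma \in \Sigma$ with $\sigma(b) < \sigma(a)$ for every edge $(a,b)$ of the augmented graph. Restricting to the original edges gives $\sigma \in \cap_{m \in \mathcal{M}} \mathcal{D}_{i_m}(S_m)$, and applying this to each new edge $(j,i)$ yields $\sigma(i) < \sigma(j)$ for all $j \in S \setminus \{i\}$, i.e., $i = \argmin_{j \in S} \sigma(j)$.

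The step I expect to be the main obstacle is the acyclicity of $\tilde{\mathcal{G}}_{i_1 \cdots i_M}$, since multiple edges are added at once (unlike in Lemma~\ref{lem:reachable}). I would argue by contradiction: suppose $\tilde{\mathcal{G}}_{i_1 \cdots i_M}$ contains a directed cycle $C$. Because $\mathcal{G}_{i_1 \cdots i_M}$ is already acyclic (Lemma~\ref{lem:dag}), $C$ must traverse at least one new edge, each of which has head $i$. A simple cycle visits $i$ at most once, so $C$ uses exactly one new edge $(j,i)$ for some $j \in S \setminus \{i\}$, and the remainder of $C$ is a directed path from $i$ to $j$ using only edges of the original graph. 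But such a path means $j \prec_{i_1 \cdots i_M} i$, contradicting the standing assumption. Hence $\tilde{\mathcal{G}}_{i_1 \cdots i_M}$ is acyclic, and the proof is complete.
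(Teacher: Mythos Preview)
Your proof is correct and is essentially the same as the paper's: both directions hinge on the graph-augmentation argument (add all edges $(j,i)$ for $j \in S \setminus \{i\}$, observe that any cycle in the augmented graph uses exactly one new edge since all new edges share head $i$, and hence yields a path $i \to j$ in the original graph), together with the topological-ordering characterization from Lemma~\ref{lem:reachable}. The only cosmetic difference is that the paper phrases both directions as contrapositives and routes the easy direction through Lemma~\ref{lem:reachable_im}, whereas you argue directly and skip that intermediate step.
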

\begin{proof}{Proof.}
Consider any tuple of products $(i_1,\ldots,i_M) \in \mathcal{L}$, assortment $S \in \mathcal{S}$, and product $i \in S$. 
 
To prove the first direction of Lemma~\ref{lem:reachable_v2},  suppose   there exists a product $j \in S$ that satisfies $j \prec_{i_1 \cdots i_M} i$. In this case,  it follows immediately from  Lemma~\ref{lem:reachable_im} that there exists a past assortment $m \in \mathcal{M}$ that satisfies $i_m \in S$ and $i_m \prec_{i_1 \cdots i_M} i$. Therefore, for each ranking $\sigma \in \cap_{m \in \mathcal{M}} \mathcal{D}_{i_m}(S_m)$, it follows from Lemma~\ref{lem:reachable} that $\sigma(i_m) < \sigma(i)$, which combined with the fact that $i_m \in S$ implies that $i \neq \argmin_{j \in S} \sigma(j)$. 
That concludes our proof of the first direction. 

To prove the other direction, suppose that $i \neq \argmin_{j \in S} \sigma(j)$ for all rankings $\sigma \in \cap_{m \in \mathcal{M}} \mathcal{D}_{i_m}(S_m)$. In this case,  it follows immediately from  Definition~\ref{defn:D} that  $\cap_{m \in \mathcal{M}} \mathcal{D}_{i_m}(S_m) \cap \mathcal{D}_i(S) = \emptyset$. Moreover, it follows from the fact that $(i_1,\ldots,i_M) \in \mathcal{L}$ and  from Lemma~\ref{lem:dag}  
 that the directed graph $\mathcal{G}_{i_1 \cdots i_M}$ is acyclic. Therefore, we observe from the fact that $\cap_{m \in \mathcal{M}} \mathcal{D}_{i_m}(S_m) \cap \mathcal{D}_i(S) = \emptyset$ and from Lemma~\ref{lem:dag} that 
 adding  the directed edges $\{(j, i): j \in S \setminus \{i\} \}$  to the directed acyclic graph $\mathcal{G}_{i_1 \cdots i_M}$ would result in a  directed cycle. Since a cycle visits every vertex at most once, there must exist a single vertex $j \in S \setminus \{i\}$ such that adding only the directed edge $(j,i)$ to $\mathcal{G}_{i_1 \cdots i_M}$ would result in a directed cycle, which implies that $j \prec_{i_1 \cdots i_M} i$. 
This concludes our proof of the other direction. \halmos \end{proof}
Lemma~\ref{lem:reachable_v2} establishes that a product $i \in S$ is the most preferred product from assortment $S \in \mathcal{S}$ under a ranking that corresponds to the tuple of products $(i_1,\ldots,i_M) \in \mathcal{L}$ if and only if there does not exist a directed path in the directed acyclic graph $\mathcal{G}_{i_1 \cdots i_M}$ from vertex $i$ to any vertex $j$ that satisfies $j \in S$. In other words, Lemma~\ref{lem:reachable_v2} implies that the set $\cap_{m \in \mathcal{M}} \mathcal{D}_{i_m}(S_m) \cap \mathcal{D}_i(S)$ is nonempty if and only if there is no vertex $j \in S$ which is reachable from vertex $i$.

In view of the above, we are now ready to develop our graphical interpretation of $\rho_{i_1 \cdots i_M}(S)$. This interpretation of $\rho_{i_1 \cdots i_M}(S)$, which is presented below in  Proposition~\ref{prop:cost_reform},  will be instrumental to our analysis in the subsequent Appendix~\ref{appx:prop2}, where we will use this interpretation to analyze the behavior of the functions $S \mapsto \rho_{i_1 \cdots i_M}(S)$ for each tuple of products $(i_1,\ldots,i_M) \in \mathcal{L}$. Our graphical representation of $\rho_{i_1 \cdots i_M}(S)$  requires the  following definition of the set $\mathcal{I}_{i_1 \cdots i_M}(S)$, which can be interpreted as the set of all vertices $i \in \mathcal{N}_0$ in the directed graph $\mathcal{G}_{i_1 \cdots i_M}$ that do not have a directed path to any of the vertices $i_1,\ldots,i_M$ that are elements of the assortment $S$. 
\begin{definition} \label{defn:I}
$\mathcal{I}_{i_1 \cdots i_M}(S) \triangleq \left \{ i \in \mathcal{N}_0:\; \textnormal{for all } m  \in \mathcal{M}, \; \textnormal{if } i_m \in S, \text{ then } i_m \nprec_{i_1 \cdots i_M} i \right \}$. 
\end{definition}
To make sense of Definition~\ref{defn:I}, we recall from Lemma~\ref{lem:reachable_im} that a vertex $j$ in a directed acyclic graph $\mathcal{G}_{i_1 \cdots i_M}$ can be  reachable from another vertex only if $j = i_m$ for some  past assortment $m \in \mathcal{M}$. Therefore, it follows immediately from Lemma~\ref{lem:reachable_v2} 
 that $S \cap \mathcal{I}_{i_1 \cdots i_M}(S)$  is the set of products $i$ for which the set of rankings $\cap_{m \in \mathcal{M}} \mathcal{D}_{i_m}(S_m) \cap \mathcal{D}_i(S)$ is nonempty. Combining this with Definition~\ref{defn:rho}, we have concluded the proof of the following Proposition~\ref{prop:cost_reform}, which establishes our   graphical interpretation of $\rho_{i_1 \cdots i_M}(S)$.  % follows:
\begin{proposition} \label{prop:cost_reform}
For all $S \in \mathcal{S}$ and $(i_1,\ldots,i_M) \in \mathcal{L}$, 
$\rho_{i_1 \cdots i_M}(S) = \min_{i \in S \cap \mathcal{I}_{i_1 \cdots i_M}(S)} r_i.$
\end{proposition}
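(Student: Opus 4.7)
The plan is to show that the two sets
\[
A \;\triangleq\; \bigl\{\,i \in S :\; \cap_{m \in \mathcal{M}} \mathcal{D}_{i_m}(S_m) \cap \mathcal{D}_i(S) \neq \emptyset \bigr\}, \qquad B \;\triangleq\; S \cap \mathcal{I}_{i_1 \cdots i_M}(S)
\]
are equal; the proposition then follows immediately because $\rho_{i_1 \cdots i_M}(S) = \min_{i \in A} r_i$ by Definition~\ref{defn:rho}, and the right-hand side of the proposition is $\min_{i \in B} r_i$.

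First I would fix an arbitrary product $i \in S$ and rewrite the condition defining $A$. By Definition~\ref{defn:D}, a ranking $\sigma$ lies in $\cap_m \mathcal{D}_{i_m}(S_m) \cap \mathcal{D}_i(S)$ precisely when $\sigma$ corresponds to the tuple $(i_1,\ldots,i_M)$ and additionally satisfies $i = \argmin_{j \in S} \sigma(j)$. Therefore the condition $i \in A$ is exactly the existence statement of Lemma~\ref{lem:reachable_v2}, so that lemma yields the equivalent reformulation
\[
A \;=\; \bigl\{\, i \in S:\; j \nprec_{i_1 \cdots i_M} i \text{ for all } j \in S \bigr\}.
\]

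It then remains to show that this last set equals $B$. The containment ``$A \subseteq B$'' is immediate: the universal condition ``$j \nprec_{i_1 \cdots i_M} i$ for all $j \in S$'' in particular forces ``$i_m \nprec_{i_1 \cdots i_M} i$ for every $m \in \mathcal{M}$ with $i_m \in S$,'' which is precisely the defining condition of $\mathcal{I}_{i_1 \cdots i_M}(S)$ in Definition~\ref{defn:I}.

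For the reverse containment ``$B \subseteq A$,'' the key step is to use Lemma~\ref{lem:reachable_im} to promote the restricted non-reachability condition (only across the vertices $i_1,\ldots,i_M$) into full non-reachability across all of $S$. Concretely, suppose $i \in B$ but, for contradiction, some $j \in S$ satisfied $j \prec_{i_1 \cdots i_M} i$. Since $(i_1,\ldots,i_M) \in \mathcal{L}$, Lemma~\ref{lem:reachable_im} gives an $m \in \mathcal{M}$ with $j = i_m$, so $i_m \in S$ and $i_m \prec_{i_1 \cdots i_M} i$, contradicting $i \in \mathcal{I}_{i_1 \cdots i_M}(S)$. This establishes $B \subseteq A$ and completes the argument. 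The proof is essentially a bookkeeping exercise chaining together Lemmas~\ref{lem:reachable_im} and~\ref{lem:reachable_v2} with the definitions, and I do not anticipate any serious obstacle.
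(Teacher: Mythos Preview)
Your proposal is correct and follows essentially the same approach as the paper: both arguments combine Lemma~\ref{lem:reachable_im} and Lemma~\ref{lem:reachable_v2} to identify $S \cap \mathcal{I}_{i_1\cdots i_M}(S)$ with the set of $i \in S$ for which $\cap_m \mathcal{D}_{i_m}(S_m) \cap \mathcal{D}_i(S)$ is nonempty, and then invoke Definition~\ref{defn:rho}. The paper simply compresses your two-inclusion argument into a single sentence, whereas you spell out the $A\subseteq B$ and $B\subseteq A$ directions separately.
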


\subsection{Proof of Proposition~\ref{prop:plus_one_inequality_prop}} \label{appx:prop2}

Equipped with Proposition~\ref{prop:cost_reform} from Appendix~\ref{appx:graphical}, we are now ready to present our proof of Proposition~\ref{prop:plus_one_inequality_prop}. 
To show this, we begin by developing an intermediary result, denoted below by Claim~\ref{claim:add_one}, that will allow us to compare the values of $\rho_{i_1 \cdots i_M}(S)$ and $\rho_{i_1 \cdots i_M}(S \cup \{i\})$ for every assortment $S \in \mathcal{S}$ and every product $i$ which is not in the assortment. 
\begin{claim} \label{claim:add_one}
For all $S \in \mathcal{S}$, $(i_1,\ldots,i_M) \in \mathcal{L}$, and $i \notin S$,
\begin{align*}
&\rho_{i_1 \cdots i_M}(S \cup \{i\}) = \begin{cases}
\rho_{i_1 \cdots i_M}(S),&\textnormal{if } i \notin \mathcal{I}_{i_1 \cdots i_M}(S),\\
\min \left \{ \min \limits_{j \in S \cap \mathcal{I}_{i_1 \cdots i_M}(S) \cap \left\{ j' \in \mathcal{N}_0: i \nprec_{i_1 \cdots i_M} j' \right\}} r_j, r_i \right \}, &\textnormal{if } i \in \mathcal{I}_{i_1 \cdots i_M}(S).\\
\end{cases}
\end{align*}
\end{claim}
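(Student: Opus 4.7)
The starting point is Proposition~\ref{prop:cost_reform}, which lets us rewrite $\rho_{i_1 \cdots i_M}(S \cup \{i\}) = \min_{j \in (S \cup \{i\}) \cap \mathcal{I}_{i_1 \cdots i_M}(S \cup \{i\})} r_j$. The plan is to analyze precisely how the set $\mathcal{I}_{i_1 \cdots i_M}(\cdot)$ changes when $i$ is adjoined to $S$, and then read off the minimum. From Definition~\ref{defn:I}, enlarging the assortment can only tighten the defining condition, so $\mathcal{I}_{i_1 \cdots i_M}(S \cup \{i\}) \subseteq \mathcal{I}_{i_1 \cdots i_M}(S)$, and a vertex $j$ lies in the gap $\mathcal{I}_{i_1 \cdots i_M}(S) \setminus \mathcal{I}_{i_1 \cdots i_M}(S \cup \{i\})$ exactly when some $m$ with $i_m = i$ satisfies $i \prec_{i_1 \cdots i_M} j$.

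In Case~1, where $i \notin \mathcal{I}_{i_1 \cdots i_M}(S)$, the containment $\mathcal{I}_{i_1 \cdots i_M}(S \cup \{i\}) \subseteq \mathcal{I}_{i_1 \cdots i_M}(S)$ already forces $i \notin \mathcal{I}_{i_1 \cdots i_M}(S \cup \{i\})$, so $(S \cup \{i\}) \cap \mathcal{I}_{i_1 \cdots i_M}(S \cup \{i\}) = S \cap \mathcal{I}_{i_1 \cdots i_M}(S \cup \{i\})$. The key step is to show $S \cap \mathcal{I}_{i_1 \cdots i_M}(S) = S \cap \mathcal{I}_{i_1 \cdots i_M}(S \cup \{i\})$; the $\supseteq$ inclusion is immediate, and for $\subseteq$ I would argue by contradiction. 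If some $j \in S \cap \mathcal{I}_{i_1 \cdots i_M}(S)$ fails to lie in $\mathcal{I}_{i_1 \cdots i_M}(S \cup \{i\})$, then $i \prec_{i_1 \cdots i_M} j$. The hypothesis $i \notin \mathcal{I}_{i_1 \cdots i_M}(S)$ supplies some $m$ with $i_m \in S$ and $i_m \prec_{i_1 \cdots i_M} i$, and concatenating the two directed paths in $\mathcal{G}_{i_1 \cdots i_M}$ yields $i_m \prec_{i_1 \cdots i_M} j$, contradicting $j \in \mathcal{I}_{i_1 \cdots i_M}(S)$ since $i_m \in S$. The two sets thus have equal minima, giving $\rho_{i_1 \cdots i_M}(S \cup \{i\}) = \rho_{i_1 \cdots i_M}(S)$.

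In Case~2, where $i \in \mathcal{I}_{i_1 \cdots i_M}(S)$, I would first verify $i \in \mathcal{I}_{i_1 \cdots i_M}(S \cup \{i\})$: the condition for $m$ with $i_m \in S$ is inherited from $i \in \mathcal{I}_{i_1 \cdots i_M}(S)$, and the condition for any $m$ with $i_m = i$ reduces to $i \nprec_{i_1 \cdots i_M} i$, which holds because $\mathcal{G}_{i_1 \cdots i_M}$ is acyclic by Lemma~\ref{lem:dag}. Next, a vertex $j \in S \cap \mathcal{I}_{i_1 \cdots i_M}(S)$ lies in $\mathcal{I}_{i_1 \cdots i_M}(S \cup \{i\})$ if and only if $i \nprec_{i_1 \cdots i_M} j$: when at least one $m$ satisfies $i_m = i$ this is the literal extra condition from Definition~\ref{defn:I}, while when no $m$ has $i_m = i$ the extra condition is vacuous but Lemma~\ref{lem:reachable_im} still forces $i \nprec_{i_1 \cdots i_M} j$ for every $j$. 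Consequently $(S \cup \{i\}) \cap \mathcal{I}_{i_1 \cdots i_M}(S \cup \{i\}) = \{i\} \cup \bigl(S \cap \mathcal{I}_{i_1 \cdots i_M}(S) \cap \{j' \in \mathcal{N}_0 : i \nprec_{i_1 \cdots i_M} j'\}\bigr)$, and taking the minimum of the revenues over this set yields the stated formula.

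The main obstacle is the transitivity argument in Case~1, namely the need to chain $i_m \prec_{i_1 \cdots i_M} i$ and $i \prec_{i_1 \cdots i_M} j$ into $i_m \prec_{i_1 \cdots i_M} j$. This is a direct consequence of concatenating directed paths in $\mathcal{G}_{i_1 \cdots i_M}$, but it is the step that drives the equality in Case~1; Case~2, by contrast, is essentially bookkeeping given the graphical reformulation of $\rho_{i_1 \cdots i_M}(S)$ in Proposition~\ref{prop:cost_reform} and the restriction on reachable vertices in Lemma~\ref{lem:reachable_im}.
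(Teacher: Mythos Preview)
Your proposal is correct and follows essentially the same approach as the paper. The paper first derives the unified identity $\mathcal{I}_{i_1\cdots i_M}(S\cup\{i\}) = \mathcal{I}_{i_1\cdots i_M}(S)\cap\{j:i\nprec_{i_1\cdots i_M} j\}$ and then splits into cases, whereas you split into cases first and establish the relevant part of that identity inside each case; but the core ingredients---Proposition~\ref{prop:cost_reform}, the transitivity/path-concatenation argument in Case~1, and the observation (via Lemma~\ref{lem:reachable_im} or the construction of $\mathcal{G}_{i_1\cdots i_M}$) that $i\nprec_{i_1\cdots i_M} j$ holds automatically when $i\notin\{i_1,\ldots,i_M\}$---are identical.
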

\begin{proof}{Proof of Claim~\ref{claim:add_one}.}
Consider any  assortment $S \in \mathcal{S}$, tuple of products $(i_1,\ldots,i_M) \in \mathcal{L}$, and product $i \notin S$.  We first observe that 
\begin{align}
\mathcal{I}_{i_1 \cdots i_M}(S \cup \{i\}) &=  \left \{ j  \in \mathcal{N}_0: \textnormal{for all } m  \in \mathcal{M}, \; \textnormal{if } i_m \in S \cup \{i\}, \text{ then } i_m \nprec_{i_1 \cdots i_M} j \right \} \notag \\
&=  \left \{ j  \in \mathcal{N}_0: \textnormal{for all } m  \in \mathcal{M}, \; \textnormal{if } i_m \in S, \text{ then } i_m \nprec_{i_1 \cdots i_M} j \right \}  \notag \\
&\quad \cap  \left \{ j  \in \mathcal{N}_0:  \textnormal{if there exists } m \in \mathcal{M} \textnormal{ such that } i = i_m, \textnormal{ then }i \nprec_{i_1 \cdots i_M} j \right \} \notag \\
&= \mathcal{I}_{i_1 \cdots i_M}(S) \cap   \left \{ j  \in \mathcal{N}_0:  \textnormal{if there exists } m \in \mathcal{M} \textnormal{ such that } i = i_m, \textnormal{ then }i \nprec_{i_1 \cdots i_M} j \right \} \notag \\
&=\begin{cases}
\mathcal{I}_{i_1 \cdots i_M}(S),&\text{if }i \notin \{i_1,\ldots,i_M\},\\
\mathcal{I}_{i_1 \cdots i_M}(S) \cap \left \{ j \in \mathcal{N}_0: i \nprec_{i_1 \cdots i_M} j \right \},&\text{if }i \in \{i_1,\ldots,i_M\}
\end{cases} \notag \\ 
&= \mathcal{I}_{i_1 \cdots i_M}(S) \cap \left \{ j \in \mathcal{N}_0: i \nprec_{i_1 \cdots i_M} j \right \}. \label{line:adding_one_step_two} 
\end{align}
Indeed, the first and third equalities follow from Definition~\ref{defn:I}. The second and fourth equalities follow from algebra. The final equality follows from the fact that if $i \notin \{i_1,\ldots,i_M\}$, then it follows from the construction of $\mathcal{G}_{i_1 \cdots i_M}$ that there are no incoming edges to vertex $i$, which implies that $\left \{ j \in \mathcal{N}_0: i \nprec_{i_1 \cdots i_M} j \right \} = \mathcal{N}_0$.  

Therefore, it follows from line~\eqref{line:adding_one_step_two} and Proposition~\ref{prop:cost_reform} that 
\begin{align}
\rho_{i_1 \cdots i_M}(S \cup \{i \}) &= \min_{j \in (S \cup \{i\}) \cap \mathcal{I}_{i_1 \cdots i_M}(S \cup \{i\})} r_j \notag \\
&=  \min_{j \in (S \cup \{i\}) \cap \mathcal{I}_{i_1 \cdots i_M}(S) \cap \{j' \in \mathcal{N}_0: \; i \nprec_{i_1 \cdots i_M} j' \}} r_j \notag \\
&= \begin{cases}
\min \limits_{j \in S \cap \mathcal{I}_{i_1 \cdots i_M}(S) \cap \{ j' \in \mathcal{N}_0: i \nprec_{i_1 \cdots i_M} j' \}} r_j,&\text{if } i \notin \mathcal{I}_{i_1 \cdots i_M}(S),\\
\min \left \{ \min \limits_{j \in S \cap \mathcal{I}_{i_1 \cdots i_M}(S) \cap \{ j' \in \mathcal{N}_0: i \nprec_{i_1 \cdots i_M} j' \}} r_j, r_i \right \}, &\text{if } i \in \mathcal{I}_{i_1 \cdots i_M}(S),
\end{cases} \label{line:adding_one_step_three}
\end{align}
where the first equality follows from Proposition~\ref{prop:cost_reform}, the second equality follows from line~\eqref{line:adding_one_step_two}, and the last equality follows from algebra and the fact that $i \nprec_{i_1 \cdots i_M} i$. 

We conclude the proof of Claim~\ref{claim:add_one} by considering the case where $i \notin \mathcal{I}_{i_1 \cdots i_M}(S)$. 
 For this case, suppose for the sake of developing a contradiction that there exists $j \in S \cap \mathcal{I}_{i_1 \cdots i_M}(S)$ which satisfies $i \prec_{i_1 \cdots i_M} j$. Under that supposition, it would follow from the fact that $i \notin \mathcal{I}_{i_1 \cdots i_M}(S)$ and Definition~\ref{defn:I} that there would exist a past assortment $m \in \mathcal{M}$ that satisfies $i_m \in S$ and $i_m \prec_{i_1 \cdots i_M} i$. Thus, by the transitive property, we have $i_m \prec_{i_1 \cdots i_M} i \prec_{i_1 \cdots i_M} j$, which contradicts the supposition that $j \in \mathcal{I}_{i_1 \cdots i_M}(S)$. Because we have a contradiction, we have shown that $i \nprec_{i_1 \cdots i_M} j$ for all $j \in S \cap \mathcal{I}_{i_1 \cdots i_M}(S)$, and so the desired result follows  immediately from line~\eqref{line:adding_one_step_three} and Definition~\ref{defn:rho}.  
\halmos 
\end{proof}

Using the above intermediary result, we now complete the proof of Proposition~\ref{prop:plus_one_inequality_prop}. Indeed, consider any assortment $S\in \mathcal{S}$ and any product $i \notin S$.  Suppose that there exists a product $i^* \in S$ which satisfies $r_{i^*} < r_i$ and $\mathcal{M}_{i^*} \subseteq \mathcal{M}_i$.  For each tuple of products $(i_1,\ldots,i_M) \in \mathcal{L}$, we have two cases to consider:

\begin{itemize}
\item \underline{Case 1:} Suppose that $i \notin \mathcal{I}_{i_1 \cdots i_M}(S)$. In this case, it follows  immediately from Claim~\ref{claim:add_one} that $$\rho_{i_1 \cdots i_M}(S \cup \{i\}) = \rho_{i_1 \cdots i_M}(S),$$ 
and so the inequality $\rho_{i_1 \cdots i_M}(S) \le \rho_{i_1 \cdots i_M}(S \cup \{i\})$ holds when  $i \notin \mathcal{I}_{i_1 \cdots i_M}(S)$. 

\vspace{1em}

\item \underline{Case 2:} Suppose that $i \in \mathcal{I}_{i_1 \cdots i_M}(S)$.

  In this case, we begin by showing that $i_m \notin S$ for each $m \in \mathcal{M}_i$. Indeed, consider any past assortment $m \in \mathcal{M}_i$. On one hand, if $i = i_m$, then it follows immediately from the fact that $i \notin S$ that $i_m \notin S$.  On the other hand, if $i \neq i_m$, then it also must be the case that $i_m \notin S$, else we would have a contradiction with the fact that  $i \in \mathcal{I}_{i_1 \cdots i_M}(S)$ and the fact that there is, by our construction of $\mathcal{G}_{i_1 \cdots i_M}$, a directed edge from vertex $i$ to vertex $i_m$. We have thus shown that $i_m \notin S$ for all $m \in \mathcal{M}_i$. 

We next show that $i_m \in \mathcal{I}_{i_1 \cdots i_M}(S)$ for all $m \in \mathcal{M}_i$. Indeed, consider any arbitrary $m \in \mathcal{M}_i$. On one hand, if $i_m = i$, then the  statement  $i_m \in \mathcal{I}_{i_1 \cdots i_M}(S)$ follows immediately from the fact that $i \in \mathcal{I}_{i_1 \cdots i_M}(S)$. On the other hand, if $i_m \neq i$, then it follows from the fact that $i \in \mathcal{I}_{i_1 \cdots i_M}(S)$ and the fact that there is a directed edge from vertex $i$ to vertex $i_m$ that there must not be a directed path from vertex $i_m$ to a vertex  $i_{m'}$ that satisfies $i_{m'} \in S$ for any $m' \in \mathcal{M}$. We have thus shown that   $i_m \in \mathcal{I}_{i_1 \cdots i_M}(S)$ for all $m \in \mathcal{M}_i$.
%\vspace{0.05in}

Using the above results, we now prove that $i^* \in \mathcal{I}_{i_1 \cdots i_M}(S)$. Indeed, we have shown in the above results that $i_m \notin S$ and $i_m \in \mathcal{I}_{i_1 \cdots i_M}(S)$ for all $m \in \mathcal{M}_{i}$. Therefore, it follows from the supposition that $\mathcal{M}_{i^*} \subseteq \mathcal{M}_i$ that $i_m \notin S$ and $i_m \in \mathcal{I}_{i_1 \cdots i_M}(S)$ for all $m \in \mathcal{M}_{i^*}$.  Since we have supposed that $i^* \in S$, it follows from the fact that $i_m \notin S$  for all $m \in \mathcal{M}_{i^*}$ that $i^* \neq i_m$ for all $m \in \mathcal{M}_{i^*}$. Moreover, it follows from the construction of $\mathcal{G}_{i_1 \cdots i_M}$ that all of the outgoing edges from vertex $i^*$ are incoming edges to  vertices $i_m$ for $m \in \mathcal{M}_{i^*}$. Therefore, it follows from the fact that $i_m \in \mathcal{I}_{i_1 \cdots i_M}(S)$ for all $m \in \mathcal{M}_{i^*}$ that  $i^* \in \mathcal{I}_{i_1 \cdots i_M}(S)$. 
%\vspace{0.05in}

We now prove the desired result for Case 2. First, we observe that
\begin{align}
\rho_{i_1 \cdots i_M}(S) = \min_{j \in S \cap \mathcal{I}_{i_1 \cdots i_M}(S)} r_j \le r_{i^*}, \label{line:rho_woah}
\end{align}
where the equality follows from Proposition~\ref{prop:cost_reform}  and the inequality follows from our supposition that $i^* \in S$ and because we have shown that  $i^* \in \mathcal{I}_{i_1 \cdots i_M}(S)$. 
Therefore, we have that 
\begin{align*}
\rho_{i_1 \cdots i_M}(S \cup \{i\}) &= \min \left \{ \min \limits_{j \in S \cap \mathcal{I}_{i_1 \cdots i_M}(S) \cap \{ j' \in \mathcal{N}_0: i \nprec_{i_1 \cdots i_M} j' \}} r_j, r_i \right \} \\
&\ge \min \left \{   \min \limits_{j \in S \cap \mathcal{I}_{i_1 \cdots i_M}(S)} r_j, r_i \right \} \\
&= \min \left \{  \rho_{i_1 \cdots i_M}(S), r_i \right \} \\
&= \rho_{i_1 \cdots i_M}(S),
\end{align*}
where the first equality follows from Claim~\ref{claim:add_one} and the supposition of Case 2 that $i \in \mathcal{I}_{i_1 \cdots i_M}(S)$, the inequality follows algebra, the second equality follows from  Proposition~\ref{prop:cost_reform},  and the final equality holds because of our supposition that $r_{i^*} < r_{i}$ and because of line~\eqref{line:rho_woah}, which showed that $ \rho_{i_1 \cdots i_M}(S) \le r_{i^*}$. This concludes the proof of Case 2. 
\end{itemize}
In both of the above two cases, we showed that $\rho_{i_1 \cdots i_M}(S) \le \rho_{i_1 \cdots i_M}(S \cup \{i \})$, and so our proof of Proposition~\ref{prop:plus_one_inequality_prop} is complete. 
\halmos

\subsection{Proof of Theorem~\ref{thm:main}} \label{appx:proof_main}
Consider any arbitrary assortment $S \in \mathcal{S}$. For this assortment, we define a new assortment as $$S' \triangleq S \cup \left \{i \in \mathcal{N}_0: \text{there exists } i^* \in S \textnormal{ such that }  \mathcal{M}_{i^*} \subseteq \mathcal{M}_i \textnormal{ and } r_{i^*} < r_i  \right \}.$$
It follows immediately from the definition of the collection $\widehat{\mathcal{S}}$ that this new assortment satisfies $S' \in \widehat{\mathcal{S}}$. Moreover, let $\{ j_1,\ldots,j_\nu \} \triangleq S' \setminus S$ denote the new products that have been added into the assortment. Then we observe for each tuple of products $(i_1,\ldots,i_M) \in \mathcal{L}$ that
\begin{align*}
\rho_{i_1 \cdots i_M}(S') &=  \rho_{i_1 \cdots i_M}(S)  + \sum_{\iota = 1}^{\nu} \left(  \rho_{i_1 \cdots i_M}(S \cup \{j_1,\ldots,j_\iota \}) - \rho_{i_1 \cdots i_M}(S \cup \{j_1,\ldots,j_{\iota-1} \}) \right) \ge  \rho_{i_1 \cdots i_M}(S).
\end{align*} 
Indeed, the equality follows from algebra.  The  inequality  follows from Proposition~\ref{prop:plus_one_inequality_prop}, which implies that $ \rho_{i_1 \cdots i_M}(S \cup \{j_1,\ldots,j_\iota \}) \ge \rho_{i_1 \cdots i_M}(S \cup \{j_1,\ldots,j_{\iota-1} \})$ for each $\iota \in \{1,\ldots,\nu\}$.  Since the assortment $S \in \mathcal{S}$ was chosen arbitrarily, our proof of Theorem~\ref{thm:main} follows from Proposition~\ref{prop:reform_wc}. 
\halmos 

\section{Proofs of Technical Results from \S\ref{sec:characterization:question}}
\label{appx:impossibility}

\subsection{Proof of Lemma~\ref{lem:impossibility}}
Let $\mathscr{M} = \bar{\mathcal{S}}$, and let the past assortments be indexed by  $\mathscr{M} = \{\bar{S}_1,\ldots,\bar{S}_n \}$, whereby the $i$-th past assortment  is $\bar{S}_i \triangleq \{0,i,i+1,\ldots,n-1,n\}$.  Equipped with the above notation, we first prove that the equality 
 $\bar{\mathcal{S}} = \widehat{\mathcal{S}}$ holds. 
Indeed,  choose any arbitrary assortment $S \in \widehat{\mathcal{S}}$, and let $i^* \triangleq \argmin_{j \in S: r_j > 0} r_j$ denote the product in the chosen assortment that has the smallest nonzero revenue.  
It readily follows from the facts that $\mathscr{M} = \bar{\mathcal{S}}$ and $r_1 < \cdots < r_n$ that the equalities $\mathcal{M}_{i^*} = \{ m \in \mathcal{M}: m \le i^*\}$ and $\mathcal{M}_i =  \{ m \in \mathcal{M}: m \le i\}$ hold for each $i \in \{i^*+1,\ldots,n \}$. Therefore, for each $i \in \{i^*+1,\ldots,n\}$, it follows from the definition of the collection $\widehat{\mathcal{S}}$,  from the fact that $r_{i^*} < r_i$, and from the fact that $\mathcal{M}_{i^*} \subseteq \mathcal{M}_i$ that $i \in S$. We have thus shown that $S = \{0, i^*,i^*+1,\ldots,n-1,n\} = \bar{S}_{i^*}$, which implies that $S \in \bar{\mathcal{S}}$. Since the assortment $S \in \widehat{\mathcal{S}}$ was chosen arbitrarily, we have shown that $\widehat{\mathcal{S}} \subseteq \bar{\mathcal{S}}$. The other direction of the proof that  $\bar{\mathcal{S}} = \widehat{\mathcal{S}}$ follows from the fact that  the inclusion $\mathscr{M} \subseteq \widehat{\mathcal{S}}$ always holds\footnote{To see why the inclusion $\mathscr{M} \subseteq \widehat{\mathcal{S}}$ always holds, consider any past assortment $S \in \mathscr{M}$. For each product $i^* \in S$, suppose that there exists another product $i$ which satisfies $r_{i^*} < r_{i}$ and $\mathcal{M}_{i^*} \subseteq \mathcal{M}_{i}$. Since $S \in \mathcal{M}_{i^*} \subseteq \mathcal{M}_i$, we conclude that $i \in S$ must hold, which proves that $S \in \widehat{\mathcal{S}}$. } and from the fact that $\mathscr{M} = \bar{\mathcal{S}}$.  Our proof that $\bar{\mathcal{S}} = \widehat{\mathcal{S}}$ is thus complete. \halmos

\subsection{Proof of Corollary~\ref{cor:impossibility}}

Using Lemma~\ref{lem:impossibility}, we have 
\begin{align*}
\max_{S \in \mathcal{S}} \min_{\lambda \in \mathcal{U}} \mathscr{R}^{\lambda}(S)  = \max_{S \in \widehat{\mathcal{S}}} \min_{\lambda \in \mathcal{U}} \mathscr{R}^{\lambda}(S)  =  \max_{S \in \bar{\mathcal{S}}} \min_{\lambda \in \mathcal{U}} \mathscr{R}^{\lambda}(S) =  \max_{S \in \mathscr{M}} \min_{\lambda \in \mathcal{U}} \mathscr{R}^{\lambda}(S) =   \max_{m \in \mathcal{M}} r^\intercal v_m,
\end{align*}
where the first equality follows from Theorem~\ref{thm:main}, the second equality holds because  $\widehat{\mathcal{S}} = \bar{\mathcal{S}}$, the third equality holds because  $\mathscr{M} = \bar{\mathcal{S}}$, and the final equality follows from the construction of the set of ranking-based choice models $\mathcal{U}$ (see \S\ref{sec:setting}) and from the fact that $\eta = 0$. 
\halmos 

\section{Proofs of Technical Results from \S\ref{sec:characterization:upperbound}}

\subsection{Proof of Lemma~\ref{lem:two:S}}\label{appx:two:S}
For notational convenience, let the collection on the right side of line~\eqref{line:S_two} in Lemma~\ref{lem:two:S} be denoted by $\widehat{\mathcal{S}}'$. It  follows immediately from the definition of $\widehat{\mathcal{S}}$ and from the fact that $r_0 < r_1 < \cdots < r_n$ that each assortment $S \in \widehat{\mathcal{S}}'$ is also an element of $\widehat{\mathcal{S}}$. This proves that $ \widehat{\mathcal{S}} \supseteq \widehat{\mathcal{S}}'$.   To show the other direction, consider any arbitrary assortment $S \in \widehat{\mathcal{S}}$. We first show that $S_1 \cap S_2 \subseteq S$. Indeed,  it follows from the fact that $S,S_1,S_2 \in \mathcal{S}$ that $0 \in S \cap S_1 \cap S_2$.  Moreover, for each product $j \in (S_1 \cap S_2)\setminus \{0\}$, we readily observe that the inequality $r_j > r_0$ and the equality $\mathcal{M}_j = \mathcal{M}_0$ both hold. Therefore, it follows from the definition of $\widehat{\mathcal{S}}$ and the fact that $S \in \widehat{\mathcal{S}}$ that each product $j \in S_1 \cap S_2$ is also an element of $S$. We have thus shown that $S_1 \cap S_2 \subseteq S$ for all $S \in \widehat{\mathcal{S}}$. 
Next, we define the following integers: 
\begin{align*}
i_1 &\triangleq \min \left \{n, \min_{j \in  S_1 \setminus S_2} j \right \}, & i_2 &\triangleq \min \left \{ n, \min_{j \in S_2 \setminus S_1} j \right \},
\end{align*}
where any minimization problem over an empty feasible set is defined equal to $\infty$.
It follows from the fact that $S \in \widehat{\mathcal{S}}$, from the definition of $\widehat{\mathcal{S}}$, and from the assumption of $r_0 < r_1 < \cdots < r_n$ that 
\begin{align*}
\left \{j \in S_1 \setminus S_2: j \ge i_1 \right \}  \subseteq S \text{ and } \left \{j \in  S_2 \setminus S_1: j \ge i_2 \right \} \subseteq S. 
\end{align*}
Hence, it follows from the assumption that $S_1 \cup S_2 = \mathcal{N}_0$ that
\begin{align*}
S = \left(S_1 \cap S_2 \right) \cup \left \{j \in  S_1 \setminus S_2: j \ge i_1 \right \}  \cup \left \{j \in  S_2 \setminus S_1: j \ge i_2 \right \}.
\end{align*}
We have thus shown that the assortment $S$ is an element of the collection of assortments $\widehat{\mathcal{S}}'$. Since $S \in \widehat{\mathcal{S}}$ was chosen arbitrarily, we have shown that $\widehat{\mathcal{S}} \subseteq \widehat{\mathcal{S}}'$, which concludes our proof of Lemma~\ref{lem:two:S}. 
 \halmos

\subsection{Proof of Lemma~\ref{lem:fixed_dim:S}} \label{appx:fixed_dim:S}
We readily observe that the collection of assortments $\widehat{\mathcal{S}}$ is a subset of
\begin{align*}
\widetilde{\mathcal{S}} \triangleq \left \{ S \in \mathcal{S}:\; \textnormal{if } i^* \in S,\; r_{i^*} < r_{i}, \textnormal{ and } \mathcal{M}_{i^*} = \mathcal{M}_i, \textnormal{ then } i \in S \right \},
\end{align*}
where we recall from the beginning of \S\ref{sec:characterization} that $\mathcal{M}_i$ is defined as the subset of the past assortments $\mathcal{M} \equiv \{1,\ldots,M\}$ that offered product $i$. 
%Let $2^\mathcal{M}$ denote the collection of all subsets of $\mathcal{M} = \{1,\ldots,M\}$.
For each subset of  past assortments $\mathcal{C} \subseteq \mathcal{M}$, let the products that are offered {only} in the assortments in $\mathcal{C}$ be denoted by
\begin{align*}
\mathcal{N}_0(\mathcal{C}) &\triangleq 
 \left \{ i \in \mathcal{N}_0: \mathcal{M}_i = \mathcal{C}  \right \}.
\end{align*}
Equipped with the above definitions, we observe that $\{\mathcal{N}_0(\mathcal{C}): \mathcal{C} \subseteq \mathcal{M}\}$ is the collection of subsets of products that always appear together in the past assortments, and we readily observe that  $|\{\mathcal{N}_0(\mathcal{C}): \mathcal{C} \subseteq \mathcal{M}\}| = 2^M$. %{\color{red}We observe that the number of $\mathcal{N}(\mathcal{A})$ which is nonempty is at most $\min \{n, 2^M \}$.}
Therefore, 
\begin{align}
| \widehat{\mathcal{S}}  |  &\le | \widetilde{\mathcal{S}}  | \le \prod_{\mathcal{C} \subseteq \mathcal{M}} \left( \left| \mathcal{N}_0(\mathcal{C}) \right| + 1 \right)  \le (n+2)^{2^M}.  \label{line:badname}
\end{align}
Indeed, the first inequality on line~\eqref{line:badname} holds because the collection of assortments $\widehat{\mathcal{S}}$ is a subset of the collection of assortments $\widetilde{\mathcal{S}}$. To see why the second inequality on line~\eqref{line:badname} holds, consider any arbitrary subset of past assortments $\mathcal{C} \subseteq \mathcal{M}$, and let the products in $\mathcal{N}_0(\mathcal{C})$ be indexed in ascending order by revenue; that is, let the products that comprise $\mathcal{N}_0(\mathcal{C})$ be denoted by $i_1^{\mathcal{C}},\ldots,i_{|\mathcal{N}_0(\mathcal{C})|}^{\mathcal{C}}$, where $r_{i_1^{\mathcal{C}}} < \cdots < r_{i_{|\mathcal{N}_0(\mathcal{C})|}^{\mathcal{C}}}$.  Then, we observe from the definition of the collection $\widetilde{\mathcal{S}}$ that every assortment $S \in \widetilde{\mathcal{S}}$ must satisfy the condition [$\{i_{j+1}^{\mathcal{C}},\ldots,i_{| \mathcal{N}_0(\mathcal{C})|}^{\mathcal{C}} \} \subseteq S$ and $i_1^{\mathcal{C}},\ldots,i_{j}^{\mathcal{C}} \notin S$] for some $j \in \{0,\ldots,| \mathcal{N}_0(\mathcal{C})|\}$.  Since $\mathcal{C} \subseteq \mathcal{M}$ was chosen arbitrarily, we have shown that 
$$\widetilde{\mathcal{S}} \subseteq\left \{ \bigcup_{\mathcal{C}\subseteq \mathcal{M}} \mathcal{F}^{\mathcal{C}}: \; \mathcal{F}^{\mathcal{C}} \in \left \{  \emptyset, \left\{i_{| \mathcal{N}_0(\mathcal{C})|}^{\mathcal{C}} \right \}, \left\{i_{| \mathcal{N}_0(\mathcal{C})|-1}^{\mathcal{C}},i_{| \mathcal{N}_0(\mathcal{C})|}^{\mathcal{C}} \right \},\ldots,   \left\{i_1^{\mathcal{C}},\ldots,i_{| \mathcal{N}_0(\mathcal{C})|}^{\mathcal{C}} \right \}  \right \}  \right \},  $$
%
%$$\widetilde{\mathcal{S}} \subseteq \bigcup_{\mathcal{A} \subseteq \mathcal{M}} \left(  \{ \emptyset \} \cup \bigcup_{j = 1}^{| \mathcal{N}(\mathcal{A})|}  \{i_j,\ldots,i_{| \mathcal{N}(\mathcal{A})|} \} \right) $$
which proves that the second inequality on line~\eqref{line:badname} holds. The third inequality on line~\eqref{line:badname} follows from the fact that  $|\{\mathcal{N}_0(\mathcal{C}): \mathcal{C} \subseteq \mathcal{M}\}| = 2^M$ and from the fact that $|\mathcal{N}_0(\mathcal{C})| \le  n+1$ for every $\mathcal{C} \subseteq \mathcal{M}$. We have thus proven that  $| \widehat{\mathcal{S}} |$ is at most $(n+2)^{2^M}$, which concludes our proof of Lemma~\ref{lem:fixed_dim:S}. \halmos

\subsection{Proof of Lemma~\ref{lem:fixed_dim:S:time}}\label{appx:fixed_dim:S:time}
  As the first step in our proof of Lemma~\ref{lem:fixed_dim:S:time}, we develop an algorithm for constructing a directed graph, denoted by $\mathscr{G} \equiv (\mathscr{V},\mathscr{E})$, in which the set of vertices in the directed graph is defined as $\mathscr{V} \triangleq \mathcal{N}_0$ and the set of directed edges is defined as $\mathscr{E} \triangleq \{ (i^*,i) \in \mathcal{N}_0 \times \mathcal{N}_0: \; r_{i^*} < r_{i} \textnormal{ and } \mathcal{M}_{i^*} \subseteq \mathcal{M}_{i} \}$. 
This directed graph  has a natural correspondence with the collection of assortments $\widehat{\mathcal{S}}$, as we readily observe that an assortment satisfies $S \in \widehat{\mathcal{S}}$ if and only if [$0 \in S$] and [we have that $i \in S$ whenever there exists a product $i^* \in S$ and a directed edge $(i^*,i) \in \mathscr{E}$]. It is easy to see that this directed graph is acyclic and has the property that a vertex $j \in \mathscr{V}$  is reachable from a vertex $i \in \mathscr{V}$ if and only if there is a directed edge $(i,j) \in \mathscr{E}$ from vertex $i$ to vertex $j$. 
A directed acyclic graph which has the aforementioned property for each pair of vertices is referred to as a \emph{transitive closure} \citep[p.90]{ahuja1988network}.

Our algorithm for constructing the directed acyclic graph $\mathscr{G}$ that is a transitive closure  is presented in Algorithm~\ref{alg:construct_G}. In the algorithm, we first iterate over each product $i \in \mathcal{N}_0$ and construct the corresponding set $\mathcal{M}_i$ of past assortments which offered that product. It is easy to see that each of the sets $\mathcal{M}_i$ can be constructed in $\mathcal{O}(M)$ computation time by checking whether the product satisfies $i \in S_m$ for each past assortment $m \in \mathcal{M}$; hence, we observe that all of the sets $\mathcal{M}_0,\ldots,\mathcal{M}_n$ can be constructed in a total of $\mathcal{O}(Mn)$ computation time. Assume that we  store the sets $\mathcal{M}_0,\ldots,\mathcal{M}_n$ as  unsorted arrays as well as hash tables.   Given these data structures for $\mathcal{M}_0,\ldots,\mathcal{M}_n$, we then iterate over each pair of products $(i,i^*) \in \mathcal{N}_0 \times \mathcal{N}_0$ and check in $\mathcal{O}(M)$ computation time whether $r_{i^*} < r_{i}$ and $\mathcal{M}_{i^*} \subseteq \mathcal{M}_i$.  Since there are $\mathcal{O}(n^2)$ pairs of products in $\mathcal{N}_0 \times \mathcal{N}_0$, we conclude that the set of directed edges $\mathscr{E}$ can be constructed in  a total of $\mathcal{O}(n^2 M)$ computation time. Combining all of the steps, and since it takes $\mathcal{O}(n)$ computation time to construct the set of vertices $\mathscr{V}$,  we have shown that the total computation time for Algorithm~\ref{alg:construct_G}  is $\mathcal{O}(n + Mn + n^2 M) = \mathcal{O}(n^2 M)$.

\begin{algorithm}[t]
\begin{center}
\fbox{\begin{minipage}{\linewidth}
%\OneAndAHalfSpacedXI
\begin{center}
\textsc{\underline{Construct-${\mathscr{G}}(\mathscr{M},r)$}}\\
\end{center}
\vspace{1em}
\textbf{Inputs}: 
\begin{itemize}
\item The collection of past assortments, $\mathscr{M}  \equiv \{S_1,\ldots,S_M\}$. 
\item The revenues of the products, $r \equiv (r_0,r_1,\ldots,r_n)$. 
\end{itemize}
\vspace{1em}
\textbf{Output}:
\begin{itemize}
\item  $\mathscr{G} \equiv (\mathscr{V},\mathscr{E})$, where $\mathscr{V} \equiv \mathcal{N}_0$ and $\mathscr{E} \equiv \{ (i^*,i) \in \mathcal{N}_0 \times \mathcal{N}_0: \; r_{i^*} < r_{i} \textnormal{ and } \mathcal{M}_{i^*} \subseteq \mathcal{M}_{i} \}$. 
\end{itemize} 
\vspace{1em}
\textbf{Procedure}: 
\begin{enumerate}
\item Initialize the vertex set $\mathscr{V} \leftarrow \emptyset$ and edge set $\mathscr{E} \leftarrow \emptyset$. 
\item For each product $i \in \mathcal{N}_0$: \label{step:iterative_of_i}% \equiv \cup_{m \in \mathcal{M}} S_m$: 
\begin{enumerate}
\item Update $\mathscr{V} \leftarrow \mathscr{V} \cup \{i \}$. 
\item Construct the set $\mathcal{M}_i$ of past assortments which offered product $i$. 
\end{enumerate}
\item For each pair of products $(i^*,i) \in \mathcal{N}_0 \times \mathcal{N}_0$:
\begin{enumerate}
\item If $r_{i^*} < r_{i}$ and $\mathcal{M}_{i^*} \subseteq \mathcal{M}_i$:
\begin{enumerate}
\item Update $\mathscr{E} \leftarrow \mathscr{E} \cup \{(i^*,i) \}$
\end{enumerate}
\end{enumerate}
\item Output $\mathscr{G} \equiv (\mathscr{V},\mathscr{E})$ and terminate. 
\end{enumerate}
\end{minipage}}
\end{center}
\caption{A procedure for constructing the directed acyclic graph $\mathscr{G}$.} \label{alg:construct_G}
\end{algorithm}

We next describe our algorithm for constructing the collection of assortments $\widehat{\mathcal{S}}$ from the directed graph $\mathscr{G}$. This algorithm is  denoted by \textsc{Construct-$\widehat{\mathcal{S}}(\mathscr{M},r)$} and is found in Algorithm~\ref{alg:construct_S}.  In this algorithm,  we first use Algorithm~\ref{alg:construct_G} to construct the directed acyclic graph $\mathscr{G} \equiv (\mathscr{V},\mathscr{E})$ which is a transitive closure, and then we invoke a recursive subroutine denoted by \textsc{RecursiveStep$(\mathscr{G})$} in Algorithm~\ref{alg:recursive}.   The goal of the recursive subroutine is to take as an input a generic directed acyclic graph $\mathscr{G} \equiv (\mathscr{V}, \mathscr{E})$ which is a transitive closure, and for that graph, output the collection of subsets of vertices $\mathscr{A} \equiv \{S \subseteq \mathscr{V}:  \text{ if $i \in S$ and $(i,j) \in \mathscr{E}$, then $j \in S$}\}$. Algorithm~\ref{alg:construct_S} concludes by removing the subsets of vertices from $\mathscr{A}$ which do not include the no-purchase option $0$, and then outputs the remaining subsets of vertices from $\mathscr{A}$.  The correctness of Algorithm~\ref{alg:construct_S} follows immediately from our earlier observation that an assortment satisfies $S \in \widehat{\mathcal{S}}$ if and only if [$0 \in S$] and [we have that $i \in S$ whenever there exists a product $i^* \in S$ and a directed edge $(i^*,i) \in \mathscr{E}$].

   At a high level, the recursive subroutine in Algorithm~\ref{alg:recursive} is comprised of two cases. The base case of the subroutine is when the graph has no vertices, in which case it is clear that $\mathscr{A} =\{ \emptyset \}$. If we are not in the base case, then the aim of the recursive subroutine is to construct the collections $ \left \{ S \in \mathscr{A}: i \notin S \right \} $ and $\left \{ S \in \mathscr{A}: i \in S \right \}$ for a chosen vertex $i \in \mathscr{V}$ and output the union of these two collections. The construction of the collection $ \left \{ S \in \mathscr{A}: i \notin S \right \} $ takes place on lines~\eqref{step:construct_A1:1}-\eqref{step:construct_A1:2} of Algorithm~\ref{alg:recursive}, and the construction of the collection $ \left \{ S \in \mathscr{A}: i \in S \right \} $ takes place on lines~\eqref{step:construct_A2:1}-\eqref{step:construct_A2:3} of Algorithm~\ref{alg:recursive}. 

Up to this point, we have established that Algorithm~\ref{alg:construct_G} is correct (that is,  it delivers the desired output for any valid input), and we have established that Algorithm~\ref{alg:construct_S} is correct under the assumption that Algorithm~\ref{alg:recursive} is correct. Therefore, it remains for us to prove that  the recursive subroutine in Algorithm~\ref{alg:recursive} is correct. To prove the correctness of the recursive subroutine, we will make use of four intermediary claims, which are denoted below by  Claims~\ref{claim:appx:idk_intro}-\ref{claim:appx:idk_3}. The purpose of the first two intermediary claims, denoted by Claims~\ref{claim:appx:idk_intro} and \ref{claim:appx:idk}, is to show that the graphs $\mathscr{G}' \equiv (\mathscr{V}', \mathscr{E}')$ and $\mathscr{G}'' \equiv (\mathscr{V}'', \mathscr{E}'')$  constructed on lines~\eqref{step:construct_A1:1} and \eqref{step:construct_A2:1} of Algorithm~\ref{alg:recursive} are directed acyclic graphs that are transitive closures, which implies that $\mathscr{G}' $ and $\mathscr{G}''$ are valid inputs on lines~\eqref{step:construct_A1:2} and \eqref{step:construct_A2:2} of  Algorithm~\ref{alg:recursive}. The purpose of the second two intermediary claims, denoted by Claims~\ref{claim:appx:idk_2} and \ref{claim:appx:idk_3}, is to show that the union of the two collections $ \mathscr{A}' $ and $ \mathscr{A}'''$ constructed on lines~\eqref{step:construct_A1:2} and \eqref{step:construct_A2:3} of Algorithm~\ref{alg:recursive} provides the desired output on line~\eqref{step:construct_A} of Algorithm~\ref{alg:recursive}.   
\begin{algorithm}[t] 
\begin{center}
\fbox{\begin{minipage}{\linewidth}
%\OneAndAHalfSpacedXI
\begin{center}
\textsc{\underline{Construct-$\widehat{\mathcal{S}}(\mathscr{M},r)$}}\\
\end{center}
\vspace{1em}
\textbf{Inputs}: 
\begin{itemize}
\item The collection of past assortments, $\mathscr{M}  \equiv \{S_1,\ldots,S_M\}$. 
\item The revenues of the products, $r \equiv (r_0,r_1,\ldots,r_n)$. 
\end{itemize}
\vspace{1em}
\textbf{Output}:
\begin{itemize}
\item  The collection of assortments $\widehat{\mathcal{S}}$ corresponding to the collection of past assortments $\mathscr{M}$ and the revenues $r$. 
\end{itemize} 
\vspace{1em}
\textbf{Procedure}: 
\begin{enumerate}
\item Construct the directed acyclic graph $\mathscr{G} \leftarrow \textsc{Construct-${\mathscr{G}}(\mathscr{M},r)$}$.  \label{step:construct_S:1}
\item Compute the collection of assortments $\widehat{\mathcal{S}} \leftarrow \textsc{RecursiveStep$(\mathscr{G})$}$.  \label{step:construct_S:2}
\item For each $S \in \widehat{\mathcal{S}}$:  \label{step:construct_S:3}
\begin{enumerate}
\item If $0 \notin S$:
\begin{enumerate}
\item Update $\widehat{\mathcal{S}} \leftarrow \widehat{\mathcal{S}} \setminus \{ S \}$. 
\end{enumerate}
\end{enumerate}
\item Output $\widehat{\mathcal{S}}$ and terminate. 
\end{enumerate}
\vspace{1em}
\end{minipage}}
%\vspace{1em}
\end{center}
\caption{A procedure for constructing $\widehat{\mathcal{S}}$.} \label{alg:construct_S}
\end{algorithm}

\begin{algorithm}[t]
\begin{center}
\fbox{\begin{minipage}{\linewidth}
\begin{center}
\textsc{\underline{RecursiveStep$(\mathscr{G})$}}\\
\end{center}
\vspace{1em}
\textbf{Inputs}: 
\begin{itemize}
\item A directed acyclic graph $\mathscr{G} \equiv (\mathscr{V}, \mathscr{E})$ which is a transitive closure. 
\end{itemize}
\vspace{1em}
\textbf{Output}:
\begin{itemize}
\item The collection $\mathscr{A} \equiv \{S \subseteq \mathscr{V}:  \text{ if $i \in S$ and $(i,j) \in \mathscr{E}$, then $j \in S$}\}$. %  whenever  
\end{itemize} 
\vspace{1em}
\textbf{Procedure}: 
\begin{enumerate}
\item If $\mathscr{V} = \emptyset$:
\begin{enumerate}
\item Output the collection $\mathscr{A} \equiv\{ \emptyset \}$ and terminate. 
\end{enumerate}
\item Otherwise:  %\label{step:initialize}% the following: 
\begin{enumerate}
%\item Initialize an empty collection $\mathscr{A} \leftarrow \{\}$. % and let $\mathscr{E}_i$ be the outgoing edges from vertex $i$.
\item Choose any vertex  $i \in \mathscr{V}$.% that has no incoming edges. 
\label{step:choose_vertex} 

\item  
Create a copy of $\mathscr{G} \equiv (\mathscr{V},\mathscr{E})$ in which the vertices $\{i \} \cup \{\ell: (\ell,i) \in \mathscr{E} \}$ and the incoming and outgoing edges of these vertices are removed. Denote this new graph by $\mathscr{G}' \equiv (\mathscr{V}', \mathscr{E}')$.\label{step:construct_A1:1}
\item Compute the collection $\mathscr{A}' \leftarrow \textsc{RecursiveStep$(\mathscr{G}')$}$. \label{step:construct_A1:2}

\item 
Create a copy of $\mathscr{G} \equiv (\mathscr{V},\mathscr{E})$ in which the vertices $\{i \} \cup  \{\ell: (i,\ell) \in \mathscr{E} \}$ and the incoming and outgoing edges of these vertices are removed. Denote this new graph by $\mathscr{G}'' \equiv (\mathscr{V}'', \mathscr{E}'')$.  \label{step:construct_A2:1}
\item Compute the collection $\mathscr{A}'' \leftarrow \textsc{RecursiveStep$(\mathscr{G}'')$}$.\label{step:construct_A2:2} 
\item Compute the collection $\mathscr{A}''' \leftarrow \{S \cup \{i \} \cup \{\ell: (i,\ell) \in \mathscr{E} \} :  S \in \mathscr{A}'' \}$.  \label{step:construct_A2:3}

\item Output the collection $\mathscr{A} \equiv\mathscr{A}' \cup \mathscr{A}'''$ and terminate.  \label{step:construct_A}
\end{enumerate}
\end{enumerate}
\vspace{1em}
\end{minipage}}
%\vspace{1em}
\end{center}
\caption{A recursive subroutine which is used in Algorithm~\ref{alg:construct_S}. }\label{alg:recursive}
\end{algorithm}%
%\null
%\vfill
%\clearpage
%}

\begin{claim} \label{claim:appx:idk_intro}
Let  $\mathscr{G} \equiv (\mathscr{V},\mathscr{E})$ be a directed acyclic graph that is a transitive closure. For notational convenience, let  $\mathscr{E}_i$ denote the set of incoming and outgoing edges from each vertex $i \in \mathscr{V}$. Then for each vertex $i \in \mathscr{V}$, we have that  $\tilde{\mathscr{G}} \equiv (\mathscr{V} \setminus  \{i \}, \mathscr{E} \setminus  \mathscr{E}_i)$ is a directed acyclic graph that is a transitive closure.  
\end{claim}
\begin{proof}{Proof of Claim~\ref{claim:appx:idk_intro}.} Let  $\mathscr{G} \equiv (\mathscr{V},\mathscr{E})$ be a directed acyclic graph that is a transitive closure, and let $i \in \mathscr{V}$ be any chosen vertex. It is clear that removing a vertex and its associated incoming and outgoing edges from a directed acyclic graph will not induce any cycles. Therefore, it follows from the fact that the original graph $\mathscr{G} \equiv (\mathscr{V},\mathscr{E})$ is a directed acyclic graph  that the new graph $\tilde{\mathscr{G}} \equiv (\mathscr{V} \setminus \{i\}, \mathscr{E} \setminus \mathscr{E}_i)$ is also a directed acyclic graph. Moreover, consider any two arbitrary vertices $j,k \in \mathscr{V} \setminus \{i\}$ which satisfy the property that vertex $k$ is reachable from vertex $j$  in the new  graph  $\tilde{\mathscr{G}} \equiv (\mathscr{V} \setminus \{i\}, \mathscr{E} \setminus \mathscr{E}_i)$. Then it follows immediately from the construction of the new graph that vertex $k$ is reachable from vertex $j$  in the original graph $\mathscr{G} \equiv (\mathscr{V},\mathscr{E})$. Since the original graph $\mathscr{G} \equiv (\mathscr{V},\mathscr{E})$ is a transitive closure, there  must exist a directed edge from vertex $j$ to vertex $k$ in the original graph. Since neither $j$ nor $k$ are equal to $i$, we have thus shown that there is a directed edge $(j,k) \in \mathscr{E} \setminus \mathscr{E}_i$. Since the two vertices $j,k \in \mathscr{V} \setminus \{i\}$ were chosen arbitrarily, we have shown that the new graph is also a transitive closure. Our proof of Claim~\ref{claim:appx:idk_intro} is thus complete. 
 \halmos
\end{proof}

\begin{claim} \label{claim:appx:idk}
Let  $\mathscr{G} \equiv (\mathscr{V},\mathscr{E})$ be a directed acyclic graph that is a transitive closure. For notational convenience, let  $\mathscr{E}_i$ denote the set of incoming and outgoing edges from each vertex $i \in \mathscr{V}$. Then for each subset of vertices $\mathscr{B} \subseteq \mathscr{V}$, we have that  $\tilde{\mathscr{G}} \equiv (\mathscr{V} \setminus \mathscr{B}, \mathscr{E} \setminus (\cup_{i \in \mathscr{B}} \mathscr{E}_i))$ is a directed acyclic graph that is a transitive closure.  
\end{claim}

\begin{proof}{Proof of Claim~\ref{claim:appx:idk}.} Let  $\mathscr{G} \equiv (\mathscr{V},\mathscr{E})$ be a directed acyclic graph that is a transitive closure, let  $\mathscr{B} \subseteq \mathscr{V}$ be a subset of vertices,  and define the directed graph $\tilde{\mathscr{G}} \equiv (\mathscr{V} \setminus \mathscr{B}, \mathscr{E} \setminus (\cup_{i \in \mathscr{B}} \mathscr{E}_i))$. The rest of the proof follows a straightforward induction argument. Indeed, let the vertices of $\mathscr{B}$ be indexed by $\mathscr{B} \equiv \{i^{\mathscr{B}}_1,\ldots,i^{\mathscr{B}}_{|\mathscr{B}|} \}$.  For each $j \in \{0,1,\ldots,| \mathscr{B}|\}$, we define the following directed graph:
\begin{align*}
{\mathscr{G}}^{\mathscr{B}}_j \equiv  
(\underbrace{\mathscr{V} \setminus \{i^{\mathscr{B}}_1,\ldots, i^{\mathscr{B}}_{j} \}}_{\mathscr{V}^{\mathscr{B}}_j}, \;  \underbrace{\mathscr{E} \setminus \left \{ (k,\ell) \in \mathscr{E}: k \in \{i^{\mathscr{B}}_1,\ldots, i^{\mathscr{B}}_{j} \} \textnormal{ or } \ell \in \{i^{\mathscr{B}}_1,\ldots, i^{\mathscr{B}}_{j} \} \right \}}_{\mathscr{E}^{\mathscr{B}}_j} ).
\end{align*}
We readily observe from the above definition that ${\mathscr{G}}^{\mathscr{B}}_0 = \mathscr{G}$ and that ${\mathscr{G}}^{\mathscr{B}}_{| \mathscr{B}|} = \tilde{\mathscr{G}}$. In the remainder, we will prove by induction that each $\tilde{\mathscr{G}}^{\mathscr{B}}_0,\ldots,\tilde{\mathscr{G}}^{\mathscr{B}}_{| \mathscr{B}|}$ is a directed acyclic graph that is a transitive closure. Our induction proof proceeds as follows. In the base case, it follows from the equality ${\mathscr{G}}^{\mathscr{B}}_0 = \mathscr{G}$ that ${\mathscr{G}}^{\mathscr{B}}_0$ is a directed acyclic graph that is a transitive closure. Next, assume by induction that $\tilde{\mathscr{G}}^{\mathscr{B}}_0,\ldots,\tilde{\mathscr{G}}^{\mathscr{B}}_{j-1}$ are directed acyclic graphs that are transitive closures for any $j \in \{1,\ldots,| \mathscr{B}| \}$. Then we readily observe from the definition of the directed graph $\mathscr{G}^{\mathscr{B}}_j \equiv (\mathscr{V}^{\mathscr{B}}_j, \mathscr{E}^{\mathscr{B}}_j)$ that the following equalities hold:
\begin{align*}
\mathscr{V}^{\mathscr{B}}_j &= \mathscr{V}^{\mathscr{B}}_{j-1} \setminus \{i^{\mathscr{B}}_j\}; & \mathscr{E}^{\mathscr{B}}_j &= \mathscr{E}^{\mathscr{B}}_{j-1} \setminus \left \{(k,\ell) \in \mathscr{E}^{\mathscr{B}}_{j-1}: \; k = i^{\mathscr{B}}_j \textnormal{ or } \ell = i^{\mathscr{B}}_j  \right \}. 
\end{align*}
Using the above equalities, it follows immediately from Claim~\ref{claim:appx:idk_intro} and the induction hypothesis that $\mathscr{G}^{\mathscr{B}}_j $ is a directed acyclic graph that is a transitive closure. This  concludes our induction proof, and since the equality ${\mathscr{G}}^{\mathscr{B}}_{| \mathscr{B}|} = \tilde{\mathscr{G}}$  holds, our proof of Claim~\ref{claim:appx:idk} is complete. \halmos
\end{proof}

\begin{claim}\label{claim:appx:idk_2}
Let  $\mathscr{G} \equiv (\mathscr{V},\mathscr{E})$ be a directed acyclic graph that is a transitive closure. For notational convenience, let  $\mathscr{E}_i$ denote the set of incoming and outgoing edges from each vertex $i \in \mathscr{V}$. For each vertex $i \in \mathscr{V}$, %we have that
\begin{align}
  &\left \{S \subseteq \mathscr{V} \setminus \{i \}:  \textnormal{ if $k \in S$ and $(k,j) \in \mathscr{E}$, then $j \in S$} \right\} \notag \\
        & = \left \{ S \subseteq \mathscr{V} \setminus (\{ i \} \cup \{\ell: (\ell,i) \in \mathscr{E} \}): \textnormal{if } k \in  S  \textnormal{ and } (k,j) \in \mathscr{E} \setminus \left( \mathscr{E}_i \cup \bigcup_{\ell: (\ell,i) \in \mathscr{E}} \mathscr{E}_\ell \right), \textnormal{ then } j \in S \right \}. \label{line:remove_i_from_set}
\end{align}
\end{claim}

\begin{proof}{Proof of Claim~\ref{claim:appx:idk_2}.}
 Let  $\mathscr{G} \equiv (\mathscr{V},\mathscr{E})$ be a directed acyclic graph that is a transitive closure, and let $i \in \mathscr{V}$ be any chosen vertex from this graph. We first observe that 
 \begin{align}
&\left \{S \subseteq \mathscr{V} \setminus \{i \}:  \text{ if $k \in S$ and $(k,j) \in \mathscr{E}$, then $j \in S$} \right\} \label{line:remove_i_from_set_yuck}\\
&= \left \{ S \subseteq \mathscr{V} \setminus (\{ i \} \cup \{\ell: (\ell,i) \in \mathscr{E} \}): \textnormal{if } k \in  S  \textnormal{ and } (k,j) \in \mathscr{E}, \textnormal{ then } j \in S \right \},\label{line:remove_i_from_set_mini}
  \end{align}
where the above 
equality follows from the fact that 
any subset of vertices $S$ from the collection on line~\eqref{line:remove_i_from_set_yuck} must not contain any vertices in the graph that have an outgoing edge to vertex $i$. For notational convenience, let the collection of subsets of vertices on line~\eqref{line:remove_i_from_set} be denoted by  $\tilde{\mathscr{A}}$, and let the collection of subsets of vertices on line~\eqref{line:remove_i_from_set_mini} be denoted by  $\tilde{\mathscr{A}}'$.  In other words, let
\begin{align*}
\tilde{\mathscr{A}}' &\triangleq  \left \{ S \subseteq \mathscr{V} \setminus (\{ i \} \cup \{\ell: (\ell,i) \in \mathscr{E} \}): \textnormal{if } k \in  S  \textnormal{ and } (k,j) \in \mathscr{E}, \textnormal{ then } j \in S \right \}\\
\tilde{\mathscr{A}} &\triangleq  \left \{ S \subseteq \mathscr{V} \setminus (\{ i \} \cup \{\ell: (\ell,i) \in \mathscr{E} \}): \textnormal{if } k \in  S  \textnormal{ and } (k,j) \in \mathscr{E} \setminus \left( \mathscr{E}_i \cup \bigcup_{\ell: (\ell,i) \in \mathscr{E}} \mathscr{E}_\ell \right), \textnormal{ then } j \in S \right \}.
\end{align*}
In the remainder of the proof, we will show that $\tilde{\mathscr{A}} = \tilde{\mathscr{A}}'$. 

To show the first direction, consider any arbitrary subset of vertices $S \in \tilde{\mathscr{A}}'$. For this subset of vertices,  consider any two arbitrary vertices $k, j \in  \mathscr{V} \setminus (\{ i \} \cup \{\ell: (\ell,i) \in \mathscr{E} \})$ which satisfy the conditions [$k \in S$] and [$(k,j) \in \mathscr{E} \setminus ( \mathscr{E}_i \cup \bigcup_{\ell: (\ell,i) \in \mathscr{E}} \mathscr{E}_\ell )$]. In this case, it follows immediately from the fact that $\mathscr{E} \setminus ( \mathscr{E}_i \cup \bigcup_{\ell: (\ell,i) \in \mathscr{E}} \mathscr{E}_\ell ) \subseteq \mathscr{E}$ that the condition [$(k,j) \in \mathscr{E} $] holds. Therefore, it follows immediately from the definition of $\tilde{\mathscr{A}}'$ that  $j \in S$.  Since the two vertices $k, j \in  \mathscr{V} \setminus (\{ i \} \cup \{\ell: (\ell,i) \in \mathscr{E} \})$  which satisfy the conditions [$k \in S$] and [$(k,j) \in \mathscr{E} \setminus ( \mathscr{E}_i \cup \bigcup_{\ell: (\ell,i) \in \mathscr{E}} \mathscr{E}_\ell )$] were chosen arbitrarily, we have shown that $S \in \tilde{\mathscr{A}}$, and since the subset of vertices $S \in \tilde{\mathscr{A}}'$ was chosen arbitrarily, we have shown that  $\tilde{\mathscr{A}}'  \subseteq \tilde{\mathscr{A}}$. Our proof of the first direction is thus complete. 

 To show the other direction, consider any arbitrary subset of vertices $S \in  \tilde{\mathscr{A}}$. For this subset of vertices, consider any two arbitrary vertices $k, j \in  \mathscr{V} \setminus (\{ i \} \cup \{\ell: (\ell,i) \in \mathscr{E} \})$ which satisfy the conditions [$k \in S$] and [$(k,j) \in \mathscr{E}$]. We now suppose, for the sake of developing a contradiction,  that the directed edge from vertex $k$ to vertex $j$ satisfies $ (k,j) \in \mathscr{E}_i \cup \bigcup_{\ell: (\ell,i) \in \mathscr{E}} \mathscr{E}_\ell$. % $\ell = i$ or $(\ell,i) \in \mathscr{E}$.
Under this supposition, we have two cases to consider. In the first case, the directed edge satisfies   $(k,j) \in \mathscr{E}_i$. However,  that would imply that either  $j = i$ or $k = i$, and those equalities contradict the facts that  $k \in S$ and $S \subseteq \mathscr{V} \setminus (\{ i \} \cup \{\ell: (\ell,i) \in \mathscr{E} \})$. In the second case,  the directed edge satisfies $(k,j) \in \bigcup_{\ell: (\ell,i) \in \mathscr{E}} \mathscr{E}_\ell$, which implies that there are directed edges $(k,j) \in \mathscr{E}$ and $(j,i) \in \mathscr{E}$ in the graph $\mathscr{G}$. Since the graph is a transitive closure, the existence of directed edges $(k,j) \in \mathscr{E}$ and $(j,i) \in \mathscr{E}$ implies that there must exist a directed edge $(k,i) \in \mathscr{E}$. However, the existence of a   directed edge $(k,i) \in \mathscr{E}$ contradicts the facts that  $k \in S$ and $S \subseteq \mathscr{V} \setminus (\{ i \} \cup \{\ell: (\ell,i) \in \mathscr{E} \})$.  In all cases, we have proved by contradiction that the vertices $k$ and $j$ must satisfy the condition [$(k,j) \in \mathscr{E} \setminus ( \mathscr{E}_i \cup \bigcup_{\ell: (\ell,i) \in \mathscr{E}} \mathscr{E}_\ell )$].  Since the two vertices $k, j \in  \mathscr{V} \setminus (\{ i \} \cup \{\ell: (\ell,i) \in \mathscr{E} \})$ which satisfy the conditions [$k \in S$] and [$(k,j) \in \mathscr{E}$] were chosen arbitrarily, we have shown that $S \in \tilde{\mathscr{A}}'$, and since the subset of vertices $S \in \tilde{\mathscr{A}}$ was chosen arbitrarily, we have shown that  $\tilde{\mathscr{A}}  \subseteq \tilde{\mathscr{A}}'$. Our proof of the second direction is thus complete. 

 We have thus shown that $\tilde{\mathscr{A}} = \tilde{\mathscr{A}}'$, which concludes our proof of Claim~\ref{claim:appx:idk_2}. \halmos
\end{proof}

\begin{claim}\label{claim:appx:idk_3}
Let  $\mathscr{G} \equiv (\mathscr{V},\mathscr{E})$ be a directed acyclic graph that is a transitive closure. For notational convenience, let  $\mathscr{E}_i$ denote the set of incoming and outgoing edges from each vertex $i \in \mathscr{V}$. For each vertex $i \in \mathscr{V}$, 
\begin{align}
  &\left \{S \subseteq \mathscr{V}: \left[ i \in S \right] \textnormal{ and } \left[  \textnormal{if $k \in S$ and $(k,j) \in \mathscr{E}$, then $j \in S$} \right] \right\} \notag \\
  % &= \left \{ S \subseteq \mathscr{V} \setminus (\{ i \} \cup \{j: (j,i) \in \mathscr{E} \}): \textnormal{if } k \in  S  \textnormal{ and } (k,j) \in \mathscr{E}, \textnormal{ then } j \in S \right \}\\
        & = \left \{ \begin{aligned}
        &S' \cup \{i \} \cup \left\{ \ell: (i,\ell) \in \mathscr{E} \right \} :\\
        &\quad S' \in \left \{ S  \subseteq \mathscr{V} \setminus \left( \{i \} \cup \left\{ \ell: (i,\ell) \in \mathscr{E} \right \}  \right):  \textnormal{if } k \in  S  \textnormal{ and } (k,j) \in \mathscr{E} \setminus \left( \mathscr{E}_i \cup \bigcup_{\ell: (i,\ell) \in \mathscr{E}} \mathscr{E}_\ell \right), \textnormal{ then } j \in S \right \} \end{aligned} \right \}.\label{line:add_i_to_set_goofy}
\end{align}
\end{claim}

\begin{proof}{Proof of Claim~\ref{claim:appx:idk_3}.}
 Let  $\mathscr{G} \equiv (\mathscr{V},\mathscr{E})$ be a directed acyclic graph that is a transitive closure, and let $i \in \mathscr{V}$ be any chosen vertex from this graph. Then we observe that 
 \begin{align}
  &\left \{S \subseteq \mathscr{V}: \left[ i \in S \right] \textnormal{ and } \left[  \textnormal{if $k \in S$ and $(k,j) \in \mathscr{E}$, then $j \in S$} \right] \right\} \label{line:add_i_to_set_yuck} \\
&= \left \{S \subseteq \mathscr{V}: \left[ \{i \} \cup \{\ell: (i,\ell) \in \mathscr{E} \} \subseteq S \right] \textnormal{ and } \left[  \textnormal{if $k \in S$ and $(k,j) \in \mathscr{E}$, then $j \in S$} \right] \right\} \label{line:add_i_to_set_mini}\\
        & = \left \{ S \subseteq \mathscr{V}: \left[ \{i \} \cup \left\{ \ell: (i,\ell) \in \mathscr{E} \right \}   \subseteq S\right] \textnormal{ and } \left[ \textnormal{if } k \in  S  \textnormal{ and } (k,j) \in \mathscr{E} \setminus \left( \mathscr{E}_i \cup \bigcup_{\ell: (i,\ell) \in \mathscr{E}} \mathscr{E}_\ell \right), \textnormal{ then } j \in S \right] \right \} \label{line:add_i_to_set}.
     \end{align}
 Indeed, the  equality of the collections of subsets of vertices on lines~\eqref{line:add_i_to_set_yuck} and \eqref{line:add_i_to_set_mini}   follows from the fact that  
any subset of vertices $S$ in the collection on line~\eqref{line:add_i_to_set_yuck} satisfies $i \in S$, and so it must contain all of the vertices in the graph which have an incoming  edge from vertex $i$.
For notational convenience, let the collections of subsets of vertices on lines~\eqref{line:add_i_to_set_mini}   and  \eqref{line:add_i_to_set} be denoted by $\bar{\mathscr{A}}'$ and $\bar{\mathscr{A}}$, respectively.  To show that  line~\eqref{line:add_i_to_set_mini} is equal to line~\eqref{line:add_i_to_set}, we first observe that the inclusion $\bar{\mathscr{A}}'  \subseteq \bar{\mathscr{A}}$ follows  immediately from the definitions of the collections $\bar{\mathscr{A}}'$  and $\bar{\mathscr{A}} $. To show the other direction, consider any subset of vertices $S \in  \bar{\mathscr{A}}$ and consider any vertex $k \in S$. If there exists a directed edge $(k,j) \in \mathscr{E}$ which satisfies $ (k,j) \in \mathscr{E}_i \cup \bigcup_{\ell: (i,\ell) \in \mathscr{E}} \mathscr{E}_\ell$, then we observe that the vertex $j$ must satisfy $j \in \{i \} \cup \{ \ell: (i,\ell) \in \mathscr{E} \}$. Since it follows from $S \in \bar{\mathscr{A}}$ that the inclusion $\{i\} \cup \{\ell: (i,\ell) \in \mathscr{E} \} \subseteq S$ holds, we conclude that $S \in \bar{\mathscr{A}}'$. We have thus shown that line~\eqref{line:add_i_to_set_mini} is equal to line~\eqref{line:add_i_to_set}. Since the equivalence of lines~\eqref{line:add_i_to_set} and \eqref{line:add_i_to_set_goofy} follows readily from algebra, our proof of Claim~\ref{claim:appx:idk_3} is complete. \halmos
\end{proof}

Using the above intermediary claims, we now prove the correctness of the recursive subroutine in Algorithm~\ref{alg:recursive}. Indeed,  it is clear that the recursive subroutine yields the correct output in the base case where $\mathscr{V} = \emptyset$.  Next, let us assume by induction that the recursive subroutine yields the correct output for all valid input graphs with up to $p-1$ vertices, and consider any valid input $\mathscr{G} \equiv (\mathscr{V},\mathscr{E})$ for which the number of vertices is $|\mathscr{V}| = p$.  Let $i \in \mathscr{V}$ denote the vertex from this graph which is chosen in line~\eqref{step:choose_vertex} of Algorithm~\ref{alg:recursive}, where the existence of such a vertex follows from the fact that we are in the case where $\mathscr{V} \neq \emptyset$. It follows immediately from Claim~\ref{claim:appx:idk}  that the graphs $\mathscr{G}' \equiv (\mathscr{V}', \mathscr{E}')$ and $\mathscr{G}'' \equiv (\mathscr{V}'', \mathscr{E}'')$  constructed on lines~\eqref{step:construct_A1:1} and \eqref{step:construct_A2:1} of Algorithm~\ref{alg:recursive} are directed acyclic graphs which are transitive closures, which implies that these graphs are valid inputs to Algorithm~\ref{alg:recursive} in lines~\eqref{step:construct_A1:2} and \eqref{step:construct_A2:2}. Therefore, it follows from the induction hypothesis and lines~\eqref{step:construct_A1:2}, \eqref{step:construct_A2:2}, and \eqref{step:construct_A2:3} of Algorithm~\ref{alg:recursive} that
\begin{align*}
\mathscr{A}' &= \left \{ S \subseteq \mathscr{V} \setminus (\{ i \} \cup \{\ell: (\ell,i) \in \mathscr{E} \}): \textnormal{if } k \in  S  \textnormal{ and } (k,j) \in \mathscr{E} \setminus \left( \mathscr{E}_i \cup \bigcup_{\ell: (\ell,i) \in \mathscr{E}} \mathscr{E}_\ell \right), \textnormal{ then } j \in S \right \}\\
\mathscr{A}'' &= \left \{ S \subseteq \mathscr{V} \setminus (\{ i \} \cup \{\ell: (i,\ell) \in \mathscr{E} \}): \textnormal{if } k \in  S  \textnormal{ and } (k,j) \in \mathscr{E} \setminus \left( \mathscr{E}_i \cup \bigcup_{\ell: (i,\ell) \in \mathscr{E}} \mathscr{E}_\ell \right), \textnormal{ then } j \in S \right \}\\
\mathscr{A}''' &= \left \{ S \cup \{i \} \cup \left\{ \ell: (i,\ell) \in \mathscr{E} \right \} : S \in \mathscr{A}'' \right \},
\end{align*}
where the induction hypothesis can be applied because  $\mathscr{G}' \equiv (\mathscr{V}', \mathscr{E}')$  and $\mathscr{G}'' \equiv (\mathscr{V}'', \mathscr{E}'')$ are valid inputs to Algorithm~\ref{alg:recursive} and because $| \mathscr{V}'| \le p-1$ and $| \mathscr{V}''| \le p-1$.  Therefore, it follows from Claims~\ref{claim:appx:idk_2} and \ref{claim:appx:idk_3} that
 \begin{align*}
 \mathscr{A}' &=  \left \{S \subseteq \mathscr{V} \setminus \{i \}:  \textnormal{ if $k \in S$ and $(k,j) \in \mathscr{E}$, then $j \in S$} \right\} \\
\mathscr{A}''' &= \left \{S \subseteq \mathscr{V}: \left[ i \in S \right] \textnormal{ and } \left[  \textnormal{if $k \in S$ and $(k,j) \in \mathscr{E}$, then $j \in S$} \right] \right\},
 \end{align*}
 which proves that the output of Algorithm~\ref{alg:recursive} is
 \begin{align*}
 \mathscr{A}' \cup  \mathscr{A}''' =  \left \{S \subseteq \mathscr{V}:  \textnormal{ if $k \in S$ and $(k,j) \in \mathscr{E}$, then $j \in S$} \right\}.
 \end{align*}
 This completes our proof of the correctness of Algorithm~\ref{alg:recursive}.
 
 To conclude our proof of Lemma~\ref{lem:fixed_dim:S:time}, we analyze the computation time of Algorithm~\ref{alg:construct_S}. Indeed, we recall that the computation time required for line~\eqref{step:construct_S:1} in Algorithm~\ref{alg:construct_S} is  $\mathcal{O}(n^2M)$. In what follows, we  assume that all directed graphs are stored as {adjacency lists}.  Under this assumption, our analysis of the computation time for line~\eqref{step:construct_S:2} in Algorithm~\ref{alg:recursive} is split into the following two intermediary claims, denoted by Claim~\ref{claim:mainwork} and \ref{claim:recursivework}. In our first intermediary claim, presented below as Claim~\ref{claim:mainwork}, we establish the computation time required for  lines~\eqref{step:construct_A1:1}, \eqref{step:construct_A2:1}, \eqref{step:construct_A2:3}, and \eqref{step:construct_A} of Algorithm~\ref{alg:recursive}. 
 \begin{claim} \label{claim:mainwork}
 If  $\mathscr{G} \equiv (\mathscr{V},\mathscr{E})$ is a directed acyclic graph that is a transitive closure with $| \mathscr{V}| \ge 1$, then lines~\eqref{step:construct_A1:1}, \eqref{step:construct_A2:1}, \eqref{step:construct_A2:3}, and \eqref{step:construct_A} of Algorithm~\ref{alg:recursive} can be performed in $\mathcal{O}\left(| \mathscr{V}| \times  \left|\textsc{RecursiveStep}(\mathscr{G}) \right| \right)$ time. 
 \end{claim}
 \begin{proof}{Proof.}
 We observe that the set of vertices $\{i \} \cup \{ \ell: (\ell, i) \in \mathscr{E} \}$ in line~\eqref{step:construct_A1:1} of Algorithm~\ref{alg:recursive} can be queried and stored as a hash table in $\mathcal{O}(|\{i \} \cup \{ \ell: (\ell, i) \in \mathscr{E} \}|) = \mathcal{O}(| \mathscr{V}|)$ computation time. Therefore, the directed graph $\mathscr{G}' \equiv (\mathscr{V}',\mathscr{E}')$ in line~\eqref{step:construct_A1:1} of Algorithm~\ref{alg:recursive} can be constructed from scratch in $\mathcal{O}(| \mathscr{V}| + | \mathscr{E}|)$ computation time. By identical reasoning, we observe that the directed graph $\mathscr{G}'' \equiv (\mathscr{V}'',\mathscr{E}'')$ in line~\eqref{step:construct_A2:1} of Algorithm~\ref{alg:recursive} can be constructed  in $\mathcal{O}(| \mathscr{V}| + | \mathscr{E}|)$ computation time. It is easy to see  for each $S \in \mathscr{A}''$ that $S \cup \{i \} \cup \{ \ell: (i,\ell) \in \mathscr{E} \}$ is the union of three disjoint sets, which implies that the collection on line~\eqref{step:construct_A2:3}  of Algorithm~\ref{alg:recursive} can be constructed in a total of $\mathcal{O}\left(\sum_{S \in \mathscr{A}''} ( | S| + | \{ \ell: (i,\ell) \in \mathscr{E} \}|) \right) = \mathcal{O} \left(\sum_{S \in \mathscr{A}'''} |S| \right)$ computation time. It is similarly easy to see that  $\mathscr{A} \equiv \mathscr{A}' \cup \mathscr{A}'''$  is the union of two disjoint collections, which implies that line~\eqref{step:construct_A} of Algorithm~\ref{alg:recursive} can be performed in $\mathcal{O}\left( \sum_{S \in \mathscr{A}'} |S| + \sum_{S \in \mathscr{A}'''} |S|  \right) = \mathcal{O} \left( \sum_{S \in \mathscr{A}} | S| \right)$ computation time. All combined, we have shown that the computation time required for lines~\eqref{step:construct_A1:1}, \eqref{step:construct_A2:1}, \eqref{step:construct_A2:3}, and \eqref{step:construct_A} of Algorithm~\ref{alg:recursive} is
 \begin{align*}
\underbrace{ \mathcal{O}(| \mathscr{V}| + |\mathscr{E}|)}_{\eqref{step:construct_A1:1}} + \underbrace{\mathcal{O}(| \mathscr{V}| + | \mathscr{E}|) }_{\eqref{step:construct_A2:1}}+  \underbrace{\mathcal{O} \left(\sum_{S \in \mathscr{A}'''} |S| \right)}_{\eqref{step:construct_A2:3}} + \underbrace{ \mathcal{O} \left( \sum_{S \in \mathscr{A}} | S| \right)}_{\eqref{step:construct_A}} = \mathcal{O}\left(| \mathscr{V}| + |\mathscr{E}| + \sum_{S \in \mathscr{A}} | S|\right).
 \end{align*}
Using the fact that graphs always satisfy the inequality $| \mathscr{E} | \le | \mathscr{V}|^2$ and the fact that  $|S| \le | \mathscr{V}|$ for all $S \in \mathscr{A}$, the above computation time simplifies to
$ \mathcal{O}\left(| \mathscr{V}|^2 +  \left| \mathscr{V} \right| \times \left| \mathscr{A} \right| \right).$ 
Since $\mathscr{A} \equiv \textsc{RecursiveStep}(\mathscr{G})$ is the output of Algorithm~\ref{alg:recursive}, and since it is easy to see that $| \mathscr{A}| \ge | \mathscr{V}|$, our proof of Claim~\ref{claim:mainwork} is complete. 
\halmos  \end{proof}

In our second intermediary claim, presented below as Claim~\ref{claim:recursivework}, we use Claim~\ref{claim:mainwork} to establish the computation time for Algorithm~\ref{alg:recursive} for any valid input. 
\begin{claim} \label{claim:recursivework}
 If  $\mathscr{G} \equiv (\mathscr{V},\mathscr{E})$ is a directed acyclic graph that is a transitive closure with $| \mathscr{V}| \ge 1$, then the computation time for Algorithm~\ref{alg:recursive} is $\mathcal{O}(|\mathscr{V}|^2 \times \left|\textsc{RecursiveStep}(\mathscr{G}) \right| )$. 
 \end{claim}
 \begin{proof}{Proof.}
 Let $T(\mathscr{G})$ denote the computation time required to run Algorithm~\ref{alg:recursive} for any valid input $\mathscr{G} \equiv (\mathscr{V},\mathscr{E})$. It follows from Claim~\ref{claim:mainwork} that $T(\mathscr{G})$ can be represented by an asymptotic recurrence of the form
  \begin{align}
 T(\mathscr{G}) &= \begin{cases}
| \mathscr{V}| \times  \left|\textsc{RecursiveStep}(\mathscr{G}) \right| + T(\mathscr{G}') + T(\mathscr{G}''),&\text{if } \mathscr{V} \neq \emptyset,\\
  1,&\text{if } \mathscr{V} = \emptyset,
  \end{cases} \label{line:asympt}
 \end{align}
 where $\mathscr{G}'$ and $\mathscr{G}''$ are the subgraphs constructed in Algorithm~\ref{alg:recursive} on lines~\eqref{step:construct_A1:1} and \eqref{step:construct_A2:1}.  We will now prove that the above recurrence satisfies the following inequality:
 \begin{align*}
 T(\mathscr{G})  \le  (|\mathscr{V}|+1)^2  \times \left|\textsc{RecursiveStep}(\mathscr{G}) \right|. % \label{line:closedforminequality}
 \end{align*}
  Indeed, the above inequality clearly holds in the base case. Now assume by induction that the above inequality holds for all valid inputs graphs to Algorithm~\ref{alg:recursive}  with up to $p-1$ vertices, and consider any valid input $\mathscr{G} \equiv (\mathscr{V},\mathscr{E})$ to Algorithm~\ref{alg:recursive}  for which the number of vertices is $|\mathscr{V}| = p$. For this input, we have
 \begin{align*}
 T(\mathscr{G}) &= | \mathscr{V}| \times  \left|\textsc{RecursiveStep}(\mathscr{G}) \right| + T(\mathscr{G}') + T(\mathscr{G}'')\\
 &\le | \mathscr{V}| \times  \left|\textsc{RecursiveStep}(\mathscr{G}) \right| +  (|\mathscr{V}'|+1)^2  \times \left|\textsc{RecursiveStep}(\mathscr{G}') \right|  + (|\mathscr{V}''|+1)^2  \times \left|\textsc{RecursiveStep}(\mathscr{G}'') \right| \\%\tag{Induction hypothesis}\\
  &\le | \mathscr{V}| \times  \left|\textsc{RecursiveStep}(\mathscr{G}) \right|  + | \mathscr{V}|^2  \times \left|\textsc{RecursiveStep}(\mathscr{G}') \right| + | \mathscr{V}|^2  \times \left|\textsc{RecursiveStep}(\mathscr{G}'') \right| \\
  &= | \mathscr{V}| \times  \left|\textsc{RecursiveStep}(\mathscr{G}) \right|  + | \mathscr{V}|^2  \times \left|\textsc{RecursiveStep}(\mathscr{G}) \right| \\
    &\le (| \mathscr{V}|+1)^2 \times  \left|\textsc{RecursiveStep}(\mathscr{G}) \right|, 
   \end{align*}
 where the first line follows from \eqref{line:asympt} and from the fact that $\mathscr{V} \neq \emptyset$, the second line follows from the induction hypothesis, the third line follows from the facts that $| \mathscr{V}'| < | \mathscr{V}|$ and $| \mathscr{V}''| < | \mathscr{V}|$, the fourth line follows from the fact that $ \left|\textsc{RecursiveStep}(\mathscr{G}) \right| =  \left|\textsc{RecursiveStep}(\mathscr{G}') \right| +  \left|\textsc{RecursiveStep}(\mathscr{G}'') \right| $, and the final line follows from algebra.  This concludes our proof of Claim~\ref{claim:recursivework}.
 \halmos \end{proof}
 
 Our analysis of the computation time for line~\eqref{step:construct_S:2} of Algorithm~\ref{alg:construct_S} follows readily from Claim~\ref{claim:recursivework}. Indeed,  we observe that the graph $\mathscr{G} \equiv (\mathscr{V}, \mathscr{E})$ that was constructed on line~\eqref{step:construct_S:1} of Algorithm~\ref{alg:construct_S} satisfies $|\mathscr{V}| =  n+1$ and $| \mathscr{E}| \le (n+1)^2$. Therefore, it follows from Claim~\ref{claim:recursivework} that line~\eqref{step:construct_S:2} of Algorithm~\ref{alg:construct_S} requires $\mathcal{O}((n+1)^2 \times \left|\textsc{RecursiveStep}(\mathscr{G}) \right|) = \mathcal{O}(n^2 | \widehat{\mathcal{S}}|)$ computation time. Since the loop on line~\eqref{step:construct_S:3} of Algorithm~\ref{alg:construct_S} can be performed in $\mathcal{O}(\sum_{S \in \widehat{\mathcal{S}}} |S|) = \mathcal{O}(n | \widehat{\mathcal{S}}|)$ computation time, we have shown that the total computation time for Algorithm~\ref{alg:construct_S} is
 \begin{align*}
 \underbrace{\mathcal{O}(n^2M)}_{\eqref{step:construct_S:1}} +  \underbrace{\mathcal{O}(n^2 |\widehat{\mathcal{S}}|)}_{\eqref{step:construct_S:2}} +  \underbrace{\mathcal{O}(n |\widehat{\mathcal{S}}|)}_{\eqref{step:construct_S:3}} = \mathcal{O}\left(n^2 (M +  |\widehat{\mathcal{S}}|) \right). % = \mathcal{O}\left(n^2 |\widehat{\mathcal{S}}| \right),
 \end{align*}
This concludes our proof of Lemma~\ref{lem:fixed_dim:S:time}. \halmos

\section{Proof of Technical Results from \S\ref{sec:characterization:reverse}} \label{appx:not_improveable}

There are two technical results in \S\ref{sec:characterization:reverse}:  Lemma~\ref{lem:S_hat_for_S_tilde} and Theorem~\ref{thm:not_improveable}. Lemma~\ref{lem:S_hat_for_S_tilde} follows immediately from the definition of the collection of assortments $\widehat{S}$ from \S\ref{sec:characterization}, and so the proof of Lemma~\ref{lem:S_hat_for_S_tilde} is omitted. The remainder of this appendix thus contains the proof of  Theorem~\ref{thm:not_improveable}, which is split into several steps. We will begin by choosing any arbitrary $\bar{S} \in \widehat{\mathcal{S}}$ and, for that assortment,  we will define a particular realization of the historical data $v \equiv (v_{m,i}: m \in \mathcal{M}, i \in S_m)$. We will then prove in Lemma~\ref{lem:singleton} in Appendix~\ref{appx:not_improveable:singleton} for that realization of the historical data that the corresponding set of feasible solutions for the linear optimization problem~\eqref{prob:robust_simplified} is a singleton.  By showing that the realization of the historical data generates a unique feasible solution for the linear optimization problem~\eqref{prob:robust_simplified}, we conclude in Appendix~\ref{appx:not_improveable:finale} by showing that $\bar{S}$ is the unique optimal solution of the robust optimization problem~\eqref{prob:robust}.  % of a historical data

\subsection{Assumptions and Notation} \label{appx:not_improveable:prelim}
Following the statement of Theorem~\ref{thm:not_improveable}, we will assume throughout Appendix~\ref{appx:not_improveable} that  $\mathscr{M} = \tilde{\mathcal{S}}$, that $\eta = 0$, and that the revenues $r_1 < \cdots < r_n$ are fixed.  We observe that Theorem~\ref{thm:not_improveable} trivially holds in the case where $n = 1$; therefore, we assume throughout the rest of  Appendix~\ref{appx:not_improveable} that the number of products satisfies $n \ge 2$.\footnote{Suppose the number of products satisfies $n = 1$.  In that case,  we observe that $\tilde{\mathcal{S}} = \{\{0,1\}\}$. Since $\tilde{\mathcal{S}}$ is a singleton, Theorem~\ref{thm:not_improveable} follows immediately from Theorem~\ref{thm:main}.}  

We will make use of the following notation and preliminary results throughout Appendix~\ref{appx:not_improveable}.  First, it follows from the fact that $\mathscr{M} = \tilde{\mathcal{S}}$  and from  Definition~\ref{defn:L} that the set of tuples $\mathcal{L}$ can be represented compactly as\footnote{A formal proof of the equality on line~\eqref{line:defn_L_for_not_improveable} can be found as Lemma~\ref{lem:reform_of_L} in \S\ref{sec:algorithms:nested}.}
\begin{align}
\mathcal{L} = \left \{ (i_1,\ldots,i_n): i_1 \in \{0,n\} \text{ and } i_{j+1} \in \{j, i_j \} \text{ for all } j \in \{1,\ldots,n-1\} \right \}. \label{line:defn_L_for_not_improveable}
\end{align}
Second, for each $j \in \{1,\ldots,n-1\}$, we define the  indices
\begin{align*}
\ubar{i}^j_1 = \cdots = \ubar{i}^j_j = 0, \quad \ubar{i}^j_{j+1} = \cdots = \ubar{i}^j_n = j, \\
\bar{i}^j_1 = \cdots = \bar{i}^j_j = n, \quad \bar{i}^j_{j+1} = \cdots = \bar{i}^j_n = j,
\end{align*}
and we denote the tuples constructed from those indices by
\begin{align*}
 \left(\ubar{i}^j_1,\ldots,\ubar{i}^j_n \right)  \triangleq (0,\ldots,0,j,\ldots,j) \textnormal{ and }  \left(\bar{i}^j_1,\ldots,\bar{i}^j_n \right) \triangleq (n,\ldots,n,j,\ldots,j).
\end{align*}
Finally, it follows from the fact that $\eta = 0$ that the constraints of the linear  optimization problem~\eqref{prob:robust_simplified} can be written compactly as 
\begin{equation}  \label{line:system_feas}
 \begin{aligned}%&&&\sum_{ i_1 \in \{0,n\}} \sum_{i_2 \in \{1,i_1\}} \cdots \sum_{i_{j} \in \{j-1,i_{j-1}\}}  \sum_{i_{j+2} \in \{j,j+1\}} \cdots \sum_{i_{j} \in \{j-1,i_{j-1}\}}\lambda_{i_1 \cdots i_n} = v_{m,i}&& \forall m \in \mathcal{M}, \; i \in S_m\\
&&&\sum_{(i_1,\ldots,i_n) \in \mathcal{L}: i_m = i} \lambda_{i_1 \cdots i_n} = v_{m,i}&& \forall m \in \mathcal{M}, \; i \in S_m\\
&&& \sum_{(i_1,\ldots,i_n) \in \mathcal{L}} \lambda_{i_1 \cdots i_n} = 1 \\
&&& \lambda_{i_1 \cdots i_n} \ge 0&& \forall (i_1,\ldots,i_n) \in \mathcal{L},
\end{aligned}
\end{equation}
where we restate for the sake of convenience  that $S_m \equiv \{0,1,\ldots,m-1,n\}$ and $\mathcal{M} \equiv \{1,\ldots,n\}$. 
\subsection{Construction of historical data for any given $\bar{S} \in \widehat{\mathcal{S}}$,  and proof that the corresponding system~\eqref{line:system_feas} has a unique solution} \label{appx:not_improveable:singleton}

In view of the strategy outlined at the beginning of Appendix~\ref{appx:not_improveable}, we now present the details of the proof of Theorem~\ref{thm:not_improveable}. Indeed, consider any arbitrary assortment $\bar{S} \in \widehat{\mathcal{S}}$.  For that assortment, we define the realization of the historical data $v_{m,i}$ for each past assortment $m \in \mathcal{M} \equiv \{1,\ldots,n\}$ and each product $i \in S_m$ as follows:
\begin{align} \label{line:defn_of_v}
v_{m,i} \triangleq \begin{cases}
%\left(1 + \left| \left \{ m,\ldots,n-1 \right \} \setminus \bar{S} \right| \right) \frac{1}{n-1},&\text{if } i = n,\\
%\left(1 + \left| \left \{ m,\ldots,n-1 \right \} \cap \bar{S} \right| \right)  \delta,&\text{if } i = 0,
\frac{1}{n}\left(1 + \left| \left \{ m,\ldots,n-1 \right \} \setminus \bar{S} \right|\right),&\text{if } i = n,\\
\frac{1}{n} \left| \left \{ m,\ldots,n-1 \right \} \cap \bar{S} \right| ,&\text{if } i = 0,\\
\frac{1}{n},&\text{otherwise}.
\end{cases}
\end{align}
\begin{comment}
Let us make some observations about the above realization of the historical data. First, for any assortment $m \in \mathcal{M}$, we observe that the historical data satisfies 
%\begin{align*}
%\sum_{i \in S_m} v_{m,i} &= v_{m,0} + v_{m,i} + \sum_{i \in \{1,\ldots,m-1\}: i \in \bar{S}} v_{m,i} + \sum_{i \in \{1,\ldots,m-1\}: i \notin \bar{S}}v_{m,i}\\
%&= \left(1 + \left| \left \{ m,\ldots,n-1 \right \} \cap \bar{S} \right| \right)  \delta + \left(1 + \left| \left \{ m,\ldots,n-1 \right \} \setminus \bar{S} \right| \right) \frac{1}{n-1} + \sum_{i \in \{1,\ldots,m-1\}: i \in \bar{S}} \delta + \sum_{i \in \{1,\ldots,m-1\}: i \notin \bar{S}} \frac{1}{n-1} \\
%&=  \left(1 + \left| \left \{ m,\ldots,n-1 \right \} \cap \bar{S} \right| \right)  \delta + \left(1 + \left| \left \{ m,\ldots,n-1 \right \} \setminus \bar{S} \right| \right) \frac{1}{n-1} + \left( \left| \left \{1,\ldots,m-1 \right \} \cap \bar{S} \right|  \right) \delta + \left( \left| \left \{1,\ldots,m-1 \right \} \setminus \bar{S} \right|  \right) \frac{1}{n-1} \\
%&=  \left(1 + \left| \left \{ 1,\ldots,n-1 \right \} \cap \bar{S} \right| \right)  \delta + \left(1 + \left| \left \{ 1,\ldots,n-1 \right \} \setminus \bar{S} \right| \right) \frac{1}{n-1}  \\
%&=  \left(1 + \left| \bar{S} \setminus \{0,n\} \right| \right)  \delta + \left(1 + \left| \mathcal{N} \setminus \bar{S} \right| \right) \frac{1}{n-1}  \\
%\end{align*}
\begin{align*}
\sum_{i \in S_m} v_{m,i} &= v_{m,0} + v_{m,i} + \sum_{i \in \{1,\ldots,m-1\}: i \in \bar{S}} v_{m,i} + \sum_{i \in \{1,\ldots,m-1\}: i \notin \bar{S}}v_{m,i}\\
&=  \left| \left \{ m,\ldots,n-1 \right \} \cap \bar{S} \right| \delta + \left| \left \{ m,\ldots,n-1 \right \} \setminus \bar{S} \right| \frac{1}{n-1} + \sum_{i \in \{1,\ldots,m-1\}: i \in \bar{S}} \delta + \sum_{i \in \{1,\ldots,m-1\}: i \notin \bar{S}} \frac{1}{n-1} \\
&=   \left| \left \{ m,\ldots,n-1 \right \} \cap \bar{S} \right|   \delta + \left| \left \{ m,\ldots,n-1 \right \} \setminus \bar{S} \right| \frac{1}{n-1} + \left( \left| \left \{1,\ldots,m-1 \right \} \cap \bar{S} \right|  \right) \delta + \left( \left| \left \{1,\ldots,m-1 \right \} \setminus \bar{S} \right|  \right) \frac{1}{n-1} \\
&=   \left| \left \{ 1,\ldots,n-1 \right \} \cap \bar{S} \right|  \delta +  \left| \left \{ 1,\ldots,n-1 \right \} \setminus \bar{S} \right| \frac{1}{n-1}  \\
&=   \left| \bar{S} \setminus \{0,n\} \right|  \delta +  \left| \mathcal{N} \setminus \bar{S} \right| \frac{1}{n-1}  \\
&= 1. 
\end{align*}
\end{comment}
With the  realization of the historical data from line~\eqref{line:defn_of_v} corresponding to the assortment $\bar{S}$, the remainder of Appendix~\ref{appx:not_improveable:singleton} is dedicated to proving the following lemma:
\begin{lemma} \label{lem:singleton}
The system~\eqref{line:system_feas} is nonempty and has a unique solution $\lambda$ which satisfies the following equality for each $(i_1,\ldots,i_n) \in \mathcal{L}$:
%\begin{align}
%\lambda_{i_1 \cdots i_n} &= \begin{cases}
%\frac{1}{n},&\textnormal{if there exists } j \in \bar{S}\setminus \{0,n\} \textnormal{ such that } i_1 = \cdots = i_j = 0 \textnormal{ and } i_{j+1} = \cdots = i_n = j,\\
%\frac{1}{n},&\textnormal{if there exists } j \in \mathcal{N} \setminus \bar{S} \textnormal{ such that } i_1 = \cdots = i_j = n \textnormal{ and } i_{j+1} = \cdots = i_n = j,\\
%\frac{1}{n},&\textnormal{if }  i_{1} = \cdots = i_n = n,\\
%0,&\textnormal{otherwise}.
%\end{cases} \label{line:singleton:defn_lambda}
%\end{align}
\begin{align}
\lambda_{i_1 \cdots i_n} &= \begin{cases}
\frac{1}{n},&\textnormal{if there exists }  j \in \bar{S}\setminus \{0,n\} \textnormal{ such that } (i_1,\ldots,i_n) =  \left(\ubar{i}^j_1,\ldots,\ubar{i}^j_n \right),\\
\frac{1}{n},&\textnormal{if there exists } j \in \mathcal{N} \setminus \bar{S} \textnormal{ such that }(i_1,\ldots,i_n) =  \left(\bar{i}^j_1,\ldots,\bar{i}^j_n \right) ,\\
\frac{1}{n},&\textnormal{if }  (i_{1}, \ldots, i_n) = (n,\ldots,n),\\
0,&\textnormal{otherwise}.
\end{cases} \label{line:singleton:defn_lambda}
\end{align}
\end{lemma}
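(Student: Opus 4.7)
The plan is to prove Lemma~\ref{lem:singleton} in two stages: first verifying that the $\lambda$ defined by~\eqref{line:singleton:defn_lambda} is feasible for~\eqref{line:system_feas}, and then showing that any feasible $\lambda$ must coincide with it. Feasibility is a direct computation. I would evaluate $\sum_{(i_1,\ldots,i_n) \in \mathcal{L}:\, i_m = i} \lambda_{i_1 \cdots i_n}$ for each constraint by summing only over the $n$ tuples in the support of the claimed $\lambda$, splitting into the cases $i = 0$, $i = n$, and $i \in \{1,\ldots,m-1\}$, and comparing with~\eqref{line:defn_of_v}. The normalization $\sum_{(i_1,\ldots,i_n) \in \mathcal{L}} \lambda_{i_1 \cdots i_n} = 1$ is immediate from $|\bar{S} \setminus \{0,n\}| + |\mathcal{N} \setminus \bar{S}| + 1 = n$.

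The uniqueness argument will rest on three structural observations about tuples in $\mathcal{L}$, all derived from the representation~\eqref{line:defn_L_for_not_improveable}: (i) for each $m$, $\{i_m = 0\} = \{i_1 = \cdots = i_m = 0\}$, since the recursion $i_{j+1} \in \{j, i_j\}$ with $j \geq 1$ never produces the value $0$ unless $i_j = 0$ already; (ii) symmetrically, $\{i_m = n\} = \{i_1 = \cdots = i_m = n\}$; and (iii) for $m \in \{1,\ldots,n-1\}$ and $k \in \{m+1,\ldots,n-1\}$, the relation $i_{k+1} \in \{k, i_k\}$ combined with $m \neq k$ forces $\{i_{k+1} = m\} \subseteq \{i_k = m\}$, yielding the nested chain $\{i_n = m\} \subseteq \cdots \subseteq \{i_{m+1} = m\}$.

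Combining these observations with the constraints of~\eqref{line:system_feas}, the uniqueness argument proceeds as follows. The $(n,0)$ and $(n,n)$ constraints, combined with (i) and (ii), immediately give $\lambda_{(0,\ldots,0)} = v_{n,0} = 0$ and $\lambda_{(n,\ldots,n)} = v_{n,n} = 1/n$. For each $m \in \{1,\ldots,n-1\}$, observation (i) allows the $(m,0)$ and $(m+1,0)$ constraints to be interpreted as masses on the nested events $\{i_1 = \cdots = i_m = 0\}$ and $\{i_1 = \cdots = i_{m+1} = 0\}$; their difference pins down the mass on tuples with $i_1 = \cdots = i_m = 0$ and $i_{m+1} = m$ as $v_{m,0} - v_{m+1,0} = \tfrac{1}{n} \mathbb{I}\{m \in \bar{S}\}$. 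Analogously, (ii) combined with the $(m,n)$ and $(m+1,n)$ constraints gives that the mass on tuples with $i_1 = \cdots = i_m = n$ and $i_{m+1} = m$ equals $\tfrac{1}{n} \mathbb{I}\{m \notin \bar{S}\}$. Since these two masses sum to $1/n = v_{m+1,m}$, which is the total mass on all tuples with $i_{m+1} = m$, nonnegativity forces every tuple with $i_{m+1} = m$ and $i_m \notin \{0, n\}$ to carry zero mass. Finally, from (iii) and the identities $v_{m+1,m} = v_{m+2,m} = \cdots = v_{n,m} = 1/n$, the telescoping differences $v_{k,m} - v_{k+1,m}$ all vanish, so tuples with $i_k = m$ and $i_{k+1} \neq m$ carry zero mass for each $k \in \{m+1,\ldots,n-1\}$; chaining these statements together, any nonzero-mass tuple with $i_{m+1} = m$ must also satisfy $i_{m+1} = i_{m+2} = \cdots = i_n = m$. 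The only tuples consistent with all of these conclusions are $(\ubar{i}^m_1,\ldots,\ubar{i}^m_n)$ and $(\bar{i}^m_1,\ldots,\bar{i}^m_n)$, with the masses already computed. Running this argument over every $m \in \{1,\ldots,n-1\}$ and noting that every tuple in $\mathcal{L} \setminus \{(0,\ldots,0),(n,\ldots,n)\}$ has a unique first switch at some position $m+1$ (necessarily with $i_{m+1} = m$) then accounts for every element of $\mathcal{L}$ and yields~\eqref{line:singleton:defn_lambda}.

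The main technical obstacle will be the final ``no further switches'' step: eliminating \emph{every} multi-switch tuple with $i_{m+1} = m$, not merely those with a specific second switch. This requires carefully chaining the nested containments of (iii) with the particular equality $v_{k,m} = 1/n$ for all $k \geq m+1$ inherited from line~\eqref{line:defn_of_v}, which is precisely where the specific construction of the historical data plays a decisive role in collapsing $|\mathcal{L}| = 2^n$ potentially nonzero variables onto exactly $n$ of them.
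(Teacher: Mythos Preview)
Your proposal is correct and follows essentially the same approach as the paper: both arguments pin down the masses via the telescoping differences $v_{m,\dagger}-v_{m+1,\dagger}$ for $\dagger\in\{0,n\}$ and rule out multi-switch tuples using $v_{k,m}=v_{k+1,m}$ for $k\ge m+1$ (the paper bundles the latter into a single difference $v_{j+1,j}-v_{n,j}=0$ inside its Claim~\ref{claim:singleton_2}, whereas you telescope step by step, but the content is identical). The only cosmetic differences are that the paper verifies feasibility last rather than first, and that your intermediate step showing zero mass on $\{i_{m+1}=m,\,i_m\notin\{0,n\}\}$ is redundant once you invoke the ``first switch'' framing at the end (a tuple with first switch at $m+1$ automatically has $i_1=\cdots=i_m\in\{0,n\}$ by your observations~(i)--(ii)).
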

Stated in words, the above Lemma~\ref{lem:singleton} establishes that the vector $\lambda$ defined by line~\eqref{line:singleton:defn_lambda} is the unique feasible solution for the linear optimization problem~\eqref{prob:robust_simplified} under the assumptions that the past assortments are $\mathscr{M} = \tilde{\mathcal{S}}$,  that $\eta = 0$, and that the realization of the historical data satisfies \eqref{line:defn_of_v}. 

Let us provide an interpretation of the vector $\lambda$ defined by line~\eqref{line:singleton:defn_lambda}. Speaking informally, the vector $\lambda$ is comprised of three types of customers. The first type of customers are denoted by the tuples $ \left(\ubar{i}^j_1,\ldots,\ubar{i}^j_n \right)$ for each  $ j \in \bar{S}\setminus \{0,n\} $, and these are  customers that  prefer product $j$ and, if that product is unavailable, will not make a purchase. The second type of customers are denoted by the  tuples $ \left(\bar{i}^j_1,\ldots,\bar{i}^j_n \right)$ for each  $j \in \mathcal{N} \setminus \bar{S}$, and these are  customers whose most  preferred product is $j$, whose second-most preferred product is $n$, and will not purchase a product if neither $j$ nor $n$ are available. The third type of customers are denoted by the tuple $(n,\ldots,n)$, and these are customers that will purchase product $n$ if it is available, and will otherwise not make a purchase.  In Appendix~\ref{appx:not_improveable:finale}, we will use Lemma~\ref{lem:singleton} to prove Theorem~\ref{thm:not_improveable}.

% if that product is unavailable, will not purchase a product. those that prefer product $j \in \mathcal{N} \setminus \bar{S}$ and, if that product is unavailable, will purchase product $n$

Our proof of Lemma~\ref{lem:singleton} makes use of two intermediary claims,  denoted below as Claims~\ref{claim:singleton_1} and \ref{claim:singleton_2}. These intermediary claims establish that any feasible solution of the system~\eqref{line:system_feas} must satisfy certain structural properties. We first present these two intermediary claims, followed by the proof of   Lemma~\ref{lem:singleton}. 
\begin{claim} \label{claim:singleton_1}
Suppose that $\lambda$ is a feasible solution of the system \eqref{line:system_feas}, and let $(i_1,\ldots,i_n) \in \mathcal{L}$. If $\lambda_{i_1 \cdots i_n} > 0$, then either $(i_1,\ldots,i_n) = (n,\ldots,n)$ or there exists $j \in \{1,\ldots,n-1\}$ such that $i_{j+1} = j$. 
\end{claim}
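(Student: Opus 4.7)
The plan is to first observe that the assertion in Claim~\ref{claim:singleton_1} is vacuously easy for almost all tuples in $\mathcal{L}$ and reduces to a single equality. To see this, I will show that a tuple $(i_1,\ldots,i_n) \in \mathcal{L}$ for which the quantifier ``there exists $j \in \{1,\ldots,n-1\}$ with $i_{j+1} = j$'' fails must satisfy $i_{j+1} \neq j$ for every $j$; since $i_{j+1} \in \{j, i_j\}$ by the characterization \eqref{line:defn_L_for_not_improveable}, this forces $i_{j+1} = i_j$ for every $j \in \{1,\ldots,n-1\}$, so $i_1 = i_2 = \cdots = i_n$. Combined with $i_1 \in \{0,n\}$, the only candidate tuples are $(0,0,\ldots,0)$ and $(n,n,\ldots,n)$. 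Thus, proving Claim~\ref{claim:singleton_1} reduces to showing $\lambda_{0\cdots 0} = 0$.

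To establish $\lambda_{0\cdots 0} = 0$, I will invoke the constraint in \eqref{line:system_feas} corresponding to the pair $(m,i) = (n,0)$, which reads $\sum_{(i_1,\ldots,i_n)\in\mathcal{L}:\, i_n = 0}\lambda_{i_1\cdots i_n} = v_{n,0}$. From the construction of the historical data in \eqref{line:defn_of_v}, we have $v_{n,0} = \tfrac{1}{n}|\{n,\ldots,n-1\}\cap \bar{S}|= 0$ since the index set $\{n,\ldots,n-1\}$ is empty. Next, using \eqref{line:defn_L_for_not_improveable} together with a short backward induction, I will argue that $(0,\ldots,0)$ is the only tuple in $\mathcal{L}$ with $i_n = 0$: indeed $i_n \in \{n-1, i_{n-1}\}$, and because we have assumed $n \geq 2$ in Appendix~\ref{appx:not_improveable:prelim}, $i_n = 0$ forces $i_{n-1} = 0$; iterating the same dichotomy yields $i_{n-2} = 0$, $\ldots$, $i_1 = 0$. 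Combining these two facts with the nonnegativity constraint $\lambda_{0 \cdots 0} \geq 0$ from \eqref{line:system_feas} gives $\lambda_{0\cdots 0} = 0$, completing the claim.

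There is no real obstacle here; the essence of the argument is simply to recognize that the richest past assortment $S_n = \mathcal{N}_0$ provides a constraint that pins down the mass on the ``always no-purchase'' tuple $(0,\ldots,0)$, because $v_{n,0} = 0$ by construction. The only delicate bookkeeping is in the equivalence between ``no $j$ with $i_{j+1} = j$'' and ``the tuple never switches,'' and this is handled cleanly by the definition of $\mathcal{L}$ in \eqref{line:defn_L_for_not_improveable}.
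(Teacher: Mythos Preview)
Your proof is correct and follows essentially the same approach as the paper's own proof. Both arguments reduce the claim to showing $\lambda_{0\cdots 0}=0$ via the constraint $\sum_{(i_1,\ldots,i_n)\in\mathcal{L}:\,i_n=0}\lambda_{i_1\cdots i_n}=v_{n,0}=0$ together with the observation (from \eqref{line:defn_L_for_not_improveable}) that $(0,\ldots,0)$ is the unique tuple in $\mathcal{L}$ with $i_n=0$; your contrapositive organization is slightly more streamlined, but the substance is identical.
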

\begin{proof}{Proof of Claim~\ref{claim:singleton_1}.}
Suppose that $\lambda$ is a solution of the system \eqref{line:system_feas}, and let $(i_1,\ldots,i_n) \in \mathcal{L}$. %We have two cases to consider.

We begin by showing that if $\lambda_{i_1 \cdots i_n} > 0$, then either $(i_1,\ldots,i_n) = (n,\ldots,n)$ or there exists $m \in \{1,\ldots,n\}$ such that $i_m \notin \{0,n\}$.  We will show this claim by proving its contraposition.  Indeed, 
%We begin by showing that if $i_m \in \{0,n\}$ for all $m \in \{1,\ldots,n\}$, then $\lambda_{i_1 \cdots i_n} = 0$. 
suppose that $i_m \in \{0,n\}$ for all $m \in \{1,\ldots,n\}$ and that there exists $\bar{m} \in \{1,\ldots,n\}$ such that $i_{\bar{m}} = 0$. In this case,  it follows from  \eqref{line:defn_L_for_not_improveable}  and from the fact that $i_m \in \{0,n\}$ for all $m \in \{1,\ldots,n\}$  that $i_1 = \cdots = i_n = 0$.  If $i_1 = \cdots = i_n = 0$, then  
\begin{align*}
\lambda_{i_1 \cdots i_n} = \sum_{(i_1',\ldots,i_n') \in \mathcal{L}: i_{n}' = 0} \lambda_{i_1' \cdots i_n'} = v_{n,0} = 0,
\end{align*}
 where the first equality follows from  \eqref{line:defn_L_for_not_improveable}, the second equality follows from the fact that $\lambda$ is a feasible solution of the system \eqref{line:system_feas}, and the third equality follows from \eqref{line:defn_of_v}. We have thus shown that if $i_m \in \{0,n\}$ for all $m \in \{1,\ldots,n\}$ and if there exists $m \in \{1,\ldots,n\}$ such that $i_m = 0$, then  $\lambda_{i_1 \cdots i_n} = 0$. Note that the contrapositive of this statement is that if $\lambda_{i_1 \cdots i_n} > 0$, then  either $(i_1,\ldots,i_n) = (n,\ldots,n)$ or there exists $m \in \{1,\ldots,n\}$ such that $i_m \notin \{0,n\}$.

We conclude the proof of Claim~\ref{claim:singleton_1} by showing that if there exists $m \in \{1,\ldots,n\}$ such that $i_m \notin \{0,n\}$, then there exists $j \in \{1,\ldots,n-1\}$ such that $i_{j+1} = j$. Indeed, suppose there exists $m \in \{1,\ldots,n\}$ such that $i_m \notin \{0,n\}$. Since the inclusion $i_1 \in \{0,n\}$ always holds, the existence of $m \in \{1,\ldots,n\}$ satisfying $i_m \notin \{0,n\}$ implies that there must  exist $j \in \{1,\ldots,n-1\}$ such that $i_j \neq i_{j+1}$.  Since we have assumed that the inclusion $(i_1,\ldots,i_n) \in \mathcal{L}$ holds, it follows from  \eqref{line:defn_L_for_not_improveable} and from the fact that $i_j \neq i_{j+1}$ that $i_{j+1}$ must be equal to $j$. Our proof of  Claim~\ref{claim:singleton_1} is thus complete.  \halmos \end{proof}

%Moreover, we observe from the above equality that a tuple $(i_1,\ldots,i_n) \in \mathcal{L}$ satisfies the equality $i_m = j$ for some $j \in \{1,\ldots,n-1\}$ only if  the equality $i_{j+1} = j$ also holds. 
%Finally, we observe for each $j \in \{1,\ldots,n-1 \}$ that a tuple $(i_1,\ldots,i_n) \in \mathcal{L}$ satisfies the equalities $i_1 = \cdots = i_m$ and $i_{m+1} = \cdots = i_n = j$ for $j \in \{1,\ldots,n-1\}$ only if $m = j$. % $i_{j+1} = j$ also holds. 
%\halmos \end{proof}

\begin{claim} \label{claim:singleton_2}
Suppose that $\lambda$ is a feasible  solution of the system \eqref{line:system_feas}, and let $(i_1,\ldots,i_n) \in \mathcal{L}$.  If $\lambda_{i_1 \cdots i_n} > 0$ and if $(i_1,\ldots,i_n) \neq (n,\ldots,n)$, then  there exists $j \in \{1,\ldots,n-1\}$ and $\dagger \in \{0,n\}$ such that $i_1 = \cdots = i_j = \dagger$ and $i_{j+1} =  \cdots = i_n = j$. 
\end{claim}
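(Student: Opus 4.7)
The plan is to extract the smallest switching index $j$ afforded by Claim~\ref{claim:singleton_1} and then use both the minimality of $j$ and the very specific form of the historical data in \eqref{line:defn_of_v} to force the tuple to have the shape $(\dagger,\ldots,\dagger,j,\ldots,j)$ with $\dagger \in \{0,n\}$. Concretely, I would fix a feasible $\lambda$ and a tuple $(i_1,\ldots,i_n) \in \mathcal{L}$ with $\lambda_{i_1\cdots i_n} > 0$ and $(i_1,\ldots,i_n) \ne (n,\ldots,n)$, invoke Claim~\ref{claim:singleton_1} to see that the set $\{k \in \{1,\ldots,n-1\}: i_{k+1}=k\}$ is nonempty, and let $j$ denote its minimum.

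The easy half of the argument would establish that $i_1 = \cdots = i_j$ and that this common value $\dagger$ lies in $\{0,n\}$. Indeed, by \eqref{line:defn_L_for_not_improveable} one has $i_{m+1} \in \{m, i_m\}$ for every $m \in \{1,\ldots,j-1\}$; the option $i_{m+1} = m$ would contradict the minimality of $j$, so $i_{m+1} = i_m$ at each such $m$. Iterating and using $i_1 \in \{0,n\}$ then gives $i_1 = \cdots = i_j \in \{0,n\}$, and I would take $\dagger \triangleq i_1$.

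The harder half would show $i_{j+1} = \cdots = i_n = j$, and this is where the particular form of the data enters. For every $m \in \{j+1,\ldots,n\}$ we have $j \in \{1,\ldots,m-1\} \subseteq S_m$ with $j \notin \{0,n\}$, so \eqref{line:defn_of_v} gives $v_{m,j} = 1/n$, and hence \eqref{line:system_feas} yields $\sum_{(i'_1,\ldots,i'_n)\in \mathcal{L}:\, i'_m = j} \lambda_{i'_1\cdots i'_n} = 1/n$. The key structural observation is that, by the transition rule $i'_{k+1}\in\{k,i'_k\}$ in \eqref{line:defn_L_for_not_improveable} together with the fact that $j \neq k$ for every $k \in \{j+1,\ldots,n-1\}$, any tuple with $i'_m = j$ for some $m \geq j+1$ must in fact satisfy $i'_{j+1} = i'_{j+2} = \cdots = i'_m = j$ (by a simple backward induction on the slot index). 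Consequently, for each $m \in \{j+1,\ldots,n\}$,
\[
\{(i'_1,\ldots,i'_n)\in\mathcal{L}: i'_m = j\} \subseteq \{(i'_1,\ldots,i'_n)\in\mathcal{L}: i'_{j+1} = j\},
\]
and since both sides contribute the same total mass $1/n$ under $\lambda$, nonnegativity of $\lambda$ forces $\lambda_{i'_1\cdots i'_n} = 0$ on the set-theoretic difference. Because $\lambda_{i_1\cdots i_n} > 0$ and $i_{j+1} = j$, this would yield $i_m = j$ for all $m \in \{j+1,\ldots,n\}$, completing the proof.

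The main obstacle I anticipate is the containment argument in the third paragraph: it requires carefully tracing the branching rule of \eqref{line:defn_L_for_not_improveable} backward from slot $m$ to slot $j+1$ to see that the value $j$ can persist only by "not switching" at every intermediate slot, and then leveraging the exact equality of two $1/n$-sums to collapse a potential difference into zero. The role of the specific data in \eqref{line:defn_of_v} — which assigns the uniform value $1/n$ to every inner product $i \in \{1,\ldots,m-1\}$ across each past assortment $S_m$ — is essential here, since it is precisely this uniformity that produces the telescoping identity that precludes any second switch after slot $j+1$.
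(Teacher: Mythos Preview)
Your proposal is correct and follows essentially the same route as the paper's proof: both take the minimal switching index $j$, use minimality together with the transition rule in \eqref{line:defn_L_for_not_improveable} to establish $i_1=\cdots=i_j=\dagger\in\{0,n\}$, and then compare the two equality constraints $\sum_{i'_{j+1}=j}\lambda=v_{j+1,j}=1/n$ and $\sum_{i'_m=j}\lambda=v_{m,j}=1/n$ (the paper uses only $m=n$, you use all $m$, but this is inessential) to conclude via nonnegativity that any tuple with $i'_{j+1}=j$ but a later switch must carry zero mass.
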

%\begin{claim} \label{claim:singleton_2}
%Suppose that $\lambda$ is a feasible  solution of the system \eqref{line:system_feas}, and let $(i_1,\ldots,i_n) \in \mathcal{L}$. If  satisfy $i_{j+1} = j$ for some $j \in \{1,\ldots,n-1\}$. If $\lambda_{i_1 \cdots i_n} > 0$, then $i_{j+1} =  \cdots = i_n = j$. 
%\end{claim}
\begin{proof}{Proof of Claim~\ref{claim:singleton_2}.}

Suppose that $\lambda$ is a feasible solution of the system \eqref{line:system_feas}, and suppose that the inequality $\lambda_{i_1 \cdots i_n} > 0$  holds for a given tuple $(i_1,\ldots,i_n) \in \mathcal{L}$ with $(i_1,\ldots,i_n) \neq (n,\ldots,n)$. Then it follows from Claim~\ref{claim:singleton_1} that there exists a product $j' \in \{1,\ldots,n-1\}$ such that $i_{j'+1} = j'$. From this point onward, we let  the  product $j' \in \{1,\ldots,n-1\}$ with the smallest index that satisfies $i_{j'+1} = j'$  be denoted by
\begin{align}
j \triangleq \min \left\{j' \in \{1,\ldots,n-1\}: i_{j' + 1} = j' \right \} = \min \left\{m \in \{1,\ldots,n-1\}: i_{m+1} \notin \{0,n\} \right \}, \label{line:defn_j_in_singleton}
\end{align}
where the last equality follows from  \eqref{line:defn_L_for_not_improveable}. 

For the product $j$ defined in \eqref{line:defn_j_in_singleton}, it follows from the fact that $\lambda$ is a feasible solution of the system~\eqref{line:system_feas} that the following two equalities must hold:
\begin{align}
\sum_{(i_1',\ldots,i_n') \in \mathcal{L}: i_{j+1}' = j} \lambda_{i_1' \cdots i_n'} = v_{j+1,j}; \quad   \sum_{(i_1',\ldots,i_n') \in \mathcal{L}: i_n' = j} \lambda_{i_1' \cdots i_n'} = v_{n,j}. \label{line:dabadee}
\end{align}
We observe from \eqref{line:defn_of_v} and from the inclusion  $j \in \{1,\ldots,n-1\}$ that the equality $ v_{j+1,j} =  v_{n,j}$ also holds. Therefore,  \eqref{line:dabadee} implies that 
\begin{align}
\sum_{(i_1',\ldots,i_n') \in \mathcal{L}: i_{j+1}' = j} \lambda_{i_1' \cdots i_n'} -   \sum_{(i_1',\ldots,i_n') \in \mathcal{L}: i_n' = j} \lambda_{i_1' \cdots i_n'} = 0.\label{line:dabadee_2}
\end{align}
Moreover,  we observe from \eqref{line:defn_L_for_not_improveable} and from the inclusion  $j \in \{1,\ldots,n-1\}$ that a tuple $(i_1',\ldots,i_n') \in \mathcal{L}$ satisfies the equality $i_n' = j$  if and only if $i_{j+1}' = \cdots = i_n' = j$. Therefore, \eqref{line:dabadee_2} implies that
\begin{align}
\sum_{(i_1',\ldots,i_n') \in \mathcal{L}: \; \substack{ i_{j+1}' = j \text{ and } i_{k}' \neq i_{k+1}' \text{ for }\\\text{some } k \in \{j+1,\ldots,n-1\} } } \lambda_{i_1' \cdots i_n'} = 0. \label{line:dabadee_3}
\end{align}
Since $\lambda$ is a feasible solution of the system~\eqref{line:system_feas}, and since \eqref{line:system_feas} implies that the inequality $\lambda_{i_1' \cdots i_n'} \ge 0$ holds for all $(i_1',\ldots,i_n') \in \mathcal{L}$, it follows from \eqref{line:dabadee_3} that
\begin{align}
\lambda_{i_1' \cdots i_n'} = 0 \quad \forall (i_1',\ldots,i_n') \in \mathcal{L} \textnormal{ such that } i_{j+1}' = j \text{ and } i_{k}' \neq i_{k+1}'   \text{ for some } k \in \{j+1,\ldots,n-1\}.  \label{line:dabadee_4}
\end{align}

Since we supposed that $\lambda_{i_1 \cdots i_n} > 0$, and since the tuple $(i_1,\ldots,i_n) \in \mathcal{L}$ satisfied  the equality $i_{j+1} = j$, \eqref{line:dabadee_4} proves that the equalities $i_{j+1} = \cdots = i_n = j$ must hold. Moreover, it follows from the right-most equality in \eqref{line:defn_j_in_singleton} that $i_1,\ldots,i_j \in \{0,n\}$, which together with \eqref{line:defn_L_for_not_improveable} implies that $i_1 = \cdots = i_j$. Since \eqref{line:defn_L_for_not_improveable} implies that $i_1 \in \{0,n\}$,  our proof of Claim~\ref{claim:singleton_2} is complete. 
\halmos \end{proof}

In view of Claims~\ref{claim:singleton_1} and \ref{claim:singleton_2}, we now present the proof of Lemma~\ref{lem:singleton}.

\begin{proof}{Proof of Lemma~\ref{lem:singleton}.}

We begin by showing that if $\lambda$ is a feasible solution of the system~\eqref{line:system_feas}, then $\lambda$ must satisfy the equality on line~\eqref{line:singleton:defn_lambda} for all $(i_1,\ldots,i_n) \in \mathcal{L}$. 
Indeed, suppose that $\lambda$ is a feasible  solution of the system \eqref{line:system_feas}. In Claim~\ref{claim:singleton_2}, we showed that if $\lambda$ is a feasible  solution of the system \eqref{line:system_feas} and if $\lambda_{i_1 \cdots i_n} > 0$ for some tuple $(i_1,\ldots,i_n) \in \mathcal{L}$ where $(i_1,\ldots,i_n) \neq (n,\ldots,n)$, then the tuple $(i_1,\ldots,i_n)$ must be contained in 
\begin{align*}
%\bar{\mathcal{L}} &\triangleq 
{\mathcal{L}}^* &\triangleq \left \{(i_1',\ldots,i_n'): 
 \textnormal{ there exists } j \in \{1,\ldots,n-1\} \textnormal{ such that  }  i_1' = \cdots = i_{j}' = 0 \textnormal{ and } i_{j+1}' = \cdots = i_{n}' = j \right \} \\
 &\quad \cup \left \{(i_1',\ldots,i_n'):  \textnormal{ there exists } j \in \{1,\ldots,n-1\} \textnormal{ such that  }  i_1' = \cdots = i_{j}' = n \textnormal{ and } i_{j+1}' = \cdots = i_{n}' = j \right \}\\
%& \left \{ (0,1,\ldots,1), (0,0,2,\ldots,2), \ldots, (0,\ldots,0,n-1) \right \} \cup 
&=  \left \{ \left(\ubar{i}^1_1,\ldots,\ubar{i}^1_n \right), \ldots, \left(\ubar{i}^{n-1}_1,\ldots,\ubar{i}^{n-1}_1\right),\left(\bar{i}^1_1,\ldots,\bar{i}^1_n \right), \ldots, \left(\bar{i}^{n-1}_1,\ldots,\bar{i}^{n-1}_1\right) \right \},
\end{align*} %and suppose that the inequality  $\lambda_{i_1 \cdots i_n} > 0$ holds for a given tuple $(i_1,\ldots,i_n) \in \mathcal{L}$. In Claim~\ref{claim:singleton_3}, we showed that there exists a $j \in \{1,\ldots,n-1\}$ such that $i_{j+1} = \cdots = i_n$.   $(i_1,\ldots,i_n) \in \mathcal{L}$.
where the notation $\left(\ubar{i}^j_1,\ldots,\ubar{i}^j_n \right) \triangleq (0,\ldots,0,j,\ldots,j) $ and $\left(\bar{i}^j_1,\ldots,\bar{i}^j_n \right) \triangleq (n,\ldots,n,j,\ldots,j)$ for each $j \in \{1,\ldots,n-1\}$ is defined in Appendix~\ref{appx:not_improveable:prelim}. 
% as
%\begin{align*}
%\left(\ubar{i}^j_1,\ldots,\ubar{i}^j_n \right) \triangleq (0,\ldots,0,j,\ldots,j)  \textnormal{ and } \left(\bar{i}^j_1,\ldots,\bar{i}^j_n \right) \triangleq (n,\ldots,n,j,\ldots,j)
%\end{align*}
%For the sake of clarity, we remark in the above notation that the first $j$ indices in $\left(\ubar{i}^j_1,\ldots,\ubar{i}^j_n \right)$ and $ \left(\bar{i}^j_1,\ldots,\bar{i}^j_n \right)$ are $0$ and $n$, respectively. %each $j \in \{1,\ldots,n-1\}$ that 
%\begin{align*}
%\left(\ubar{i}^j_1,\ldots,\ubar{i}^j_n \right) &\triangleq (\underbrace{0}_{=\ubar{i}^j_1},\ldots,\underbrace{0}_{=\ubar{i}^j_j},\underbrace{j}_{=\ubar{i}^j_{j+1}},\ldots,\underbrace{j}_{=\ubar{i}^j_{j+1}}) 
%\end{align*}
In view of the above notation, we now consider any arbitrary $j \in \{1,\ldots,n-1\}$ and $\dagger \in \{0,n\}$.  We first observe that 
\begin{align}
v_{j,\dagger} = \sum_{(i_1',\ldots,i_n') \in \mathcal{L}: i_{j}' = \dagger} \lambda_{i_1' \cdots i_n'}  =  \sum_{(i_1',\ldots,i_n') \in \mathcal{L}^*: i_{j}' = \dagger} \lambda_{ {i}_1' \cdots {i}_n'}= \begin{cases}
  \sum_{j'=j}^{n-1} \lambda_{ \ubar{i}_1^{j'} \cdots \ubar{i}_n^{j'}}, &\text{if } \dagger = 0,\\
 \lambda_{n \cdots n } +   \sum_{j'=j}^{n-1} \lambda_{ \bar{i}_1^{j'} \cdots \bar{i}_n^{j'}}, &\text{if } \dagger = n,
    \end{cases} \label{line:singleton_3:j}
\end{align}
where the first equality follows from the fact that $\lambda$ is a feasible  solution of the system \eqref{line:system_feas}, the second equality holds because $\lambda_{i_1'\cdots i_n'} > 0$ only if $(i_1',\ldots,i_n') \in \mathcal{L}^* \cup \{(n,\ldots,n)\}$, and the third equality holds because a tuple $(i_1',\ldots,i_n') \in \mathcal{L}^* \cup \{(n,\ldots,n)\}$ satisfies the equality $i_j' = \dagger$ if and only if $[(i_1',\ldots,i_n') = (n,\ldots,n) \text{ and } \dagger = n]$ or $[ \dagger = 0 \textnormal{ and there exists  $j' \in \{j,\ldots,n\}$ such that }(i_1',\ldots,i_n') = (\ubar{i}_1^{j'},\ldots,\ubar{i}_n^{j'})  ]$ or $[ \dagger = n \textnormal{ and there exists  $j' \in \{j,\ldots,n\}$ such that }(i_1',\ldots,i_n') = (\bar{i}_1^{j'},\ldots,\bar{i}_n^{j'})  ]$. Using identical reasoning as above, we also observe that  
\begin{align}
v_{j+1,\dagger} = \sum_{(i_1',\ldots,i_n') \in \mathcal{L}: i_{j+1}' = \dagger} \lambda_{i_1' \cdots i_n'}  =  \sum_{(i_1',\ldots,i_n') \in \mathcal{L}^*: i_{j+1}' = \dagger} \lambda_{ {i}_1' \cdots {i}_n'}=   \begin{cases}
  \sum_{j'=j+1}^{n-1} \lambda_{ \ubar{i}_1^{j'} \cdots \ubar{i}_n^{j'}}, &\text{if } \dagger = 0,\\
    \lambda_{n \cdots n } +  \sum_{j'=j+1}^{n-1} \lambda_{ \bar{i}_1^{j'} \cdots \bar{i}_n^{j'}}, &\text{if } \dagger = n.
    \end{cases} \label{line:singleton_3:j_plus_one}
\end{align}
Combining lines~\eqref{line:singleton_3:j} and \eqref{line:singleton_3:j_plus_one}, we have shown that
\begin{align}
v_{j,\dagger}-v_{j+1,\dagger} = \begin{cases}
\lambda_{ \ubar{i}_1^{j} \cdots \ubar{i}_n^{j}}, &\text{if } \dagger = 0,\\
 \lambda_{ \bar{i}_1^{j} \cdots \bar{i}_n^{j}}, &\text{if } \dagger = n.
    \end{cases} \label{line:singleton_3:v_minus_v_1}
\end{align}
We also recall from \eqref{line:defn_of_v} that
\begin{align}
v_{j,\dagger}-v_{j+1,\dagger} &= \begin{cases}
\frac{1}{n},&\text{if } \dagger = 0 \text{ and } j \in \bar{S},\\
\frac{1}{n},&\text{if } \dagger = n \text{ and } j \notin \bar{S},\\
0,&\text{otherwise}. 
\end{cases} \label{line:singleton_3:v_minus_v_2}
\end{align}
Therefore, combining lines~\eqref{line:singleton_3:v_minus_v_1} and \eqref{line:singleton_3:v_minus_v_2}, we have shown that the following equalities hold for all $j \in \{1,\ldots,n-1\}$:
\begin{align}
 \lambda_{ \ubar{i}_1^{j} \cdots \ubar{i}_n^{j}} &= \begin{cases}
 \frac{1}{n},&\text{if } j \in \bar{S},\\
 0,&\text{otherwise};
 \end{cases} &  
 \lambda_{ \bar{i}_1^{j} \cdots \bar{i}_n^{j}} &= \begin{cases}
 \frac{1}{n},&\text{if } j \notin \bar{S},\\
 0,&\text{otherwise};
 \end{cases} \label{line:dabadee_infinity}
\end{align}
Moreover, it follows from the fact that $\lambda$ is a feasible solution of the system~\eqref{line:system_feas} that
\begin{align*}
\lambda_{n \cdots n} &= 1 - \sum_{(i_1',\ldots,i_n') \in \mathcal{L}:  (i_1',\ldots,i_n') \neq (n,\ldots,n)} \lambda_{i_1' \cdots i_n'}\\
&= 1 - \sum_{(i_1',\ldots,i_n') \in \mathcal{L}^*} \lambda_{i_1' \cdots i_n'}\\
&= 1 - \sum_{j' \in \bar{S} \setminus \{0,n\}} \lambda_{\ubar{i}_1^{j'} \cdots \ubar{i}_n^{j'}} - \sum_{j' \in \mathcal{N} \setminus \bar{S}} \lambda_{\ubar{i}_1^{j'} \cdots \ubar{i}_n^{j'}}\\
&= 1 - \sum_{j' \in \bar{S} \setminus \{0,n\}}\frac{1}{n} - \sum_{j' \in \mathcal{N} \setminus \bar{S}} \frac{1}{n}\\
&= 1 - \frac{n-1}{n} \\
&= \frac{1}{n},
\end{align*}
where the first equality follows from the fact that  $\lambda$ is a feasible solution of the system~\eqref{line:system_feas}, the second equality follows from the fact that $\lambda_{i_1' \cdots i_n'} > 0$ for a tuple $(i_1',\ldots,i_n') \in \mathcal{L} \setminus \{ (n,\ldots,n)\}$ if and only if $(i_1',\ldots,i_n') \in \mathcal{L}^*$,  the third and fourth equalities follow from \eqref{line:dabadee_infinity}, and the fourth and fifth equalities follow from algebra. 
%Since $j \in \{1,\ldots,n-1\}$ was chosen arbitrarily,  since $\lambda_{i_1' \cdots i_n'} > 0$ only if $(i_1',\ldots,i_n') \in \mathcal{L}^*$, and since the inclusion $\{0,n\} \subseteq \bar{S}$ follows from the fact that $\bar{S}$ is an element of $\widehat{\mathcal{S}}$, we conclude that the following equality must hold for all $(i_1,\ldots,i_n) \in \mathcal{L}$:
Combining the above analysis, we conclude that any feasible solution $\lambda$ of the system~\eqref{line:system_feas} must satisfy the following equality must hold for all $(i_1,\ldots,i_n) \in \mathcal{L}$:
\begin{align*}
\lambda_{i_1 \cdots i_n} &= \begin{cases}
\frac{1}{n},&\textnormal{if there exists }  j \in \bar{S}\setminus \{0,n\} \textnormal{ such that } (i_1,\ldots,i_n) =  \left(\ubar{i}^j_1,\ldots,\ubar{i}^j_n \right),\\
\frac{1}{n},&\textnormal{if there exists } j \in \mathcal{N} \setminus \bar{S} \textnormal{ such that }(i_1,\ldots,i_n) =  \left(\bar{i}^j_1,\ldots,\bar{i}^j_n \right) ,\\
\frac{1}{n},&\textnormal{if }  (i_{1}, \ldots, i_n) = (n,\ldots,n),\\
0,&\textnormal{otherwise}.
\end{cases} \tag{\ref{line:singleton:defn_lambda}}
\end{align*}
%State have thus proven that any feasible solution $\lambda$ for the system \eqref{line:system_feas} must satisfy the equality on line~\eqref{line:singleton:defn_lambda} for all $(i_1,\ldots,i_n) \in \mathcal{L}$. 

We  conclude the proof of Lemma~\ref{lem:singleton} by showing, for the sake of completeness, that if $\lambda$  satisfies the equality on line~\eqref{line:singleton:defn_lambda} for all $(i_1,\ldots,i_n) \in \mathcal{L}$, then $\lambda$ is a feasible solution of the system~\eqref{line:system_feas}. Indeed, let $\lambda$ satisfy the equality on line~\eqref{line:singleton:defn_lambda} for all $(i_1,\ldots,i_n) \in \mathcal{L}$. For each assortment $m \in \mathcal{M}$ and product $i \in S_m$, we observe that
\begin{align*}
\sum_{(i_1,\ldots,i_n) \in \mathcal{L}: i_m = i} \lambda_{i_1 \cdots i_n} 
&= \begin{cases}
\frac{1}{n}, &\text{if } i \in \mathcal{N} \setminus \bar{S},\\
\frac{1}{n}, &\text{if } i \in \bar{S} \setminus \{0,n\},\\
 \frac{1}{n} + \sum_{j \in \mathcal{N} \setminus \bar{S}: j \ge m} \frac{1}{n} ,&\text{if } i = n,\\
\sum_{j \in  \bar{S} \setminus \{0,n\}: j \ge m} \frac{1}{n},&\text{if } i = 0
\end{cases}\\
&= \begin{cases}
\frac{1}{n}, &\text{if } i \in \mathcal{N} \setminus \{0,n\},\\
\frac{1}{n}\left(1 + \left| \left \{ m,\ldots,n-1 \right \} \setminus \bar{S} \right|\right),&\text{if } i = n,\\
\frac{1}{n} \left| \left \{ m,\ldots,n-1 \right \} \cap \bar{S} \right| ,&\text{if } i = 0,\\
\end{cases}\\
&= v_{m,i},
\end{align*}
where the first equality follows from the fact that $\lambda$ satisfies the equality on line~\eqref{line:singleton:defn_lambda} for all $(i_1,\ldots,i_n) \in \mathcal{L}$, the second equality follows from algebra, and the third equality follows from line~\eqref{line:defn_of_v}. Furthermore, we observe that 
\begin{align*}
\sum_{(i_1,\ldots,i_n) \in \mathcal{L}} \lambda_{i_1 \cdots i_n} &= \sum_{j \in \bar{S} \setminus \{0,n\}} \frac{1}{n} 
 + \sum_{j \in \mathcal{N} \setminus  \bar{S}} \frac{1}{n} + \frac{1}{n}  = \frac{| \mathcal{N} \setminus \{0,n\}|}{n} + \frac{1}{n}= 1,\end{align*}
 where the first equality follows from  the fact that $\lambda$ satisfies the equality on line~\eqref{line:singleton:defn_lambda} for all $(i_1,\ldots,i_n) \in \mathcal{L}$, and the second and third equalities follows from algebra. Finally, we observe from  line~\eqref{line:singleton:defn_lambda}  that $\lambda_{i_1 \cdots i_n} \ge 0$ for all $(i_1,\ldots,i_n) \in \mathcal{L}$. We have thus proven that $\lambda$ is a feasible solution of the  system~\eqref{line:system_feas}, which concludes our proof of Lemma~\ref{lem:singleton}. 
\halmos \end{proof}

\subsection{Proof of Theorem~\ref{thm:not_improveable}} \label{appx:not_improveable:finale}

We now use the above Lemma~\ref{lem:singleton} to conclude our proof of Theorem~\ref{thm:not_improveable}. We begin by using Lemma~\ref{lem:singleton}  to reformulate the robust optimization problem~\eqref{prob:robust}. Indeed,
%it follows from Lemma~\ref{lem:singleton}, Proposition~\ref{prop:reform_wc},  and the fact that  $\eta = 0$ that the robust optimization problem~\eqref{prob:robust} can be formulated as
\begin{align}% \tag{WC'-$S$} \label{prob:robust_simplified}
 \eqref{prob:robust} &= \max_{S \in \mathcal{S}} \left \{  \begin{aligned}
& \; \underset{\lambda}{\textnormal{minimize}} && \sum_{(i_1,\ldots,i_M) \in \mathcal{L}} \rho_{i_1 \cdots i_M}(S) \lambda_{i_1\cdots i_M}\\
&\textnormal{subject to}&& \sum_{(i_1,\ldots,i_n) \in \mathcal{L}: i_m = i} \lambda_{i_1 \cdots i_n} = v_{m,i}&& \forall m \in \mathcal{M}, \; i \in S_m\\
&&& \sum_{(i_1,\ldots,i_n) \in \mathcal{L}} \lambda_{i_1 \cdots i_n} = 1 \\
&&& \lambda_{i_1 \cdots i_n} \ge 0&& \forall (i_1,\ldots,i_n) \in \mathcal{L},
\end{aligned} \right \} \notag \\
&= \max_{S \in \mathcal{S}} \left \{\frac{1}{n} \sum_{j \in \bar{S} \setminus \{0,n\}} \rho_{\ubar{i}^j_1,\ldots,\ubar{i}^j_n}(S) + \frac{1}{n} \sum_{j \in\mathcal{N} \setminus \bar{S}} \rho_{\bar{i}^j_1,\ldots,\bar{i}^j_n}(S)  + \frac{1}{n} \rho_{n \cdots n}(S) \right \},\label{prob:not_improveable:intermediary_prob}
\end{align}
where the first equality follows from  Proposition~\ref{prop:reform_wc}  and the fact that  $\eta = 0$, and the second equality follows from Lemma~\ref{lem:singleton}. 

We next analyze the objective function in \eqref{prob:not_improveable:intermediary_prob}. Indeed, it follows readily from Proposition~\ref{prop:cost_reform} from Appendix~\ref{appx:graphical} that the following equalities hold for all assortments $S \in \mathcal{S}$:
\begin{align*}
\begin{aligned}
 \rho_{\ubar{i}^j_1 \cdots \ubar{i}^j_n}(S)  &=  r_j \mathbb{I} \left \{j \in S \right \} && \forall j \in \bar{S} \setminus \{0,n\}, \\
  \rho_{\bar{i}^j_1 \cdots \bar{i}^j_n}(S)  &=  r_j \mathbb{I} \left \{j \in S \right \} + r_n \mathbb{I} \left \{j \notin S  \textnormal{ and } n \in S\right \} && \forall j \in \mathcal{N} \setminus \bar{S}, \\
  \rho_{n \cdots n}(S) &= r_n \mathbb{I} \left \{ n \in S\right \}.
  \end{aligned}
\end{align*}
Using the above equalities, we rewrite the optimization problem on line~\eqref{prob:not_improveable:intermediary_prob} as 
\begin{align}
\max_{S \in \mathcal{S}} \left \{\underbrace{\frac{1}{n} \sum_{j \in \bar{S} \setminus \{0,n\}}r_j \mathbb{I} \left \{j \in S \right \}  + \frac{1}{n} \sum_{j \in\mathcal{N} \setminus \bar{S}} \left(  r_j \mathbb{I} \left \{j \in S \right \} + r_n \mathbb{I} \left \{j \notin S  \textnormal{ and } n \in S \right \}  \right)  + \frac{1}{n} r_n \mathbb{I} \left \{ n \in S \right \}}_{\mathscr{R}'(S)} \right \}. \label{prob:not_improveable:intermediary_prob_2}
\end{align}

We next show that every optimal solution $S$ of \eqref{prob:not_improveable:intermediary_prob_2}  must contain the product $n$. Indeed,  for every  arbitrary assortment $S \in \mathcal{S}$ that  satisfies $n \notin S$, we observe from algebra that
\begin{align*}
\mathscr{R}'(S \cup \{n\}) - \mathscr{R}'(S)  &= \frac{1}{n} \sum_{j \in \mathcal{N} \setminus \bar{S}} r_n \left( \mathbb{I} \left \{j \notin S \right \}  - 0 \right) + \frac{1}{n} r_n > 0.
\end{align*}
Since the assortment $S \in \mathcal{S}$ with $n \notin S$ was chosen arbitrarily, we have shown that  every optimal solution $S$ of \eqref{prob:not_improveable:intermediary_prob_2} must satisfy $n \in S$. We can thus rewrite   \eqref{prob:not_improveable:intermediary_prob_2} as
\begin{align}
\max_{S \in \mathcal{S}: n \in S} \left \{\underbrace{\frac{1}{n} \sum_{j \in \bar{S} \setminus \{0,n\}}r_j \mathbb{I} \left \{j \in S \right \}  + \frac{1}{n} \sum_{j \in\mathcal{N} \setminus \bar{S}} \left(  r_j \mathbb{I} \left \{j \in S \right \} + r_n \mathbb{I} \left \{j \notin S \right \}  \right)  + \frac{1}{n} r_n }_{\mathscr{R}''(S)} \right \}. \label{prob:not_improveable:intermediary_prob_3}
\end{align}

We next show that every optimal solution $S$ of the optimization problem~\eqref{prob:not_improveable:intermediary_prob_3}  must satisfy $j' \notin S$ for all $j' \in \mathcal{N} \setminus \bar{S}$. Indeed,  for every  arbitrary assortment $S \in \mathcal{S}$ that  satisfies $n \in S$ and satisfies $j' \in S$ for some product  $j' \in \mathcal{N} \setminus \bar{S}$, we observe from algebra that
\begin{align*}
\mathscr{R}''(S \setminus \{j'\}) - \mathscr{R}''(S)  &= \frac{1}{n} \left(0 + r_n \right) - \frac{1}{n}  \left(r_{j'} + 0 \right)  > 0.
\end{align*}
Since the assortment $S \in \mathcal{S}$ with $n \in S$ and $j' \in S$ for some $j' \in \mathcal{N} \setminus \bar{S}$ was chosen arbitrarily, we have shown that  every optimal solution $S$ of \eqref{prob:not_improveable:intermediary_prob_3} must satisfy $j' \notin S$ for all $j' \in \mathcal{N} \setminus \bar{S}$. We can thus rewrite   \eqref{prob:not_improveable:intermediary_prob_3} as
\begin{align}
\max_{S \in \mathcal{S}: n \in S, \; j' \notin S \forall j' \in \mathcal{N} \setminus \bar{S}} \left \{\underbrace{\frac{1}{n} \sum_{j \in \bar{S} \setminus \{0,n\}}r_j \mathbb{I} \left \{j \in S \right \}  + \frac{1}{n} \sum_{j \in\mathcal{N} \setminus \bar{S}}  r_n + \frac{1}{n} r_n }_{\mathscr{R}''(S)} \right \}. \label{prob:not_improveable:intermediary_prob_4}
\end{align}

Finally, we conclude by inspection that every optimal solution $S$ of the optimization problem \eqref{prob:not_improveable:intermediary_prob_4} must satsify $j' \in S$ for all $j' \in S \setminus \{0,n\}$. We have thus shown that the only optimal solution of the robust optimization problem~\eqref{prob:robust} is $\bar{S}$, which concludes our proof of Theorem~\ref{thm:not_improveable}. \halmos

\section{Assumption that $\mathcal{N}_0 = \cup_{m \in \mathcal{M}} S_m$}\label{appx:N_0_assumption}
In Remark~\ref{rem:N_0} of \S\ref{sec:algorithms}, we state an assumption that the universe of products $\mathcal{N}_0$ is equal to $\cup_{m \in \mathcal{M}} S_m$, where $S_1,\ldots,S_M \subseteq \mathcal{N}_0$ are the assortments that the firm has previously offered to its customers. In that remark, we claim that this assumption can always be satisfied by redefining $\mathcal{N}_0$ to be equal to $\cup_{m \in \mathcal{M}} S_m$.  In this appendix, we argue that this assumption is without loss of generality from the perspective of the robust optimization problem~\eqref{prob:robust}. Specifically, we formalize Remark~\ref{rem:N_0} via the following proposition:
\begin{proposition} \label{prop:N_0_assumption}
 $\max \limits_{S \in \mathcal{S}} \min \limits_{\lambda \in \mathcal{U}}\mathscr{R}^{\lambda}(S) = \max \limits_{S \in \mathcal{S}} \min \limits_{\lambda \in \mathcal{U}}\mathscr{R}^{\lambda}\left(S \cap \left( \bigcup_{m \in \mathcal{M}} S_m  \right) \right). $
\end{proposition}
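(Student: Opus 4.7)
The plan is to prove the equality by showing inequalities in both directions. Let $\mathcal{N}_0^\star \triangleq \bigcup_{m \in \mathcal{M}} S_m$ and $f(S) \triangleq \min_{\lambda \in \mathcal{U}} \mathscr{R}^\lambda(S)$. Because $0 \in S_m \subseteq \mathcal{N}_0^\star$ for every $m \in \mathcal{M}$, we have $S \cap \mathcal{N}_0^\star \in \mathcal{S}$ whenever $S \in \mathcal{S}$, so the direction $\max_S f(S) \ge \max_S f(S \cap \mathcal{N}_0^\star)$ is immediate: for every $S \in \mathcal{S}$, the assortment $S \cap \mathcal{N}_0^\star$ belongs to $\mathcal{S}$, so $f(S \cap \mathcal{N}_0^\star) \le \max_{S' \in \mathcal{S}} f(S')$, and taking the maximum over $S$ yields the inequality.

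For the reverse direction, the plan is to show the pointwise bound $f(S) \le f(S \cap \mathcal{N}_0^\star)$ for every $S \in \mathcal{S}$ via the linear reformulation in Proposition~\ref{prop:reform_wc}. By Remark~\ref{remark:rho}, the feasible set of~\eqref{prob:robust_simplified} does not depend on $S$; the assortment only enters through the objective coefficients $\rho_{i_1\cdots i_M}(S)$. Hence it suffices to show the coefficient inequality
\[
\rho_{i_1 \cdots i_M}(S) \;\le\; \rho_{i_1 \cdots i_M}(S \cap \mathcal{N}_0^\star) \qquad \forall (i_1,\ldots,i_M) \in \mathcal{L}.
\]

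To establish the coefficient inequality, I would invoke the graphical characterization in Proposition~\ref{prop:cost_reform}, which gives $\rho_{i_1 \cdots i_M}(S) = \min_{i \in S \cap \mathcal{I}_{i_1 \cdots i_M}(S)} r_i$. The key structural observation is that every product $i \in \mathcal{N}_0 \setminus \mathcal{N}_0^\star$ is an \emph{isolated vertex} in $\mathcal{G}_{i_1 \cdots i_M}$: by construction, all directed edges are of the form $(j, i_m)$ with $j \in S_m \setminus \{i_m\}$ and $i_m \in S_m$, and since $S_m \subseteq \mathcal{N}_0^\star$, no edge is incident to any vertex outside $\mathcal{N}_0^\star$. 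Combined with the fact that $(i_1,\ldots,i_M) \in S_1 \times \cdots \times S_M \subseteq (\mathcal{N}_0^\star)^M$, this yields $i_m \in S \iff i_m \in S \cap \mathcal{N}_0^\star$, so by Definition~\ref{defn:I} the sets $\mathcal{I}_{i_1 \cdots i_M}(S)$ and $\mathcal{I}_{i_1 \cdots i_M}(S \cap \mathcal{N}_0^\star)$ coincide. Therefore
\[
S \cap \mathcal{I}_{i_1 \cdots i_M}(S) \;\supseteq\; (S \cap \mathcal{N}_0^\star) \cap \mathcal{I}_{i_1 \cdots i_M}(S \cap \mathcal{N}_0^\star),
\]
and taking the minimum of $r_i$ over a larger set can only decrease, which gives the desired coefficient inequality and hence $f(S) \le f(S \cap \mathcal{N}_0^\star)$. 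Maximizing over $S$ completes the argument.

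The main obstacle is articulating cleanly that the set $\mathcal{I}_{i_1 \cdots i_M}(\cdot)$ is unchanged when $S$ is replaced by $S \cap \mathcal{N}_0^\star$; once one notes that each $i_m$ already lies in $\mathcal{N}_0^\star$, this reduces to an elementary check. No new technical machinery beyond Propositions~\ref{prop:reform_wc} and~\ref{prop:cost_reform} should be required.
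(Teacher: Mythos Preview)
Your proposal is correct and follows essentially the same route as the paper: both directions are handled identically, with the nontrivial inequality reduced via Proposition~\ref{prop:reform_wc} to the coefficient comparison $\rho_{i_1\cdots i_M}(S)\le\rho_{i_1\cdots i_M}(S\cap\mathcal{N}_0^\star)$, which is then established through Proposition~\ref{prop:cost_reform} by noting that $i_1,\ldots,i_M\in\mathcal{N}_0^\star$ forces $\mathcal{I}_{i_1\cdots i_M}(S)=\mathcal{I}_{i_1\cdots i_M}(S\cap\mathcal{N}_0^\star)$ and then using monotonicity of the minimum over a shrinking index set. Your remark about isolated vertices is a correct extra observation but is not needed for the argument.
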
  
The above proposition says that if $S^* \in \mathcal{S}$ is an optimal solution of the robust optimization problem~\eqref{prob:robust}, then $S^* \cap \left( \bigcup_{m \in \mathcal{M}} S_m  \right)$ is also an optimal solution of the robust optimization problem~\eqref{prob:robust}. Hence, the above proposition implies that we can assume without loss of generality that the robust optimization problem~\eqref{prob:robust} will only optimize over assortments $S$ in the set $\mathcal{S} \cap \left( \bigcup_{m \in \mathcal{M}} S_m  \right)$.  The following proof of Proposition~\ref{prop:N_0_assumption} relies on the graphical interpretation of $\rho_{i_1 \cdots i_M}(S)$ that is developed in Appendix~\ref{appx:graphical}. 
\begin{proof}{Proof of Proposition~\ref{prop:N_0_assumption}. }
We first observe that 
\begin{align*}
 \max \limits_{S \in \mathcal{S}} \min \limits_{\lambda \in \mathcal{U}}\mathscr{R}^{\lambda}\left(S \cap \left( \bigcup_{m \in \mathcal{M}} S_m  \right) \right) &=  \max \limits_{S \in \mathcal{S} \cap \left( \bigcup_{m \in \mathcal{M}} S_m  \right)  } \min \limits_{\lambda \in \mathcal{U}}\mathscr{R}^{\lambda}\left(S \right) \le \max \limits_{S \in \mathcal{S}} \min \limits_{\lambda \in \mathcal{U}}\mathscr{R}^{\lambda}(S), 
\end{align*}
where the equality follows from the fact that $\mathcal{S} \triangleq \{S \subseteq \mathcal{N}_0: 0 \in S \}$, and the inequality follows from algebra. To show the other direction, 
consider any arbitrary assortment $S \in \mathcal{S}$. We recall from Proposition~\ref{prop:reform_wc} that the worst-case expected revenue   $ \min_{\lambda \in \mathcal{U}}\mathscr{R}^{\lambda}\left(S \cap \left( \bigcup_{m \in \mathcal{M}} S_m  \right) \right)$ is equal to the optimal objective value of the following linear optimization problem:
\begin{equation}\label{prob:robust_simplified_N_0_assumption}
 \begin{aligned}
& \; \underset{\lambda, \epsilon}{\textnormal{minimize}} && \sum_{(i_1,\ldots,i_M) \in \mathcal{L}} \rho_{i_1 \cdots i_M}\left(S \cap \left( \bigcup_{m \in \mathcal{M}} S_m  \right) \right) \lambda_{i_1\cdots i_M}\\
&\textnormal{subject to}&&  \textnormal{same constraints as \eqref{prob:robust_simplified}}
\end{aligned}
\end{equation}
Moreover, we recall from Proposition~\ref{prop:cost_reform} in Appendix~\ref{appx:graphical} that the following equality holds for each $(i_1,\ldots,i_M) \in \mathcal{L}$: 
\begin{align}
 \rho_{i_1 \cdots i_M}\left(S \cap \left( \bigcup_{m \in \mathcal{M}} S_m  \right) \right)  = \min_{i \in  S \cap \left( \bigcup_{m \in \mathcal{M}} S_m  \right)  \cap \mathcal{I}_{i_1 \cdots i_M}\left( S \cap \left( \bigcup_{m \in \mathcal{M}} S_m  \right) \right)} r_i. \label{line:rho_for_union}
\end{align} 
In particular, we observe  for each $(i_1,\ldots,i_M) \in \mathcal{L}$ that 
\begin{align}
\mathcal{I}_{i_1 \cdots i_M}\left( S \cap \left( \bigcup_{m \in \mathcal{M}} S_m  \right) \right) &=  \left \{ i \in \mathcal{N}_0:\; \textnormal{for all } m  \in \mathcal{M}, \; \textnormal{if } i_m \in S \cap \left( \bigcup_{m \in \mathcal{M}} S_m  \right), \text{ then } i_m \nprec_{i_1 \cdots i_M} i \right \} \notag \\
 &=  \left \{ i \in \mathcal{N}_0:\; \textnormal{for all } m  \in \mathcal{M}, \; \textnormal{if } i_m \in S, \text{ then } i_m \nprec_{i_1 \cdots i_M} i \right \} \notag  \\
&= \mathcal{I}_{i_1 \cdots i_M}\left( S  \right),\label{line:I_for_union}
\end{align}
where the first equality follows from Definition~\ref{defn:I}, the second equality follows from the fact that $i_1,\ldots,i_M \in \cup_{m \in \mathcal{M}} S_m$, and the third equality follows from Definition~\ref{defn:I}. Therefore, for every feasible solution $(\lambda,\epsilon)$ for the linear optimization problem~\eqref{prob:robust_simplified}, we observe that 
\begin{align*}
\sum_{(i_1,\ldots,i_M) \in \mathcal{L}} \rho_{i_1 \cdots i_M}\left(S \cap \left( \bigcup_{m \in \mathcal{M}} S_m  \right) \right) \lambda_{i_1\cdots i_M} &= \sum_{(i_1,\ldots,i_M) \in \mathcal{L}} \left( \min_{i \in  S \cap \left( \bigcup_{m \in \mathcal{M}} S_m  \right)  \cap \mathcal{I}_{i_1 \cdots i_M}\left( S \cap \left( \bigcup_{m \in \mathcal{M}} S_m  \right) \right)} r_i \right)  \lambda_{i_1\cdots i_M} \\
&=  \sum_{(i_1,\ldots,i_M) \in \mathcal{L}} \left( \min_{i \in  S \cap \left( \bigcup_{m \in \mathcal{M}} S_m  \right)  \cap \mathcal{I}_{i_1 \cdots i_M}\left( S \right)} r_i  \right) \lambda_{i_1\cdots i_M}\\
&\ge  \sum_{(i_1,\ldots,i_M) \in \mathcal{L}} \left(  \min_{i \in  S   \cap \mathcal{I}_{i_1 \cdots i_M}\left( S \right)} r_i  \right) \lambda_{i_1\cdots i_M}\\
&=  \sum_{(i_1,\ldots,i_M) \in \mathcal{L}}\rho_{i_1 \cdots i_M}\left(S \right)  \lambda_{i_1\cdots i_M},
\end{align*}
where the first equality follows from \eqref{line:rho_for_union}, the second equality follows from \eqref{line:I_for_union}, the inequality follows from algebra, and the third equality follows from Proposition~\ref{prop:cost_reform}. Since the above analysis holds for every feasible solution $(\lambda,\epsilon)$ for the linear optimization problem~\eqref{prob:robust_simplified}, and since the assortment $S \in \mathcal{S}$ was chosen arbitrarily, we conclude that
\begin{align*}
 \max \limits_{S \in \mathcal{S}} \min \limits_{\lambda \in \mathcal{U}}\mathscr{R}^{\lambda}\left(S \cap \left( \bigcup_{m \in \mathcal{M}} S_m  \right) \right) &\ge \max \limits_{S \in \mathcal{S}} \min \limits_{\lambda \in \mathcal{U}}\mathscr{R}^{\lambda}(S). 
\end{align*}
The proof of  Proposition~\ref{prop:N_0_assumption} is thus complete.  \halmos \end{proof}

\section{Proofs of Technical Results from \S\ref{sec:algorithm:twoassortments}}\label{appx:two}

\subsection{Proof of Lemma~\ref{lem:two:L}} \label{appx:two:L}
It follows from Definition~\ref{defn:L} and the fact that $M = 2$ that
\begin{align*}
\mathcal{L} = \left \{(i_1,i_2): \mathcal{D}_{i_1}(S_1) \cap \mathcal{D}_{i_2}(S_2) \neq \emptyset  \right \}. 
\end{align*}  The rest of the proof of Lemma~\ref{lem:two:L} is split into three cases.

 In the first case, consider any arbitrary pair of products $(i_1,i_2) \in (S_1 \setminus S_2) \times S_2$. For this pair of products,  consider any ranking $\sigma \in \Sigma$ which satisfies the equalities $\sigma(i_1) =0$ and $\sigma(i_2) = 1$. It follows from the fact that $i_1 \in S_1$ that this ranking satisfies the equality $\argmin_{j \in S_1} \sigma(j) = 1$. Moreover, it follows from the facts that $i_1 \notin S_2$ and $i_2 \in S_2$ that this ranking also satisfies the equality $\argmin_{j \in S_2} \sigma(j) = 2$. We have thus shown that $\sigma \in \mathcal{D}_1(S_1) \cap \mathcal{D}_2(S_2)$, which proves that $ \mathcal{D}_1(S_1) \cap \mathcal{D}_2(S_2) \neq \emptyset$. Since the pair of products $(i_1,i_2) \in (S_1 \setminus S_2) \times S_2$ was chosen arbitrarily, we have shown that $(S_1 \setminus S_2) \times S_2 \subseteq  \mathcal{L}$.

 In the second case, consider any arbitrary pair of products $(i_1,i_2) \in S_1 \times (S_2 \setminus S_1)$. Using identical reasoning as the first case, we observe that  $ \mathcal{D}_1(S_1) \cap \mathcal{D}_2(S_2) \neq \emptyset$, which shows that $S_1 \times (S_2 \setminus S_1) \subseteq  \mathcal{L}$. 
 
 In the third case,  consider any arbitrary pair of products $(i_1,i_2) \in (S_1 \cap S_2) \times (S_1 \cap S_2)$.  For this pair of products, it follows from Definition~\ref{defn:D} that the inequality $\sigma(i_1) < \sigma(j)$ must hold for all rankings $\sigma \in \mathcal{D}_{i_1}(S_1)$ and all products $j \in (S_1 \cap S_2) \setminus \{i_1 \}$. It also follows from Definition~\ref{defn:D} that the inequality $\sigma(i_2) < \sigma(j)$ must hold for all rankings $\sigma \in \mathcal{D}_{i_2}(S_2)$ and all products $j \in (S_1 \cap S_2) \setminus \{i_2 \}$. It follows immediately from these inequalities that there exists a ranking that satisfies $\sigma \in \mathcal{D}_{i_1}(S_1) \cap \mathcal{D}_{i_2}(S_2)$ if and only if $i_1 = i_2$, and so it follows from Definition~\ref{defn:L} that $(i_1,i_2) \in \mathcal{L}$ if and only if $i_1 = i_2$. 
 
Since the three cases we have considered  are exhaustive, our proof of Lemma~\ref{lem:two:L} is complete.   \halmos

\subsection{Proof of Lemma~\ref{lem:two:flow}}\label{appx:two:flow}

Let $M = 2$ and $\eta = 0$. In the remainder of the proof of Lemma~\ref{lem:two:flow}, we will show that the following equality holds for all assortments $S \in \mathcal{S}$:
\begin{align} 
 \min_{\lambda \in \mathcal{U}}\mathscr{R}^{\lambda}(S) &= \sum_{i \in S_1\cap S_2} \rho_{ii}(S) v_{1,i} \;+ \notag \\
  &\quad \quad \quad \left[\begin{aligned}
& \; \underset{\lambda}{\textnormal{minimize}} && \sum_{i_1 \in S_1 \setminus S_2} \sum_{i_2 \in S_1 \cap S_2}  \rho_{i_1 i_2}(S) \lambda_{i_1i_2} \\
&&&+ \sum_{i_1 \in S_1 \setminus S_2} \sum_{i_2 \in S_2 \setminus  S_1}  \rho_{i_1 i_2}(S) \lambda_{i_1i_2} \\
&&&+ \sum_{i_1 \in S_1 \cap S_2} \sum_{i_2 \in S_2 \setminus S_1} ( \rho_{i_1 i_2}(S) - \rho_{i_1 i_1}(S)) \lambda_{i_1i_2} \\
&\textnormal{subject to}&& \begin{aligned}[t]
& \sum_{i_2 \in S_2} \lambda_{i_1  i_2} = v_{1,i_1} && \forall  i_1 \in S_1 \setminus S_2\\
& \sum_{i_1 \in S_1} \lambda_{i_1  i_2} = v_{2,i_2} &&\forall   i_2 \in S_2 \setminus S_1\\
& \sum_{i_2 \in S_2 \setminus S_1} \lambda_{i i_2}   - \sum_{i_1 \in S_1 \setminus S_2} \lambda_{i_1  i}  =  v_{1,i} - v_{2,i}  &&\forall   i \in S_1 \cap S_2\\
& \sum_{i_1 \in S_1 \setminus S_2} \lambda_{i_1 i} \le v_{2,i}  && \forall i \in S_1 \cap S_2\\
& \lambda_{i_1 i_2} \ge 0 && \forall (i_1,i_2) \in \mathcal{L}
\end{aligned}
\end{aligned}
\right].  \label{prob:flow}
\end{align}
We readily observe that the linear optimization problem  on line~\eqref{prob:flow} is a minimum-cost network flow problem, where each decision variable $\lambda_{i_1 i_2}$ corresponds to the flow on a directed edge from vertex $i_1$ to vertex $i_2$  \citep[p. 296]{ahuja1988network}. In particular, we observe that the minimum-cost network flow problem on line~\eqref{prob:flow} takes place on a tripartite directed acyclic graph with $|S_1 \setminus S_2| + |S_2 \setminus S_1| + |S_1 \cap S_2| = n+1$ vertices and $ \mathcal{O}(n^2)$ directed edges. In Figure~\ref{fig:network_flow},  we present a visualization of the network corresponding to the minimum-cost network flow problem from line~\eqref{prob:flow}. 
\begin{remark}The inequality of the form $\sum_{i_1 \in S_1 \setminus S_2} \lambda_{i_1  i} \le v_{2,i}$ for each vertex $i \in S_1 \cap S_2$ can be interpreted as an upper bound on the incoming flow entering into vertex $i$. These inequalities can be converted into edge capacities by splitting each vertex $i \in S_1 \cap S_2$ into two vertices, $i^{\text{in}}$ and $i^{\text{out}}$, where the incoming edges to vertex $i^{\text{in}}$ are from the vertices in $S_1 \setminus S_2$, the outgoing edges from $i^{\text{out}}$ are to the  vertices in $S_2 \setminus S_1$, the only outgoing edge from vertex $i^{\text{in}}$ is the only incoming edge to vertex $i^{\text{out}}$, the flow through the edge from vertex $i^{\text{in}}$ to vertex $i^{\text{out}}$ must satisfy the constraints $0 \le g_{i^{\text{in}} i^{\text{out}}} \le v_{2,i}$, and the supplies on the vertices $i^{\text{in}}$ and $i^{\text{out}}$ are $0$ and $v_{1,i} - v_{2,i}$, respectively. 
\end{remark}

\begin{figure}[t]
\centering
\FIGURE{%
\begin{tikzpicture}
\tikzset{vertex/.style = {shape=circle,draw,minimum size=1.5em}}
\tikzset{edge/.style = {->,> = latex'}}
% vertices
\node[vertex] (1) at  (-0.25*360/6: 3cm) {1};
\node[vertex] (0) at  (1.25*360/6: 3cm) {0};
\node[vertex] (5) at  (1.75*360/6: 3cm) {5};
\node[vertex] (4) at  (3.25*360/6: 3cm) {4};
\node[vertex] (3) at  (3.75*360/6: 3cm) {3};
\node[vertex] (2) at  (5.25*360/6: 3cm) {2};
  \node[ellipse,rotate=0*360/6,line width=0.5mm,draw = red,dotted,minimum width = 3.5cm,  minimum height = 2cm] (e) at (1.5*360/6: 2.897cm) {};
 \node[ellipse,rotate=360/6,draw = ForestGreen,line width=0.5mm,dotted,minimum width = 3.5cm,  minimum height = 2cm] (e) at (-0.5*360/6: 2.897cm) {};
  \node[ellipse,rotate=2*360/6,draw = blue,line width=0.5mm,dotted,minimum width = 3.5cm,  minimum height = 2cm] (e) at (3.5*360/6: 2.897cm) {};
 \node[text width=3cm] at (1.5*360/6: 4.25cm)  {};%edges
%\node[draw] \draw [-{Stealth[scale=1.25]}] (0) -- (3); 
\draw [-{Stealth[scale=1.25]}] (0) -- (4);
\draw [-{Stealth[scale=1.25]}] (0) -- (3);
\draw [-{Stealth[scale=1.25]}] (5) -- (3); 
\draw [-{Stealth[scale=1.25]}] (5) -- (4); 
\draw [-{Stealth[scale=1.25]}] (1) -- (0); 
\draw [-{Stealth[scale=1.25]}] (1) -- (3); 
\draw [-{Stealth[scale=1.25]}] (1) -- (4); 
\draw [-{Stealth[scale=1.25]}] (1) -- (5); 
\draw [-{Stealth[scale=1.25]}] (2) -- (0); 
\draw [-{Stealth[scale=1.25]}] (2) -- (3); 
\draw [-{Stealth[scale=1.25]}] (2) -- (4); 
\draw [-{Stealth[scale=1.25]}] (2) -- (5); 
\end{tikzpicture}
}
{Visualization of minimum-cost network flow problem from line~\eqref{prob:flow}.\label{fig:network_flow}}
{\TABLEfootnotesizeIX The figure shows a visualization of the minimum-cost network flow problem corresponding to the linear optimization problem on line~\eqref{prob:flow} for the case where the past assortments are $S_1 = \{0,1,2,5\}$ and $S_2 = \{0,3,4,5\}$. We see that there is a vertex in the graph for each product $i  \in \mathcal{N}_0 \equiv \{0,1,2,3,4,5\}$. The graph is a  tripartite directed graph, where the three partitions of vertices are denoted by the dotted ellipses and correspond to  $S_1 \setminus S_2 = \{1,2\}$, $S_1 \cap S_2 = \{0,5\}$, and $S_2 \setminus S_1 = \{3,4\}$. The flow demands at each of the vertices and the flow cost for each of the directed edges can be found on  line~\eqref{prob:flow}.    }
\end{figure}

We now proceed to prove that line~\eqref{prob:flow} holds. Indeed, consider any arbitrary assortment $S \in {\mathcal{S}}$. It follows from Proposition~\ref{prop:reform_wc}  and our assumptions of $M=2$ and $\eta = 0$ that 
\begin{align*}
\min_{\lambda \in \mathcal{U}}\mathscr{R}^{\lambda}(S) = \left[ \begin{aligned}
& \; \underset{\lambda}{\textnormal{minimize}} && \sum_{(i_1,i_2) \in \mathcal{L}} \rho_{i_1 i_2}(S) \lambda_{i_1 i_2}\\
&\textnormal{subject to}&&  \sum_{(i_1,i_2) \in \mathcal{L}: \; i_1 = i} \lambda_{i_1  i_2}  = v_{1,i} \quad \forall  i \in S_1\\
&&&  \sum_{(i_1,i_2) \in \mathcal{L}: \; i_2 = i} \lambda_{i_1  i_2}  = v_{2,i} \quad \forall  i \in S_2\\
&&& \sum_{(i_1,i_2) \in \mathcal{L}} \lambda_{i_1 i_2} = 1 \\
&&& \lambda_{i_1 i_2} \ge 0 \quad \forall (i_1,i_2) \in \mathcal{L}
\end{aligned}\right]. 
\end{align*}
After applying Lemma~\ref{lem:two:L} to the above optimization problem and removing the redundant constraint $\sum_{(i_1,i_2) \in \mathcal{L}} \lambda_{i_1 i_2} = 1$, we observe that
\begin{align*}
\min_{\lambda \in \mathcal{U}}\mathscr{R}^{\lambda}(S) = \left[ \begin{aligned}
& \; \underset{\lambda}{\textnormal{minimize}} && \sum_{i_1 \in S_1 \setminus S_2} \sum_{i_2 \in S_1 \cap S_2}  \rho_{i_1 i_2}(S) \lambda_{i_1i_2} + \sum_{i_1 \in S_1 \setminus S_2} \sum_{i_2 \in S_2 \setminus  S_1}  \rho_{i_1 i_2}(S) \lambda_{i_1i_2} \\
&&&+ \sum_{i_1 \in S_1 \cap S_2} \sum_{i_2 \in S_2 \setminus S_1}  \rho_{i_1 i_2}(S) \lambda_{i_1i_2} + \sum_{i \in S_1 \cap S_2}  \rho_{i i}(S) \lambda_{i i}\\
&\textnormal{subject to}&&\begin{aligned}[t]
&  \sum_{i_2 \in S_2} \lambda_{i_1  i_2} = v_{1,i_1} && \forall  i_1 \in S_1 \setminus S_2\\
&  \sum_{i_1 \in S_1} \lambda_{i_1  i_2} = v_{2,i_2} &&\forall   i_2 \in S_2 \setminus S_1\\
& \lambda_{i i} +  \sum_{i_2 \in S_2 \setminus S_1} \lambda_{i  i_2} = v_{1,i} &&\forall   i \in S_1 \cap S_2\\
& \lambda_{i i} +  \sum_{i_1 \in S_1 \setminus S_2} \lambda_{i_1  i} = v_{2,i} &&\forall   i \in S_1 \cap S_2\\
& \lambda_{i_1 i_2} \ge 0 && \forall (i_1,i_2) \in \mathcal{L}
\end{aligned}
\end{aligned}\right]. 
\end{align*}
Finally,  for each $i \in S_1 \cap S_2$, we eliminate the decision variable $\lambda_{ii}$  from the above optimization problem by using the equality $\lambda_{ii} = v_{2,i} - \sum_{i_1 \in S_1 \setminus S_2} \lambda_{i_1 i}$. With this elimination, we obtain the desired result.  \halmos

\subsection{Proof of Theorem~\ref{thm:two}}\label{appx:thm:two}

We first describe our algorithm for solving the robust optimization problem~\eqref{prob:robust}, which  follows a brute-force strategy. Namely, our algorithm  iterates over each of the assortments $S \in \widehat{\mathcal{S}}$, and, for each such assortment, the algorithm computes the corresponding worst-case expected revenue $\min_{\lambda \in \mathcal{U}}\mathscr{R}^{\lambda}(S)$. The algorithm concludes by returning the maximum value of $\min_{\lambda \in \mathcal{U}}\mathscr{R}^{\lambda}(S)$ across the assortments $S \in \widehat{\mathcal{S}}$. The correctness of this algorithm for solving the robust optimization problem~\eqref{prob:robust} follows immediately from Theorem~\ref{thm:main}.

 We now analyze the running time of our algorithm by using our three intermediary results. 
 We assume that the two assortments $S_1$ and $S_2$ are given as sorted arrays. Under this assumption, 
  it is straightforward  to see that the sets $S_1 \cap S_2$, $S_1 \setminus S_2$, and $S_2 \setminus S_1$ can be computed and stored as sorted arrays in $\mathcal{O}(n)$ computation time. We also require $\mathcal{O}(n)$ computation time to store  copies of the sets $S_1,S_2,S_1 \cap S_2$, $S_1 \setminus S_2$, and $S_2 \setminus S_1$ in hash tables, which ensures that querying  whether a given product is an element of any of these sets can be performed in $\mathcal{O}(1)$ time.   
 
 We next analyze the computation times for constructing the collection of assortments $\widehat{\mathcal{S}}$, constructing the set of pairs of products $\mathcal{L}$, and computing the quantities $\rho_{i_1 i_2}(S)$ for each  assortment $S \in \widehat{\mathcal{S}}$ and each pair of products $(i_1,i_2) \in \mathcal{L}$. Indeed, using the aforementioned data structures, it follows readily from 
Lemma~\ref{lem:two:S} that we can construct the collection of assortments $\widehat{\mathcal{S}}$ in  $\mathcal{O}(n^3)$ computation time.\footnote{It follows from Lemma~\ref{lem:two:S} that we can efficiently iterate over the assortments in $\widehat{\mathcal{S}}$ by iterating over the pairs of products in $S_1 \setminus S_2$ and $S_2 \setminus S_1$. Constructing the collection $\widehat{\mathcal{S}}$ thus requires iterating over the $|S_1 \setminus S_2| \times |S_2 \setminus S_1| = \mathcal{O}(n^2)$ assortments, and each of the assortments is comprised of at most $\mathcal{O}(n)$ products.  } 
Moreover,   it follows from 
Lemma~\ref{lem:two:L} that the set of pairs of products $\mathcal{L}$ can be computed in $\mathcal{O}(n^2)$ time.  Finally, we analyze the computation times for computing the quantities $\rho_{i_1 i_2}(S)$ for each  assortment $S \in \widehat{\mathcal{S}}$ and each pair of products $(i_1,i_2) \in \mathcal{L}$. Indeed, we recall from Lemma~\ref{lem:two:S} that $| \widehat{\mathcal{S}}| = \mathcal{O}(n^2)$, and we recall from Lemma~\ref{lem:two:L} that $|\mathcal{L}| = \mathcal{O}(n^2)$. Therefore,  there are $| \widehat{\mathcal{S}}| \times |\mathcal{L}| = \mathcal{O}(n^4)$ different ways of choosing an  assortment $S \in \widehat{\mathcal{S}}$ and a pair of products $(i_1,i_2) \in \mathcal{L}$. For each assortment $S \in {\mathcal{S}}$ and pair of products $(i_1,i_2) \in \mathcal{L}$,   it follows readily from Definition~\ref{defn:rho} and Lemmas~\ref{lem:two:S} and \ref{lem:two:L} that 
   \begin{align*}
\rho_{i_1 i_2}(S) &= \begin{cases}
r_{i_1},&\text{if } {\color{black}i_1, i_2 \in S_1 \cap S_2},  \; i_1 = i_2, \textnormal{ and } i_1 \in S, \\
0,&\text{if } {\color{black}i_1, i_2 \in S_1 \cap S_2},  \; i_1 = i_2, \textnormal{ and } i_1 \notin S, \\
\\[-0.5em]
r_{i_2},&\text{if } {\color{black} i_1 \in S_1 \cap S_2,\; i_2 \in S_2 \setminus S_1}, \textnormal{ and } i_2 \in S,\\
\min \left \{ r_{i_1}, \min_{j \in S \cap S_2 \setminus S_1}r_{j} \right \},&\text{if }  {\color{black} i_1 \in S_1 \cap S_2,\; i_2 \in S_2 \setminus S_1},\; i_2 \notin S, \textnormal{ and } i_1 \in S, \\
0,&\text{if }  {\color{black} i_1 \in S_1 \cap S_2,\; i_2 \in S_2 \setminus S_1},\; i_2 \notin S, \textnormal{ and } i_1 \notin S, \\ 
\\[-0.5em]
r_{i_1},&\text{if }{\color{black} i_1 \in S_1 \setminus S_2,\; i_2 \in S_1 \cap S_2}, \textnormal{ and } i_1 \in S,\\
\min \left \{ r_{i_2}, \min_{j \in S \cap S_1 \setminus S_2}r_{j} \right \},&\text{if }{\color{black} i_1 \in S_1 \setminus S_2,\; i_2 \in S_1 \cap S_2},\; i_1 \notin S, \textnormal{ and } i_2 \in S, \\
0,&\text{if }{\color{black} i_1 \in S_1 \setminus S_2,\; i_2 \in S_1 \cap S_2},\; i_1 \notin S, \textnormal{ and } i_2 \notin S, \\
\\[-0.5em]
\min \left \{ r_{i_1},r_{i_2} \right \}, &\text{if }{\color{black}i_1 \in S_1 \setminus S_2, \; i_2 \in S_2 \setminus S_1},\; i_1 \in S, \text{ and } i_2 \in S,\\
\min \left \{ r_{i_1}, \min_{j \in S \cap S_2 \setminus S_1}r_{j} \right \},&\text{if }{\color{black}i_1 \in S_1 \setminus S_2, \; i_2 \in S_2 \setminus S_1},\; i_1 \in S, \text{ and } i_2 \notin S,\\
\min \left \{ r_{i_2}, \min_{j \in S \cap S_1 \setminus S_2}r_{j} \right \},&\text{if }{\color{black}i_1 \in S_1 \setminus S_2, \; i_2 \in S_2 \setminus S_1},\; i_1 \notin S, \text{ and } i_2 \in S,\\
0,&\text{if }{\color{black}i_1 \in S_1 \setminus S_2, \; i_2 \in S_2 \setminus S_1},\; i_1 \notin S, \text{ and } i_2 \notin S.
\end{cases}
\end{align*}
We observe that the quantities $\min_{j \in S \cap S_1 \setminus S_2}r_{j}$ and  $\min_{j \in S \cap S_2 \setminus S_1}r_{j} $ appear in many of the above cases, and we see that these quantities can be precomputed for each of the assortments $S \in \widehat{\mathcal{S}}$ in a total of $| \widehat{\mathcal{S}}|\times \mathcal{O}(n) = \mathcal{O}(n^3)$ computation time. Given that we have precomputed these quantities, and given the fact that our data structures allow us to query whether any product is an element of the sets $S_1 \setminus S_2$, $S_2 \setminus S_1$, and $S_1 \cap S_2$  in $\mathcal{O}(1)$ time, we conclude that all of the $\rho_{i_1 i_2}(S)$ can be computed in a total of $\mathcal{O}(n^4 + n^3) = \mathcal{O}(n^4)$ time. 
In summary, we have established that constructing the collection of assortments $\widehat{\mathcal{S}}$, constructing the set of pairs of products $\mathcal{L}$, and computing the quantities $\rho_{i_1 i_2}(S)$ for each  assortment $S \in \widehat{\mathcal{S}}$ and each pair of products $(i_1,i_2) \in \mathcal{L}$ can be performed  in a total of $\mathcal{O}(n^4)$  computation time. 

We conclude our proof  of Theorem~\ref{thm:two} by establishing the total computation time of our brute-force algorithm using  the information computed above.  In each iteration of our algorithm, we select an assortment $S \in \widehat{\mathcal{S}}$ and compute the worst-case expected revenue $\min_{\lambda \in \mathcal{U}} \mathscr{R}^{\lambda}(S)$. As shown in Lemma~\ref{lem:two:flow}, we can compute  $\min_{\lambda \in \mathcal{U}} \mathscr{R}^{\lambda}(S)$ by solving a minimum-cost network flow problem over a graph with $n+1$ vertices and $\mathcal{O}(n^2)$ edges. Using the minimum-cost network flow algorithm of \cite{orlin1997polynomial} and \cite{tarjan1997dynamic},  we observe that Problem~\eqref{prob:flow} can be solved in $\mathcal{O}(n^3 \log (n r_n))$ computation time.\footnote{The algorithm of \cite{orlin1997polynomial} and \cite{tarjan1997dynamic} computes the minimum-cost network flow on a directed graph in $\mathcal{O}( (VE \log V) \min \left \{\log( V C), E \log V \right \})$ running time, where $V$ is the number of vertices, $E$ is the number of directed edges, and $C$ is the  maximum absolute value of any edge cost. The algorithm requires that $C$ is integral; for more details, see \citet[\S3]{tarjan1997dynamic}. In our case, Problem~\eqref{prob:flow} is a minimum-cost network flow problem in a  directed graph where $V = n$, $E = \mathcal{O}(n^2)$, and $C = \max_{S \in \widehat{\mathcal{S}},(i_1,i_2) \in \mathcal{L}} \rho_{i_1 i_2}(S) = r_n$. } Since the worst-case expected revenue $\min_{\lambda \in \mathcal{U}} \mathscr{R}^{\lambda}(S)$ must be computed for each assortment $S \in \widehat{\mathcal{S}}$, and since it follows readily from  Lemma~\ref{lem:two:S} that $| \widehat{\mathcal{S}}| = \mathcal{O}(n^2)$, we conclude that our algorithm  requires a total  of  $\mathcal{O}(n^5 \log (n r_n))$ computation time.
 \halmos

\section{Proofs of  Technical Results from \S\ref{sec:algorithms:fixed_dim}}\label{appx:fixed_dim}

\subsection{Proof of Lemma~\ref{lem:fixed_dim:construct_L}}
To construct the set of tuples of products $\mathcal{L}$, we iterate over each tuple of products $(i_1,\ldots,i_M) \in S_1 \times \cdots \times S_M$. For each such tuple of products, we can construct the corresponding directed graph $\mathcal{G}_{i_1 \cdots i_M}$ that is described in the beginning of Appendix~\ref{appx:graphical}. According to Lemma~\ref{lem:dag} in Appendix~\ref{appx:graphical}, the tuple of products $(i_1,\ldots,i_M)$ satisfies $(i_1,\ldots,i_M) \in \mathcal{L}$ if and only if the directed graph $\mathcal{G}_{i_1 \cdots i_M}$ is acyclic. We can check whether a directed graph is acyclic by using the well-known topological sorting algorithm \citep[p.79]{ahuja1988network}. Therefore, our algorithm for constructing the set of tuples of products $\mathcal{L}$ is to  iterate over each tuple of products $(i_1,\ldots,i_M) \in S_1 \times \cdots \times S_M$ and, for each such tuple of products, to check whether the tuple of products satisfies $(i_1,\ldots,i_M) \in \mathcal{L}$ by performing the topological sorting algorithm on the directed graph $\mathcal{G}_{i_1 \cdots i_M}$.

The computation time of the above algorithm for constructing $\mathcal{L}$ can be analyzed as follows.  The number of iterations in the algorithm is equal to $|S_1 \times \cdots \times S_M| = \mathcal{O}(n^M)$, where we observe that it is trivial from a computation time analysis to enumerate and iterate over the tuples of products in $S_1 \times  \cdots \times S_M$. For each iteration, we must construct a directed graph $\mathcal{G}_{i_1 \cdots i_M}$ that has $n+1 = \mathcal{O}(n)$ vertices and $\sum_{m\in\mathcal{M}} (| S_m| + 1) = \mathcal{O}(Mn)$ directed edges. Constructing this directed graph requires examining each product in each of the assortments $S_1,\ldots,S_M$, and so we can construct $\mathcal{G}_{i_1 \cdots i_M}$ in $\mathcal{O}(Mn)$ computation time.  The computation time for the topological sort algorithm on a directed graph is equal to the number of vertices plus the number of directed edges in the directed graph, and so we can check whether $\mathcal{G}_{i_1 \cdots i_M}$ is acyclic in $\mathcal{O}(n + Mn) = \mathcal{O}(Mn)$ computation time. Therefore, we have shown that our algorithm for constructing the set of tuples of products $\mathcal{L}$ requires $\mathcal{O}\left(n^M \times \left(Mn + Mn \right)\right) = \mathcal{O}(M n^{M+1})$ computation time. We also observe from the inequality $| \mathcal{L}| \le |S_1 \times \cdots \times S_M|$ that $| \mathcal{L}| = \mathcal{O}(n^M)$, which concludes our proof of Lemma~\ref{lem:fixed_dim:construct_L}. 
\halmos

\subsection{Proof of Lemma~\ref{lem:fixed_dim:construct_rho}}
Consider any assortment $S \in \widehat{\mathcal{S}}$ and any tuple of products $(i_1,\ldots,i_M) \in \mathcal{L}$. To motivate our algorithm for computing $\rho_{i_1 \cdots i_M}(S)$, we begin by recalling the directed graph $\mathcal{G}_{i_1 \cdots i_M}$ corresponding to the tuple of products $(i_1,\ldots,i_M)$ that is described in the beginning of Appendix~\ref{appx:graphical}. According to Lemma~\ref{lem:dag} in Appendix~\ref{appx:graphical}, it follows from the fact that $(i_1,\ldots,i_M) \in \mathcal{L}$ that $\mathcal{G}_{i_1 \cdots i_M}$ is a directed acyclic graph. Moreover, we recall that
\begin{align}
\rho_{i_1 \cdots i_M}(S) &= \min_{i \in S \cap \mathcal{I}_{i_1 \cdots i_M}(S)} r_i = \min_{i \in S: \; i_m \nprec_{i_1 \cdots i_M} i  \textnormal{ for all } i_m \in S} r_i, \label{line:duhhh}
\end{align}
where the first equality follows from Proposition~\ref{prop:cost_reform} in  Appendix~\ref{appx:graphical} and the second equality follows from Definition~\ref{defn:I} in  Appendix~\ref{appx:graphical}. We recall from Definition~\ref{def:reachable} in Appendix~\ref{appx:graphical} that the notation $i \nprec_{i_1 \cdots i_M} j$ means that  there is no directed path from  vertex $j$ to vertex $i$ in the graph $\mathcal{G}_{i_1 \cdots i_M}$. Stated in words, line~\eqref{line:duhhh} shows that $\rho_{i_1 \cdots i_M}(S)$ is equal to the minimum  revenue $r_i$ among all of the vertices $i$ in the directed acyclic graph $\mathcal{G}_{i_1 \cdots i_M}$ which do not have a directed path to a vertex $i_m$ for any past assortment $m \in \mathcal{M}$ which satisfies $i_m \in S$. 

Based on the above observations, we arrive at the following straightforward algorithm (presented as Algorithm~\ref{alg:construct_rho}) for computing $\rho_{i_1 \cdots i_M}(S)$. The algorithm begins on line~\eqref{step:construct_rho:1} of Algorithm~\ref{alg:construct_rho} by constructing  the directed acyclic graph $\mathcal{G}_{i_1 \cdots i_M}$.  In particular, it follows from identical reasoning as in the proof of Lemma~\ref{lem:fixed_dim:construct_L} that $\mathcal{G}_{i_1 \cdots i_M}$ can be constructed in $\mathcal{O}(Mn)$ computation time and that this graph  is comprised of  $\mathcal{O}(n)$ vertices and $\mathcal{O}(Mn)$ directed edges. In line~\eqref{step:construct_rho:2} of Algorithm~\ref{alg:construct_rho}, we iterate over each of the previously offered assortments $m \in \mathcal{M}$ which satisfy $i_m \in S$. For each such past assortment $m$, we mark all of the vertices in the graph which have a directed path to vertex $i_m$. Since the graph has  $\mathcal{O}(Mn)$ directed edges, and assuming henceforth that $\mathcal{G}_{i_1 \cdots i_M}$ is stored in memory as an adjacency list,  it is easy to see using a standard graph traversal algorithm like depth-first search that the loop in  line~\eqref{step:construct_rho:2} of Algorithm~\ref{alg:construct_rho} can be performed in a total of $\mathcal{O}(M \times Mn) = \mathcal{O}(M^2 n)$ computation time. Finally, line~\eqref{step:construct_rho:3} of Algorithm~\ref{alg:construct_rho} iterates over all of the vertices in at most $\mathcal{O}(Mn)$ computation time and outputs the minimum $r_i$ among all of the vertices $i$ which are unmarked. The correctness of Algorithm~\ref{alg:construct_rho} follows immediately from our earlier reasoning on line~\eqref{line:duhhh}, and the total computation time required for Algorithm~\ref{alg:construct_rho} is
\begin{align*}
\underbrace{\mathcal{O}\left( Mn \right)}_{\eqref{step:construct_rho:1}} + \underbrace{\mathcal{O}\left( M^2n \right)}_{\eqref{step:construct_rho:2}} + \underbrace{\mathcal{O}\left( n \right)}_{\eqref{step:construct_rho:3}} = \mathcal{O}(M^2 n). 
\end{align*}
This concludes our proof of Lemma~\ref{lem:fixed_dim:construct_rho}. \halmos 
\begin{algorithm}[t] 
\begin{center}
\fbox{\begin{minipage}{\linewidth}
%\OneAndAHalfSpacedXI
\begin{center}
\textsc{\underline{Construct-${{\rho}}(S, (i_1,\ldots,i_M), \mathscr{M}, r)$}}\\
\end{center}
\vspace{1em}
\textbf{Inputs}: 
\begin{itemize}
\item An assortment, $S \in \mathcal{S}$. 
\item A tuple of products, $(i_1,\ldots,i_M) \in \mathcal{L}$. 
\item The collection of past assortments,  $\mathscr{M}  \equiv \{S_1,\ldots,S_M\}$. 
\item The revenues of the products, $r \equiv (r_0,r_1,\ldots,r_n)$. 
\end{itemize}
\vspace{1em}
\textbf{Output}:
\begin{itemize}
\item  The quantity $\rho_{i_1 \cdots i_M}(S)$. %$\mathscr{G} \equiv (\mathscr{V},\mathscr{E})$, where $\mathscr{V} \equiv \mathcal{N}_0$ and $\mathscr{E} \equiv \{ (i^*,i) \in \mathcal{N}_0 \times \mathcal{N}_0: \; r_i > r_{i^*} \textnormal{ and } \mathcal{M}_{i^*} \subseteq \mathcal{M}_{i} \}$. 
\end{itemize} 
\vspace{1em}
\textbf{Procedure}: 
\begin{enumerate}
\item Construct the directed acyclic graph $\mathcal{G}_{i_1 \cdots i_M}$. % as described in the beginning of \S\ref{sec:graphical}.   
\label{step:construct_rho:1}
\item For each past assortment $m \in  \mathcal{M}$:  \label{step:construct_rho:2}
\begin{enumerate}
\item If $i_m \in S$: % and [vertex $i_m$ is unmarked]: % and if $i_m$ is still a vertex in $\mathcal{G}_{i_1 \cdots i_M}$:
\begin{enumerate}
\item Mark each unmarked vertex in $\mathcal{G}_{i_1 \cdots i_M}$ that has a directed path to vertex $i_m$. 
\end{enumerate}
\end{enumerate}
\item Output the minimum $r_i$ among all unmarked vertices $i$ in $\mathcal{G}_{i_1 \cdots i_M}$ and terminate.  \label{step:construct_rho:3}
\end{enumerate}
\vspace{1em}
\end{minipage}}
%\vspace{1em}
\end{center}
\caption{A procedure for computing $\rho_{i_1 \cdots i_M}(S)$.} \label{alg:construct_rho}
\end{algorithm}

\subsection{Proof of Theorem~\ref{thm:poly}} \label{appx:thm:poly}
As described at the beginning of \S\ref{sec:algorithms:fixed_dim}, we consider a brute-force algorithm for solving the robust optimization problem~\eqref{prob:robust} that consists of computing the worst-case expected revenue $\min_{\lambda \in \mathcal{U}} \mathscr{R}^{\lambda}(S)$ for each candidate assortment $S \in  \widehat{\mathcal{S}}$ and outputting the assortment that has the maximum worst-case expected revenue. The computation time of this brute-force algorithm is analyzed as follows.  In Lemmas~\ref{lem:fixed_dim:S:time} and \ref{lem:fixed_dim:S}, we established that the collection of assortments $\widehat{\mathcal{S}}$ can be constructed in $\mathcal{O}(n^2(M + | \widehat{\mathcal{S}}|)) = \mathcal{O}(n^2(M + (n+2)^{2^M})) = \mathcal{O}(\textnormal{poly}(n))$ computation time, where the last equality holds for any fixed $M$. In Lemma~\ref{lem:fixed_dim:construct_L}, we established that the set of tuples of products $\mathcal{L}$ can be constructed in $\mathcal{O}(Mn^{M+1}) = \mathcal{O}(\textnormal{poly}(n))$ computation time, where the equality holds for any fixed $M$.  Our algorithm performs an iteration for each  assortment $S \in \widehat{\mathcal{S}}$, and thus our algorithm will perform $| \widehat{\mathcal{S}}| \le (n+2)^{2^M} =  \mathcal{O}(\textnormal{poly}(n))$ iterations for any fixed $M$. Given any assortment $S \in \widehat{\mathcal{S}}$,  we established in Lemma~\ref{lem:fixed_dim:construct_rho} that the quantities $\rho_{i_1 \cdots i_M}(S)$ for each $(i_1,\ldots,i_M) \in \mathcal{L}$ can all be computed in a total of $\mathcal{O} ( | \mathcal{L}| \times M^2 n) = \mathcal{O}(n^M \times M^2 n) = \mathcal{O}(\textnormal{poly}(n))$ computation time, where the last equality holds for any fixed $M$. Given the set of tuples of products $\mathcal{L}$ and the quantities $\rho_{i_1 \cdots i_M}(S)$ for each $(i_1,\ldots,i_M) \in \mathcal{L}$, we established in Remark~\ref{remark:tractability} that we can compute the worst-case expected revenue $\min_{\lambda \in \mathcal{U}} \mathscr{R}^{\lambda}(S)$ by constructing and solving a linear optimization problem with $\mathcal{O}(n^M) = \mathcal{O}(\textnormal{poly}(n))$ decision variables and $\mathcal{O}(nM) = \mathcal{O}(\textnormal{poly}(n))$ constraints for any fixed $M$. Since linear optimization can be solved in weakly polynomial-time via the ellipsoid algorithm, we conclude that each iteration of our algorithm requires $\mathcal{O}(\textnormal{poly}(n))$ time for any fixed $M$. Our proof of Theorem~\ref{thm:poly}  is thus complete. % of the algorithm,  
\halmos

\section{Proofs of Technical Results from \S\ref{sec:algorithms:nested}}\label{appx:big_M:mip}
This appendix contains the proofs of the technical results from \S\ref{sec:algorithms:nested}. Appendices~\ref{appx:big_M:mip:reform_of_L}, \ref{appx:big_M:mip:cor:reform_of_L},  and \ref{appx:big_M:mip:rho_m}  contain the proofs of Lemma~\ref{lem:reform_of_L}, Corollary~\ref{cor:reform_of_L},  and Lemma~\ref{lem:rho_m}. Appendix~\ref{appx:mip:intermediary} contains two intermediary results (Lemmas~\ref{lem:simplified_to_compact} and \ref{lem:compact_to_bound}) that are used in the proofs of Proposition~\ref{prop:lp_reform_nested} and Theorem~\ref{thm:nested}. Appendices~\ref{appx:lp_reform_nested} and \ref{appx:mip:lp_reform_nested} contain the proofs of Proposition~\ref{prop:lp_reform_nested}  and Theorem~\ref{thm:nested}.  
We remark that the proofs in this appendix make  extensive use of the notation and intermediary results from  \S\ref{sec:characterization:proof} and  Appendix~\ref{appx:graphical}. 

\subsection{Proof of Lemma~\ref{lem:reform_of_L}} \label{appx:big_M:mip:reform_of_L}

Let Assumption~\ref{ass:nested} hold, consider any tuple $(i_1,\ldots,i_M) \in S_1 \times \cdots \times S_M$, and consider the corresponding directed graph $\mathcal{G}_{i_1 \cdots i_M} $.  We recall that the set of vertices in the directed graph is equal to $\mathcal{N}_0$, and the graph has a directed edge $(i,i_m)$ from each vertex $i \in S_m \setminus \{i_m\}$ to vertex $i_m$ for each past assortment $m \in \mathcal{M}$. %Since the past assortments are nested, it follows from the construction of the directed graph that there exists a directed edge 

To show the first direction, suppose that there exists an $m \in \{1,\ldots,M-1\}$ such that $i_{m+1} \notin \{i_m \}\cup \mathcal{B}_{m+1}$.  % directed graph has an edge from vertex $i_m$ to vertex $i_{m+1}$. 
In this case, we observe that
\begin{align*}
i_{m+1} \in S_{m+1} \setminus \left(  \{i_m \}\cup \mathcal{B}_{m+1}  \right) = S_{m+1} \setminus \left( \{i_m \}\cup \left( S_{m+1} \setminus S_{m} \right) \right)  = S_{m} \setminus \{i_m\}, 
\end{align*}
where the inclusion follows from the fact that $i_{m+1} \in S_{m+1}$ and from the supposition that $i_{m+1} \notin \{i_m \}\cup \mathcal{B}_{m+1}$, the first equality follows from line~\eqref{defn:B_m} and from the fact that $m+1 \in \{2,\ldots,M\}$, and the second equality  follows from algebra. 
Moreover, it follows from the fact that the past assortments are nested  that $i_m \in S_{m+1}$ and from the supposition that $i_{m+1} \neq i_m$ that $(i_m,i_{m+1})$ is a directed edge in $\mathcal{G}_{i_1 \cdots i_M} $.
Since the directed graph $\mathcal{G}_{i_1 \cdots i_M}$ has a directed edge from each vertex $i \in S_m \setminus \{i_m\}$ to vertex $i_m$, we conclude that the directed graph $\mathcal{G}_{i_1 \cdots i_M}$ must have an edge from vertex $i_{m+1}$ to vertex $i_m$, which implies that $\mathcal{G}_{i_1 \cdots i_M}$ has a cycle. Therefore, it follows from Lemma~\ref{lem:dag} from Appendix~\ref{appx:graphical} that $(i_1,\ldots,i_M) \notin \mathcal{L}$.

To show the other direction,  suppose that  the inclusion $i_{m+1} \in \{i_m \}\cup \mathcal{B}_{m+1}$ holds for all $m \in \{1,\ldots,M-1\}$. For notational convenience, let $i_{m_1},\ldots,i_{m_K}$ denote the elements of the set $\cup_{m=1}^M \{ i_m\}$, where we assume without loss of generality that the indices $m_1,\ldots,m_K$ satisfy $m_1 < \cdots <m_K$, satisfy $m_1 = 1$, and  satisfy $i_{m_k-1} \neq i_{m_k}$ for all $k \in \{2,\ldots,K\}$. It follows from the construction of the directed graph $\mathcal{G}_{i_1 \cdots i_M}$ that $i_{m_1},\ldots,i_{m_K}$ are the only vertices in the directed graph with incoming edges.  Moreover, 
it follows from our choice of the indices  $m_1,\ldots,m_K$ and from the supposition that $i_{m+1} \in \{i_m \}\cup \mathcal{B}_{m+1}$ for all $m \in \{1,\ldots,M-1\}$  that $i_{m_k} \in \mathcal{B}_{m_k}$ for each $k \in \{1,\ldots,K\}$, which implies that $i_{m_{k'}} \notin S_{m_{k}}$ for all $1 \le k < k' \le K$.  Therefore, we conclude for all $1 \le k < k' \le K$ that there is no directed edge in  $\mathcal{G}_{i_1 \cdots i_M}$ from vertex $i_{m_{k'}}$ to vertex $i_{m_{k}}$, which implies that $\mathcal{G}_{i_1 \cdots i_M}$ is acyclic. Therefore, it follows from Lemma~\ref{lem:dag} that $(i_1,\ldots,i_M) \in \mathcal{L}$. 
\halmos 

%\begin{lemma} \label{lem:edge_to_i_m_to_i_m1}
%If $((m,i,\kappa),(m+1,i',\kappa')) \in \mathscr{E}(S)$, then $i' \in \{i\} \cup \mathcal{B}_{m+1}$.  
%\end{lemma}
%\begin{proof}{Proof.}
%Suppose that $((m,i,\kappa),(m+1,i',\kappa')) \in \mathscr{E}(S)$. In this case, it follows from the construction of the set $\mathscr{E}(S)$ that there exists a tuple $(i_1,\ldots,i_M) \in \mathcal{L}$ such that $i = i_m$, $\kappa = \rho_{i_1 \cdots i_M}(S \cap. S_m)$, $i' = i_{m+1}$, and $\kappa' = \rho_{i_1 \cdots i_M}(S \cap. S_{m+1})$. For this tuple $(i_1,\ldots,i_M) \in \mathcal{L}$, it follows from Lemma~\ref{lem:reform_of_L} that $i_{m+1} \in \{i_m\} \cup \mathcal{B}_{m+1}$. This completes our proof of Lemma~\ref{lem:edge_to_i_m_to_i_m1}. 
%\halmos \end{proof}

\subsection{Proof of Corollary~\ref{cor:reform_of_L}} \label{appx:big_M:mip:cor:reform_of_L}
Let  Assumption~\ref{ass:nested} hold. For each $m \in \mathcal{M}$, let us define the following set of tuples:
\begin{align}
\mathcal{L}_m \triangleq  \begin{cases}
\left \{ (i_1): i_1 \in \mathcal{B}_1 \right \},&\text{if } m=1,\\
 \left \{(i_1,\ldots,i_m): (i_1,\ldots,i_{m-1}) \in \mathcal{L}_{m-1} \textnormal{ and } i_{m} \in \{i_{m-1} \} \cup \mathcal{B}_{m}  \right \},&\text{if } m \in \{2,\ldots,M\}.
 \end{cases} \label{line:uselessstuff}
\end{align}
It follows from a straightforward induction argument and from Lemma~\ref{lem:reform_of_L} that $\mathcal{L}_M = \mathcal{L}$. Moreover, it follows from a straightforward induction argument and from line~\eqref{line:uselessstuff}  that the following holds for each $m \in \mathcal{M}$:
\begin{align}
\left|\mathcal{L}_m \right| = \begin{cases}
 \left | \mathcal{B}_1 \right |,&\text{if } m=1,\\
  \left | \mathcal{B}_1 \right |  \times  \prod_{m'=2}^m \left(1 + \left| \mathcal{B}_{m'} \right| \right),&\text{if } m \in \{2,\ldots,M\}.
  \end{cases} \label{line:uselessstuff2}
\end{align}
Therefore, we conclude that
\begin{align*}
\left| \mathcal{L} \right| = \left | \mathcal{B}_1 \right |  \times  \prod_{m=2}^M \left(1 + \left| \mathcal{B}_{m} \right| \right) \ge 2^{M-1},
\end{align*}
where the equality follows from line~\eqref{line:uselessstuff2} and from the fact that $\mathcal{L}_M = \mathcal{L}$, and the inequality follows from the fact that $| \mathcal{B}_m| \ge 1$ for all $m \in \mathcal{M}$. This completes our proof of Corollary~\ref{cor:reform_of_L}. 
\halmos
 
\subsection{Proof of Lemma~\ref{lem:rho_m}} \label{appx:big_M:mip:rho_m}

We assume throughout the proof of Lemma~\ref{lem:rho_m} that Assumption~\ref{ass:nested} holds, and that $S \subseteq \mathcal{N}_0$ is an assortment that satisfies $S \cap S_1 \neq \emptyset$. 

We begin by considering any tuple $(i_1,\ldots,i_M) \in \mathcal{L}$ and $m \in \mathcal{M}$ that satisfy $i_m \in S$. In this case, we recall from Appendix~\ref{appx:graphical} that the graph $\mathcal{G}_{i_1 \cdots i_M}$ has a directed edge from each vertex $i \in S_m \setminus \{i_m\}$ to vertex $i_m$. In particular, that implies that there is a directed edge from each vertex $i \in (S \cap S_m) \setminus \{i_m\}$ to vertex $i_m$. Since $i_m \in S$, we have thus shown that $i_m \prec_{i_1 \cdots i_M} i$ for all $i \in (S \cap S_m) \setminus \{i_m\}$. Therefore, %which implies that
\begin{align}
\rho_{i_1 \cdots i_M}(S \cap S_m)  &= \min_{i \in S \cap S_m \cap \mathcal{I}_{i_1 \cdots i_M}(S \cap S_m)} r_i  \notag \\
&=  \min_{i \in S \cap S_m \cap \left \{j \in \mathcal{N}_0: \textnormal{ for all } m'\in \mathcal{M}, \textnormal{ if } i_{m'} \in  S \cap S_m, \textnormal{ then } i_{m'} \nprec_{i_1 \cdots i_M} j \right \} } r_i \notag  \\
&=  \min_{i \in S \cap S_m:  \textnormal{ for all } m'\in \mathcal{M}, \textnormal{ if } i_{m'} \in  S \cap S_m, \textnormal{ then } i_{m'} \nprec_{i_1 \cdots i_M} i } r_i \notag \\
&=  \min_{i \in S \cap S_m:  i = i_m} r_i \notag  \\
&= r_{i_m},\label{line:supplement_to_ih}
\end{align}
where the first equality follows from Proposition~\ref{prop:cost_reform} from Appendix~\ref{appx:graphical}, the second equality follows from Definition~\ref{defn:I} from Appendix~\ref{appx:graphical}, the third equality follows from algebra, the fourth equality follows from the fact that $i_m \prec_{i_1 \cdots i_M} i$ for all $i \in (S \cap S_m) \setminus \{i_m\}$, and the fifth equality follows from algebra. In summary, we have shown that if a tuple $(i_1,\ldots,i_M) \in \mathcal{L}$ and $m \in \mathcal{M}$ satisfy $i_m \in S$, then $\rho_{i_1 \cdots i_M}(S \cap S_m)  = r_{i_m}$. 

We next consider any tuple $(i_1,\ldots,i_M) \in \mathcal{L}$ that satisfies $i_1 \notin S$. 
%For this base case, we observe that
%\begin{align*}
%\rho_{i_1}(S) &\underbrace{\triangleq}_{\textnormal{line~\eqref{defn:rho_m} }} \min_{i \in \mathcal{B}_1 \cap S} r_i = 
%\end{align*}
For this case, we observe that
\begin{align*}
\rho_{i_1 \cdots i_M }(S \cap S_1) &= \min_{i \in S \cap S_1 \cap \mathcal{I}_{i_1 \cdots i_M}(S \cap S_1)} r_i \\
&= \min_{i \in S \cap S_1 \cap\left \{ j \in \mathcal{N}_0: \textnormal{ for all } m'\in \mathcal{M}, \textnormal{ if } i_{m'} \in  S \cap S_1, \textnormal{ then } i_{m'} \nprec_{i_1 \cdots i_M} j \right \} } r_i \\
 &= \min_{i  \in S \cap S_1: \textnormal{ for all } m'\in \mathcal{M}, \textnormal{ if } i_{m'} \in  S \cap S_1, \textnormal{ then } i_{m'} \nprec_{i_1 \cdots i_M} i} r_i \\
 &= \min_{i  \in S \cap S_1: \textnormal{ for all } m'\in \{2,\ldots,M\}, \textnormal{ if } i_{m'} \in  S \cap S_1, \textnormal{ then } i_{m'} \nprec_{i_1 \cdots i_M} i} r_i \\
  &=  \min_{i \in S \cap S_1} r_i \\
    &=  \min_{i \in S \cap \mathcal{B}_1} r_i.
\end{align*}
%\begin{align*}
%S \cap S_1 \cap \mathcal{I}_{i_1 \cdots i_M}(S \cap S_1) &= S \cap S_1 \cap\left \{ j \in \mathcal{N}_0: \textnormal{ for all } m'\in \mathcal{M}, \textnormal{ if } i_{m'} \in  S \cap S_1, \textnormal{ then } i_{m'} \nprec_{i_1 \cdots i_M} j \right \} \\
% &= \left \{ j \in S \cap S_1: \textnormal{ for all } m'\in \mathcal{M}, \textnormal{ if } i_{m'} \in  S \cap S_1, \textnormal{ then } i_{m'} \nprec_{i_1 \cdots i_M} j \right \}\\
%  &= \left \{ j \in S \cap S_1: \textnormal{ for all } m' \in \left \{2,\ldots,M \right\}, \textnormal{ if } i_{m'} \in  S \cap S_1, \textnormal{ then } i_{m'} \nprec_{i_1 \cdots i_M} j \right \}\\
%  &=  S \cap S_1\\
%    &=  S \cap \mathcal{B}_1.
%\end{align*}
Indeed, the first equality follows from Proposition~\ref{prop:cost_reform}. The second equality follows from Definition~\ref{defn:I}. The third equality follows from algebra. The fourth equality follows from the fact that $i_1 \notin S$. The fifth equality follows from Lemma~\ref{lem:reform_of_L}, which together with the facts that $(i_1,\ldots,i_M) \in \mathcal{L}$ and $i_1 \notin S$ readily implies that $i_2,\ldots,i_M \notin S \cap S_1$. The sixth equality follows from line~\eqref{defn:B_m}. In summary, we have shown that if a tuple $(i_1,\ldots,i_M) \in \mathcal{L}$ satisfies $i_1 \notin S$, then $\rho_{i_1 \cdots i_M}(S \cap S_1)  =  \min_{i \in S \cap \mathcal{B}_1} r_i$. 

Finally, we consider any tuple $(i_1,\ldots,i_M) \in \mathcal{L}$ and ${m} \in \{2,\ldots,M\}$ that satisfy $i_m \notin S$. In this case, we observe that 
\begin{align}
\rho_{i_1 \cdots i_M}(S \cap S_m)&= \min_{i \in S \cap S_m \cap \mathcal{I}_{i_1 \cdots i_M}(S \cap S_m)} r_i \notag \\
&=  \min_{i \in S \cap (S_{m-1} \cup \mathcal{B}_m) \cap \mathcal{I}_{i_1 \cdots i_M}(S \cap S_m)} r_i \notag \\
&= \min \left \{ \min_{i \in S \cap S_{m-1} \cap  \mathcal{I}_{i_1 \cdots i_M}(S \cap S_m)} r_i, \min_{i \in S \cap \mathcal{B}_{m} \cap \mathcal{I}_{i_1 \cdots i_M}(S \cap S_m)} r_i\right \},  \label{line:splitting_into_parts_long_lemma}
\end{align}
where the first equality follows from Proposition~\ref{prop:cost_reform}, the second equality follows from Assumption~\ref{ass:nested},  and the third equality follows from algebra. In the remainder of the proof, we focus on evaluating the two terms $\min_{i \in S \cap S_{m-1} \cap  \mathcal{I}_{i_1 \cdots i_M}(S \cap S_m)} r_i$ and $\min_{i \in S \cap \mathcal{B}_{m} \cap \mathcal{I}_{i_1 \cdots i_M}(S \cap S_m)} r_i$ that appear in line~\eqref{line:splitting_into_parts_long_lemma}. First, we   observe that %The fourth equality  follows from Lemma~\ref{lem:reform_of_L}, which together with the facts that $(i_1,\ldots,i_M) \in \mathcal{L}$ and $i_m \notin S$ readily implies that $i_m,\ldots,i_M \notin S \cap S_m$. The fifth equality follows from the fact that $S_{m-1} \subseteq S_m$ (Assumption~\ref{ass:nested}) and from the fact that $i_1,\ldots,i_{m-1} \in S_{m-1}$ (Assumption~\ref{ass:nested}), which together imply for each $m' \in \{1,\ldots,m-1\}$ that $i_{m'} \in S \cap S_m$ if and only if  $i_{m'} \in S \cap S_{m-1}$.  The sixth equality follows from line~\eqref{defn:B_m}. The seventh line follows from algebra. The eighth line follows from the induction hypothesis and from line~\eqref{line:supplement_to_ih}. 
\begin{align}
\min_{i \in S \cap S_{m-1} \cap  \mathcal{I}_{i_1 \cdots i_M}(S \cap S_m)} r_i &= \min_{i \in S \cap S_{m-1} \cap\left \{ j \in \mathcal{N}_0: \textnormal{ for all } m'\in \mathcal{M}, \textnormal{ if } i_{m'} \in  S \cap S_m, \textnormal{ then } i_{m'} \nprec_{i_1 \cdots i_M} j \right \} } r_i  \notag \\
  &= \min_{i  \in S \cap S_{m-1} \cap \left \{ j \in \mathcal{N}_0: \textnormal{ for all } m'\in \{1,\ldots,m-1\}, \textnormal{ if } i_{m'} \in  S \cap S_m, \textnormal{ then } i_{m'} \nprec_{i_1 \cdots i_M} j \right \}} r_i \notag \\
    &= \min_{i  \in S \cap S_{m-1} \cap \left \{ j \in \mathcal{N}_0: \textnormal{ for all } m'\in \{1,\ldots,m-1\}, \textnormal{ if } i_{m'} \in  S \cap S_{m-1}, \textnormal{ then } i_{m'} \nprec_{i_1 \cdots i_M} j \right \}} r_i \notag \\
        &= \min_{i  \in S \cap S_{m-1} \cap \left \{ j \in \mathcal{N}_0: \textnormal{ for all } m'\in \mathcal{M}, \textnormal{ if } i_{m'} \in  S \cap S_{m-1}, \textnormal{ then } i_{m'} \nprec_{i_1 \cdots i_M} j \right \}} r_i \notag \\
                &= \min_{i  \in S \cap S_{m-1} \cap \mathcal{I}_{i_1 \cdots i_M}(S \cap S_{m-1})} r_i \notag \\
 &= \rho_{i_1 \cdots i_M}(S \cap S_{m-1}).   \label{line:splitting_into_parts_long_lemma:part_1} 
    \end{align}
    Indeed, the first equality follows from Definition~\ref{defn:I}. The second equality  follows from Lemma~\ref{lem:reform_of_L}, which together with the facts that $(i_1,\ldots,i_M) \in \mathcal{L}$ and $i_m \notin S$ implies that $i_m,\ldots,i_M \notin S \cap S_m$. The third equality follows from Assumption~\ref{ass:nested}, which together with the fact that  $i_m \notin S$ implies  for each $m' \in \{1,\ldots,m-1\}$ that $i_{m'} \in S \cap S_{m}$ if and only if  $i_{m'} \in S \cap S_{m-1}$.  The fourth equality follows from the fact that $i_m,\ldots,i_M \notin S \cap S_m$ and the fact that $S \cap S_{m-1} \subseteq S \cap S_m$.   The fifth equality follows from  Definition~\ref{defn:I}, and the sixth equality follows from Proposition~\ref{prop:cost_reform}. Moreover, we observe that 
\begin{align}
\min_{i \in S \cap \mathcal{B}_m  \cap  \mathcal{I}_{i_1 \cdots i_M}(S \cap S_m)} r_i &= \min_{i \in S \cap \mathcal{B}_m \cap\left \{ j \in \mathcal{N}_0: \textnormal{ for all } m'\in \mathcal{M}, \textnormal{ if } i_{m'} \in  S \cap S_m, \textnormal{ then } i_{m'} \nprec_{i_1 \cdots i_M} j \right \} } r_i  \notag \\
  &= \min_{i  \in S \cap \mathcal{B}_m \cap \left \{ j \in \mathcal{N}_0: \textnormal{ for all } m'\in \{1,\ldots,m-1\}, \textnormal{ if } i_{m'} \in  S \cap S_m, \textnormal{ then } i_{m'} \nprec_{i_1 \cdots i_M} j \right \}} r_i \notag \\
    &= \min_{i  \in S \cap\mathcal{B}_m \cap \left \{ j \in \mathcal{N}_0: \textnormal{ for all } m'\in \{1,\ldots,m-1\}, \textnormal{ if } i_{m'} \in  S \cap S_{m-1}, \textnormal{ then } i_{m'} \nprec_{i_1 \cdots i_M} j \right \}} r_i \notag  \\
        &= \min_{i  \in S \cap\mathcal{B}_m:  \textnormal{ for all } m'\in \{1,\ldots,m-1\}, \textnormal{ if } i_{m'} \in  S \cap S_{m-1}, \textnormal{ then } i_{m'} \nprec_{i_1 \cdots i_M} i} r_i. \label{line:splitting_into_parts_long_lemma:part_2} 
            \end{align}
     Indeed, the first equality follows from Definition~\ref{defn:I}. The second equality  follows from Lemma~\ref{lem:reform_of_L}, which together with the facts that $(i_1,\ldots,i_M) \in \mathcal{L}$ and $i_m \notin S$ implies that $i_m,\ldots,i_M \notin S \cap S_m$. The third equality follows from Assumption~\ref{ass:nested},  which together with the fact that  $i_m \notin S$  implies for each $m' \in \{1,\ldots,m-1\}$ that $i_{m'} \in S \cap S_{m}$ if and only if  $i_{m'} \in S \cap S_{m-1}$. To simplify the expression on line~\eqref{line:splitting_into_parts_long_lemma:part_2}, we make use of the following intermediary claim. 
\begin{claim} \label{claim:rho_m:reachable}
Consider any  $i \in S \cap \mathcal{B}_m$ and $m' \in \{1,\ldots,m-1\}$. If $i_{m'} \in S$, then $i_{m'} \nprec_{i_1 \cdots i_M} i$. 
\end{claim}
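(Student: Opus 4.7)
The plan is to prove Claim~\ref{claim:rho_m:reachable} by contradiction, exploiting the canonical ``first-occurrence'' decomposition of $(i_1,\ldots,i_M) \in \mathcal{L}$ afforded by Lemma~\ref{lem:reform_of_L}, together with the implicit hypothesis $i_m \notin S$ inherited from the surrounding case of Lemma~\ref{lem:rho_m}.

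First I would introduce the first-occurrence indices $m_1 = 1 < m_2 < \cdots < m_K$, chosen so that $i_{m_k - 1} \neq i_{m_k}$ for $k \geq 2$, and set $m_{K+1} := M+1$. By Lemma~\ref{lem:reform_of_L} each $i_{m_k} \in \mathcal{B}_{m_k}$, and $i_{m''} = i_{m_k}$ exactly when $m'' \in [m_k, m_{k+1})$. Let $j$ be the unique index with $i_{m'} = i_{m_j}$; since $m_j \leq m' \leq m-1$, we have $m_j < m$, and hence $i_{m'} \in S_{m_j} \subseteq S_{m-1}$, which combined with $i \in \mathcal{B}_m = S_m \setminus S_{m-1}$ yields $i \neq i_{m'}$.

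Next I would characterize the relevant edges of $\mathcal{G}_{i_1 \cdots i_M}$. Outgoing edges from $i$: since nestedness and $i \in \mathcal{B}_m$ give $i \in S_{m''}$ iff $m'' \geq m$, any edge $(i, i_{m_l})$ used by a path requires $[m_l, m_{l+1}) \cap \{m,\ldots,M\} \neq \emptyset$, equivalently $m_{l+1} > m$. Edges among the representatives $\{i_{m_1},\ldots,i_{m_K}\}$: I claim $(i_{m_k}, i_{m_{k'}})$ can be an edge only when $k' > k$. Such an edge needs some $m''$ with $i_{m_k} \in S_{m''}$ (forcing $m'' \geq m_k$, since $i_{m_k} \in \mathcal{B}_{m_k}$ gives $i_{m_k} \notin S_{m_k - 1}$ and we invoke nestedness) and with $i_{m''} = i_{m_{k'}}$ (forcing $m_{k'} \leq m'' < m_{k'+1}$); together these are consistent only when $\max(m_k, m_{k'}) < m_{k'+1}$, and a short case analysis on $k'$ vs.\ $k$ rules out $k' < k$. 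Combined with Lemma~\ref{lem:reachable_im}, which guarantees that every intermediate vertex of a path from $i$ lies in $\{i_{m_1},\ldots,i_{m_K}\}$, this forces the representative-index to increase strictly after the first step.

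Finally, assembling the pieces, suppose for contradiction that there is a directed path from $i$ to $i_{m_j}$ with entry vertex $i_{m_{l_1}}$ and terminal vertex $i_{m_j}$. If the path has length one, then $l_1 = j$, so the entry condition $m_{l_1 + 1} > m$ becomes $m_{j+1} > m$; this means $m \in [m_j, m_{j+1})$, whence $i_m = i_{m_j} = i_{m'} \in S$, contradicting $i_m \notin S$. If the path has length at least two, strict monotonicity of the representative-index forces $l_1 < j$, hence $m_{l_1 + 1} \leq m_j \leq m' \leq m-1 < m$, contradicting $m_{l_1 + 1} > m$. Either way we reach a contradiction, so $i_{m'} \nprec_{i_1 \cdots i_M} i$, as required. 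The main obstacle is the monotonicity sub-step in the second paragraph: one must carefully combine $i_{m_k} \notin S_{m_k - 1}$ with nestedness to rule out ``backward'' edges in the chain of representatives, after which everything reduces to bookkeeping with the intervals $[m_k, m_{k+1})$.
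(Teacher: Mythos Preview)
Your argument is correct but takes a genuinely different route from the paper. The paper's proof is shorter and leans directly on acyclicity: it first observes that for every $m'' \in \{m,\ldots,M\}$ one has $i_{m'} \in S_{m'} \subseteq S_{m''}$ by nestedness, and since $i_{m'} \in S$ while $i_m \notin S$ forces $i_{m'} \neq i_m$ (hence, via Lemma~\ref{lem:reform_of_L}, $i_{m'} \neq i_{m''}$), there is a direct edge from $i_{m'}$ to $i_{m''}$, i.e.\ $i_{m''} \prec_{i_1 \cdots i_M} i_{m'}$. Separately, $i \in \mathcal{B}_m$ means the only outgoing edges of $i$ land at vertices $i_{m''}$ with $m'' \ge m$. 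So a hypothetical path from $i$ to $i_{m'}$ would give $i_{m'} \prec_{i_1 \cdots i_M} i_{m''}$ for some such $m''$, and combined with $i_{m''} \prec_{i_1 \cdots i_M} i_{m'}$ this yields a cycle, contradicting Lemma~\ref{lem:dag}. Your approach instead builds the first-occurrence decomposition $i_{m_1},\ldots,i_{m_K}$, proves that edges among these representatives are strictly forward in the index $k$, and then derives the contradiction by pure index bookkeeping on the intervals $[m_k,m_{k+1})$, without invoking Lemma~\ref{lem:dag} at all. What you gain is a more self-contained argument and an explicit structural picture of paths in $\mathcal{G}_{i_1\cdots i_M}$ under nestedness; what the paper gains is brevity, since it treats acyclicity as a black box and skips the representative machinery entirely.
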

\begin{proof}{Proof of Claim~\ref{claim:rho_m:reachable}.}
Consider any $i \in S \cap \mathcal{B}_m$, and consider any $m' \in \{1,\ldots,m-1\}$ that satisfies the inclusion $i_{m'} \in S$. 

We first show that $i_{m''} \prec_{i_1 \cdots i_M} i_{m'}$ for all $m'' \in \{m,\ldots,M\}$.   Indeed, consider any arbitrary $m'' \in \{m,\ldots,M\}$. We readily observe from Assumption~\ref{ass:nested} that $i_{m'} \in S_{m'} \subseteq S_{m''}$. Moreover, it follows from the fact that $i_{m'} \in S$ and from the fact that $i_m \notin S$ that the expression $i_{m'} \neq i_m$ must hold. Combining the expression $i_{m'} \neq i_m$  with Lemma~\ref{lem:reform_of_L}, we observe that the expression $i_{m'} \neq i_{m''}$ must hold. Combining the expression $i_{m'} \neq i_{m''}$ with the fact that $i_{m'},i_{m''} \in S_{m''}$, we conclude that the graph $\mathcal{G}_{i_1 \cdots i_M}$ must have a directed edge from vertex $i_{m'}$ to vertex $i_{m''}$. Because $m'' \in \{m,\ldots,M\}$ was chosen arbitrarily, we conclude  that $i_{m''} \prec_{i_1 \cdots i_M} i_{m'}$ for all $m'' \in \{m,\ldots,M\}$.

We next show that the vertex $i$ in the graph $\mathcal{G}_{i_1 \cdots i_M}$ has an outgoing edge to vertex $j$  only if there exists $m'' \in \{m,\ldots,M\}$ such that  $j = i_{m''}$. Indeed, we observe from the fact that $i \in S \cap \mathcal{B}_m \subseteq \mathcal{B}_m$ and from Assumption~\ref{ass:nested}  that the inclusion $i \in S_{m''}$ holds if and only if $m'' \in \{m,\ldots,M\}$. Therefore, it follows from the construction of the directed  graph $\mathcal{G}_{i_1 \cdots i_M}$  that $(i,j)$ is a directed edge in $\mathcal{G}_{i_1 \cdots i_M}$ only if $j = i_{m''}$ for some $m'' \in \{m,\ldots,M\}$.

We now combine the above steps to conclude the proof of Claim~\ref{claim:rho_m:reachable}. Indeed, suppose for the sake of developing a contradiction that $i_{m'} \prec_{i_1 \cdots i_M} i$. We showed previously that the only outgoing edges from the vertex $i$ in the  directed  graph $\mathcal{G}_{i_1 \cdots i_M}$ are to vertices $j$ that satisfy the equality $j = i_{m''}$ for some $m'' \in \{m,\ldots,M\}$. Therefore, it follows from the supposition that $i_{m'} \prec_{i_1 \cdots i_M} i$  that there must exist a $m'' \in \{m,\ldots,M\}$ such that $i_{m'} \prec_{i_1 \cdots i_M} i_{m''}$. Because we showed previously that $i_{m''} \prec_{i_1 \cdots i_M} i_{m'}$, we conclude that the  directed  graph $\mathcal{G}_{i_1 \cdots i_M}$  must have a cycle. However, we recall from  Lemma~\ref{lem:dag} from Appendix~\ref{appx:graphical} and from the fact that $(i_1,\ldots,i_M) \in \mathcal{L}$ that  $\mathcal{G}_{i_1 \cdots i_M}$  is acyclic. Because we have a contradiction, we conclude that $i_{m'} \nprec_{i_1 \cdots i_M} i$, which completes our proof of Claim~\ref{claim:rho_m:reachable}. \halmos 
\end{proof}

Combining lines~\eqref{line:splitting_into_parts_long_lemma}, \eqref{line:splitting_into_parts_long_lemma:part_1}, and \eqref{line:splitting_into_parts_long_lemma:part_2} with Claim~\ref{claim:rho_m:reachable}, we conclude that
\begin{align*}
\rho_{i_1 \cdots i_M}(S \cap S_m) &=  \min \left \{ \min_{i \in S \cap S_{m-1} \cap  \mathcal{I}_{i_1 \cdots i_M}(S \cap S_m)} r_i, \min_{i \in S \cap \mathcal{B}_{m} \cap \mathcal{I}_{i_1 \cdots i_M}(S \cap S_m)} r_i\right \}\\
&=  \min \left \{ \rho_{i_1 \cdots i_M}(S \cap S_{m-1}), \min_{i \in S \cap \mathcal{B}_{m} \cap \mathcal{I}_{i_1 \cdots i_M}(S \cap S_m)} r_i\right \}\\
&=  \min \left \{ \rho_{i_1 \cdots i_M}(S \cap S_{m-1}), \min_{i  \in S \cap\mathcal{B}_m:  \textnormal{ for all } m'\in \{1,\ldots,m-1\}, \textnormal{ if } i_{m'} \in  S \cap S_{m-1}, \textnormal{ then } i_{m'} \nprec_{i_1 \cdots i_M} i} r_i \right \}\\
&=  \min \left \{ \rho_{i_1 \cdots i_M}(S \cap S_{m-1}), \min_{i  \in S \cap\mathcal{B}_m} r_i \right \}. 
\end{align*}
Indeed, the first equality follows from \eqref{line:splitting_into_parts_long_lemma}, the second equality follows from \eqref{line:splitting_into_parts_long_lemma:part_1}, the third equality follows from \eqref{line:splitting_into_parts_long_lemma:part_2}, and the fourth equality follows from Claim~\ref{claim:rho_m:reachable}. Our proof of Lemma~\ref{lem:rho_m} is thus complete. 
\halmos

\subsection{Intermediary Results for Proposition~\ref{prop:lp_reform_nested} and Theorem~\ref{thm:nested}} \label{appx:mip:intermediary}

This appendix contains two intermediary lemmas that will be used in the proofs of Proposition~\ref{prop:lp_reform_nested} in Appendix~\ref{appx:lp_reform_nested} and the proof of Theorem~\ref{thm:nested} in Appendix~\ref{appx:mip:lp_reform_nested}. The purpose of these intermediary lemmas, which are presented below as Lemmas~\ref{lem:simplified_to_compact} and \ref{lem:compact_to_bound},  is to characterize the relationship between the the linear optimization problem~\eqref{prob:lp_reform_nested} and the linear optimization problem~\eqref{prob:robust_simplified}.  

For notational convenience, let the following set be defined for each assortment $S \subseteq \mathcal{N}_0$: % the set of constraints of the form $g_{m,i,\kappa} = 0$ in the above inner minimization problem be denoted by
\begin{align}
\mathfrak{N}(S) \triangleq \left \{ (m,i,\kappa) \in {\mathfrak{V}}: \begin{gathered} 
\left[  i \in S \textnormal{ and } i \in \mathcal{B}_m \textnormal{ and } \kappa \neq r_i \right]\\
 \textnormal{ or } \left[  i \in S \textnormal{ and } i \notin \mathcal{B}_m \textnormal{ and } \kappa \in \left\{r_j: j \in \mathcal{B}_m \right \}  \right]\\
 \textnormal{ or } \left[ \kappa \in \left \{ r_j: j \in \mathcal{B}_m \setminus S \right \} \right ]
\end{gathered} \right \}.  \label{line:N_frak_S}
\end{align} 
We remark that $\mathfrak{N}(S)$ is the set of vertices $(m,i,\kappa) \in \mathfrak{V}$ for which the linear optimization problem~\eqref{prob:lp_reform_nested} has a constraint of the form $g_{m,i,\kappa} = 0$. Moreover,  let the set of all paths of the form $(1,i_1,\kappa_1),\ldots,(M,i_M,\kappa_M) \in \mathfrak{V}$ in the directed acyclic graph $\mathfrak{G} \equiv (\mathfrak{V},\mathfrak{E})$ be denoted by
\begin{align*}
\mathscr{P} \triangleq \left \{((1,i_1,\kappa_1),\ldots, (M,i_M,\kappa_M)): ((m,i_m,\kappa_m),(m+1,i_{m+1},\kappa_{m+1})) \in \mathfrak{E} \; \forall m \in \{1,\ldots,M-1\}   \right \}. 
\end{align*}

 In our first intermediary lemma, denoted below by Lemma~\ref{lem:simplified_to_compact}, we show that every feasible solution of the linear optimization problem~\eqref{prob:robust_simplified} can be transformed into a feasible solution of the linear optimization problem~\eqref{prob:lp_reform_nested} with the same objective value.

\begin{lemma} \label{lem:simplified_to_compact}
Let Assumption~\ref{ass:nested} hold, let $S \subseteq \mathcal{N}_0$ satisfy $S \cap S_1 \neq \emptyset$, and let $(\lambda,\epsilon)$ be a  feasible solution for the linear optimization problem~\eqref{prob:robust_simplified}. For each vertex $(m,i,\kappa) \in \mathfrak{V}$, let 
\begin{align*}
 g_{m,i,\kappa} &\triangleq \sum_{(i_1,\ldots,i_M) \in \mathcal{L}: \; i_m = i,  \rho_{i_1 \cdots i_M}(S \cap S_m) = \kappa} \lambda_{i_1 \cdots i_M},
 \end{align*}
 and for each directed edge $((m,i,\kappa),(m+1,i',\kappa')) \in \mathfrak{E}$, let 
 \begin{align*}
f_{m,i,\kappa,i',\kappa'} &\triangleq \sum_{(i_1,\ldots,i_M) \in \mathcal{L}: \; i_m = i, \rho_{i_1 \cdots i_M}(S \cap S_m) = \kappa, i_{m+1} = i', \rho_{i_1 \cdots i_M}(S \cap S_{m+1}) = \kappa' } \lambda_{i_1 \cdots i_M}. 
\end{align*}
Then $(f,g,\epsilon)$ is a feasible solution for the linear optimization problem~\eqref{prob:lp_reform_nested}, and the objective value associated with $(f,g,\epsilon)$ in the  linear optimization problem~\eqref{prob:lp_reform_nested} is equal to the objective value associated with $(\lambda,\epsilon)$ in the  linear optimization problem~\eqref{prob:robust_simplified}. 
\end{lemma}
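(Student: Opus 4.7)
\bigskip

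\noindent\textbf{Proof proposal for Lemma~\ref{lem:simplified_to_compact}.}

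The plan is a direct verification: I would fix an arbitrary feasible $(\lambda,\epsilon)$ of \eqref{prob:robust_simplified} and check that the aggregated quantities $(f,g,\epsilon)$ defined in the statement satisfy, one by one, each family of constraints of \eqref{prob:lp_reform_nested} and match its objective value. The key conceptual observation is that the aggregation identifies each $\lambda_{i_1 \cdots i_M}$ with flow along the path $(1,i_1,\rho_{i_1 \cdots i_M}(S \cap S_1)),\ldots,(M,i_M,\rho_{i_1 \cdots i_M}(S \cap S_M))$ in the graph $\mathfrak{G}$; the two workhorse results are Lemma~\ref{lem:reform_of_L} (which constrains how consecutive coordinates $i_m,i_{m+1}$ can evolve) and Lemma~\ref{lem:rho_m} (which constrains how consecutive $\kappa$-values $\rho_{i_1\cdots i_M}(S\cap S_m),\rho_{i_1 \cdots i_M}(S \cap S_{m+1})$ can evolve), so together they guarantee that these paths really do lie in $\mathfrak{E}$.

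I would carry out the steps in the following order. First, I would verify the historical-fit constraints $\sum_{\kappa}g_{m,i,\kappa}-\epsilon_{m,i}=v_{m,i}$ by unwinding the definition of $g_{m,i,\kappa}$ and summing over $\kappa$: the inner sum collapses to $\sum_{(i_1,\ldots,i_M)\in\mathcal{L}:\, i_m=i}\lambda_{i_1 \cdots i_M}$, which equals $v_{m,i}+\epsilon_{m,i}$ by feasibility of $(\lambda,\epsilon)$ for \eqref{prob:robust_simplified}. Second, I would check the two flow-conservation families: for forward conservation, $\sum_{i',\kappa'}f_{m,i,\kappa,i',\kappa'}=\sum_{(i_1,\ldots,i_M):\,i_m=i,\,\rho(S\cap S_m)=\kappa}\lambda_{i_1\cdots i_M}=g_{m,i,\kappa}$ after noting, via Lemmas~\ref{lem:reform_of_L} and~\ref{lem:rho_m}, that every pair $(i',\kappa')=(i_{m+1},\rho_{i_1\cdots i_M}(S\cap S_{m+1}))$ appearing in the aggregation automatically satisfies $i'\in\{i\}\cup\mathcal{B}_{m+1}$ and $\kappa'\in\{\kappa\}\cup\{r_j:j\in\mathcal{B}_{m+1}\}$, so that $((m,i,\kappa),(m+1,i',\kappa'))\in\mathfrak{E}$; backward conservation is symmetric. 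Third, the normalization $\sum_{i,\kappa}g_{M,i,\kappa}=\sum_{(i_1,\ldots,i_M)\in\mathcal{L}}\lambda_{i_1\cdots i_M}=1$ and nonnegativity $f,g\ge 0$ and $\|\epsilon\|\le\eta$ are immediate. Finally, to match objective values I would use that $S \cap S_M = S$ (since Assumption~\ref{ass:nested} together with the blanket assumption from Remark~\ref{rem:N_0} give $S_M=\mathcal{N}_0$), so that
\[
\sum_{i,\kappa:(M,i,\kappa)\in\mathfrak{V}}\kappa\, g_{M,i,\kappa}=\sum_{(i_1,\ldots,i_M)\in\mathcal{L}}\rho_{i_1\cdots i_M}(S\cap S_M)\,\lambda_{i_1\cdots i_M}=\sum_{(i_1,\ldots,i_M)\in\mathcal{L}}\rho_{i_1\cdots i_M}(S)\,\lambda_{i_1\cdots i_M}.
\]

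The main obstacle, and the step I would dedicate the most care to, is showing that the vanishing constraints $g_{m,i,\kappa}=0$ for $(m,i,\kappa)\in\mathfrak{N}(S)$ are satisfied. This reduces to proving that for every $(i_1,\ldots,i_M)\in\mathcal{L}$ the pair $(i_m,\rho_{i_1\cdots i_M}(S\cap S_m))$ never lies in $\mathfrak{N}(S)$, which I would establish by induction on $m$ using Lemma~\ref{lem:rho_m}. The base case $m=1$ is where the assumption $S\cap S_1\neq\emptyset$ plays its role: either $i_1\in S$ and then $\rho=r_{i_1}$ with $i_1\in S\cap\mathcal{B}_1$, or $i_1\notin S$ and then $\rho=\min_{j\in\mathcal{B}_1\cap S}r_j=r_{j^\star}$ for some $j^\star\in\mathcal{B}_1\cap S$; in both cases $(1,i_1,\rho)\notin\mathfrak{N}(S)$. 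The inductive step then splits on whether $i_m\in S$ (giving $\rho=r_{i_m}$, so $\kappa=r_{i_m}$ with $i_m\in S$, ruling out all three clauses defining $\mathfrak{N}(S)$) or $i_m\notin S$ (in which case $\rho$ is either the inherited value $\rho_{i_1\cdots i_M}(S\cap S_{m-1})$, handled by the induction hypothesis, or $r_j$ for some $j\in\mathcal{B}_m\cap S$, again avoiding $\mathfrak{N}(S)$). Uniqueness of the revenues $r_1<\cdots<r_n$ will be needed at several points to conclude that $\kappa\neq r_i$ whenever $\kappa$ equals a revenue of a product outside the appropriate block. Once this vanishing step is in place, the remaining verifications are routine bookkeeping.
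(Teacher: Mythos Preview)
Your proposal is correct and follows the same strategy as the paper, which packages the key step as an explicit claim (Claim~\ref{claim:not_in_N_S}) showing that for every $(i_1,\ldots,i_M)\in\mathcal{L}$ the sequence $(m,i_m,\rho_{i_1\cdots i_M}(S\cap S_m))_{m=1}^M$ is a path in $\mathfrak{G}$ that avoids $\mathfrak{N}(S)$, and then invokes this claim in each verification; your plan does the same thing, just organized linearly rather than factored through a single claim.

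One point to tighten: in the vanishing-constraint step, when $i_m\notin S$ and $\kappa$ equals the inherited value $\rho_{i_1\cdots i_M}(S\cap S_{m-1})$, this sub-case is not really ``handled by the induction hypothesis.'' The induction hypothesis at level $m-1$ only tells you that $\kappa\notin\{r_j:j\in\mathcal{B}_{m-1}\setminus S\}$, whereas at level $m$ you need $\kappa\notin\{r_j:j\in\mathcal{B}_m\setminus S\}$, and these are statements about disjoint blocks. The correct argument here is direct, not inductive: $\rho_{i_1\cdots i_M}(S\cap S_{m-1})$ is the revenue of some product in $S\cap S_{m-1}$ (by Definition~\ref{defn:rho} or Proposition~\ref{prop:cost_reform}), and $S_{m-1}$ is disjoint from $\mathcal{B}_m=S_m\setminus S_{m-1}$, so revenue distinctness gives $\kappa\notin\{r_j:j\in\mathcal{B}_m\}$ outright. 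This is exactly the ``revenue of a product outside the appropriate block'' mechanism you already flag, so you have the right ingredient---just drop the induction framing for this particular sub-step. The paper handles the whole vanishing-constraint verification by a three-case analysis at each fixed $m$ using Lemma~\ref{lem:rho_m}, with no induction at all.
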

\begin{proof}{Proof.} 
Let Assumption~\ref{ass:nested} hold, let $S \subseteq \mathcal{N}_0$ satisfy $S \cap S_1 \neq \emptyset$, and let $(\lambda,\epsilon)$ be a  feasible solution for the linear optimization problem~\eqref{prob:robust_simplified}.  For each vertex $(m,i,\kappa) \in \mathfrak{V}$, let 
\begin{align*}
 g_{m,i,\kappa} &\triangleq \sum_{(i_1,\ldots,i_M) \in \mathcal{L}: \; i_m = i,  \rho_{i_1 \cdots i_M}(S \cap S_m) = \kappa} \lambda_{i_1 \cdots i_M},
 \end{align*}
 and for each edge $((m,i,\kappa),(m+1,i',\kappa')) \in \mathfrak{E}$, let 
 \begin{align*}
f_{m,i,\kappa,i',\kappa'} &\triangleq \sum_{(i_1,\ldots,i_M) \in \mathcal{L}: \; i_m = i, \rho_{i_1 \cdots i_M}(S \cap S_m) = \kappa, i_{m+1} = i', \rho_{i_1 \cdots i_M}(S \cap S_{m+1}) = \kappa' } \lambda_{i_1 \cdots i_M}. 
\end{align*}
We begin by presenting an intermediary claim, denoted below by Claim~\ref{claim:not_in_N_S}, which will provide the key ingredient in the proof of Lemma~\ref{lem:simplified_to_compact}. The purpose of this intermediary claim is to show that every tuple $(i_1,\ldots,i_M) \in \mathcal{L}$ can be converted into a path $(1,i_1,\kappa_1),\ldots,(M,i_M,\kappa_M)$ in which none of the vertices are elements of $\mathfrak{N}(S)$. 
%\end{lemma}
\begin{claim} \label{claim:not_in_N_S}
%Let Assumption~\ref{ass:nested} hold, and consider any assortment $S \subseteq \mathcal{N}_0$ that satisfies $S \cap S_1 \neq \emptyset$. For each $(i_1,\ldots,i_M) \in \mathcal{L}$, we have that $((1,i_1,\rho_{i_1 \cdots i_M}(S \cap S_1) ),\ldots, (M,i_M,\rho_{i_1 \cdots i_M}(S \cap S_M))) \in \mathscr{P}$.  % we have that $((1,i_1,\rho_{i_1 \cdots i_M}(S \cap S_1)),\ldots, (M,i_M,\rho_{i_1 \cdots i_M}(S \cap S_M))) \in \mathscr{P}$ and that  $(m,i_m,\rho_{i_1,\ldots,i_M}(S \cap S_m)) \notin \mathfrak{N}(S)$ for all $m \in \mathcal{M}$ 
For each $(i_1,\ldots,i_M) \in \mathcal{L}$, we have that $((1,i_1,\rho_{i_1 \cdots i_M}(S \cap S_1)),\ldots, (M,i_M,\rho_{i_1 \cdots i_M}(S \cap S_M))) \in \mathscr{P}$ and that  $(m,i_m,\rho_{i_1,\ldots,i_M}(S \cap S_m)) \notin \mathfrak{N}(S)$ for all $m \in \mathcal{M}$. 
\end{claim}
\begin{proof}{Proof of Claim~\ref{claim:not_in_N_S}.}
Consider any tuple $(i_1,\ldots,i_M) \in \mathcal{L}$.  It follows readily from Definition~\ref{defn:rho} that the inclusion $\rho_{i_1 \cdots i_M}(S \cap S_m) \in \{r_j: j \in S \cap S_m \}$ holds for each $m \in \mathcal{M}$, and so it follows from the definition of   $\mathfrak{V}$ that the inclusion $(m,i_m,\rho_{i_1 \cdots i_M}(S \cap S_m)) \in \mathfrak{V}$ holds for each $m \in \mathcal{M}$. Moreover, for every arbitrary $m \in \{1,\ldots,M-1\}$, we observe from Lemma~\ref{lem:reform_of_L} that $i_{m+1} \in \{i_m \} \cup \mathcal{B}_{m+1}$, and we observe from Lemma~\ref{lem:rho_m} and from the fact that $(i_1,\ldots,i_M) \in \mathcal{L}$ that  $\rho_{i_1 \cdots i_M}(S \cap S_{m+1}) \in \{ \rho_{i_1 \cdots i_M}(S \cap S_{m}) \} \cup \{r_j: j \in \mathcal{B}_m \}$. Combining those two observations, we have shown that $((m,i_m,\rho_{i_1 \cdots i_M}(S \cap S_m)), (m+1,i_{m+1},\rho_{i_1 \cdots i_M}(S \cap S_{m+1}))) \in \mathfrak{E}$. Since $m \in \{1,\ldots,M-1\}$ was chosen arbitrarily, we conclude that $((1,i_1,\rho_{i_1 \cdots i_M}(S \cap S_1)),\ldots, (M,i_M,\rho_{i_1 \cdots i_M}(S \cap S_M))) \in \mathscr{P}$. 

 To conclude the proof of Claim~\ref{claim:not_in_N_S}, it remains for us to show that $(m,i_m,\rho_{i_1,\ldots,i_M}(S \cap S_m)) \notin \mathfrak{N}(S)$ for all $m \in \mathcal{M}$. Indeed, for each $m \in \mathcal{M}$, we consider the following  three cases:
\begin{itemize}
\item If $i_m \in S$ and $i_m \in\mathcal{B}_m$, then it follows  from Lemma~\ref{lem:rho_m} that $\rho_{i_1 \cdots i_M}(S \cap S_m) = r_{i_m}$.
\item If $i_m \in S$ and $i_m \notin \mathcal{B}_m$,  then it follows  from Lemma~\ref{lem:rho_m} that $\rho_{i_1 \cdots i_M}(S \cap S_m) = r_{i_m} \notin \{r_j: j \in \mathcal{B}_m\}$. 
\item If $\rho_{i_1 \cdots i_M}(S \cap S_m) \in\{ r_j:j \in \mathcal{B}_m \}$, then it follows from Lemma~\ref{lem:rho_m} that $\rho_{i_1 \cdots i_M}(S \cap S_m) \in\{ r_j:j \in S \cap \mathcal{B}_m \}$. 
\end{itemize}
Combining the above cases with the definition of $\mathfrak{N}(S)$ on line~\eqref{line:N_frak_S}, we conclude that $(m,i_m,\rho_{i_1 \cdots i_M}(S \cap S_m)) \notin  \mathfrak{N}(S)$ for each $m \in \mathcal{M}$. Our proof of Claim~\ref{claim:not_in_N_S} is thus complete. 
\halmos \end{proof}

Using Claim~\ref{claim:not_in_N_S}, we show in the following bullet points that the solution $(f,g,\epsilon)$ is a feasible solution for the linear optimization problem~\eqref{prob:lp_reform_nested}; that is, we show that $(f,g,\epsilon)$ satisfies each of the constraints in the linear optimization problem~\eqref{prob:lp_reform_nested}.  

\begin{itemize}
\item For all $m \in \mathcal{M}$ and $i \in {S}_m$, we observe that
\begin{align*}
\sum_{\kappa:(m,i,\kappa) \in \mathfrak{V}} g_{m,i,\kappa} - \epsilon_{m,i}  &= \sum_{\kappa:(m,i,\kappa) \in \mathfrak{V}} \left(\sum_{(i_1,\ldots,i_M) \in \mathcal{L}: \; i_m = i,  \rho_{i_1 \cdots i_M}(S \cap S_m) = \kappa} \lambda_{i_1 \cdots i_M} \right) - \epsilon_{m,i} \\
&= \sum_{(i_1,\ldots,i_M) \in \mathcal{L}: \; i_m = i} \left( \sum_{\kappa:(m,i_m,\kappa) \in \mathfrak{V},  \rho_{i_1 \cdots i_M}(S \cap S_m) = \kappa}  \lambda_{i_1 \cdots i_M} \right) - \epsilon_{m,i} \\
&= \sum_{(i_1,\ldots,i_M) \in \mathcal{L}: \; i_m = i} \mathbb{I} \left \{ (m,i_m, \rho_{i_1 \cdots i_M}(S \cap S_m)) \in \mathfrak{V} \right \}  \lambda_{i_1 \cdots i_M}  - \epsilon_{m,i} \\
&=  \sum_{(i_1,\ldots,i_M) \in \mathcal{L}: \; i_m = i} \lambda_{i_1 \cdots i_M} - \epsilon_{m,i} \\
&= v_{m,i},
\end{align*}
Indeed,  the first equality follows from our construction of $g$. The second and third equalities follow from algebra. The fourth equality follows from Claim~\ref{claim:not_in_N_S}, which implies that $(m,i_m,\rho_{i_1 \cdots i_M}(S \cap S_m)) \in \mathfrak{V}$ for all $(i_1,\ldots,i_M) \in \mathcal{L}$. The fifth equality follows from the fact that $(\lambda,\epsilon)$ is a feasible solution for the linear optimization problem~\eqref{prob:robust_simplified}. 

\item For all $(m,i,\kappa) \in \mathfrak{V}$ such that $m \in \{1,\ldots,M-1\}$, we observe that
\begin{align*}
& \sum_{i',\kappa': ((m,i,\kappa),(m+1,i',\kappa')) \in \mathfrak{E}} f_{m,i,\kappa,i',\kappa'}  \\
 &=  \sum_{i',\kappa': ((m,i,\kappa),(m+1,i',\kappa')) \in \mathfrak{E}}  \left( \sum_{(i_1,\ldots,i_M) \in \mathcal{L}: \; i_m = i, \rho_{i_1 \cdots i_M}(S \cap S_m) = \kappa, i_{m+1} = i', \rho_{i_1 \cdots i_M}(S \cap S_{m+1}) = \kappa'} \lambda_{i_1 \cdots i_M} \right) \\
  &= \sum_{(i_1,\ldots,i_M) \in \mathcal{L}: \; i_m = i,  \rho_{i_1 \cdots i_M}(S \cap S_m) = \kappa} \left(  \sum_{i',\kappa': ((m,i_m, \rho_{i_1 \cdots i_M}(S \cap S_m)),(m+1,i',\kappa')) \in \mathfrak{E}, i_{m+1} = i', \rho_{i_1 \cdots i_M}(S \cap S_{m+1}) = \kappa'}   \lambda_{i_1 \cdots i_M} \right) \\
    &= \sum_{(i_1,\ldots,i_M) \in \mathcal{L}: \; i_m = i,  \rho_{i_1 \cdots i_M}(S \cap S_m) = \kappa}  \mathbb{I} \left \{ ((m,i_m,  \rho_{i_1 \cdots i_M}(S \cap S_m) ),(m+1,i_{m+1}, \rho_{i_1 \cdots i_M}(S \cap S_{m+1}))) \in \mathfrak{E} \right \}   \lambda_{i_1 \cdots i_M} \\
&=  \sum_{(i_1,\ldots,i_M) \in \mathcal{L}: \; i_m = i, \rho_{i_1 \cdots i_M}(S \cap S_m) = \kappa} \lambda_{i_1 \cdots i_M}\\
&= g_{m,i,\kappa},
\end{align*}
Indeed, the first equality follows from our construction of $f$. The second and third equalities follow from algebra.  The fourth equality follows from Claim~\ref{claim:not_in_N_S}, which implies that $((m,i_m,\rho_{i_1 \cdots i_M}(S \cap S_m)),(m+1,i_{m+1},\rho_{i_1 \cdots i_M}(S \cap S_{m+1}))) \in \mathfrak{E}$ for all $(i_1,\ldots,i_M) \in \mathcal{L}$. The fifth equality follows from our construction of $g$.

\item For all $(m,i,\kappa) \in \mathfrak{V}$ such that $m \in \{2,\ldots,M\}$, we observe that
\begin{align*}
& \sum_{i',\kappa': ((m-1,i',\kappa'),(m,i,\kappa)) \in \mathfrak{E}} f_{m-1,i',\kappa',i,\kappa}  \\
 &=   \sum_{i',\kappa': ((m-1,i',\kappa'),(m,i,\kappa)) \in \mathfrak{E}} \left( \sum_{(i_1,\ldots,i_M) \in \mathcal{L}: \; i_{m-1} = i', \rho_{i_1 \cdots i_M}(S \cap S_{m-1}) = \kappa', i_{m} = i, \rho_{i_1 \cdots i_M}(S \cap S_{m}) = \kappa} \lambda_{i_1 \cdots i_M} \right) \\
  &= \sum_{(i_1,\ldots,i_M) \in \mathcal{L}: \; i_m = i,  \rho_{i_1 \cdots i_M}(S \cap S_m) = \kappa} \left(  \sum_{i',\kappa': ((m-1,i',\kappa'),(m,i_m, \rho_{i_1 \cdots i_M}(S \cap S_m))) \in \mathfrak{E}, i_{m-1} = i', \rho_{i_1 \cdots i_M}(S \cap S_{m-1}) = \kappa'}   \lambda_{i_1 \cdots i_M} \right) \\
    &= \sum_{(i_1,\ldots,i_M) \in \mathcal{L}: \; i_m = i,  \rho_{i_1 \cdots i_M}(S \cap S_m) = \kappa}  \mathbb{I} \left \{ ((m-1,i_{m-1},  \rho_{i_1 \cdots i_M}(S \cap S_{m-1}) ),(m,i_{m}, \rho_{i_1 \cdots i_M}(S \cap S_{m}))) \in \mathfrak{E} \right \}   \lambda_{i_1 \cdots i_M} \\
&=  \sum_{(i_1,\ldots,i_M) \in \mathcal{L}: \; i_m = i, \rho_{i_1 \cdots i_M}(S \cap S_m) = \kappa} \lambda_{i_1 \cdots i_M}\\
&= g_{m,i,\kappa},
\end{align*}
Indeed, the first equality follows from our construction of $f$. The second and third equalities follow from algebra.  The fourth equality follows from Claim~\ref{claim:not_in_N_S}, which implies that $((m-1,i_{m-1},  \rho_{i_1 \cdots i_M}(S \cap S_{m-1}) ),(m,i_{m}, \rho_{i_1 \cdots i_M}(S \cap S_{m}))) \in \mathfrak{E} $ for all $(i_1,\ldots,i_M) \in \mathcal{L}$. The fifth equality follows from our construction of $g$.

%\item For all $(m,i,k) \in \mathfrak{V}_{>1}(S)$, we observe that
%\begin{align*}
%\sum_{(j,\ell) \in \mathscr{I}_{m,i,k}(S)} f_{m-1,j,i,\ell} &= \sum_{(j,\ell) \in \mathscr{I}_{m,i,k}(S)} \left( \sum_{(i_1,\ldots,i_M) \in \mathcal{L}: \; i_{m-1} = j, i_{m} = i, \rho^{m-1}_{i_1 \cdots i_{m-1}}(S) = r_\ell} \lambda_{i_1 \cdots i_M}  \right) \\
%&= \sum_{(j,\ell) \in \mathscr{I}_{m,i,k}(S)} \left( \sum_{(i_1,\ldots,i_M) \in \mathcal{L}: \; i_{m-1} = j, i_{m} = i, \rho^{m-1}_{i_1 \cdots i_{m-1}}(S) = r_\ell,  \rho^{m}_{i_1 \cdots i_{m}}(S) = r_k} \lambda_{i_1 \cdots i_M}  \right) \\
%&= \sum_{(i_1,\ldots,i_M) \in \mathcal{L}: \;  i_{m} = i,  \rho^{m}_{i_1 \cdots i_m}(S) = r_k} \lambda_{i_1 \cdots i_M}  \\
%&= g_{m,i,k}.
%\end{align*}
%Indeed,  the first equality follows from our construction of $f$. The second equality follows from Claim~\ref{claim:intermediary_for_nested_intermediary2}. The third equality follows from algebra. The fourth equality follows from our construction of $g$. 

\item Consider any vertex $(m,i,\kappa) \in \mathfrak{V}$ that satisfies $g_{m,i,\kappa} > 0$. Because $g_{m,i,\kappa} > 0$, it follows from our construction of $g$  that there must exist a tuple $(i_1,\ldots,i_M) \in \mathcal{L}$ that satisfies $i_m = i$, $\rho_{i_1 \cdots i_M}(S \cap S_m) = \kappa$, and $\lambda_{i_1 \cdots i_M} > 0$. Since Claim~\ref{claim:not_in_N_S} implies that  $(m,i_m,\rho_{i_1,\ldots,i_M}(S \cap S_m)) \notin \mathfrak{N}(S)$ for all $m \in \mathcal{M}$ and all $(i_1,\ldots,i_M) \in \mathcal{L}$, we conclude that 
\begin{align*}
&&& g_{m,i,\kappa} = 0 \quad \forall (m,i,\kappa)\in \mathfrak{N}(S).
\end{align*} 
\item We observe that
\begin{align*}
 \sum_{i,\kappa: (M,i,\kappa) \in \mathfrak{V}} g_{M,i,\kappa} &=  \sum_{i,\kappa: (M,i,\kappa) \in \mathfrak{V}} \left( \sum_{(i_1,\ldots,i_M) \in \mathcal{L}: \; i_M = i,  \rho_{i_1 \cdots i_M}(S \cap S_M) = \kappa} \lambda_{i_1 \cdots i_M} \right)\\
&=  \sum_{(i_1,\ldots,i_M) \in \mathcal{L}} \lambda_{i_1 \cdots i_M}\\
&= 1.
\end{align*}
Indeed, the first equality follows from our construction of $g$. The second equality follows from  Claim~\ref{claim:not_in_N_S}, which implies that $(M,i_M,\rho_{i_1 \cdots i_M}(S \cap S_M)) \in \mathfrak{V}$ for all $(i_1,\ldots,i_M) \in \mathcal{L}$. The third equality follows from the fact that $(\lambda,\epsilon)$ is a feasible solution for the linear optimization problem~\eqref{prob:robust_simplified}. 
\end{itemize}
Since $f,g$ are clearly nonnegative, the above bullet points show that $(f,g,\epsilon)$ satisfies each of the constraints in the linear optimization problem~\eqref{prob:lp_reform_nested}.  

It remains for us to show that the objective value associated with $(f,g,\epsilon)$ in the linear optimization problem~\eqref{prob:lp_reform_nested} is equal to the objective value associated with $(\lambda,\epsilon)$ in the  linear optimization problem~\eqref{prob:robust_simplified}. Indeed, we observe that
\begin{align*}
 \sum_{i,\kappa: (M,i,\kappa) \in \mathfrak{V}}\kappa  g_{M,i,\kappa}  &=  \sum_{i,\kappa: (M,i,\kappa) \in \mathfrak{V}}\kappa  \left(\sum_{(i_1,\ldots,i_M) \in \mathcal{L}: \; i_M = i,  \rho_{i_1 \cdots i_M}(S \cap S_M) = \kappa} \lambda_{i_1 \cdots i_M}\right) \\
 &=  \sum_{i,\kappa: (M,i,\kappa) \in \mathfrak{V}}\kappa  \left(\sum_{(i_1,\ldots,i_M) \in \mathcal{L}: \; i_M = i,  \rho_{i_1 \cdots i_M}(S) = \kappa} \lambda_{i_1 \cdots i_M}\right) \\
  &= \sum_{(i_1,\ldots,i_M) \in \mathcal{L}} \left( \sum_{i,\kappa: \; (M,i,\kappa) \in \mathfrak{V}, i_M = i,  \rho_{i_1 \cdots i_M}(S) = \kappa} \kappa \lambda_{i_1 \cdots i_M} \right) \\
        &= \sum_{(i_1,\ldots,i_M) \in \mathcal{L}} \mathbb{I} \left \{ (M,i_M,\rho_{i_1 \cdots i_M}(S)) \in \mathfrak{V} \right \}  \rho_{i_1 \cdots i_M}(S) \lambda_{i_1 \cdots i_M}  \\
           &=  \sum_{(i_1,\ldots,i_M) \in \mathcal{L}}  \rho_{i_1 \cdots i_M}(S)  \lambda_{i_1 \cdots i_M}.
\end{align*}
The first equality follows from the construction of $g$. The second equality follows from Remark~\ref{rem:N_0} and Assumption~\ref{ass:nested}, which together imply that $S \cap S_M = S \cap \mathcal{N}_0 = S$.  The third and fourth equalities follows from algebra. The fifth equality follows from Claim~\ref{claim:not_in_N_S}. Our proof of Lemma~\ref{lem:simplified_to_compact} is thus complete. \halmos
\end{proof}

Our second intermediary lemma, denoted below by Lemma~\ref{lem:compact_to_bound}, will be used to show that the optimal objective value of the linear optimization problem~\eqref{prob:robust_simplified} is less than or equal to the optimal objective value of  the linear optimization problem~\eqref{prob:lp_reform_nested}.
\begin{lemma}  \label{lem:compact_to_bound}
%Let Assumption~\ref{ass:nested} hold, and consider any assortment $S \subseteq \mathcal{N}_0$ that satisfies $S \cap S_1 \neq \emptyset$. For each $(i_1,\ldots,i_M) \in \mathcal{L}$, we have that $((1,i_1,\rho_{i_1 \cdots i_M}(S \cap S_1) ),\ldots, (M,i_M,\rho_{i_1 \cdots i_M}(S \cap S_M))) \in \mathscr{P}$.  % we have that $((1,i_1,\rho_{i_1 \cdots i_M}(S \cap S_1)),\ldots, (M,i_M,\rho_{i_1 \cdots i_M}(S \cap S_M))) \in \mathscr{P}$ and that  $(m,i_m,\rho_{i_1,\ldots,i_M}(S \cap S_m)) \notin \mathfrak{N}(S)$ for all $m \in \mathcal{M}$ 
Let Assumption~\ref{ass:nested} hold, let  $S \subseteq \mathcal{N}_0$ satisfy $S \cap S_1 \neq \emptyset$, and let $(f,g,\epsilon)$ satisfy the following constraints:
\begin{align*}
&&& \sum_{\kappa: (m,i,\kappa) \in {\mathfrak{V}}}  g_{m,i,\kappa}  - \epsilon_{m,i}= v_{m,i} && \forall m \in \mathcal{M}, \; i \in S_m \\
&&& \sum_{i',\kappa': ((m,i,\kappa),(m+1,i',\kappa')) \in {\mathfrak{E}}} f_{m,i,\kappa,i',\kappa'}  = g_{m,i,\kappa} && \forall (m,i,\kappa) \in {\mathfrak{V}}: m \in \{1,\ldots,M-1\} \\
&&& \sum_{i',\kappa': ((m-1,i',\kappa'),(m,i,\kappa)) \in {\mathfrak{E}}} f_{m-1,i',\kappa',i,\kappa}  = g_{m,i,\kappa} && \forall (m,i,\kappa) \in {\mathfrak{V}}: m \in \{2,\ldots,M\}\\
&&& \| \epsilon \| \le \eta \\
&&& \sum_{i,\kappa: (M,i,\kappa) \in \mathfrak{V}} g_{M,i,\kappa} = 1 \\
 &&& f_{m,i,\kappa,i',\kappa'} \ge 0 && \forall ((m,i,\kappa),(m+1,i',\kappa')) \in {\mathfrak{E}}  \\
&&& g_{m,i,\kappa} \ge 0 && \forall (m,i,\kappa) \in {\mathfrak{V}}.
\end{align*}
Then there exists a vector $\lambda$ such that $(\lambda,\epsilon)$ is a feasible solution for the linear optimization problem~\eqref{prob:lp_reform_nested} and 
\begin{align*}
\sum_{(i_1,\ldots,i_M) \in \mathcal{L}} \rho_{i_1 \cdots i_M}(S) \lambda_{i_1 \cdots i_M} \le \sum_{i,\kappa: (M,i,\kappa) \in \mathfrak{V}} \kappa g_{m,i,\kappa} + \sum_{(m,i,\kappa) \in \mathfrak{N}(S)} r_n g_{m,i,\kappa}. 
\end{align*}
\end{lemma}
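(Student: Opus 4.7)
The plan is to interpret $(f,g)$ as a unit-value network flow on the directed acyclic graph $\mathfrak{G} \equiv (\mathfrak{V}, \mathfrak{E})$ from the source layer $\{(1,i,\kappa) \in \mathfrak{V}\}$ to the sink layer $\{(M,i,\kappa) \in \mathfrak{V}\}$, and to transform it, via flow decomposition, into the tuple-indexed distribution required by \eqref{prob:robust_simplified}. By the flow decomposition theorem \citep[Theorem 3.5]{ahuja1988network}, the hypotheses yield nonnegative weights $\mu_P \ge 0$ indexed over paths $P \in \mathscr{P}$ such that $f_e = \sum_{P \ni e} \mu_P$, $g_v = \sum_{P \ni v} \mu_P$, and $\sum_{P \in \mathscr{P}} \mu_P = 1$. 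I will then set
$$\lambda_{i_1 \cdots i_M} \triangleq \sum_{P = ((1,i_1,\kappa_1),\ldots,(M,i_M,\kappa_M)) \in \mathscr{P}} \mu_P$$
for every $(i_1,\ldots,i_M) \in \mathcal{L}$. The product components along each path obey $i_{m+1} \in \{i_m\} \cup \mathcal{B}_{m+1}$ by the definition of $\mathfrak{E}$, so Lemma~\ref{lem:reform_of_L} guarantees that the product sequence of every path lies in $\mathcal{L}$ and the assignment is well-defined.

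Feasibility of $(\lambda,\epsilon)$ for \eqref{prob:robust_simplified} is then straightforward. The marginal identity $\sum_{(i_1,\ldots,i_M) \in \mathcal{L}: i_m = i} \lambda_{i_1 \cdots i_M} = \sum_{\kappa} g_{m,i,\kappa} = v_{m,i} + \epsilon_{m,i}$ follows from decomposing by the vertex visited at level $m$; the normalization $\sum_{\mathcal{L}} \lambda_{i_1 \cdots i_M} = 1$ follows from $\sum_{(M,i,\kappa)\in\mathfrak{V}} g_{M,i,\kappa} = 1$; nonnegativity is inherited; and the norm constraint $\|\epsilon\| \le \eta$ is carried over directly.

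The crux of the proof is the following key claim: for every path $P \in \mathscr{P}$ whose vertices all lie outside $\mathfrak{N}(S)$, the terminal revenue satisfies $\kappa_M^P \ge \rho_{i_1^P \cdots i_M^P}(S)$. I will prove the stronger inequality $\kappa_m^P \ge \rho_{i_1^P \cdots i_M^P}(S \cap S_m)$ for every $m \in \mathcal{M}$ by induction on $m$, matching the recursion of Lemma~\ref{lem:rho_m}. In the base case $m=1$ I split on whether $i_1 \in S$: if $i_1 \in S$, avoidance of the first clause of $\mathfrak{N}(S)$ forces $\kappa_1 = r_{i_1}$, which equals $\rho_{\cdot}(S \cap S_1)$; if $i_1 \notin S$, avoidance of the third clause restricts $\kappa_1 \in \{r_j : j \in \mathcal{B}_1 \cap S\}$, so $\kappa_1 \ge \min_{j \in \mathcal{B}_1 \cap S} r_j = \rho_{\cdot}(S \cap S_1)$. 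The inductive step splits on $i_m \in S$ versus $i_m \notin S$ and, within the former, on $i_m \in \mathcal{B}_m$ versus $i_m \notin \mathcal{B}_m$; in the sub-case $i_m \in S \setminus \mathcal{B}_m$, avoidance of the second clause together with the edge constraint $\kappa_m \in \{\kappa_{m-1}\} \cup \{r_j : j \in \mathcal{B}_m\}$ forces $\kappa_m = \kappa_{m-1}$, while Lemma~\ref{lem:reform_of_L} forces $i_{m-1} = i_m$, allowing the induction hypothesis to propagate. When $i_m \notin S$, avoidance of the third clause plus the edge constraint places $\kappa_m$ in $\{\kappa_{m-1}\} \cup \{r_j : j \in \mathcal{B}_m \cap S\}$, each of whose values is at least $\rho_{\cdot}(S \cap S_m)$ by Lemma~\ref{lem:rho_m} and the induction hypothesis. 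Setting $m = M$ and using $S \cap S_M = S$ from Remark~\ref{rem:N_0} delivers the claim. This inductive case analysis is where I expect the main difficulty to lie, since it requires a careful simultaneous reading of the edge structure of $\mathfrak{G}$, the three clauses defining $\mathfrak{N}(S)$, and the three cases of the recursion in Lemma~\ref{lem:rho_m}.

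With the key claim in hand, the objective bound follows by a clean double counting. Partition $\mathscr{P}$ into paths avoiding $\mathfrak{N}(S)$ and paths meeting it. For the first class, $\rho_{i_1^P \cdots i_M^P}(S)\, \mu_P \le \kappa_M^P\, \mu_P$; summed over these paths this is at most $\sum_{(M,i,\kappa) \in \mathfrak{V}} \kappa\, g_{M,i,\kappa}$ after adding back the nonnegative contributions of the remaining paths. For the second class, the crude inequality $\rho_{i_1^P \cdots i_M^P}(S) \le r_n$ together with the observation that every such path visits at least one vertex in $\mathfrak{N}(S)$ yields $\sum_{P \text{ meets } \mathfrak{N}(S)} r_n\, \mu_P \le \sum_{(m,i,\kappa) \in \mathfrak{N}(S)} r_n\, g_{m,i,\kappa}$. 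Summing the two bounds completes the proof.
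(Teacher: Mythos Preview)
Your proposal is correct and follows essentially the same approach as the paper: flow decomposition on $\mathfrak{G}$ to obtain path weights, marginalizing over the $\kappa$-coordinates to define $\lambda$, verifying feasibility for \eqref{prob:robust_simplified} (the reference to \eqref{prob:lp_reform_nested} in the statement is a typo), proving by induction on $m$ that $\kappa_m \ge \rho_{i_1 \cdots i_M}(S\cap S_m)$ for every path avoiding $\mathfrak{N}(S)$ via the case split on $i_m \in S$ and $i_m \in \mathcal{B}_m$, and finishing with the same split-and-bound argument on paths that avoid versus meet $\mathfrak{N}(S)$. The paper records the two facts you use (paths project to tuples in $\mathcal{L}$; the inductive bound on $\kappa_m$) as separate claims, but the logical content is identical.
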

\begin{proof}{Proof.}
Let Assumption~\ref{ass:nested} hold, let  $S \subseteq \mathcal{N}_0$ satisfy $S \cap S_1 \neq \emptyset$, and let $(f,g,\epsilon)$ satisfy the following constraints:
\begin{subequations} \label{constraints:simple}
\begin{align}
&&& \sum_{\kappa: (m,i,\kappa) \in {\mathfrak{V}}}  g_{m,i,\kappa}  - \epsilon_{m,i}= v_{m,i} && \forall m \in \mathcal{M}, \; i \in S_m \\
&&& \sum_{i',\kappa': ((m,i,\kappa),(m+1,i',\kappa')) \in {\mathfrak{E}}} f_{m,i,\kappa,i',\kappa'}  = g_{m,i,\kappa} && \forall (m,i,\kappa) \in {\mathfrak{V}}: m \in \{1,\ldots,M-1\} \\
&&& \sum_{i',\kappa': ((m-1,i',\kappa'),(m,i,\kappa)) \in {\mathfrak{E}}} f_{m-1,i',\kappa',i,\kappa}  = g_{m,i,\kappa} && \forall (m,i,\kappa) \in {\mathfrak{V}}: m \in \{2,\ldots,M\}\\
&&& \| \epsilon \| \le \eta \\
&&& \sum_{i,\kappa: (M,i,\kappa) \in \mathfrak{V}} g_{M,i,\kappa} = 1 \\
 &&& f_{m,i,\kappa,i',\kappa'} \ge 0 && \forall ((m,i,\kappa),(m+1,i',\kappa')) \in {\mathfrak{E}}  \\
&&& g_{m,i,\kappa} \ge 0 && \forall (m,i,\kappa) \in {\mathfrak{V}}.
\end{align}
\end{subequations}
 We observe that $f$ can be interpreted as a network flow through the directed acyclic graph $\mathfrak{G} \equiv  (\mathfrak{V},\mathfrak{E})$, in the sense that $f_{m,i,\kappa,i',\kappa'} \ge 0 $ is the units of flow from vertex $(m,i,\kappa) \in \mathfrak{V}$ to vertex $(m+1,i',\kappa') \in \mathfrak{V}$, and the constraints of the form $\sum_{i',\kappa': ((m,i,\kappa),(m+1,i',\kappa')) \in \mathfrak{E}} f_{m,i,\kappa,i',\kappa'}  = g_{m,i,\kappa}$ and  $\sum_{i',\kappa': ((m-1,i',\kappa'),(m,i,\kappa)) \in \mathfrak{E}} f_{m-1,i',\kappa',i,\kappa}  = g_{m,i,\kappa}$ ensure that flow is conserved through each vertex. 
Because $f$ can be interpreted as a network flow in the directed acyclic graph $\mathfrak{G} \equiv (\mathfrak{V},\mathfrak{E})$, it follows from the flow decomposition theorem \citep[Theorem 3.5]{ahuja1988network} that there exists a nonnegative vector $\pi \equiv (\pi_{i_1 \kappa_1 \cdots i_M \kappa_M}: ((1,i_1,\kappa_1),\ldots, (M,i_M,\kappa_M)) \in \mathscr{P} ) \ge 0$ that satisfies the following equalities:
\begin{align}
g_{m,i,\kappa} &= \sum_{ ((1,i_1,\kappa_1),\ldots, (M,i_M,\kappa_M)) \in \mathscr{P} : i_m = i, \kappa_m = \kappa} \pi_{i_1 \kappa_1 \cdots i_M \kappa_M} && \forall (m,i,\kappa) \in \mathfrak{V} \label{line:g_reverse_repeat}\\
f_{m,i,\kappa,i',\kappa'} &= \sum_{ ((1,i_1,\kappa_1),\ldots, (M,i_M,\kappa_M)) \in \mathscr{P} : i_m = i, \kappa_m = \kappa, i_{m+1} = i', \kappa_{m+1} = \kappa'} \pi_{i_1 \kappa_1 \cdots i_M \kappa_M} && ((m,i,\kappa),(m+1,i',\kappa')) \in \mathfrak{E}.\notag % \label{line:f_reversewhy}
\end{align}
%In particular, we observe that each $\pi_{i_1 \kappa_1 \cdots i_M \kappa_M}$ can be interpreted as units of  flow through the path $((1,i_1,\kappa_1),\ldots, (M,i_M,\kappa_M)) \in \mathscr{P}$ in the directed acyclic graph $\mathfrak{G} \equiv (\mathfrak{V},\mathfrak{E})$. 
From this point onward, consider any fixed choice of the nonnegative vector $\pi$ that satisfies the above equalities. Given that choice of $\pi$, let the vector $\lambda \equiv (\pi_{i_1  \cdots i_M}: (i_1,\ldots,i_M) \in \mathcal{L})$ be defined as follows:
\begin{align}
\lambda_{i_1 \cdots i_M} &\triangleq \sum_{\kappa_1,\ldots,\kappa_M:  ((1,i_1,\kappa_1),\ldots, (M,i_M,\kappa_M)) \in \mathscr{P} } \pi_{i_1 \kappa_1 \cdots i_M \kappa_M} \quad \forall   (i_1,\ldots,i_M) \in \mathcal{L}.  \label{line:lambda_reverse}
\end{align}

Our first main step of the proof of  Lemma~\ref{lem:compact_to_bound} consists of showing that $(\lambda,\epsilon)$ is a  feasible  solution for the linear optimization problem~\eqref{prob:robust_simplified}. To show that $(\lambda,\epsilon)$ is a  feasible  solution for the linear optimization problem~\eqref{prob:robust_simplified}, we will utilize the following property of $\mathscr{P}$:
 \begin{claim} \label{claim:decomposition:inL}
If $((1,i_1,\kappa_1),\ldots, (M,i_M,\kappa_M)) \in \mathscr{P}$, then $(i_1,\ldots,i_M) \in \mathcal{L}$. 
\end{claim}
\begin{proof}{Proof of Claim~\ref{claim:decomposition:inL}. }
Consider any  path $((1,i_1,\kappa_1),\ldots, (M,i_M,\kappa_M)) \in \mathscr{P}$. We readily observe from the construction of the graph $\mathfrak{G} \equiv (\mathfrak{V},\mathfrak{E})$ and from the fact that $(1,i_1,\kappa_1) \in \mathfrak{V}$ that $i_1 \in S_1$. Next,  choose any arbitrary $m \in \{2,\ldots,M\}$. Because $(1,i_1,\kappa_1),\ldots,(M,i_M,\kappa_M)$ is a path,  we observe that there must exist a directed edge in the graph $\mathfrak{G} \equiv (\mathfrak{V},\mathfrak{E})$ from  vertex $(m-1,i_{m-1},\kappa_{m-1}) \in \mathfrak{V}$ to vertex $(m,i_{m},\kappa_{m}) \in \mathfrak{V}$.  Therefore, it follows from the definition of  $\mathfrak{E}$ that the inclusion $i_{m} \in \{i_{m-1} \} \cup \mathcal{B}_{m}$ must hold.  Because $m \in \{2,\ldots,M\}$ was chosen arbitrarily, we conclude that $i_1 \in S_1$ and $i_m \in \{i_{m-1} \} \cup \mathcal{B}_m$ for all $m \in \{2,\ldots,M\}$, and so Lemma~\ref{lem:reform_of_L} implies that $(i_1,\ldots,i_M) \in \mathcal{L}$. Our proof of Claim~\ref{claim:decomposition:inL} is thus complete. 
\halmos \end{proof}% with an objective value that is less than or equal to the objective value associated with $(f,g,\epsilon)$ in the linear optimization problem~\eqref{prob:lp_reform_nested}. 
Equipped with Claim~\ref{claim:decomposition:inL}, we now proceed to show that $(\lambda,\epsilon)$ is a  feasible  solution for the linear optimization problem~\eqref{prob:robust_simplified}. Indeed, for each  $m \in \mathcal{M}$ and $i \in S_m$, we observe that 
%We now show that $\pi$ can be transformed into a feasible  solution for the linear optimization problem~\eqref{prob:robust_simplified} with an objective value that is less than or equal to the objective value associated with $(f,g,\epsilon)$ in the linear optimization problem~\eqref{prob:lp_reform_nested}. For each  $m \in \mathcal{M}$ and $i \in S_m$, we observe that 
\begin{align*}
v_{m,i} &= \sum_{\kappa:(m,i,\kappa) \in \mathfrak{V}} g_{m,i,\kappa} - \epsilon_{m,i}\\
  &= \sum_{\kappa:(m,i,\kappa) \in \mathfrak{V}} \left(\sum_{ ((1,i_1,\kappa_1),\ldots, (M,i_M,\kappa_M)) \in \mathscr{P} : i_m = i, \kappa_m = \kappa} \pi_{i_1 \kappa_1 \cdots i_M \kappa_M}  \right)- \epsilon_{m,i} \\
    &= \sum_{ ((1,i_1,\kappa_1),\ldots, (M,i_M,\kappa_M)) \in \mathscr{P} : i_m = i } \pi_{i_1 \kappa_1 \cdots i_M \kappa_M} - \epsilon_{m,i} \\
    &= \sum_{(i_1,\ldots,i_M) \in \mathcal{L}: i_m = i} \left(  \sum_{\kappa_1,\ldots,\kappa_M: ((1,i_1,\kappa_1),\ldots, (M,i_M,\kappa_M)) \in \mathscr{P}} \pi_{i_1 \kappa_1 \cdots i_M \kappa_M}   \right)- \epsilon_{m,i} \\
&=  \sum_{(i_1,\ldots,i_M) \in \mathcal{L}: \; i_m = i} \lambda_{i_1 \cdots i_M} - \epsilon_{m,i},
\end{align*}
where the first equality follows from the fact that  $(f,g,\epsilon)$ satisfies the constraints~\eqref{constraints:simple},  the second equality follows from line~\eqref{line:g_reverse_repeat}, the third equality follows from algebra, the fourth equality follows from algebra and Claim~\ref{claim:decomposition:inL}, and the fifth equality follows from our construction of $\lambda$ on line~\eqref{line:lambda_reverse}. Moreover, we observe that
\begin{align*}
1 &= \sum_{i,\kappa: (M,i,\kappa) \in \mathfrak{V}} g_{m,i,\kappa} \\
&= \sum_{i,\kappa: (M,i,\kappa) \in \mathfrak{V}} \sum_{((1,i_1,\kappa_1),\ldots, (M,i_M,\kappa_M)) \in \mathscr{P}: i_M = i, \kappa_M = \kappa } \pi_{i_1 \kappa_1 \cdots i_M \kappa_M}  \\
&= \sum_{((1,i_1,\kappa_1),\ldots, (M,i_M,\kappa_M)) \in \mathscr{P} } \pi_{i_1 \kappa_1 \cdots i_M \kappa_M}  \\
 &= \sum_{(i_1,\ldots,i_M) \in \mathcal{L}} \left( \sum_{\kappa_1,\ldots,\kappa_M:  ((1,i_1,\kappa_1),\ldots, (M,i_M,\kappa_M)) \in \mathscr{P} } \pi_{i_1 \kappa_1 \cdots i_M \kappa_M}  \right) \\
 &= \sum_{(i_1,\ldots,i_M) \in \mathcal{L}}  \lambda_{i_1 \cdots i_M},
\end{align*}
where the first equality follows from the fact that  $(f,g,\epsilon)$ satisfies the constraints~\eqref{constraints:simple}, the second equality follows from line~\eqref{line:g_reverse_repeat}, the third equality follows from algebra, the fourth equality follows from algebra and Claim~\ref{claim:decomposition:inL}, and the fifth  equality follows from our construction of $\lambda$ on line~\eqref{line:lambda_reverse}. 
Since $\lambda$ is clearly nonnegative, our proof that $(\lambda,\epsilon)$ is a feasible solution for the linear optimization problem~\eqref{prob:robust_simplified} is complete.

The remainder of the proof of Lemma~\ref{lem:compact_to_bound} consists of bounding the objective value associated with $(\lambda,\epsilon)$ in the linear optimization problem~\eqref{prob:robust_simplified}. To develop our bound, we establish the following key property   about  directed acyclic graph $\mathfrak{G} \equiv (\mathfrak{V},\mathfrak{E})$.

% In the following Claim~\ref{claim:decomposition:kappa}, we establish the key property about  directed acyclic graph $\mathfrak{G} \equiv (\mathfrak{V},\mathfrak{E})$ that will allow us to transform  $\pi$ into a feasible solution $(\lambda,\epsilon)$ for the linear optimization problem~\eqref{prob:robust_simplified} with the desired bound on its objective value:

\begin{claim} \label{claim:decomposition:kappa}
If $((1,i_1,\kappa_1),\ldots, (M,i_M,\kappa_M)) \in \mathscr{P}$ and $(m,i_m,\kappa_m) \notin \mathfrak{N}(S)$ for all $m \in \mathcal{M}$, then $\kappa_m \ge \rho_{i_1 \cdots i_M}(S \cap S_m)$ for all $m \in \mathcal{M}$. 
\end{claim}
\begin{proof}{Proof of Claim~\ref{claim:decomposition:kappa}. }
Consider any path $((1,i_1,\kappa_1),\ldots, (M,i_M,\kappa_M)) \in \mathscr{P}$ that satisfies $(m,i_m,\kappa_m) \notin \mathfrak{N}(S)$ for all $m \in \mathcal{M}$. 
Our proof of Claim~\ref{claim:decomposition:kappa} follows from an induction argument, and the induction argument proceeds as follows. 

 In the base case,  we let $m = 1$. In this case, it follows from the fact that $((1,i_1,\kappa_1),\ldots,(M,i_M,\kappa_M)) \in \mathscr{P}$ that the inclusion $(1,i_1,\kappa_1) \in \mathfrak{V}$ holds, and so it follows from the definition of $\mathfrak{V}$ that $\kappa_1 \in \{r_j: j \in \mathcal{B}_1 \}$.  Thus, it follows  from the facts that $\kappa_1 \in \{r_j: j \in \mathcal{B}_1 \}$ and $(1,i_1,\kappa_1) \notin \mathfrak{N}(S)$ that 
 \begin{align}
\kappa_1 \in \begin{cases}
\{ r_{i_1} \}, & \textnormal{if } i_1 \in S,\\
 \{ r_j: j \in  \mathcal{B}_1 \cap S \},&\textnormal{if } i_1 \notin S.
\end{cases}   \label{line:decomposition:kappa:1}
 \end{align}
Therefore, we observe that 
\begin{align*}
\kappa_1 &\ge \begin{cases}
 r_{i_1}, & \textnormal{if } i_1 \in S,\\
\min_{ j \in  \mathcal{B}_1 \cap S} r_j ,&\textnormal{if } i_1 \notin S 
\end{cases}\\
&= \rho_{i_1 \cdots i_M}(S \cap S_1),
\end{align*}
where the inequality follows from line~\eqref{line:decomposition:kappa:1} and the equality follows  from Claim~\ref{claim:decomposition:inL} and  Lemma~\ref{lem:rho_m}. We have shown that $\kappa_1 \ge \rho_{i_1 \cdots i_M}(S \cap S_1)$, which concludes our proof of the base case.

Now choose any arbitrary $m \in \{2,\ldots,M\}$ and assume by induction that the inequality $\kappa_{m'} \ge \rho_{i_1 \cdots i_M}(S \cap S_{m'})$ holds for all $m' \in \{1,\ldots,m-1\}$. In this case, it follows from the fact that  $((1,i_1,\kappa_1),\ldots,(M,i_M,\kappa_M)) \in \mathscr{P}$ that the inclusion $((m-1,i_{m-1},\kappa_{m-1}),(m,i_{m},\kappa_{m})) \in \mathfrak{E}$ holds, and so it follows from the definition of $\mathfrak{E}$ that $\kappa_m \in  \left \{ \kappa_{m-1} \right \} \cup \left \{ r_j: j \in \mathcal{B}_{m} \right \}$. Thus, it follows from the facts that $\kappa_m \in  \left \{ \kappa_{m-1} \right \} \cup \left \{ r_j: j \in \mathcal{B}_{m} \right \}$ and   $(m,i_m,\kappa_m) \notin \mathfrak{N}(S)$  that 
 \begin{align}
\kappa_m \in \begin{cases}
\{ r_{i_m} \}, & \textnormal{if } i_m \in S \text{ and } i_m \in \mathcal{B}_m,\\
\{ \kappa_{m-1} \},&\textnormal{if } i_m \in S \text{ and } i_m \notin \mathcal{B}_m,\\
\{ \kappa_{m-1} \} \cup  \{ r_j: j \in  \mathcal{B}_m \cap S \},&\textnormal{if } i_m \notin S.
\end{cases}   \label{line:decomposition:kappa:induction:1}
 \end{align}
 Therefore,
 \begin{align*}
 \kappa_m &\ge \begin{cases}
 r_{i_m}, & \textnormal{if } i_m \in S \text{ and } i_m \in \mathcal{B}_m,\\
\kappa_{m-1},&\textnormal{if } i_m \in S \text{ and } i_m \notin \mathcal{B}_m,\\
\min \left \{  \kappa_{m-1}, \min \limits_{ j \in  \mathcal{B}_m \cap S} r_j \right \},&\textnormal{if } i_m \notin S
\end{cases}\\
&\ge \begin{cases}
 r_{i_m}, & \textnormal{if } i_m \in S \text{ and } i_m \in \mathcal{B}_m,\\
 \rho_{i_1 \cdots i_M}(S \cap S_{m-1}) ,&\text{if } i_m \in S \text{ and } i_m \notin \mathcal{B}_m,\\
\min \left \{ \rho_{i_1 \cdots i_M}(S \cap S_{m-1}) , \min \limits_{ j \in  \mathcal{B}_m \cap S} r_j \right \},&\textnormal{if } i_m \notin S
\end{cases}\\
&=  \rho_{i_1 \cdots i_M}(S \cap S_{m}),
 \end{align*}
where the first inequality follows from line~\eqref{line:decomposition:kappa:induction:1}, the second inequality follows from the induction hypothesis, and the equality follows  from Claim~\ref{claim:decomposition:inL} and  Lemma~\ref{lem:rho_m}. We have thus shown that $\kappa_m \ge \rho_{i_1 \cdots i_M}(S \cap S_m)$.  This concludes our proof of the induction step, and thus our proof of Claim~\ref{claim:decomposition:kappa} is complete.
\halmos \end{proof}

Equipped with Claims~\ref{claim:decomposition:inL} and \ref{claim:decomposition:kappa},  we now develop the desired bound on   the objective value associated with $(\lambda,\epsilon)$ in the linear optimization problem~\eqref{prob:robust_simplified}:
 \begin{align}
 & \sum_{(i_1,\ldots,i_M) \in \mathcal{L}} \rho_{i_1 \cdots i_M}(S) \lambda_{i_1\cdots i_M} \notag \\
  &=   \sum_{(i_1,\ldots,i_M) \in \mathcal{L}} \rho_{i_1 \cdots i_M}(S) \left( \sum_{\kappa_1,\ldots,\kappa_M:  ((1,i_1,\kappa_1),\ldots, (M,i_M,\kappa_M)) \in \mathscr{P} } \pi_{i_1 \kappa_1 \cdots i_M \kappa_M} \right) \label{line:establishing_inequality:1} \\
  &= \sum_{((1,i_1,\kappa_1),\ldots, (M,i_M,\kappa_M)) \in \mathscr{P} } \rho_{i_1 \cdots i_M}(S)  \pi_{i_1 \kappa_1 \cdots i_M \kappa_M}\label{line:establishing_inequality:2} \\
    &= \sum_{((1,i_1,\kappa_1),\ldots, (M,i_M,\kappa_M)) \in \mathscr{P} } \rho_{i_1 \cdots i_M}(S \cap S_M)  \pi_{i_1 \kappa_1 \cdots i_M \kappa_M} \label{line:establishing_inequality:3}\\
                &= \left( \sum_{((1,i_1,\kappa_1),\ldots, (M,i_M,\kappa_M)) \in \mathscr{P}: (m,i_m,\kappa_m) \notin \mathfrak{N}(S) \; \forall m \in \mathcal{M} } \rho_{i_1 \cdots i_M}(S \cap S_M)  \pi_{i_1 \kappa_1 \cdots i_M \kappa_M}  \right)\notag \\
            & \quad  + \left( \sum_{((1,i_1,\kappa_1),\ldots, (M,i_M,\kappa_M)) \in \mathscr{P}:  \exists m \in \mathcal{M} \textnormal{ such that } (m,i_m,\kappa_m) \in \mathfrak{N}(S) } \rho_{i_1 \cdots i_M}(S \cap S_M)  \pi_{i_1 \kappa_1 \cdots i_M \kappa_M}  \right)  \label{line:establishing_inequality:4}  \\
            &\le \left( \sum_{((1,i_1,\kappa_1),\ldots, (M,i_M,\kappa_M)) \in \mathscr{P}: (m,i_m,\kappa_m) \notin \mathfrak{N}(S) \; \forall m \in \mathcal{M} } \kappa_M \pi_{i_1 \kappa_1 \cdots i_M \kappa_M}  \right) \notag \\
            & \quad  + \left( \sum_{((1,i_1,\kappa_1),\ldots, (M,i_M,\kappa_M)) \in \mathscr{P}: \exists m \in \mathcal{M} \textnormal{ such that } (m,i_m,\kappa_m) \in \mathfrak{N}(S)} \rho_{i_1 \cdots i_M}(S \cap S_M)  \pi_{i_1 \kappa_1 \cdots i_M \kappa_M}  \right)  \label{line:establishing_inequality:5}  \\
                        &\le \left( \sum_{((1,i_1,\kappa_1),\ldots, (M,i_M,\kappa_M)) \in \mathscr{P}: (m,i_m,\kappa_m) \notin \mathfrak{N}(S) \; \forall m \in \mathcal{M} } \kappa_M \pi_{i_1 \kappa_1 \cdots i_M \kappa_M}  \right) \notag \\
            & \quad  + \left( \sum_{((1,i_1,\kappa_1),\ldots, (M,i_M,\kappa_M)) \in \mathscr{P}: \exists m \in \mathcal{M} \textnormal{ such that } (m,i_m,\kappa_m) \in \mathfrak{N}(S)} r_n \pi_{i_1 \kappa_1 \cdots i_M \kappa_M}  \right)\label{line:establishing_inequality:6} \\
                                    &\le \left( \sum_{((1,i_1,\kappa_1),\ldots, (M,i_M,\kappa_M)) \in \mathscr{P}: (m,i_m,\kappa_m) \notin \mathfrak{N}(S) \; \forall m \in \mathcal{M} } \kappa_M \pi_{i_1 \kappa_1 \cdots i_M \kappa_M}  \right) \notag \\
            & \quad  + \left( \sum_{((1,i_1,\kappa_1),\ldots, (M,i_M,\kappa_M)) \in \mathscr{P}} \sum_{m \in \mathcal{M}: (m,i_m,\kappa_m) \in \mathfrak{N}(S)} r_n \pi_{i_1 \kappa_1 \cdots i_M \kappa_M}  \right)  \label{line:establishing_inequality:7}  \\
              &= \left( \sum_{((1,i_1,\kappa_1),\ldots, (M,i_M,\kappa_M)) \in \mathscr{P}: (m,i_m,\kappa_m) \notin \mathfrak{N}(S) \; \forall m \in \mathcal{M} } \kappa_M \pi_{i_1 \kappa_1 \cdots i_M \kappa_M}  \right)  \notag \\
            & \quad  + \left( \sum_{ (m,i,\kappa) \in \mathfrak{N}(S)} r_n \sum_{((1,i_1,\kappa_1),\ldots, (M,i_M,\kappa_M)) \in \mathscr{P}: i_m = i, \kappa_m = \kappa}  \pi_{i_1 \kappa_1 \cdots i_M \kappa_M}  \right) \label{line:establishing_inequality:8}  \\
     &= \left( \sum_{((1,i_1,\kappa_1),\ldots, (M,i_M,\kappa_M)) \in \mathscr{P}: (m,i_m,\kappa_m) \notin \mathfrak{N}(S) \; \forall m \in \mathcal{M} } \kappa_M \pi_{i_1 \kappa_1 \cdots i_M \kappa_M}  \right) +  \sum_{ (m,i,\kappa) \in \mathfrak{N}(S)} g_{m,i,\kappa} \label{line:establishing_inequality:9}  \\
    &\le \left( \sum_{((1,i_1,\kappa_1),\ldots, (M,i_M,\kappa_M)) \in \mathscr{P} } \kappa_M \pi_{i_1 \kappa_1 \cdots i_M \kappa_M}  \right) +  \sum_{ (m,i,\kappa) \in \mathfrak{N}(S)} g_{m,i,\kappa} \label{line:establishing_inequality:10}  \\
        &= \left( \sum_{i,\kappa: (M,i,\kappa) \in \mathfrak{V}}\kappa  \sum_{((1,i_1,\kappa_1),\ldots, (M,i_M,\kappa_M)) \in \mathscr{P}: i_M = i, \kappa_M = \kappa } \pi_{i_1 \kappa_1 \cdots i_M \kappa_M}  \right) +  \sum_{ (m,i,\kappa) \in \mathfrak{N}(S)} g_{m,i,\kappa} \label{line:establishing_inequality:11}  \\
                &=  \sum_{i,\kappa: (M,i,\kappa) \in \mathfrak{V}}\kappa  g_{M,i,\kappa}  +  \sum_{ (m,i,\kappa) \in \mathfrak{N}(S)} g_{m,i,\kappa}. \label{line:establishing_inequality:12} 
  \end{align}
  Indeed, line~\eqref{line:establishing_inequality:1} follows from our construction of $\lambda$ on line~\eqref{line:lambda_reverse}. Line~\eqref{line:establishing_inequality:2} follows from algebra and Claim~\ref{claim:decomposition:inL}. Line~\eqref{line:establishing_inequality:3} follows from Assumption~\ref{ass:nested} and Remark~\ref{rem:N_0}. Line~\eqref{line:establishing_inequality:4} follows from algebra. Line~\eqref{line:establishing_inequality:5} follows from Claim~\ref{claim:decomposition:kappa}.  Line~\eqref{line:establishing_inequality:6} follows from the fact that $\rho_{i_1 \cdots i_M}(S \cap S_M) \le r_n$ for all $(i_1,\ldots,i_M) \in \mathcal{L}$. Lines~\eqref{line:establishing_inequality:7} and \eqref{line:establishing_inequality:8} follows from algebra. Line~\eqref{line:establishing_inequality:9} follows from line~\eqref{line:g_reverse_repeat}.  Lines~\eqref{line:establishing_inequality:10} and \eqref{line:establishing_inequality:11} follow from algebra. Line~\eqref{line:establishing_inequality:12} follows from line~\eqref{line:g_reverse_repeat}.  Our proof of Lemma~\ref{lem:compact_to_bound} is thus complete. 
  \halmos \end{proof}

\subsection{Proof of Proposition~\ref{prop:lp_reform_nested}} \label{appx:lp_reform_nested}
Let Assumption~\ref{ass:nested} hold, and consider any assortment $S \subseteq \mathcal{N}_0$ that satisfies $S \cap S_1 \neq \emptyset$. It follows immediately from Lemma~\ref{lem:simplified_to_compact} from Appendix~\ref{appx:mip:intermediary} that  the optimal objective value of the linear optimization problem~\eqref{prob:robust_simplified} is greater than or equal to the optimal objective value of  the linear optimization problem~\eqref{prob:lp_reform_nested}. To show the other direction, consider any arbitrary feasible solution $(f,g,\epsilon)$ for the linear optimization problem~\eqref{prob:lp_reform_nested}. It follows from Lemma~\ref{lem:compact_to_bound} from Appendix~\ref{appx:mip:intermediary} that there exists a vector $\lambda$ such that $(\lambda,\epsilon)$ is a feasible solution for the linear optimization problem~\eqref{prob:robust_simplified} and 
\begin{align*}
\sum_{(i_1,\ldots,i_M) \in \mathcal{L}} \rho_{i_1 \cdots i_M}(S) \lambda_{i_1 \cdots i_M} \le \sum_{i,\kappa: (M,i,\kappa) \in \mathfrak{V}} \kappa g_{m,i,\kappa} + \sum_{(m,i,\kappa) \in \mathfrak{N}(S)} r_n g_{m,i,\kappa},
\end{align*}
where the definition of the set $\mathfrak{N}(S)$ can be found at the beginning of Appendix~\ref{appx:mip:intermediary}. 
In particular, it follows from the fact that $(f,g,\epsilon)$ is feasible for the linear optimization problem~\eqref{prob:lp_reform_nested} that the equality $g_{m,i,\kappa} = 0$ holds for all $(m,i,\kappa) \in \mathfrak{N}(S)$. Therefore, we conclude that
\begin{align*}
\sum_{(i_1,\ldots,i_M) \in \mathcal{L}} \rho_{i_1 \cdots i_M}(S) \lambda_{i_1 \cdots i_M}& \le \sum_{i,\kappa: (M,i,\kappa) \in \mathfrak{V}} \kappa g_{m,i,\kappa} + \sum_{(m,i,\kappa) \in \mathfrak{N}(S)} r_n 0 \\
&= \sum_{i,\kappa: (M,i,\kappa) \in \mathfrak{V}} \kappa g_{m,i,\kappa}.
\end{align*} 
Since $(f,g,\epsilon)$ was chosen arbitrarily, we conclude that the optimal objective value of the linear optimization problem~\eqref{prob:lp_reform_nested} is greater than or equal to the optimal objective value of  the linear optimization problem~\eqref{prob:robust_simplified}. Our proof of Proposition~\ref{prop:lp_reform_nested} is thus complete. 
 \halmos

\subsection{Proof of Theorem~\ref{thm:nested}} \label{appx:mip:lp_reform_nested}
Let Assumption~\ref{ass:nested} hold and $S_1,\ldots,S_M \in \mathcal{S}$. It follows from the fact that $S_1 \in \mathcal{S}$ that $S \cap S_1 \neq \emptyset$ for all $S \in \mathcal{S}$. Therefore,  it follows from Proposition~\ref{prop:lp_reform_nested} for each assortment $S \in \mathcal{S}$ that the worst-case expected revenue $\min_{\lambda \in \mathcal{U}} \mathscr{R}^\lambda(S)$ is equal to the optimal objective value of the following linear optimization problem: % optimization problem~\eqref{prob:robust} is equivalent to 
 \begin{equation} \tag{\ref{prob:lp_reform_nested}}
% \max_{S \in \mathcal{S}} \left[ 
\begin{aligned}
& \; \underset{f, g, \epsilon}{\textnormal{minimize}} && \sum_{i,\kappa: (M,i,\kappa) \in {\mathfrak{V}}} \kappa  g_{M,i,\kappa}  \\
&\textnormal{subject to}&& \sum_{\kappa: (m,i,\kappa) \in {\mathfrak{V}}}  g_{m,i,\kappa} - \epsilon_{m,i}  = v_{m,i} && \forall m \in \mathcal{M}, \; i \in S_m \\
&&& \sum_{i',\kappa': ((m,i,\kappa),(m+1,i',\kappa')) \in {\mathfrak{E}}} f_{m,i,\kappa,i',\kappa'}  = g_{m,i,\kappa} && \forall (m,i,\kappa) \in {\mathfrak{V}}: m \in \{1,\ldots,M-1\} \\
%&&&  \sum_{\ell,j: (m-1,j,i,\ell) \in \mathfrak{E} } f_{m-1,j,i,\ell} = g_{m,i,k}  && \forall (m,i,k) \in \mathfrak{V} \textnormal{ such that } i \in S \\
&&& \sum_{i',\kappa': ((m-1,i',\kappa'),(m,i,\kappa)) \in {\mathfrak{E}}} f_{m-1,i',\kappa',i,\kappa}  = g_{m,i,\kappa} && \forall (m,i,\kappa) \in {\mathfrak{V}}: m \in \{2,\ldots,M\}\\
 &&& g_{m,i,\kappa} = 0 && \forall  (m,i,\kappa) \in \mathfrak{N}(S)\\
  &&& \sum_{i,\kappa: (M,i,\kappa) \in \mathfrak{V}} g_{M,i,\kappa} = 1 \\
 &&& \| \epsilon \| \le \eta\\
 &&& f_{m,i,\kappa,i',\kappa'} \ge 0 && \forall ((m,i,\kappa),(m+1,i',\kappa')) \in {\mathfrak{E}}  \\
&&& g_{m,i,\kappa} \ge 0 && \forall (m,i,\kappa) \in {\mathfrak{V}},
\end{aligned} 
%\right]. 
\end{equation}
where the definition of the set $\mathfrak{N}(S)$ can be found in the beginning of Appendix~\ref{appx:mip:intermediary}. 

At a high level, our following analysis will consist of two main steps. First, we will use strong duality to transform the linear optimization problem~\eqref{prob:lp_reform_nested} into a maximization problem. Second, we will introduce binary decision variables to represent the assortment $S \in \mathcal{S}$. The key challenge in performing these two main steps is ensuring that the  resulting mixed-integer optimization problem does not have any nonlinear terms. To see why this is a challenge, suppose for the sake of discussion that we introduced a binary decision variable $x_{i} \in \{0,1\}$ for each product $i \in \mathcal{N}_0$ that satisfies $x_i = 1$ if and only if product $i$ is in the assortment $S$. Under this choice of binary decision variables, it is easy to verify using the definition of $\mathfrak{N}(S)$ from the beginning of Appendix~\ref{appx:mip:intermediary} that the constraints in the linear optimization problem~\eqref{prob:lp_reform_nested} of the form 
\begin{align*}
 &&& g_{m,i,\kappa} = 0 && \forall  (m,i,\kappa) \in \mathfrak{N}(S)
\end{align*}
will be satisfied if and only if 
\begin{align*}
 &&& g_{m,i,\kappa} \le 1 -  x_i  && \forall  (m,i,\kappa) \in \mathfrak{V}: i \in \mathcal{B}_m \textnormal{ and } \kappa \neq r_i\\
  &&& g_{m,i,\kappa} \le 1 -  x_i  && \forall  (m,i,\kappa) \in \mathfrak{V}: i \notin \mathcal{B}_m \textnormal{ and } \kappa \in \left \{ r_j: j \in \mathcal{B}_m \right \}\\
    &&& g_{m,i,\kappa} \le  x_j  && \forall  (m,i,\kappa) \in \mathfrak{V} \textnormal{ and } j \in \mathcal{B}_m: \kappa = r_j. 
\end{align*}
However, we observe that the binary decision variables $x \in \{0,1\}^{\mathcal{N}_0}$ appear on the right hand side of the above constraints. Therefore, if one added the above constraints into the linear optimization problem~\eqref{prob:lp_reform_nested}, then the dual of the linear optimization problem~\eqref{prob:lp_reform_nested} would have an objective function in which dual decision variables are multiplied by the binary decision variables $x$. %are multiplied by dual decision variables. 
 
To circumvent the aforementioned challenge and obtain a mixed-integer \emph{linear} optimization reformulation of \eqref{prob:robust}, we will make use of the following Claim~\ref{claim:lp_reform_nested:objective}.  Specifically, the following Claim~\ref{claim:lp_reform_nested:objective} shows that the constraints of the form $g_{m,i,\kappa} = 0\; \forall    (m,i,\kappa) \in \mathfrak{N}(S)$ can  be moved into the objective function of the linear optimization problem~\eqref{prob:lp_reform_nested} without loss of generality. By using the following claim,  we will subsequently be able to obtain a dual of the linear optimization problem~\eqref{prob:lp_reform_nested} in which  the binary decision variables $x \in \{0,1\}^{\mathcal{N}_0}$ are not multiplied by dual decision variables.  The proof of the following  Claim~\ref{claim:lp_reform_nested:objective} follows from similar reasoning as the proof of Proposition~\ref{prop:lp_reform_nested}. % based on the flow decomposition theorem \citep[Theorem 3.5]{ahuja1988network}.  %reformulation of the linear optimization problem~\eqref{prob:lp_reform_nested}:
\begin{claim} \label{claim:lp_reform_nested:objective}
Consider any assortment $S \in \mathcal{S}$, and let $\mathfrak{M}(S) \in \R^{{\mathfrak{V}}}$ be any vector that satisfies the inequality $\mathfrak{M}_{m,i,\kappa}(S) \ge r_n$ for all $(m,i,\kappa) \in \mathfrak{N}(S)$ and satisfies the equality $\mathfrak{M}_{m,i,\kappa}(S) = 0$ for all $(m,i,\kappa) \in {\mathfrak{V}} \setminus \mathfrak{N}(S)$. Then the optimal objective value of the linear optimization problem~\eqref{prob:lp_reform_nested} is equal to the optimal objective value of the following linear optimization problem: 
 \begin{equation} \label{prob:nested:compact:objective}
% \max_{S \in \mathcal{S}} \left[ 
\begin{aligned}
& \; \underset{f, g,\epsilon}{\textnormal{minimize}} && \sum_{i,\kappa: (M,i,\kappa) \in {\mathfrak{V}}} \kappa  g_{M,i,\kappa} + \sum_{(m,i,\kappa) \in {\mathfrak{V}}} \mathfrak{M}_{m,i,\kappa}(S) g_{m,i,\kappa}  \\
&\textnormal{subject to}&& \sum_{\kappa: (m,i,\kappa) \in {\mathfrak{V}}}  g_{m,i,\kappa}  - \epsilon_{m,i}= v_{m,i} && \forall m \in \mathcal{M}, \; i \in S_m \\
&&& \sum_{i',\kappa': ((m,i,\kappa),(m+1,i',\kappa')) \in {\mathfrak{E}}} f_{m,i,\kappa,i',\kappa'}  = g_{m,i,\kappa} && \forall (m,i,\kappa) \in {\mathfrak{V}}: m \in \{1,\ldots,M-1\} \\
&&& \sum_{i',\kappa': ((m-1,i',\kappa'),(m,i,\kappa)) \in {\mathfrak{E}}} f_{m-1,i',\kappa',i,\kappa}  = g_{m,i,\kappa} && \forall (m,i,\kappa) \in {\mathfrak{V}}: m \in \{2,\ldots,M\}\\
&&& \sum_{i,\kappa: (M,i,\kappa) \in \mathfrak{V}} g_{M,i,\kappa} = 1 \\
&&& \| \epsilon \| \le \eta \\
 &&& f_{m,i,\kappa,i',\kappa'} \ge 0 && \forall ((m,i,\kappa),(m+1,i',\kappa')) \in {\mathfrak{E}}  \\
&&& g_{m,i,\kappa} \ge 0 && \forall (m,i,\kappa) \in {\mathfrak{V}}.
\end{aligned} 
%\right]. 
\end{equation} 
\end{claim}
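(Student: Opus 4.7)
{Proof Proposal for Claim~\ref{claim:lp_reform_nested:objective}.}
The plan is to prove equality of the two optimal objective values by establishing the two inequalities separately, using the intermediary lemmas from Appendix~\ref{appx:mip:intermediary} as the workhorses. The easy direction is showing that $\eqref{prob:nested:compact:objective} \le \eqref{prob:lp_reform_nested}$. Indeed, any feasible solution $(f,g,\epsilon)$ for \eqref{prob:lp_reform_nested} satisfies $g_{m,i,\kappa} = 0$ for all $(m,i,\kappa) \in \mathfrak{N}(S)$, which, combined with the hypothesis that $\mathfrak{M}_{m,i,\kappa}(S) = 0$ for all $(m,i,\kappa) \in \mathfrak{V} \setminus \mathfrak{N}(S)$, ensures that the additional penalty term $\sum_{(m,i,\kappa) \in \mathfrak{V}} \mathfrak{M}_{m,i,\kappa}(S) g_{m,i,\kappa}$ vanishes. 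Since  \eqref{prob:nested:compact:objective} is otherwise \eqref{prob:lp_reform_nested} with the constraints  $g_{m,i,\kappa} = 0 \; \forall (m,i,\kappa) \in \mathfrak{N}(S)$ removed, $(f,g,\epsilon)$ is feasible for  \eqref{prob:nested:compact:objective} and attains the same objective value.

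The main direction to establish is $\eqref{prob:lp_reform_nested} \le \eqref{prob:nested:compact:objective}$. Fix any feasible solution $(f,g,\epsilon)$ for \eqref{prob:nested:compact:objective}. I would apply Lemma~\ref{lem:compact_to_bound}, noting that $(f,g,\epsilon)$ satisfies exactly the hypothesis constraints of that lemma, to obtain a vector $\lambda$ such that $(\lambda,\epsilon)$ is feasible for \eqref{prob:robust_simplified} and
\begin{equation*}
\sum_{(i_1,\ldots,i_M) \in \mathcal{L}} \rho_{i_1 \cdots i_M}(S) \lambda_{i_1 \cdots i_M} \;\le\; \sum_{i,\kappa: (M,i,\kappa) \in \mathfrak{V}} \kappa\, g_{M,i,\kappa} + \sum_{(m,i,\kappa) \in \mathfrak{N}(S)} r_n\, g_{m,i,\kappa}.
\end{equation*}
Then I would push $(\lambda,\epsilon)$ back through Lemma~\ref{lem:simplified_to_compact} to construct a feasible solution $(\tilde f, \tilde g, \epsilon)$ for \eqref{prob:lp_reform_nested} whose objective value in \eqref{prob:lp_reform_nested} equals the objective value of $(\lambda,\epsilon)$ in \eqref{prob:robust_simplified}.

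Chaining the two bounds gives
\begin{equation*}
\sum_{i,\kappa: (M,i,\kappa) \in \mathfrak{V}} \kappa\, \tilde g_{M,i,\kappa} \;\le\; \sum_{i,\kappa: (M,i,\kappa) \in \mathfrak{V}} \kappa\, g_{M,i,\kappa} + \sum_{(m,i,\kappa) \in \mathfrak{N}(S)} r_n\, g_{m,i,\kappa} \;\le\; \sum_{i,\kappa: (M,i,\kappa) \in \mathfrak{V}} \kappa\, g_{M,i,\kappa} + \sum_{(m,i,\kappa) \in \mathfrak{V}} \mathfrak{M}_{m,i,\kappa}(S)\, g_{m,i,\kappa},
\end{equation*}
where the final inequality exploits the hypotheses $\mathfrak{M}_{m,i,\kappa}(S) \ge r_n$ on $\mathfrak{N}(S)$, $\mathfrak{M}_{m,i,\kappa}(S) = 0$ elsewhere, together with $g \ge 0$. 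The right-hand side is precisely the objective value of the chosen $(f,g,\epsilon)$ in \eqref{prob:nested:compact:objective}, while the left-hand side is the objective value of a feasible solution of \eqref{prob:lp_reform_nested}. Since the feasible solution $(f,g,\epsilon)$ for \eqref{prob:nested:compact:objective} was arbitrary, we conclude $\eqref{prob:lp_reform_nested} \le \eqref{prob:nested:compact:objective}$, completing the proof.

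The only potentially delicate point is verifying that the hypothesis set of Lemma~\ref{lem:compact_to_bound} matches the feasible set of \eqref{prob:nested:compact:objective}; a side-by-side inspection of the two systems of constraints confirms that they are identical (note that Lemma~\ref{lem:compact_to_bound} does not impose $g_{m,i,\kappa} = 0$ on $\mathfrak{N}(S)$). No new calculations beyond invocation of Lemmas~\ref{lem:simplified_to_compact} and \ref{lem:compact_to_bound} are required. \halmos
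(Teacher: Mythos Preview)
Your proposal is correct and follows essentially the same approach as the paper. The only cosmetic difference is that where the paper closes the loop by invoking Proposition~\ref{prop:lp_reform_nested} (which equates the optimal values of \eqref{prob:robust_simplified} and \eqref{prob:lp_reform_nested}), you instead apply Lemma~\ref{lem:simplified_to_compact} directly to push $(\lambda,\epsilon)$ back to a feasible solution of \eqref{prob:lp_reform_nested}; since Lemma~\ref{lem:simplified_to_compact} is precisely the relevant half of the proof of Proposition~\ref{prop:lp_reform_nested}, the two arguments are equivalent.
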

\begin{proof}{Proof of Claim~\ref{claim:lp_reform_nested:objective}.} Consider any assortment $S \in \mathcal{S}$, and let $\mathfrak{M}(S) \in \R^{{\mathfrak{V}}}$ be any vector that satisfies the inequality $\mathfrak{M}_{m,i,\kappa}(S) \ge r_n$ for all $(m,i,\kappa) \in \mathfrak{N}(S)$ and satisfies the equality $\mathfrak{M}_{m,i,\kappa}(S) = 0$ for all $(m,i,\kappa) \in {\mathfrak{V}} \setminus \mathfrak{N}(S)$.
 We readily observe that the optimal objective value of the linear optimization problem~\eqref{prob:nested:compact:objective} is a lower bound on the optimal objective value of the linear optimization  problem~\eqref{prob:lp_reform_nested}, since the constraints of the form $g_{m,i,\kappa} = 0\; \forall (m,i,\kappa) \in \mathfrak{N}(S)$ from the linear optimization problem~\eqref{prob:lp_reform_nested} have been relaxed and since  the equality $\mathfrak{M}_{m,i,\kappa}(S) = 0$ holds for all $(m,i,\kappa) \in {\mathfrak{V}} \setminus \mathfrak{N}(S)$. To show the other direction, consider any arbitrary  feasible solution $(f,g,\epsilon)$ for the linear optimization problem~\eqref{prob:nested:compact:objective}. Because Assumption~\ref{ass:nested} holds and $S_1 \cap S \neq \emptyset$, it follows from Lemma~\ref{lem:compact_to_bound} from Appendix~\ref{appx:mip:intermediary} that there exists a vector $\lambda$ such that $(\lambda,\epsilon)$ is a feasible solution for the linear optimization problem~\eqref{prob:robust_simplified} and 
\begin{align*}
\sum_{(i_1,\ldots,i_M) \in \mathcal{L}} \rho_{i_1 \cdots i_M}(S) \lambda_{i_1 \cdots i_M} &\le \sum_{i,\kappa: (M,i,\kappa) \in \mathfrak{V}} \kappa g_{m,i,\kappa} + \sum_{(m,i,\kappa) \in \mathfrak{N}(S)} r_n g_{m,i,\kappa} \\
&\le  \sum_{i,\kappa: (M,i,\kappa) \in \mathfrak{V}} \kappa g_{m,i,\kappa} + \sum_{(m,i,\kappa) \in \mathfrak{V}} \mathfrak{M}(S) g_{m,i,\kappa} ,
\end{align*}
where the first inequality follows from Lemma~\ref{lem:compact_to_bound}  and the second inequality follows from our construction of the vector $\mathfrak{M}(S)$. Since  $(f,g,\epsilon)$  was chosen arbitrarily, we have shown that the optimal objective value of the linear optimization problem~\eqref{prob:robust_simplified} is a lower bound on the optimal objective value of the linear optimization  problem~\eqref{prob:nested:compact:objective}. Since we showed in Proposition~\ref{prop:lp_reform_nested} that the optimal objective value of  the linear optimization  problem~\eqref{prob:nested:compact:objective} is equal to the optimal objective value of the linear optimization problem~\eqref{prob:lp_reform_nested}, we conclude that the optimal objective value of the linear optimization problem~\eqref{prob:lp_reform_nested} is a lower bound on the optimal objective value of the linear optimization  problem~\eqref{prob:nested:compact:objective}. Our proof of Claim~\ref{claim:lp_reform_nested:objective} is thus complete. 
\halmos \end{proof}

To simplify our exposition, the remainder of the proof of Theorem~\ref{thm:nested} focuses on the case where $\eta = 0$. Indeed, it is easy to see that our reformulations of the robust optimization problem~\eqref{prob:robust} for the case of $\eta = 0$ can be readily extended to the case of $\eta > 0$ by introducing $\mathcal{O}(\textnormal{poly}(n,M))$ auxiliary decision variables and auxiliary linear constraints in the dual problem. In the case of $\eta = 0$, we observe that the linear optimization problem~\eqref{prob:nested:compact:objective} can be rewritten equivalently as 
 \begin{equation} \tag{\ref{prob:nested:compact:objective}}
% \max_{S \in \mathcal{S}} \left[ 
\begin{aligned}
& \; \underset{f, g}{\textnormal{minimize}} && \sum_{i,\kappa: (M,i,\kappa) \in {\mathfrak{V}}} \kappa  g_{M,i,\kappa} + \sum_{(m,i,\kappa) \in {\mathfrak{V}}} \mathfrak{M}_{m,i,\kappa}(S) g_{m,i,\kappa}  \\
&\textnormal{subject to}&& \sum_{\kappa: (m,i,\kappa) \in {\mathfrak{V}}}  g_{m,i,\kappa}  = v_{m,i} && \forall m \in \mathcal{M}, \; i \in S_m \\
&&& \sum_{i',\kappa': ((m,i,\kappa),(m+1,i',\kappa')) \in {\mathfrak{E}}} f_{m,i,\kappa,i',\kappa'}  = g_{m,i,\kappa} && \forall (m,i,\kappa) \in {\mathfrak{V}}: m \in \{1,\ldots,M-1\} \\
&&& \sum_{i',\kappa': ((m-1,i',\kappa'),(m,i,\kappa)) \in {\mathfrak{E}}} f_{m-1,i',\kappa',i,\kappa}  = g_{m,i,\kappa} && \forall (m,i,\kappa) \in {\mathfrak{V}}: m \in \{2,\ldots,M\}\\
 &&& f_{m,i,\kappa,i',\kappa'} \ge 0 && \forall ((m,i,\kappa),(m+1,i',\kappa')) \in {\mathfrak{E}}  \\
&&& g_{m,i,\kappa} \ge 0 && \forall (m,i,\kappa) \in {\mathfrak{V}}.
\end{aligned} 
%\right]. 
\end{equation} 
We observe from strong duality that the optimal objective value of the linear optimization problem~\eqref{prob:nested:compact:objective} is thus equal to the  optimal objective value of the following linear optimization problem~\eqref{prob:nested:compact:dual}  for every assortment $S \in \mathcal{S}$ and every choice of  $\mathfrak{M}(S) \in \R^{{\mathfrak{V}}}$ that satisfies the inequality $\mathfrak{M}_{m,i,\kappa}(S) \ge r_n$ for all $(m,i,\kappa) \in \mathfrak{N}(S)$ and satisfies the equality $\mathfrak{M}_{m,i,\kappa}(S) = 0$ for all $(m,i,\kappa) \in {\mathfrak{V}} \setminus \mathfrak{N}(S)$: 
 \begin{equation}  \label{prob:nested:compact:dual} 
\begin{aligned}
& \; \underset{\alpha,\beta,\gamma}{\textnormal{maximize}} && \sum_{m \in \mathcal{M}, i \in S_m} v_{m,i} \alpha_{m,i}  \\
&\textnormal{subject to}&&\alpha_{m,i} - \beta_{m,i,\kappa} \mathbb{I} \left \{ m \in \{1,\ldots,M-1\} \right \} - \gamma_{m,i,\kappa} \mathbb{I} \left \{ m \in \{2,\ldots,M\} \right \}\\
&&& \quad \le \kappa \mathbb{I} \left \{ m = M \right \} + \mathfrak{M}_{m,i,\kappa}(S) && \forall (m,i,\kappa) \in {\mathfrak{V}}\\
&&& \beta_{m,i,\kappa} + \gamma_{m+1,i',\kappa'}  \le 0&& \forall ((m,i,\kappa),(m+1,i',\kappa')) \in {\mathfrak{E}}\\
&&& \alpha_{m,i} \in \R && \forall m \in \mathcal{M}, \; i \in S_m\\
&&& \beta_{m,i,\kappa}, \gamma_{m,i,\kappa} \in \R && \forall (m,i,\kappa) \in {\mathfrak{V}}. 
\end{aligned} 
\end{equation}
Consequently,  if   $\mathfrak{M}(\cdot)$ satisfies the inequality $\mathfrak{M}_{m,i,\kappa}(S) \ge r_n$ for all $(m,i,\kappa) \in \mathfrak{N}(S)$ and satisfies the equality $\mathfrak{M}_{m,i,\kappa}(S) = 0$ for all $(m,i,\kappa) \in {\mathfrak{V}} \setminus \mathfrak{N}(S)$ and all assortments $S \in \mathcal{S}$, then 
{\footnotesize
 \begin{align}  %\label{prob:nested:compact:dual} 
&\eqref{prob:robust} \notag \\
 &=\max_{S \in \mathcal{S}}\; \min_{\lambda \in \mathcal{U}}\mathscr{R}^{\lambda}(S) \notag \\
&=  \underset{S \in \mathcal{S}}{\textnormal{max}} \left[\begin{aligned}
& \; \underset{\alpha,\beta,\gamma}{\textnormal{maximize}} && \sum_{m \in \mathcal{M}, i \in S_m} v_{m,i} \alpha_{m,i}  \\
&\textnormal{subject to}&&\alpha_{m,i} - \beta_{m,i,\kappa} \mathbb{I} \left \{ m \in \{1,\ldots,M-1\} \right \} - \gamma_{m,i,\kappa} \mathbb{I} \left \{ m \in \{2,\ldots,M\} \right \}\\
&&& \quad \le \kappa \mathbb{I} \left \{ m = M \right \} + \mathfrak{M}_{m,i,\kappa}(S)&& \forall (m,i,\kappa) \in {\mathfrak{V}}\\
&&& \beta_{m,i,\kappa} + \gamma_{m+1,i',\kappa'}  \le 0&& \forall ((m,i,\kappa),(m+1,i',\kappa')) \in {\mathfrak{E}}\\
&&& \alpha_{m,i} \in \R && \forall m \in \mathcal{M}, \; i \in S_m\\
&&& \beta_{m,i,\kappa}, \gamma_{m,i,\kappa} \in \R && \forall (m,i,\kappa) \in {\mathfrak{V}}
\end{aligned}  \right]  \notag  \\
&= \left[ \begin{aligned}
& \; \underset{S, \alpha,\beta,\gamma}{\textnormal{maximize}} && \sum_{m \in \mathcal{M}, i \in S_m} v_{m,i} \alpha_{m,i}  \\
&\textnormal{subject to}&&\alpha_{m,i} - \beta_{m,i,\kappa} \mathbb{I} \left \{ m \in \{1,\ldots,M-1\} \right \} - \gamma_{m,i,\kappa} \mathbb{I} \left \{ m \in \{2,\ldots,M\} \right \}\\
&&& \quad \le \kappa \mathbb{I} \left \{ m = M \right \} + \mathfrak{M}_{m,i,\kappa}(S) && \forall (m,i,\kappa) \in {\mathfrak{V}}\\
&&& \beta_{m,i,\kappa} + \gamma_{m+1,i',\kappa'}  \le 0&& \forall ((m,i,\kappa),(m+1,i',\kappa')) \in {\mathfrak{E}}\\
&&& \alpha_{m,i} \in \R && \forall m \in \mathcal{M}, \; i \in S_m\\
&&& \beta_{m,i,\kappa}, \gamma_{m,i,\kappa} \in \R && \forall (m,i,\kappa) \in {\mathfrak{V}}\\
&&& S \in \mathcal{S} 
\end{aligned} \right], \label{prob:robust:mip:1}
\end{align}}% 
where the first equality follows from the definition of the robust optimization problem~\eqref{prob:robust}, the second equality follows from Claim~\ref{claim:lp_reform_nested:objective} (which implies that $ \min_{\lambda \in \mathcal{U}}\mathscr{R}^{\lambda}(S)$ is equal to the optimal objective value of the linear optimization problem~\eqref{prob:nested:compact:dual} for all assortments $S \in \mathcal{S}$), and the third equality follows from algebra. 

To transform the  maximization problem in line~\eqref{prob:robust:mip:1} into a mixed-integer optimization problem, we introduce a binary decision variable $x_{i} \in \{0,1\}$ for each product $i \in \mathcal{N}_0$ that satisfies $x_i = 1$ if and only if product $i$ is in the assortment $S$. %We observe that the constraint that $S \in \mathcal{S}$ implies that the equality $x_0 = 1$ must hold. 
Under this equivalence between an assortment $S \in \mathcal{S}$ and the binary decision variables $x \in \{0,1\}^{\mathcal{N}_0}$, it follows from %Claim~\ref{claim:lp_reform_nested:objective} and from
 the definition of $\mathfrak{N}(\cdot)$ that the following equality holds for all assortments $S \in \mathcal{S}$: 
\begin{align}
\mathfrak{N}(S) \triangleq \left \{ (m,i,\kappa) \in {\mathfrak{V}}: \begin{gathered} 
\left[  x_i = 1  \textnormal{ and } i \in \mathcal{B}_m \textnormal{ and } \kappa \neq r_i \right]\\
 \textnormal{ or } \left[  x_i = 1 \textnormal{ and } i \notin \mathcal{B}_m \textnormal{ and } \kappa \in \left\{r_j: j \in \mathcal{B}_m \right \}  \right]\\
 \textnormal{ or } \left[ \textnormal{there exists } j \in \mathcal{B}_m \textnormal{ such that } \kappa = r_j \textnormal{ and } x_j = 0 \right ]
\end{gathered} \right \}. \label{line:defn_N_frak_x}
\end{align} 
Moreover, for each assortment $S \in \mathcal{S}$ and  each $(m,i,\kappa) \in {\mathfrak{V}}$, let us define
\begin{align}
&\mathfrak{M}_{m,i,\kappa}(S) \triangleq  r_n 
x_i \mathbb{I} \left \{ i \in \mathcal{B}_m \textnormal{ and } \kappa \neq r_i \right \} +  r_n 
x_i \mathbb{I} \left \{ i \notin \mathcal{B}_m \textnormal{ and } \kappa \in \{r_{j}: j \in \mathcal{B}_m \}  \right \} + \sum_{j \in \mathcal{B}_m} \mathbb{I} \left \{ \kappa = r_j \right \}r_n (1 - x_j)\label{line:M_better_defn}
\end{align}
% where the above definition uses the fact that $x_i = 1$ if and only if product $i$ is in the assortment $S$. 
It follows immediately from line~\eqref{line:defn_N_frak_x} and from the equivalence between each assortment $S \in \mathcal{S}$ and the binary decision variables $x \in \{0,1\}^{\mathcal{N}_0}$ that the function $\mathfrak{M}(\cdot)$ defined above satisfies the inequality $\mathfrak{M}_{m,i,\kappa}(S) \ge r_n$ for all $(m,i,\kappa) \in \mathfrak{N}(S)$ and satisfies the equality $\mathfrak{M}_{m,i,\kappa}(S) = 0$ for all $(m,i,\kappa) \in {\mathfrak{V}} \setminus \mathfrak{N}(S)$. Thus, it follows from  \eqref{prob:robust:mip:1} that the robust optimization problem~\eqref{prob:robust} is equivalent to 
\begin{equation}\label{prob:robust:mip:2},
\begin{aligned}
& \; \underset{x,\alpha,\beta,\gamma}{\textnormal{maximize}} && \sum_{m \in \mathcal{M}, i \in S_m} v_{m,i} \alpha_{m,i}  \\
&\textnormal{subject to}&&\alpha_{m,i} - \beta_{m,i,\kappa} \mathbb{I} \left \{ m \in \{1,\ldots,M-1\} \right \} - \gamma_{m,i,\kappa} \mathbb{I} \left \{ m \in \{2,\ldots,M\} \right \}\\
&&& \quad \le \kappa \mathbb{I} \left \{ m = M \right \} \\
&&& \quad \quad +  r_n x_i \mathbb{I} \left \{ i \in \mathcal{B}_m \textnormal{ and } \kappa \neq r_i \right \} \\
&&& \quad \quad +  r_n 
x_i \mathbb{I} \left \{ i \notin \mathcal{B}_m \textnormal{ and } \kappa \in \{r_{j}: j \in \mathcal{B}_m \}  \right \} \\
&&&\quad \quad + \sum_{j \in \mathcal{B}_m} \mathbb{I} \left \{ \kappa = r_j \right \}r_n (1 - x_j) && \forall (m,i,\kappa) \in {\mathfrak{V}}\\
&&& \beta_{m,i,\kappa} + \gamma_{m+1,i',\kappa'}  \le 0&& \forall ((m,i,\kappa),(m+1,i',\kappa')) \in {\mathfrak{E}}\\
&&& x_0 = 1\\
&&& \alpha_{m,i} \in \R && \forall m \in \mathcal{M}, \; i \in S_m\\
&&& \beta_{m,i,\kappa}, \gamma_{m,i,\kappa} \in \R && \forall (m,i,\kappa) \in {\mathfrak{V}}\\
&&& x_{i} \in \{0,1\}&& \forall i \in \mathcal{N}_0,
\end{aligned} 
\end{equation}
where we observe that the mixed-integer optimization problem~\eqref{prob:robust:mip:2} is a reformulation of  \eqref{prob:robust:mip:1} in which the assortment $S \in \mathcal{S}$ is represented by binary decision variables $x \in \{0,1\}^{\mathcal{N}_0}$ that satisfy $x_i = 1$ if and only if $i \in S$ and in which $\mathfrak{M}(\cdot)$ is implemented using the definition from line~\eqref{line:M_better_defn}. We note that the constraint $x_0 = 1$ is enforced in the mixed-integer optimization problem~\eqref{prob:robust:mip:2} because $S \in \mathcal{S}$ if and only if $S \subseteq \mathcal{N}_0$ and $0 \in S$. This completes our mixed-integer optimization reformulation of the robust optimization problem~\eqref{prob:robust} in the case of $\eta = 0$. Since a similar mixed-integer optimization reformulation can be obtained for the case of $\eta > 0$, our proof of  Theorem~\ref{thm:nested} is complete.
\halmos

\section{Proofs of Technical Results from \S\ref{sec:question4:bestcase}} \label{appx:optimistic_algorithms}
\subsection{Proof of Theorem~\ref{thm:poly:optimistic}}
Because the proof of Theorem~\ref{thm:poly:optimistic} closely resembles the proof of Theorem~\ref{thm:poly}, we focus in the present appendix on the main areas of distinction. 

\subsubsection{Characterization of Optimal Assortments.} \label{appx:question4:poly:characterization} We begin our proof of Theorem~\ref{thm:poly:optimistic} by characterizing the structure of optimal assortments for the optimistic optimization problem~\eqref{prob:optimistic}.  To do this, we introduce an optimistic version of Definition~\ref{defn:rho} from \S\ref{sec:characterization:proof}. Indeed, for each tuple $(i_1,\ldots,i_M) \in \mathcal{L}$, let $\xi_{i_1 \cdots i_M}(S)$ be defined as   the maximum revenue among the products in the assortment $S $ that can be the most preferred product in $S$ under a ranking that corresponds to the tuple $(i_1,\ldots,i_M)$.
\begin{definition} \label{defn:xi}
$\xi_{i_1 \cdots i_M}(S) \triangleq \max \limits_{i \in S: \; \cap_{m \in \mathcal{M}} \mathcal{D}_{i_m}(S_m) \cap \mathcal{D}_i(S) \neq \emptyset} r_i$. 
\end{definition}
 Equipped with the above notation, the following result shows that the best-case expected revenue of any fixed assortment  can be reformulated as a linear  optimization problem. The proof of the following Proposition~\ref{prop:reform_wc:optimistic} follows from identical reasoning as Proposition~\ref{prop:reform_wc} and is thus omitted. 
\begin{proposition} \label{prop:reform_wc:optimistic}
 For each  $S \in \mathcal{S}$, $\max_{\lambda \in \mathcal{U}} \mathscr{R}^\lambda(S)$ is equal to the optimal objective value of the following linear optimization problem:
\begin{equation} 
\label{prob:optimistic_simplified}
 \begin{aligned}
& \; \underset{\lambda, \epsilon}{\textnormal{maximize}} && \sum_{(i_1,\ldots,i_M) \in \mathcal{L}} \xi_{i_1 \cdots i_M}(S) \lambda_{i_1\cdots i_M}\\
&\textnormal{subject to}&&  \sum_{(i_1,\ldots,i_M) \in \mathcal{L}: \; i_m = i} \lambda_{i_1 \cdots i_M} - \epsilon_{m,i}= v_{m,i} \quad \forall m \in \mathcal{M}, \; i \in S_m\\
&&& \sum_{(i_1,\ldots,i_M) \in \mathcal{L}} \lambda_{i_1 \cdots i_M} = 1 \\
&&& \| \epsilon \| \le \eta\\
&&& \lambda_{i_1 \cdots i_M} \ge 0 \quad \forall (i_1,\ldots,i_M) \in \mathcal{L}.
\end{aligned}
\end{equation}
\end{proposition}
Following identical reasoning and using the same notation as developed in Appendix~\ref{appx:graphical}, we obtain the following graphical interpretation of the quantity $\xi_{i_1 \cdots i_M}(S)$.  In particular, the proof of the following Proposition~\ref{prop:cost_reform:optimistic} follows from identical reasoning as Proposition~\ref{prop:cost_reform} and is thus omitted. 
\begin{proposition} \label{prop:cost_reform:optimistic}
For all $S \in \mathcal{S}$ and $(i_1,\ldots,i_M) \in \mathcal{L}$, 
$\xi_{i_1 \cdots i_M}(S) = \max_{i \in S \cap \mathcal{I}_{i_1 \cdots i_M}(S)} r_i.$
\end{proposition}
Equipped with the graphical interpretation of $\xi_{i_1 \cdots i_M}(S)$ provided by Proposition~\ref{prop:cost_reform:optimistic}, we derive the following optimistic version of Proposition~\ref{prop:plus_one_inequality_prop} from \S\ref{sec:characterization:proof}: 
\begin{proposition}\label{prop:plus_one_inequality_prop:optimistic}
Let $S \in \mathcal{S}$ and $i \notin S$. If  there exists $i^* \in \mathcal{S}$ that satisfies $r_{i^*} > r_{i}$ and $\mathcal{M}_{i^*} \subseteq \mathcal{M}_i$, then $\xi_{i_1 \cdots i_M}(S) \ge \xi_{i_1 \cdots i_M}(S \cup \{i \})$ for each $(i_1,\ldots,i_M) \in \mathcal{L}$. 
%For each assortment $S \in \mathcal{S}$, there exists an assortment $S' \in \widehat{\mathcal{S}}$ which 
% satisfies the inequality $\rho_{i_1 \cdots i_M}(S) \le \rho_{i_1 \cdots i_M} (S' )$ for each $(i_1,\ldots,i_M) \in \mathcal{L}$.
\end{proposition}
\begin{proof}{Proof.}
We begin our proof of  Proposition~\ref{prop:plus_one_inequality_prop:optimistic} by developing an intermediary result, denoted below by Claim~\ref{claim:add_one:optimistic}, that will allow us to compare the values of $\xi_{i_1 \cdots i_M}(S)$ and $\xi_{i_1 \cdots i_M}(S \cup \{i\})$ for every assortment $S \in \mathcal{S}$ and every product $i$ which is not in the assortment.  The proof of the following Claim~\ref{claim:add_one:optimistic}, which uses the graphical interpretation of the quantity $\xi_{i_1 \cdots i_M}(S)$ from Proposition~\ref{prop:plus_one_inequality_prop:optimistic},  follows from identical reasoning as the proof of Claim~\ref{claim:add_one} from Appendix~\ref{appx:prop2} and is thus omitted. 
\begin{claim} \label{claim:add_one:optimistic}
For all $S \in \mathcal{S}$, $(i_1,\ldots,i_M) \in \mathcal{L}$, and $i \notin S$,
\begin{align*}
&\xi_{i_1 \cdots i_M}(S \cup \{i\}) = \begin{cases}
\xi_{i_1 \cdots i_M}(S),&\textnormal{if } i \notin \mathcal{I}_{i_1 \cdots i_M}(S),\\
\max \left \{ \max \limits_{j \in S \cap \mathcal{I}_{i_1 \cdots i_M}(S) \cap \left\{ j' \in \mathcal{N}_0: i \nprec_{i_1 \cdots i_M} j' \right\}} r_j, r_{i} \right \}, &\textnormal{if } i \in \mathcal{I}_{i_1 \cdots i_M}(S).
\end{cases}
\end{align*}
\end{claim}
Using the above intermediary result, we now complete the proof of Proposition~\ref{prop:plus_one_inequality_prop:optimistic}. Indeed, consider any assortment $S\in \mathcal{S}$ and any product $i \notin S$.  Suppose that there exists a product $i^* \in S$ which satisfies $r_{i^*} > r_i$ and $\mathcal{M}_{i^*} \subseteq \mathcal{M}_i$.  For each tuple of products $(i_1,\ldots,i_M) \in \mathcal{L}$, we have two cases to consider:
\begin{itemize}
\item \underline{Case 1:} Suppose that $i \notin \mathcal{I}_{i_1 \cdots i_M}(S)$. In this case, it follows  immediately from Claim~\ref{claim:add_one:optimistic} that $$\xi_{i_1 \cdots i_M}(S \cup \{i\}) = \xi_{i_1 \cdots i_M}(S),$$ 
and so the inequality $\rho_{i_1 \cdots i_M}(S) \le \xi_{i_1 \cdots i_M}(S \cup \{i\})$ holds when  $i \notin \mathcal{I}_{i_1 \cdots i_M}(S)$. 

\vspace{1em}

\item \underline{Case 2:} Suppose that $i \in \mathcal{I}_{i_1 \cdots i_M}(S)$.

  In this case, we begin by showing that $i_m \notin S$ for each $m \in \mathcal{M}_i$. Indeed, consider any past assortment $m \in \mathcal{M}_i$. On one hand, if $i = i_m$, then it follows immediately from the fact that $i \notin S$ that $i_m \notin S$.  On the other hand, if $i \neq i_m$, then it also must be the case that $i_m \notin S$, else we would have a contradiction with the fact that  $i \in \mathcal{I}_{i_1 \cdots i_M}(S)$ and the fact that there is, by our construction of $\mathcal{G}_{i_1 \cdots i_M}$, a directed edge from vertex $i$ to vertex $i_m$. We have thus shown that $i_m \notin S$ for all $m \in \mathcal{M}_i$. 

We next show that $i_m \in \mathcal{I}_{i_1 \cdots i_M}(S)$ for all $m \in \mathcal{M}_i$. Indeed, consider any arbitrary $m \in \mathcal{M}_i$. On one hand, if $i_m = i$, then the  statement  $i_m \in \mathcal{I}_{i_1 \cdots i_M}(S)$ follows immediately from the fact that $i \in \mathcal{I}_{i_1 \cdots i_M}(S)$. On the other hand, if $i_m \neq i$, then it follows from the fact that $i \in \mathcal{I}_{i_1 \cdots i_M}(S)$ and the fact that there is a directed edge from vertex $i$ to vertex $i_m$ that there must not be a directed path from vertex $i_m$ to a vertex  $i_{m'}$ that satisfies $i_{m'} \in S$ for any $m' \in \mathcal{M}$. We have thus shown that   $i_m \in \mathcal{I}_{i_1 \cdots i_M}(S)$ for all $m \in \mathcal{M}_i$.
%\vspace{0.05in}

Using the above results, we now prove that $i^* \in \mathcal{I}_{i_1 \cdots i_M}(S)$. Indeed, we have shown in the above results that $i_m \notin S$ and $i_m \in \mathcal{I}_{i_1 \cdots i_M}(S)$ for all $m \in \mathcal{M}_{i}$. Therefore, it follows from the supposition that $\mathcal{M}_{i^*} \subseteq \mathcal{M}_i$ that $i_m \notin S$ and $i_m \in \mathcal{I}_{i_1 \cdots i_M}(S)$ for all $m \in \mathcal{M}_{i^*}$.  Since we have supposed that $i^* \in S$, it follows from the fact that $i_m \notin S$  for all $m \in \mathcal{M}_{i^*}$ that $i^* \neq i_m$ for all $m \in \mathcal{M}_{i^*}$. Moreover, it follows from the construction of $\mathcal{G}_{i_1 \cdots i_M}$ that all of the outgoing edges from vertex $i^*$ are incoming edges to  vertices $i_m$ for $m \in \mathcal{M}_{i^*}$. Therefore, it follows from the fact that $i_m \in \mathcal{I}_{i_1 \cdots i_M}(S)$ for all $m \in \mathcal{M}_{i^*}$ that  $i^* \in \mathcal{I}_{i_1 \cdots i_M}(S)$. 
%\vspace{0.05in}

We now prove the desired result for Case 2. First, we observe that
\begin{align}
\xi_{i_1 \cdots i_M}(S) = \max_{j \in S \cap \mathcal{I}_{i_1 \cdots i_M}(S)} r_j \ge r_{i^*}, \label{line:rho_woah:optimistic}
\end{align}
where the equality follows from Proposition~\ref{prop:cost_reform:optimistic}  and the inequality follows from our supposition that $i^* \in S$ and because we have shown that  $i^* \in \mathcal{I}_{i_1 \cdots i_M}(S)$. 
Therefore, we have that 
\begin{align*}
\xi_{i_1 \cdots i_M}(S \cup \{i\}) &= \max \left \{ \max \limits_{j \in S \cap \mathcal{I}_{i_1 \cdots i_M}(S) \cap \{ j' \in \mathcal{N}_0: i \nprec_{i_1 \cdots i_M} j' \}} r_j, r_i \right \} \\
&\le \max \left \{   \max \limits_{j \in S \cap \mathcal{I}_{i_1 \cdots i_M}(S)} r_j, r_i \right \} \\
&= \max \left \{  \xi_{i_1 \cdots i_M}(S), r_i \right \} \\
&= \xi_{i_1 \cdots i_M}(S),
\end{align*}
where the first equality follows from Claim~\ref{claim:add_one} and the supposition of Case 2 that $i \in \mathcal{I}_{i_1 \cdots i_M}(S)$, the inequality follows algebra, the second equality follows from  Proposition~\ref{prop:cost_reform:optimistic},  and the final equality holds because of our supposition that $r_{i^*} > r_{i}$ and because of line~\eqref{line:rho_woah:optimistic}, which showed that $ \xi_{i_1 \cdots i_M}(S) \ge r_{i^*}$. This concludes the proof of Case 2. 
\end{itemize}
In both of the above two cases, we showed that $\xi_{i_1 \cdots i_M}(S) \ge \xi_{i_1 \cdots i_M}(S \cup \{i \})$, and so our proof of Proposition~\ref{prop:plus_one_inequality_prop:optimistic} is complete. 
\halmos \end{proof}
In view of the above Proposition~\ref{prop:plus_one_inequality_prop:optimistic}, 
we proceed to develop our main result of Appendix~\ref{appx:question4:poly:characterization}, which is a characterization of the structure of optimal assortments for the optimistic optimization problem~\eqref{prob:optimistic}. To this end, we introduce the following new collection of assortments:
\begin{align*}
\widehat{\mathcal{S}}^{\text{OO}} &\triangleq \left \{ S \in \mathcal{S}:\; \textnormal{if } i^* \in S, \;  0 < r_i < r_{i^*},  \textnormal{ and } \mathcal{M}_{i^*} \subseteq \mathcal{M}_i, \textnormal{ then } i \notin S\right \}.
\end{align*}
Our main result is the following:
\begin{theorem} \label{thm:main:optimistic}
There exists an assortment $S \in \widehat{\mathcal{S}}^{\textnormal{OO}}$ that is optimal for \eqref{prob:optimistic}. 
\end{theorem}
\begin{proof}{Proof.}
Consider any arbitrary assortment $S \in \mathcal{S}$. For this assortment, we define a new assortment as $$S' \triangleq S \setminus \left \{i \in \mathcal{N}: \text{there exists } i^* \in S \textnormal{ such that }  \mathcal{M}_{i^*} \subseteq \mathcal{M}_i \textnormal{ and } r_{i^*} > r_i  \right \}.$$

We first show that this new assortment $S'$ is an element of the collection $\widehat{\mathcal{S}}^{\textnormal{OO}}$. Indeed, it follows from the fact that $S \in \mathcal{S}$ that $0 \in S'$, which implies that $S' \in \mathcal{S}$. Moreover, for every $i^* \in S'$, it follows from the construction of $S'$ that there does not exist $i \in S \setminus \{0\}$ that satisfies $r_i < r_{i^*}$ and $\mathcal{M}_{i^*} \subseteq \mathcal{M}_i$. We thus conclude that $S' \in \widehat{\mathcal{S}}^{\textnormal{OO}}$.

We next show for each product $i \in S' \setminus S$ that there must exist a product $i^* \in S'$ that satisfies  $r_i < r_{i^*}$ and $\mathcal{M}_{i^*} \subseteq \mathcal{M}_i$. Indeed, consider any product $i \in S' \setminus S$.  It follows from the construction of $S'$ that there exists a product $i' \in S$ that satisfies $\mathcal{M}_{i'} \subseteq \mathcal{M}_i$ and $r_i < r_{i'}$.  Let $i^* \triangleq \argmax_{i' \in S: \mathcal{M}_{i'} \subseteq \mathcal{M}_i \text{ and } r_i < r_{i'}} r_{i'}$ be the product with the highest revenue among all products $i'$  that satisfy $\mathcal{M}_{i'} \subseteq \mathcal{M}_i$ and $r_i < r_{i'}$. It follows from the definition of $i^*$ that there does not exist any products $i'' \in S$ that satisfy $\mathcal{M}_{i''} \subseteq \mathcal{M}_{i^*}$ and $r_{i''} > r_{i^*}$. Therefore, we conclude that the product $i^*$ is an element of $S'$.

Finally, let $\{ j_1,\ldots,j_\nu \} \triangleq   S \setminus S'$ denote the products that have been removed from the assortment $S$. We observe for each tuple of products $(i_1,\ldots,i_M) \in \mathcal{L}$ that
\begin{align*}
\xi_{i_1 \cdots i_M}(S') &=  \xi_{i_1 \cdots i_M}(S)  + \sum_{\iota = 1}^{\nu} \left(  \xi_{i_1 \cdots i_M}(S \setminus \{j_1,\ldots,j_\iota \}) - \xi_{i_1 \cdots i_M}(S \setminus \{j_1,\ldots,j_{\iota-1} \}) \right) \le  \xi_{i_1 \cdots i_M}(S).
\end{align*} 
Indeed, the equality follows from algebra.  The  inequality  follows from Proposition~\ref{prop:plus_one_inequality_prop:optimistic} together with the fact that for each $\iota \in \{1,\ldots,\nu \}$, there exists a product $i^* \in S'$ that  satisfies  $r_{j_\iota} < r_{i^*}$ and $\mathcal{M}_{i^*} \subseteq \mathcal{M}_{j_\iota}$.  Since the assortment $S \in \mathcal{S}$ was chosen arbitrarily, our proof of Theorem~\ref{thm:main:optimistic} follows from Proposition~\ref{prop:reform_wc:optimistic}. 
\halmos \end{proof}

\subsubsection{Upper Bound.}  \label{appx:fixed_dim:S:optimistic} As the second step of our proof of Theorem~\ref{thm:poly:optimistic}, we develop an upper bound on the cardinality of the collection of assortments $\widehat{\mathcal{S}}^{\textnormal{OO}}$ that we showed in Theorem~\ref{thm:main:optimistic} contains an optimal assortment for the optimistic optimization problem~\eqref{prob:optimistic}. 

\begin{lemma}\label{lem:fixed_dim:S:optimistic}
$| \widehat{\mathcal{S}}^{\textnormal{OO}} | \le (n+1)^{2^M}$.
\end{lemma}
\begin{proof}{Proof.}
We readily observe that the collection of assortments $\widehat{\mathcal{S}}^{\textnormal{OO}}$ is a subset of
\begin{align*}
\widetilde{\mathcal{S}}^{\textnormal{OO}} \triangleq \left \{ S \in \mathcal{S}:\; \textnormal{if } i^* \in S,\; 0 < r_i <  r_{i^*}, \textnormal{ and } \mathcal{M}_{i^*} = \mathcal{M}_i, \textnormal{ then } i \notin S \right \},
\end{align*}
where we recall from the beginning of \S\ref{sec:characterization} that $\mathcal{M}_i$ is defined as the subset of the past assortments $\mathcal{M} \equiv \{1,\ldots,M\}$ that offered product $i$. 
For each subset of  past assortments $\mathcal{C} \subseteq \mathcal{M}$, let the products that are offered {only} in the assortments in $\mathcal{C}$ (excluding the no-purchase option) be denoted by 
\begin{align*}
\mathcal{N}(\mathcal{C}) &\triangleq 
 \left \{ i \in \mathcal{N}: \mathcal{M}_i = \mathcal{C}  \right \}.
\end{align*}
Equipped with the above definitions, we observe that $\{\mathcal{N}(\mathcal{C}): \mathcal{C} \subseteq \mathcal{M}\}$ is the collection of subsets of products that always appear together in the past assortments, and we readily observe that  $|\{\mathcal{N}(\mathcal{C}): \mathcal{C} \subseteq \mathcal{M}\}| = 2^M$. %{\color{red}We observe that the number of $\mathcal{N}(\mathcal{A})$ which is nonempty is at most $\min \{n, 2^M \}$.}
Therefore, 
\begin{align}
| \widehat{\mathcal{S}}^{\textnormal{OO}}  |  &\le | \widetilde{\mathcal{S}}^{\textnormal{OO}}  | \le \prod_{\mathcal{C} \subseteq \mathcal{M}} \left( \left| \mathcal{N}(\mathcal{C}) \right| + 1 \right)  \le (n+1)^{2^M}.  \label{line:badname:optimistic}
\end{align}
Indeed, the first inequality on line~\eqref{line:badname:optimistic} holds because the collection of assortments $\widehat{\mathcal{S}}^{\textnormal{OO}}$ is a subset of the collection of assortments $\widetilde{\mathcal{S}}^{\textnormal{OO}}$. To see why the second inequality on line~\eqref{line:badname:optimistic} holds, consider any arbitrary subset of past assortments $\mathcal{C} \subseteq \mathcal{M}$, and let the products in $\mathcal{N}(\mathcal{C})$ be indexed in ascending order by revenue; that is, let the products that comprise $\mathcal{N}(\mathcal{C})$ be denoted by $i_1^{\mathcal{C}},\ldots,i_{|\mathcal{N}(\mathcal{C})|}^{\mathcal{C}}$, where $r_{i_1^{\mathcal{C}}} < \cdots < r_{i_{|\mathcal{N}(\mathcal{C})|}^{\mathcal{C}}}$.  Then, it follows from the definition of the collection $\widetilde{\mathcal{S}}^{\textnormal{OO}}$ that every assortment $S \in \widetilde{\mathcal{S}}^{\textnormal{OO}}$  must satisfy the condition $\left| \left\{ i^{\mathcal{C}}_1,\ldots,i^{\mathcal{C}}_{| \mathcal{N}(\mathcal{C})|}  \right \}  \cap S \right|  \le 1$. 
 Since $\mathcal{C} \subseteq \mathcal{M}$ was chosen arbitrarily, we have shown that 
$$\widetilde{\mathcal{S}}^{\textnormal{OO}} \subseteq\left \{ \{0 \} \cup \bigcup_{\mathcal{C}\subseteq \mathcal{M}} \mathcal{F}^{\mathcal{C}}: \; \mathcal{F}^{\mathcal{C}} \in \left \{  \emptyset, \left\{i_{1}^{\mathcal{C}} \right \},\ldots,   \left\{i_{| \mathcal{N}(\mathcal{C})|}^{\mathcal{C}} \right \}  \right \}  \right \},  $$
which proves that the second inequality on line~\eqref{line:badname:optimistic} holds. The third inequality on line~\eqref{line:badname:optimistic} follows from the fact that  $|\{\mathcal{N}(\mathcal{C}): \mathcal{C} \subseteq \mathcal{M}\}| = 2^M$ and from the fact that $|\mathcal{N}(\mathcal{C})| \le  n$ for every $\mathcal{C} \subseteq \mathcal{M}$. We have thus proven that  $| \widehat{\mathcal{S}}^{\textnormal{OO}} |$ is at most $(n+1)^{2^M}$, which concludes our proof of Lemma~\ref{lem:fixed_dim:S:optimistic}. \halmos 
\end{proof}
\subsubsection{Algorithm for Constructing $\widehat{\mathcal{S}}^{\textnormal{OO}}$.}  \label{appx:fixed_dim:S:optimistic:time} As the third step of our proof of Theorem~\ref{thm:poly:optimistic}, we develop an efficient algorithm for constructing the collection of assortments $\widehat{\mathcal{S}}^{\textnormal{OO}}$ that contains an optimal assortment for the optimistic optimization problem~\eqref{prob:optimistic}. %Our algorithm developed here is similar to the algorithm that is developed in the proof of Lemma~\ref{lem:fixed_dim:S:time} in Appendix~\ref{appx:fixed_dim:S:time}. 
Specifically, our main result of Appendix~\ref{appx:fixed_dim:S:optimistic:time} is the following lemma:
\begin{lemma} \label{lem:fixed_dim:S:time:optimistic}
The collection  of assortments $\widehat{\mathcal{S}}^{\textnormal{OO}}$ can be constructed in $\mathcal{O}(n^{2} (M+| \widehat{\mathcal{S}}^{\textnormal{OO}}|))$ time. 
  \end{lemma}
  \begin{proof}{Proof.}

%We begin by using Algorithm~\ref{alg:construct_G} from Appendix~\ref{appx:fixed_dim:S:time} to construct a directed acyclic graph $\mathscr{G}$ that is a transitive closure. We recall from  Appendix~\ref{appx:fixed_dim:S:time}  that  

  We begin by developing an algorithm for constructing a directed graph, denoted by $\mathscr{G}^{\textnormal{OO}} \equiv (\mathscr{V}^{\textnormal{OO}},\mathscr{E}^{\textnormal{OO}})$, in which the set of vertices in the directed graph is defined as $\mathscr{V}^{\textnormal{OO}} \triangleq \mathcal{N}$ and the set of directed edges is defined as $\mathscr{E}^{\textnormal{OO}} \triangleq \{ (i^*,i) \in \mathcal{N} \times \mathcal{N}: \; r_{i^*} > r_{i} \textnormal{ and } \mathcal{M}_{i^*} \subseteq \mathcal{M}_{i} \}$. 
This directed graph  has a natural correspondence with the collection of assortments $\widehat{\mathcal{S}}^{\textnormal{OO}}$, as we readily observe that an assortment satisfies $S \in \widehat{\mathcal{S}}^{\textnormal{OO}}$ if and only if [$0 \in S$] and [we have that $i \notin S$ whenever there exists a product $i^* \in S \setminus \{0 \}$ and a directed edge $(i^*,i) \in \mathscr{E}^{\textnormal{OO}}$]. It is easy to see that this directed graph is acyclic and has the property that a vertex $j \in \mathscr{V}^{\textnormal{OO}}$  is reachable from a vertex $i \in \mathscr{V}^{\textnormal{OO}}$ if and only if there is a directed edge $(i,j) \in \mathscr{E}^{\textnormal{OO}}$ from vertex $i$ to vertex $j$. 
A directed acyclic graph which has the aforementioned property for each pair of vertices is referred to as a \emph{transitive closure}.

\begin{algorithm}[t]
\begin{center}
\fbox{ \begin{minipage}{\linewidth}
%\OneAndAHalfSpacedXI
\begin{center}
\textsc{\underline{Construct-${\mathscr{G}^{\textnormal{OO}}}(\mathscr{M},r)$}}\\
\end{center}
\vspace{1em}
\textbf{Inputs}: 
\begin{itemize}
\item The collection of past assortments, $\mathscr{M}  \equiv \{S_1,\ldots,S_M\}$. 
\item The revenues of the products, $r \equiv (r_0,r_1,\ldots,r_n)$. 
\end{itemize}
\vspace{1em}
\textbf{Output}:
\begin{itemize}
\item  $\mathscr{G}^{\textnormal{OO}} \equiv (\mathscr{V}^{\textnormal{OO}} ,\mathscr{E}^{\textnormal{OO}})$, where $\mathscr{V}^{\textnormal{OO}} \equiv \mathcal{N}$ and $\mathscr{E}^{\textnormal{OO}} \equiv \{ (i^*,i) \in \mathcal{N} \times \mathcal{N}: \; r_{i^*} > r_{i} \textnormal{ and } \mathcal{M}_{i^*} \subseteq \mathcal{M}_{i} \}$. 
\end{itemize} 
\vspace{1em}
\textbf{Procedure}: 
\begin{enumerate}
\item Initialize the vertex set $\mathscr{V}^{\textnormal{OO}} \leftarrow \emptyset$ and edge set $\mathscr{E}^{\textnormal{OO}} \leftarrow \emptyset$. 
\item For each product $i \in \mathcal{N}^{\textnormal{OO}}$: \label{step:iterative_of_i:optimistic}
\begin{enumerate}
\item Update $\mathscr{V}^{\textnormal{OO}} \leftarrow \mathscr{V}^{\textnormal{OO}} \cup \{i \}$. 
\item Construct the set $\mathcal{M}_i$ of past assortments which offered product $i$. 
\end{enumerate}
\item For each pair of products $(i^*,i) \in \mathcal{N} \times \mathcal{N} $:
\begin{enumerate}
\item If $r_{i^*} > r_{i}$ and $\mathcal{M}_{i^*} \subseteq \mathcal{M}_i$:
\begin{enumerate}
\item Update $\mathscr{E}^{\textnormal{OO}} \leftarrow \mathscr{E}^{\textnormal{OO}} \cup \{(i^*,i) \}$
\end{enumerate}
\end{enumerate}
\item Output $\mathscr{G}^{\textnormal{OO}} \equiv (\mathscr{V}^{\textnormal{OO}},\mathscr{E}^{\textnormal{OO}})$ and terminate. 
\end{enumerate}
\end{minipage}}
\end{center}
\caption{A procedure for constructing the directed acyclic graph $\mathscr{G}^{\textnormal{OO}}$.} \label{alg:construct_G:optimistic}
\end{algorithm}

Our algorithm for constructing the directed acyclic graph $\mathscr{G}^{\textnormal{OO}}$  that is a transitive closure is presented in Algorithm~\ref{alg:construct_G:optimistic}. In the algorithm, we first iterate over each product $i \in \mathcal{N}$ and construct the corresponding set $\mathcal{M}_i$ of past assortments which offered that product. We then iterate over each pair of products $(i,i^*) \in \mathcal{N} \times \mathcal{N}$ and check  whether $r_{i^*} > r_{i}$ and $\mathcal{M}_{i^*} \subseteq \mathcal{M}_i$.  It follows from identical reasoning as that given for Algorithm~\ref{alg:construct_G} in Appendix~\ref{appx:fixed_dim:S:time} that Algorithm~\ref{alg:construct_G:optimistic}  has a  total computation time of $\mathcal{O}(n^2 M)$.

We next describe our algorithm for constructing the collection of assortments $\widehat{\mathcal{S}}^{\textnormal{OO}}$ from the directed graph $\mathscr{G}^{\textnormal{OO}}$. This algorithm is  denoted by \textsc{Construct-$\widehat{\mathcal{S}}^{\textnormal{OO}}(\mathscr{M},r)$} and is found in Algorithm~\ref{alg:construct_S:optimistic}.  In this algorithm,  we first use Algorithm~\ref{alg:construct_G:optimistic} to construct the directed acyclic graph $\mathscr{G}^{\textnormal{OO}} \equiv (\mathscr{V}^{\textnormal{OO}},\mathscr{E}^{\textnormal{OO}})$, and then we invoke a recursive subroutine denoted by \textsc{RecursiveStep-OO}$(\mathscr{G}^{\textnormal{OO}})$ in Algorithm~\ref{alg:recursive:optimistic}.   The goal of the recursive subroutine is to take as an input a generic directed acyclic graph $\mathscr{G}^{\textnormal{OO}} \equiv (\mathscr{V}^{\textnormal{OO}}, \mathscr{E}^{\textnormal{OO}})$, and for that graph, output the collection of subsets of vertices $\mathscr{A}^{\textnormal{OO}} \equiv \{S \subseteq \mathscr{V}^{\textnormal{OO}}:  \text{ if $i \in S$ and $(i,j) \in \mathscr{E}^{\textnormal{OO}}$, then $j \notin S$}\}$. Algorithm~\ref{alg:construct_S:optimistic} concludes by adding the no-purchase option $0$ into each of the subsets of vertices from $\mathscr{A}^{\textnormal{OO}}$.  The correctness of Algorithm~\ref{alg:construct_S:optimistic} follows immediately from our earlier observation that an assortment satisfies $S \in \widehat{\mathcal{S}}^{\textnormal{OO}}$ if and only if  [$0 \in S$] and [we have that $i \notin S$ whenever there exists a product $i^* \in S \setminus \{0 \}$ and a directed edge $(i^*,i) \in \mathscr{E}^{\textnormal{OO}}$].

   At a high level, the recursive subroutine in Algorithm~\ref{alg:recursive:optimistic} is comprised of two cases. The base case of the subroutine is when the graph has no vertices, in which case it is clear that $\mathscr{A}^{\textnormal{OO}} =\{ \emptyset \}$. If we are not in the base case, then the aim of the recursive subroutine is to construct the collections $ \left \{ S \in \mathscr{A}^{\textnormal{OO}}: i \notin S \right \} $ and $\left \{ S \in \mathscr{A}^{\textnormal{OO}}: i \in S \right \}$ for a chosen vertex $i \in \mathscr{V}^{\textnormal{OO}}$ and output the union of these two collections. The construction of the collection $ \left \{ S \in \mathscr{A}^{\textnormal{OO}}: i \notin S \right \} $ takes place on lines~\eqref{step:construct_A1:1:optimistic}-\eqref{step:construct_A1:2:optimistic} of Algorithm~\ref{alg:recursive:optimistic}, and the construction of the collection $ \left \{ S \in \mathscr{A}^{\textnormal{OO}}: i \in S \right \} $ takes place on lines~\eqref{step:construct_A2:1:optimistic}-\eqref{step:construct_A2:3:optimistic} of Algorithm~\ref{alg:recursive:optimistic}. 

Up to this point, we have established that Algorithm~\ref{alg:construct_G:optimistic} is correct (that is,  it delivers the desired output for any valid input), and we have established that Algorithm~\ref{alg:construct_S:optimistic} is correct under the assumption that Algorithm~\ref{alg:recursive:optimistic} is correct. Therefore, it remains for us to prove that  the recursive subroutine in Algorithm~\ref{alg:recursive:optimistic} is correct. To prove the correctness of the recursive subroutine, we will make use of four intermediary claims, which are denoted below by  Claims~\ref{claim:appx:idk_intro:optimistic}-\ref{claim:appx:idk_3:optimistic}. The purpose of the first two intermediary claims, denoted by Claims~\ref{claim:appx:idk_intro:optimistic} and \ref{claim:appx:idk:optimistic}, is to show that the graphs $\mathscr{G}^{\textnormal{OO}'} \equiv (\mathscr{V}^{\textnormal{OO}'}, \mathscr{E}^{\textnormal{OO}'})$ and $\mathscr{G}^{\textnormal{OO}''} \equiv (\mathscr{V}^{\textnormal{OO}''}, \mathscr{E}^{\textnormal{OO}''})$  constructed on lines~\eqref{step:construct_A1:1:optimistic} and \eqref{step:construct_A2:1:optimistic} of Algorithm~\ref{alg:recursive:optimistic} are directed acyclic graphs that are transitive closures, which implies that $\mathscr{G}^{\textnormal{OO}'}$ and $\mathscr{G}^{\textnormal{OO}''}$ are valid inputs on lines~\eqref{step:construct_A1:2:optimistic} and \eqref{step:construct_A2:2:optimistic} of  Algorithm~\ref{alg:recursive:optimistic}. The purpose of the second two intermediary claims, denoted by Claims~\ref{claim:appx:idk_2:optimistic} and \ref{claim:appx:idk_3:optimistic}, is to show that the union of the two collections $ \mathscr{A}^{\textnormal{OO}'}$ and $ \mathscr{A}^{\textnormal{OO}'''}$ constructed on lines~\eqref{step:construct_A1:2:optimistic} and \eqref{step:construct_A2:3:optimistic} of Algorithm~\ref{alg:recursive:optimistic} provides the desired output on line~\eqref{step:construct_A:optimistic} of Algorithm~\ref{alg:recursive:optimistic}.   
\begin{algorithm}[t] 
\begin{center}
\fbox{ \begin{minipage}{\linewidth}
%\OneAndAHalfSpacedXI
\begin{center}
\textsc{\underline{Construct-$\widehat{\mathcal{S}}^{\textnormal{OO}}(\mathscr{M},r)$}}\\
\end{center}
\vspace{1em}
\textbf{Inputs}: 
\begin{itemize}
\item The collection of past assortments, $\mathscr{M}  \equiv \{S_1,\ldots,S_M\}$. 
\item The revenues of the products, $r \equiv (r_0,r_1,\ldots,r_n)$. 
\end{itemize}
\vspace{1em}
\textbf{Output}:
\begin{itemize}
\item  The collection of assortments $\widehat{\mathcal{S}}^{\textnormal{OO}}$ corresponding to the collection of past assortments $\mathscr{M}$ and the revenues $r$. 
\end{itemize} 
\vspace{1em}
\textbf{Procedure}: 
\begin{enumerate}
\item Construct the directed acyclic graph $\mathscr{G}^{\textnormal{OO}} \leftarrow \textsc{Construct-${\mathscr{G}}^{\textnormal{OO}}(\mathscr{M},r)$}$.  \label{step:construct_S:1:optimistic}
\item Compute the collection of assortments $\widehat{\mathcal{S}}^{\textnormal{OO}} \leftarrow \textsc{RecursiveStep-OO$(\mathscr{G}^{\textnormal{OO}})$}$.  \label{step:construct_S:2:optimistic}
\item Output $\{ S \cup \{0\}: S \in \widehat{\mathcal{S}}^{\textnormal{OO}}\}$ and terminate. 
\end{enumerate}
\vspace{1em}
\end{minipage}}
%\vspace{1em}
\end{center}
\caption{A procedure for constructing $\widehat{\mathcal{S}}^{\textnormal{OO}}$.} \label{alg:construct_S:optimistic}
\end{algorithm}

\begin{algorithm}[t]
\begin{center}
\fbox{ \begin{minipage}{\linewidth}
\begin{center}
\textsc{\underline{RecursiveStep-OO$(\mathscr{G}^{\textnormal{OO}})$}}\\
\end{center}
\vspace{1em}
\textbf{Inputs}: 
\begin{itemize}
\item A directed acyclic graph $\mathscr{G}^{\textnormal{OO}} \equiv (\mathscr{V}^{\textnormal{OO}}, \mathscr{E}^{\textnormal{OO}})$ that is a transitive closure. 
\end{itemize}
\vspace{1em}
\textbf{Output}:
\begin{itemize}
\item The collection $\mathscr{A}^{\textnormal{OO}} \equiv \{S \subseteq \mathscr{V}^{\textnormal{OO}}:  \text{ if $i \in S$ and $(i,j) \in \mathscr{E}^{\textnormal{OO}} $, then $j \notin S$}\}$. \end{itemize} 
\vspace{1em}
\textbf{Procedure}: 
\begin{enumerate}
\item If $\mathscr{V}^{\textnormal{OO}} = \emptyset$:
\begin{enumerate}
\item Output the collection $\mathscr{A}^{\textnormal{OO}} \equiv\{ \emptyset \}$ and terminate. 
\end{enumerate}
\item Otherwise:  %\label{step:initialize}% the following: 
\begin{enumerate}
%\item Initialize an empty collection $\mathscr{A} \leftarrow \{\}$. % and let $\mathscr{E}_i$ be the outgoing edges from vertex $i$.
\item Choose any vertex  $i \in \mathscr{V}^{\textnormal{OO}} $.% that has no incoming edges. 
\label{step:choose_vertex:optimistic} 

\item  
Create a copy of $\mathscr{G}^{\textnormal{OO}}  \equiv (\mathscr{V}^{\textnormal{OO}},\mathscr{E}^{\textnormal{OO}})$ in which the vertex $i$ and its incoming and outgoing edges  are removed. Denote this new graph by $\mathscr{G}^{\textnormal{OO}'} \equiv (\mathscr{V}^{\textnormal{OO}'} \mathscr{E}^{\textnormal{OO}'})$.\label{step:construct_A1:1:optimistic}
\item Compute the collection $\mathscr{A}^{\textnormal{OO}'} \leftarrow \textsc{RecursiveStep-OO$(\mathscr{G}^{\textnormal{OO}'})$}$. \label{step:construct_A1:2:optimistic}

\item 
Create a copy of $\mathscr{G}^{\textnormal{OO}} \equiv (\mathscr{V}^{\textnormal{OO}},\mathscr{E}^{\textnormal{OO}})$ in which the vertices $\{i \} \cup  \{\ell: (i,\ell) \in \mathscr{E}^{\textnormal{OO}} \} \cup \{ \ell: (\ell,i) \in \mathscr{E}^{\textnormal{OO}} \}$ and the incoming and outgoing edges of these vertices are removed. Denote this new graph by $\mathscr{G}^{\textnormal{OO}''} \equiv (\mathscr{V}^{\textnormal{OO}''}, \mathscr{E}^{\textnormal{OO}''})$.  \label{step:construct_A2:1:optimistic}
\item Compute the collection $\mathscr{A}^{\textnormal{OO}''} \leftarrow \textsc{RecursiveStep$(\mathscr{G}^{\textnormal{OO}''})$}$.\label{step:construct_A2:2:optimistic}
\item Compute the collection $\mathscr{A}^{\textnormal{OO}'''} \leftarrow \{S \cup \{i \}:  S \in \mathscr{A}^{\textnormal{OO}''} \}$.  \label{step:construct_A2:3:optimistic}

\item Output the collection $\mathscr{A}^{\textnormal{OO}} \equiv\mathscr{A}^{\textnormal{OO}'} \cup \mathscr{A}^{\textnormal{OO}'''}$ and terminate.  \label{step:construct_A:optimistic}
\end{enumerate}
\end{enumerate}
\vspace{1em}
\end{minipage}}
%\vspace{1em}
\end{center}
\caption{A recursive subroutine which is used in Algorithm~\ref{alg:construct_S:optimistic}. }\label{alg:recursive:optimistic}
\end{algorithm}%
%\null
%\vfill
%\clearpage
%}

\begin{claim} \label{claim:appx:idk_intro:optimistic}
Let  $\mathscr{G}^{\textnormal{OO}} \equiv (\mathscr{V}^{\textnormal{OO}},\mathscr{E}^{\textnormal{OO}})$ be a directed acyclic graph that is a transitive closure. For notational convenience, let  $\mathscr{E}_i^{\textnormal{OO}}$ denote the set of incoming and outgoing edges from each vertex $i \in \mathscr{V}^{\textnormal{OO}}$. Then for each vertex $i \in \mathscr{V}^{\textnormal{OO}}$, we have that  $\tilde{\mathscr{G}}^{\textnormal{OO}} \equiv (\mathscr{V}^{\textnormal{OO}} \setminus  \{i \}, \mathscr{E}^{\textnormal{OO}} \setminus  \mathscr{E}_i^{\textnormal{OO}})$ is a directed acyclic graph that is a transitive closure.  
\end{claim}

\begin{claim} \label{claim:appx:idk:optimistic}
Let  $\mathscr{G}^{\textnormal{OO}} \equiv (\mathscr{V}^{\textnormal{OO}},\mathscr{E}^{\textnormal{OO}})$ be a directed acyclic graph that is a transitive closure. For notational convenience, let  $\mathscr{E}_i^{\textnormal{OO}}$ denote the set of incoming and outgoing edges from each vertex $i \in \mathscr{V}^{\textnormal{OO}}$. Then for each subset of vertices $\mathscr{B}^{\textnormal{OO}} \subseteq \mathscr{V}^{\textnormal{OO}}$, we have that  $\tilde{\mathscr{G}}^{\textnormal{OO}} \equiv (\mathscr{V}^{\textnormal{OO}} \setminus \mathscr{B}^{\textnormal{OO}}, \mathscr{E}^{\textnormal{OO}} \setminus (\cup_{i \in \mathscr{B}^{\textnormal{OO}}} \mathscr{E}_i^{\textnormal{OO}}))$ is a directed acyclic graph that is a transitive closure.  
\end{claim}

\begin{claim}\label{claim:appx:idk_2:optimistic}
Let  $\mathscr{G}^{\textnormal{OO}} \equiv (\mathscr{V}^{\textnormal{OO}},\mathscr{E}^{\textnormal{OO}})$ be a directed acyclic graph that is a transitive closure. For notational convenience, let  $\mathscr{E}_i^{\textnormal{OO}}$ denote the set of incoming and outgoing edges from each vertex $i \in \mathscr{V}^{\textnormal{OO}}$. For each vertex $i \in \mathscr{V}^{\textnormal{OO}}$, %we have that
\begin{align*}
  &\left \{S \subseteq \mathscr{V}^{\textnormal{OO}} \setminus \{i \}:  \textnormal{ if $k \in S$ and $(k,j) \in \mathscr{E}^{\textnormal{OO}}$, then $j \notin S$} \right\} \notag \\
        & = \left \{ S \subseteq \mathscr{V}^{\textnormal{OO}} \setminus \{ i \}: \textnormal{if } k \in  S  \textnormal{ and } (k,j) \in \mathscr{E}^{\textnormal{OO}} \setminus \mathscr{E}_i^{\textnormal{OO}}, \textnormal{ then } j \notin S \right \}.% \label{line:remove_i_from_set:optimistic}
\end{align*}
\end{claim}

\begin{claim}\label{claim:appx:idk_3:optimistic}
Let  $\mathscr{G}^{\textnormal{OO}} \equiv (\mathscr{V}^{\textnormal{OO}},\mathscr{E}^{\textnormal{OO}})$ be a directed acyclic graph that is a transitive closure. For notational convenience, let  $\mathscr{E}_i^{\textnormal{OO}}$ denote the set of incoming and outgoing edges from each vertex $i \in \mathscr{V}^{\textnormal{OO}}$. For each vertex $i \in \mathscr{V}^{\textnormal{OO}}$,
\begin{align*}
  &\left \{S \subseteq \mathscr{V}^{\textnormal{OO}}: \left[ i \in S \right] \textnormal{ and } \left[  \textnormal{if $k \in S$ and $(k,j) \in \mathscr{E}^{\textnormal{OO}}$, then $j \notin S$} \right] \right\} \notag \\
        & = \left \{ \begin{aligned}
        &S' \cup \{i \}:\\
        &\quad S' \in \left \{ \begin{aligned}
        &S  \subseteq \mathscr{V}^{\textnormal{OO}} \setminus \left( \{i \} \cup \left\{ \ell: (i,\ell) \in \mathscr{E}^{\textnormal{OO}} \right \}  \cup \left\{ \ell: (\ell,i) \in \mathscr{E}^{\textnormal{OO}} \right \} \right): \\
        &\quad  \textnormal{if } k \in  S  \textnormal{ and } (k,j) \in \mathscr{E}^{\textnormal{OO}} \setminus \left( \mathscr{E}_i^{\textnormal{OO}}  \cup \bigcup_{\ell: (i,\ell) \in \mathscr{E}^{\textnormal{OO}} } \mathscr{E}_\ell^{\textnormal{OO}}  \cup \bigcup_{\ell: (\ell,i) \in \mathscr{E}^{\textnormal{OO}} } \mathscr{E}_\ell^{\textnormal{OO}} \right), \textnormal{ then } j \in S \end{aligned}  \right \} \end{aligned} \right \}.%\label{line:add_i_to_set_goofy:optimistic}
\end{align*}
\end{claim}

\begin{proof}{Proof.} The proofs of Claims~\ref{claim:appx:idk_intro:optimistic}, \ref{claim:appx:idk:optimistic}, \ref{claim:appx:idk_2:optimistic}, and \ref{claim:appx:idk_3:optimistic} follows from identical reasoning as the proofs of Claims~\ref{claim:appx:idk_intro}, \ref{claim:appx:idk}, \ref{claim:appx:idk_2}, and \ref{claim:appx:idk_3} from Appendix~\ref{appx:fixed_dim:S:time} and are thus omitted. 
 \halmos
\end{proof}

Using the above intermediary claims, we now prove the correctness of the recursive subroutine in Algorithm~\ref{alg:recursive:optimistic}. Indeed,  it is clear that the recursive subroutine yields the correct output in the base case where $\mathscr{V}^{\textnormal{OO}} = \emptyset$.  Next, let us assume by induction that the recursive subroutine yields the correct output for all valid input graphs with up to $p-1$ vertices, and consider any valid input $\mathscr{G}^{\textnormal{OO}} \equiv (\mathscr{V}^{\textnormal{OO}},\mathscr{E}^{\textnormal{OO}})$ for which the number of vertices is $|\mathscr{V}^{\textnormal{OO}}| = p$.  Let $i \in \mathscr{V}^{\textnormal{OO}}$ denote the vertex from this graph which is chosen in line~\eqref{step:choose_vertex} of Algorithm~\ref{alg:recursive}, where the existence of such a vertex follows from the fact that we are in the case where $\mathscr{V}^{\textnormal{OO}} \neq \emptyset$. It follows immediately from Claim~\ref{claim:appx:idk}  that the graphs $\mathscr{G}^{\textnormal{OO}'} \equiv (\mathscr{V}^{\textnormal{OO}'}, \mathscr{E}^{\textnormal{OO}'})$ and $\mathscr{G}^{\textnormal{OO}''} \equiv (\mathscr{V}^{\textnormal{OO}''}, \mathscr{E}^{\textnormal{OO}''})$  constructed on lines~\eqref{step:construct_A1:1:optimistic} and \eqref{step:construct_A2:1:optimistic} of Algorithm~\ref{alg:recursive:optimistic} are directed acyclic graphs which are transitive closures, which implies that these graphs are valid inputs to Algorithm~\ref{alg:recursive:optimistic} in lines~\eqref{step:construct_A1:2:optimistic} and \eqref{step:construct_A2:2:optimistic}. Therefore, it follows from the induction hypothesis and lines~\eqref{step:construct_A1:2:optimistic}, \eqref{step:construct_A2:2:optimistic}, and \eqref{step:construct_A2:3:optimistic} of Algorithm~\ref{alg:recursive:optimistic} that
\begin{align*}
\mathscr{A}^{\textnormal{OO}'} &=  \left \{ S \subseteq \mathscr{V}^{\textnormal{OO}} \setminus \{ i \}: \textnormal{if } k \in  S  \textnormal{ and } (k,j) \in \mathscr{E}^{\textnormal{OO}} \setminus \mathscr{E}_i^{\textnormal{OO}}, \textnormal{ then } j \notin S \right \} \\%\left \{ S \subseteq \mathscr{V} \setminus (\{ i \} \cup \{\ell: (\ell,i) \in \mathscr{E} \}): \textnormal{if } k \in  S  \textnormal{ and } (k,j) \in \mathscr{E} \setminus \left( \mathscr{E}_i \cup \bigcup_{\ell: (\ell,i) \in \mathscr{E}} \mathscr{E}_\ell \right), \textnormal{ then } j \in S \right \}\\
\mathscr{A}^{\textnormal{OO}'''} &= \left \{ \begin{aligned}
        &S' \cup \{i \}:\\
        &\quad S' \in \left \{ \begin{aligned}
        &S  \subseteq \mathscr{V}^{\textnormal{OO}} \setminus \left( \{i \} \cup \left\{ \ell: (i,\ell) \in \mathscr{E}^{\textnormal{OO}} \right \}  \cup \left\{ \ell: (\ell,i) \in \mathscr{E}^{\textnormal{OO}} \right \} \right): \\
        &\quad  \textnormal{if } k \in  S  \textnormal{ and } (k,j) \in \mathscr{E}^{\textnormal{OO}} \setminus \left( \mathscr{E}_i^{\textnormal{OO}}  \cup \bigcup_{\ell: (i,\ell) \in \mathscr{E}^{\textnormal{OO}} } \mathscr{E}_\ell^{\textnormal{OO}}  \cup \bigcup_{\ell: (\ell,i) \in \mathscr{E}^{\textnormal{OO}} } \mathscr{E}_\ell^{\textnormal{OO}} \right), \textnormal{ then } j \in S \end{aligned}  \right \} \end{aligned} \right \}\\
\end{align*}
where the induction hypothesis can be applied because  $\mathscr{G}^{\textnormal{OO}'} \equiv (\mathscr{V}^{\textnormal{OO}'}, \mathscr{E}^{\textnormal{OO}'})$  and $\mathscr{G}^{\textnormal{OO}''}\equiv (\mathscr{V}^{\textnormal{OO}''}, \mathscr{E}^{\textnormal{OO}''})$ are valid inputs to Algorithm~\ref{alg:recursive:optimistic} and because $| \mathscr{V}^{\textnormal{OO}'}| \le p-1$ and $| \mathscr{V}^{\textnormal{OO}''}| \le p-1$.  Therefore, it follows from Claims~\ref{claim:appx:idk_2:optimistic} and \ref{claim:appx:idk_3:optimistic} that
 \begin{align*}
 \mathscr{A}^{\textnormal{OO}'}&=  \left \{S \subseteq \mathscr{V}^{\textnormal{OO}} \setminus \{i \}:  \textnormal{ if $k \in S$ and $(k,j) \in \mathscr{E}^{\textnormal{OO}}$, then $j \notin S$} \right\} \\
\mathscr{A}^{\textnormal{OO}'''} &= \left \{S \subseteq \mathscr{V}^{\textnormal{OO}}: \left[ i \in S \right] \textnormal{ and } \left[  \textnormal{if $k \in S$ and $(k,j) \in \mathscr{E}^{\textnormal{OO}}$, then $j \notin S$} \right] \right\},
 \end{align*}
 which proves that the output of Algorithm~\ref{alg:recursive:optimistic} is
 \begin{align*}
 \mathscr{A}^{\textnormal{OO}'} \cup  \mathscr{A}^{\textnormal{OO}'''}=  \left \{S \subseteq \mathscr{V}^{\textnormal{OO}}:  \textnormal{ if $k \in S$ and $(k,j) \in \mathscr{E}^{\textnormal{OO}}$, then $j \notin S$} \right\}.
 \end{align*}
 This completes our proof of the correctness of Algorithm~\ref{alg:recursive:optimistic}.
 
We conclude Appendix~\ref{appx:fixed_dim:S:optimistic:time} by analyzing the computation time of Algorithm~\ref{alg:construct_S:optimistic}. Indeed, we recall that the computation time required for line~\eqref{step:construct_S:1:optimistic} in Algorithm~\ref{alg:construct_S:optimistic} is  $\mathcal{O}(n^2M)$. In what follows, we  assume that all directed graphs are stored as {adjacency lists}.  Under this assumption, our analysis of the computation time for line~\eqref{step:construct_S:2:optimistic} in Algorithm~\ref{alg:recursive:optimistic} is split into the following two intermediary claims, denoted by Claim~\ref{claim:mainwork:optimistic} and \ref{claim:recursivework:optimistic}. In our first intermediary claim, presented below as Claim~\ref{claim:mainwork:optimistic}, we establish the computation time required for  lines~\eqref{step:construct_A1:1:optimistic}, \eqref{step:construct_A2:1:optimistic}, \eqref{step:construct_A2:3:optimistic}, and \eqref{step:construct_A:optimistic} of Algorithm~\ref{alg:recursive:optimistic}. The proof of the following Claim~\ref{claim:mainwork:optimistic} follows from similar reasoning as the proof of Claim~\ref{claim:mainwork} from Appendix~\ref{appx:fixed_dim:S:time} and is thus omitted. 
 \begin{claim} \label{claim:mainwork:optimistic}
 If  $\mathscr{G}^{\textnormal{OO}} \equiv (\mathscr{V}^{\textnormal{OO}},\mathscr{E}^{\textnormal{OO}})$ is a directed acyclic graph that is a transitive closure with $| \mathscr{V}^{\textnormal{OO}}| \ge 1$, then lines~\eqref{step:construct_A1:1:optimistic}, \eqref{step:construct_A2:1:optimistic}, \eqref{step:construct_A2:3:optimistic}, and \eqref{step:construct_A:optimistic} of Algorithm~\ref{alg:recursive:optimistic} can be performed in $\mathcal{O}\left(| \mathscr{V}^{\textnormal{OO}}| \times  \left|\textsc{RecursiveStep-OO}(\mathscr{G}^{\textnormal{OO}}) \right| \right)$ time. 
 \end{claim}
\noindent In our second intermediary claim, presented below as Claim~\ref{claim:recursivework:optimistic}, we use Claim~\ref{claim:mainwork:optimistic} to establish the computation time for Algorithm~\ref{alg:recursive:optimistic} for any valid input. The proof of the following Claim~\ref{claim:recursivework:optimistic} follows from similar reasoning as the proof of Claim~\ref{claim:recursivework} from Appendix~\ref{appx:fixed_dim:S:time} and is thus omitted. 
\begin{claim} \label{claim:recursivework:optimistic}
 If  $\mathscr{G}^{\textnormal{OO}} \equiv (\mathscr{V}^{\textnormal{OO}},\mathscr{E}^{\textnormal{OO}})$ is a directed acyclic graph that is a transitive closure with $| \mathscr{V}^{\textnormal{OO}}| \ge 1$, then the computation time for Algorithm~\ref{alg:recursive:optimistic} is $\mathcal{O}(|\mathscr{V}^{\textnormal{OO}}|^2 \times \left|\textsc{RecursiveStep-OO}(\mathscr{G}^{\textnormal{OO}}) \right| )$. 
 \end{claim}
 \noindent  Our analysis of the computation time for line~\eqref{step:construct_S:2:optimistic} of Algorithm~\ref{alg:construct_S:optimistic} follows readily from Claim~\ref{claim:recursivework:optimistic}. Indeed,  we observe that the graph $\mathscr{G}^{\textnormal{OO}} \equiv (\mathscr{V}^{\textnormal{OO}}, \mathscr{E}^{\textnormal{OO}})$ that was constructed on line~\eqref{step:construct_S:1:optimistic} of Algorithm~\ref{alg:construct_S:optimistic} satisfies $|\mathscr{V}^{\textnormal{OO}}| =  n$ and $| \mathscr{E}^{\textnormal{OO}}| \le n^2$. Therefore, it follows from Claim~\ref{claim:recursivework:optimistic} that line~\eqref{step:construct_S:2:optimistic} of Algorithm~\ref{alg:construct_S:optimistic} requires $\mathcal{O}((n+1)^2 \times \left|\textsc{RecursiveStep-OO}(\mathscr{G}^{\textnormal{OO}}) \right|) = \mathcal{O}(n^2 | \widehat{\mathcal{S}}^{\textnormal{OO}}|)$ computation time. We have shown that the total computation time for Algorithm~\ref{alg:construct_S:optimistic} is
 \begin{align*}
 \underbrace{\mathcal{O}(n^2M)}_{\eqref{step:construct_S:1:optimistic}} +  \underbrace{\mathcal{O}(n^2 |\widehat{\mathcal{S}}^{\textnormal{OO}}|)}_{\eqref{step:construct_S:2:optimistic}}= \mathcal{O}\left(n^2 (M +  |\widehat{\mathcal{S}}^{\textnormal{OO}}|) \right). 
 \end{align*}
\halmos \end{proof}
\subsubsection{Computation time for solving \eqref{prob:optimistic}.} We conclude our proof of Theorem~\ref{thm:poly:optimistic} by combining the intermediary results from Appendices~\ref{appx:question4:poly:characterization}, \ref{appx:fixed_dim:S:optimistic}, and \ref{appx:fixed_dim:S:optimistic:time}. Indeed, we showed in Theorem~\ref{thm:main:optimistic} in Appendix~\ref{appx:question4:poly:characterization} that there exists an optimal solution for the optimistic optimization problem~\eqref{prob:optimistic} that is an element of the collection of assortments $\widehat{\mathcal{S}}^{\textnormal{OO}}$. Moreover, we showed in Lemma~\ref{lem:fixed_dim:S:optimistic} in Appendix~\ref{appx:fixed_dim:S:optimistic} that $| \widehat{\mathcal{S}}^{\textnormal{OO}}|$ is upper bounded by a polynomial in the number of products $n$ for any fixed number of past assortments $M$, and so it follows from  Lemma~\ref{lem:fixed_dim:S:time:optimistic} in Appendix~\ref{appx:fixed_dim:S:optimistic:time} that the collection $\widehat{\mathcal{S}}^{\textnormal{OO}}$ can be constructed in computation time that is polynomial in the number of products $n$ for any fixed number of past assortments $M$. Finally, for each assortment $S \in \widehat{\mathcal{S}}^{\textnormal{OO}}$, it follows from   Proposition~\ref{prop:reform_wc:optimistic} in Appendix~\ref{appx:question4:poly:characterization}
 that  the best-case expected revenue $\max_{\lambda \in \mathcal{U}} \mathscr{R}^\lambda(S)$ can be computed by solving a linear optimization problem with numbers of decision variables and constraints that scale as a polynomial in  the number of products $n$ for any fixed number of past assortments $M$.\footnote{It follows identical  reasoning as Lemmas~\ref{lem:fixed_dim:construct_L} and \ref{lem:fixed_dim:construct_rho} from \S\ref{sec:algorithms:fixed_dim} that the linear optimization reformulation of the best-case expected revenue $\max_{\lambda \in \mathcal{U}} \mathscr{R}^\lambda(S)$ can be constructed in time that is polynomial in the number of products $n$ for any fixed number of past assortments $M$.} We thus conclude that \eqref{prob:optimistic} can be solved in time that is polynomial in the number of products $n$ for any fixed number of past assortments $M$.

\subsection{Proof of Theorem~\ref{thm:nested:optimistic}} 

Let Assumption~\ref{ass:nested} hold, and let $S_1,\ldots,S_M \in\mathcal{S}$. To develop our reformulation  of the objective function of the optimization problem~\eqref{prob:optimistic}, we will utilize the following intermediary claim:
\begin{claim}\label{claim:lp_reform_nested:reverse}
For each  $S \subseteq \mathcal{N}_0$ that satisfies $S \cap S_1 \neq \emptyset$, $\max_{\lambda \in \mathcal{U}} \mathscr{R}^\lambda(S)$ is equal to the optimal objective value of the following linear optimization problem:
{\small
 \begin{equation}  \label{prob:lp_reform_nested:reverse}
\begin{aligned}
& \; \underset{f, g,\epsilon}{\textnormal{maximize}} && \sum_{i,\kappa: (M,i,\kappa) \in \mathfrak{V}} \kappa  g_{M,i,\kappa}    \\
&\textnormal{subject to}&& 
\textnormal{ same constraints as \eqref{prob:lp_reform_nested}}\end{aligned} 
\end{equation}}
\end{claim}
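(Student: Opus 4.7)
The plan is to mirror the structure of the proof of Proposition~\ref{prop:lp_reform_nested} in Appendix~\ref{appx:lp_reform_nested} and its supporting Lemmas~\ref{lem:simplified_to_compact} and \ref{lem:compact_to_bound}, replacing minimization by maximization throughout. First I would establish a ``max-analog'' of Proposition~\ref{prop:reform_wc}: $\max_{\lambda \in \mathcal{U}} \mathscr{R}^\lambda(S)$ equals the optimal objective value of the maximization problem over the feasible set of \eqref{prob:robust_simplified} with cost coefficients $\bar{\rho}_{i_1 \cdots i_M}(S) \triangleq \max_{i \in S: \cap_m \mathcal{D}_{i_m}(S_m) \cap \mathcal{D}_i(S) \neq \emptyset} r_i$ in place of $\rho_{i_1 \cdots i_M}(S)$. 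This is straightforward from the same transformation of decision variables as in the proof of Proposition~\ref{prop:reform_wc}, because the inner pointwise optimization over the variables $\omega_{i,i_1 \cdots i_M}$ flips from a $\min$ to a $\max$ under the outer maximization.

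Next I would establish the max-analogs of Lemmas~\ref{lem:rho_m}, \ref{lem:simplified_to_compact}, and \ref{lem:compact_to_bound}. The max-analog of Lemma~\ref{lem:rho_m} reads: $\bar{\rho}_{i_1 \cdots i_M}(S \cap S_m) = r_{i_m}$ if $i_m \in S$; $= \max_{i \in \mathcal{B}_m \cap S} r_i$ if $i_m \notin S$ and $m = 1$; and $= \max\{\max_{i \in \mathcal{B}_m \cap S} r_i, \bar{\rho}_{i_1 \cdots i_M}(S \cap S_{m-1})\}$ otherwise. Its proof follows the same graphical interpretation via $\mathcal{I}_{i_1 \cdots i_M}(S \cap S_m)$ in Appendix~\ref{appx:graphical} and uses Claim~\ref{claim:rho_m:reachable} verbatim, simply replacing the outer $\min$ by $\max$. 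With this in hand, the max-analog of Lemma~\ref{lem:simplified_to_compact} constructs, from any feasible $(\lambda, \epsilon)$ of the max-version of \eqref{prob:robust_simplified}, the solution $g_{m,i,\kappa} \triangleq \sum_{(i_1,\ldots,i_M) \in \mathcal{L}: i_m = i, \bar{\rho}_{i_1 \cdots i_M}(S \cap S_m) = \kappa} \lambda_{i_1 \cdots i_M}$ with analogous flow variables. The verification that this lies in the feasible region of \eqref{prob:lp_reform_nested:reverse} reduces to the analog of Claim~\ref{claim:not_in_N_S}: one must check that $(m, i_m, \bar{\rho}_{i_1 \cdots i_M}(S \cap S_m)) \notin \mathfrak{N}(S)$ for every $(i_1,\ldots,i_M) \in \mathcal{L}$ and every $m \in \mathcal{M}$, and that the recursion in the max-analog of Lemma~\ref{lem:rho_m} places consecutive triples on a directed edge of $\mathfrak{E}$. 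Both of these follow from a case split identical to the one in Claim~\ref{claim:not_in_N_S}, using that $\bar{\rho}_{i_1 \cdots i_M}(S \cap S_m) \in \{r_j : j \in S \cap S_m\}$ (so it avoids $\{r_j: j \in \mathcal{B}_m \setminus S\}$) and that when $i_m \in S$ we recover $\bar{\rho} = r_{i_m}$.

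For the max-analog of Lemma~\ref{lem:compact_to_bound}, I would again invoke the flow decomposition theorem to write any feasible $(f,g,\epsilon)$ of \eqref{prob:lp_reform_nested:reverse} as a nonnegative combination of paths $((1,i_1,\kappa_1),\ldots,(M,i_M,\kappa_M)) \in \mathscr{P}$, induce $\lambda_{i_1 \cdots i_M}$ by aggregating over $\kappa$-coordinates as in line~\eqref{line:lambda_reverse}, and then prove a \emph{reversed} version of Claim~\ref{claim:decomposition:kappa} stating $\kappa_m \le \bar{\rho}_{i_1 \cdots i_M}(S \cap S_m)$ along every such path. The induction step for this reversed claim is the crux: in the case $i_m \in S$ with $i_m \notin \mathcal{B}_m$, one uses $i_m = i_{m-1}$ (from Lemma~\ref{lem:reform_of_L}) and the induction hypothesis $\kappa_{m-1} \le r_{i_{m-1}} = \bar{\rho}$; in the case $i_m \notin S$, the new value $\kappa_m$ either equals $\kappa_{m-1} \le \bar{\rho}(S \cap S_{m-1}) \le \bar{\rho}(S \cap S_m)$ or lies in $\{r_j: j \in \mathcal{B}_m \cap S\}$, which is bounded above by $\max_{j \in \mathcal{B}_m \cap S} r_j \le \bar{\rho}(S \cap S_m)$. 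Because Remark~\ref{rem:N_0} and Assumption~\ref{ass:nested} give $S \cap S_M = S$, this yields $\sum_{i,\kappa} \kappa g_{M,i,\kappa} \le \sum_{(i_1,\ldots,i_M) \in \mathcal{L}} \bar{\rho}_{i_1 \cdots i_M}(S) \lambda_{i_1 \cdots i_M}$, and combining both inequalities completes the proof.

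The main obstacle I anticipate is establishing the max-analog of Lemma~\ref{lem:rho_m} cleanly: unlike the min case, where the recursion ``only gains'' when a new $\mathcal{B}_m \cap S$ product appears, one must verify that the max is likewise controlled by at most two sources (the previous $\bar{\rho}$ and the newly-offered products in $\mathcal{B}_m \cap S$) and cannot be boosted by other products in $S_m \setminus \mathcal{B}_m$ that were already present but previously blocked by a higher-preference $i_{m'}$. This is precisely where the reachability argument from Claim~\ref{claim:rho_m:reachable} is needed, showing that every vertex $i \in S \cap \mathcal{B}_m$ has no directed path in $\mathcal{G}_{i_1 \cdots i_M}$ to any $i_{m'} \in S$ with $m' < m$, which rules out spurious max-achievers from earlier layers and secures the two-term recursion.
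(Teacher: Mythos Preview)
Your proposal is correct but takes a genuinely different route from the paper. The paper does not re-derive max-analogs of Lemmas~\ref{lem:rho_m}, \ref{lem:simplified_to_compact}, and \ref{lem:compact_to_bound}; instead it uses a revenue-flipping reduction. Define $\bar{r}_i \triangleq r_n - r_i$ and observe that $\max_{\lambda \in \mathcal{U}} \mathscr{R}^\lambda(S) = r_n - \min_{\lambda \in \mathcal{U}} \sum_{i \in \mathcal{N}_0} \bar{r}_i \mathscr{D}^\lambda_i(S)$, since $\sum_i \mathscr{D}^\lambda_i(S) = 1$. The minimization on the right is exactly the worst-case expected revenue problem with revenues $\bar{r}$ in place of $r$, so Proposition~\ref{prop:lp_reform_nested} applies directly (after a reindexing, since $\bar{r}$ reverses the order). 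Because the graph $\mathfrak{G}$ and the set $\mathfrak{N}(S)$ depend on the revenues only through the correspondence $\kappa \leftrightarrow j$ with $\kappa = r_j$, the resulting LP is \eqref{prob:lp_reform_nested} with objective $\sum_{i,\kappa} (r_n - \kappa) g_{M,i,\kappa}$. Finally, the constraint $\sum_{i,\kappa} g_{M,i,\kappa} = 1$ turns $r_n - \min \sum (r_n - \kappa) g_{M,i,\kappa}$ into $\max \sum \kappa g_{M,i,\kappa}$, yielding \eqref{prob:lp_reform_nested:reverse}.

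Your approach, by contrast, rebuilds the entire scaffolding with $\bar\rho_{i_1\cdots i_M}(S) = \max_{i} r_i$ in place of $\rho$. This is sound: the max-analog of Lemma~\ref{lem:rho_m} follows from the same decomposition $S\cap S_m = (S\cap S_{m-1})\cup(S\cap\mathcal{B}_m)$ and the identical reachability argument in Claim~\ref{claim:rho_m:reachable}; the analog of Claim~\ref{claim:not_in_N_S} holds because $\bar\rho_{i_1\cdots i_M}(S\cap S_m) \in \{r_j: j \in S\cap S_m\}$ just as before; and your reversed Claim~\ref{claim:decomposition:kappa} goes through by the case split you outline. What you gain is a self-contained argument that makes the min/max symmetry fully explicit and avoids the reindexing step. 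What the paper gains is brevity: its entire proof is about five lines, reusing Proposition~\ref{prop:lp_reform_nested} wholesale rather than reproving its ingredients.
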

\begin{proof}{Proof of Claim~\ref{claim:lp_reform_nested:reverse}.}
Consider any assortment  $S \subseteq \mathcal{N}_0$ that satisfies $S \cap S_1 \neq \emptyset$. %We observe that the constraints in the linear optimization problems~\eqref{prob:lp_reform_nested} and \eqref{prob:lp_reform_nested:reverse} are the same. Moreover, 
For each $i \in \mathcal{N}_0$, let $\bar{r}_i \triangleq r_n - r_i$. We observe that 
\begin{align}
\max_{\lambda \in \mathcal{U}} \mathscr{R}^\lambda(S) &= \max_{\lambda \in \mathcal{U}} \sum_{i \in \mathcal{N}_0} r_i \mathscr{D}^\lambda_i(S) \notag \\
&= r_n +  \max_{\lambda \in \mathcal{U}} \sum_{i \in \mathcal{N}_0} (r_i - r_n) \mathscr{D}^\lambda_i(S) \notag \\
&= r_n -  \min_{\lambda \in \mathcal{U}} \sum_{i \in \mathcal{N}_0} (r_n - r_i) \mathscr{D}^\lambda_i(S) \notag \\
&= r_n -  \min_{\lambda \in \mathcal{U}} \sum_{i \in \mathcal{N}_0} \bar{r}_{i} \mathscr{D}^\lambda_i(S). \label{line:flipping_revenues}
\end{align}
Indeed, the first equality follows from the definition of the predicted expected revenue function $\mathscr{R}^\lambda(\cdot)$ from \S\ref{sec:setting}. The second equality follows from algebra and from the fact that $\sum_{i \in \mathcal{N}_0} \mathscr{D}^\lambda_i(S) = 1$ for all $\lambda \in \Delta_{\Sigma}$. The third equality follows from algebra. The fourth equality follows from the definition of the vector $\bar{r}$.

We observe that the optimization problem $ \min_{\lambda \in \mathcal{U}} \sum_{i \in \mathcal{N}_0} \bar{r}_{i} \mathscr{D}^\lambda_i(S)$ is the same as the optimization problem $ \min_{\lambda \in \mathcal{U}} \sum_{i \in \mathcal{N}_0} {r}_{i} \mathscr{D}^\lambda_i(S)$, except with the revenue $r_i$ for each product $i \in \mathcal{N}_0$ having been changed to $\bar{r}_i$. In particular, we observe that the vector $\bar{r}$ satisfies $0 =  \bar{r}_n <  \bar{r}_{n-1} < \cdots < \bar{r}_1 < \bar{r}_0$. Thus, with a suitable reindexing of the products, it follows readily from Assumption~\ref{ass:nested}, from the fact that $S \cap S_1 \neq \emptyset$, and from Proposition~\ref{prop:lp_reform_nested} that  $ \min_{\lambda \in \mathcal{U}} \sum_{i \in \mathcal{N}_0} \bar{r}_{i} \mathscr{D}^\lambda_i(S)$ is equal to the optimal objective value of the following linear optimization problem:
{\small
 \begin{equation}  \label{prob:lp_reform_nested_optimistic}
\begin{aligned}
& \; \underset{f, g,\epsilon}{\textnormal{minimize}} && \sum_{i,\kappa: (M,i,\kappa) \in \mathfrak{V}} (r_n -  \kappa) g_{M,i,\kappa}    \\
&\textnormal{subject to}&& \textnormal{ same constraints as \eqref{prob:lp_reform_nested}}
\end{aligned} 
\end{equation}}
Combining the above analysis, we conclude that
\begin{align*}
\max_{\lambda \in \mathcal{U}} \mathscr{R}^\lambda(S) &= r_n - \left[ \begin{aligned}
& \; \underset{f, g,\epsilon}{\textnormal{minimize}} && \sum_{i,\kappa: (M,i,\kappa) \in \mathfrak{V}} (r_n -  \kappa) g_{M,i,\kappa}    \\
&\textnormal{subject to}&& \textnormal{ same constraints as \eqref{prob:lp_reform_nested}}
\end{aligned}  \right]\\
&= r_n + \left[ \begin{aligned}
& \; \underset{f, g,\epsilon}{\textnormal{maximize}} && \sum_{i,\kappa: (M,i,\kappa) \in \mathfrak{V}} ( \kappa - r_n) g_{M,i,\kappa}    \\
&\textnormal{subject to}&& \textnormal{ same constraints as \eqref{prob:lp_reform_nested}}
\end{aligned}  \right]\\
&=  \left[ \begin{aligned}
& \; \underset{f, g,\epsilon}{\textnormal{maximize}} && \sum_{i,\kappa: (M,i,\kappa) \in \mathfrak{V}} \kappa  g_{M,i,\kappa}    \\
&\textnormal{subject to}&& \textnormal{ same constraints as \eqref{prob:lp_reform_nested}}
\end{aligned}  \right],
\end{align*}
where the first equality follows from line~\eqref{line:flipping_revenues} and from the fact that  $ \min_{\lambda \in \mathcal{U}} \sum_{i \in \mathcal{N}_0} \bar{r}_{i} \mathscr{D}^\lambda_i(S)$ is equal to the optimal objective value of the linear optimization problem~\eqref{prob:lp_reform_nested_optimistic}, the second equality follows from algebra, and the third equality follows from the fact that $\sum_{i,\kappa: (M,i,\kappa) \in \mathfrak{V}} g_{m,i,\kappa} = 1$ is one of the constraints in the linear optimization problem~\eqref{prob:lp_reform_nested}. Our proof of Claim~\ref{claim:lp_reform_nested:reverse} is thus complete. 
\halmos \end{proof}

To simplify our exposition, the remainder of the proof of Theorem~\ref{thm:nested:optimistic} focuses on the case where $\eta = 0$. Indeed, the reformulations of objective function of the optimization problem~\eqref{prob:optimistic} for the case of $\eta = 0$ can be readily extended to the case of $\eta > 0$ by introducing $\mathcal{O}(\textnormal{poly}(n,M))$ auxiliary decision variables and auxiliary linear constraints in the dual problem. In the case of $\eta = 0$, we observe from Claim~\ref{claim:lp_reform_nested:reverse} that the linear optimization problem~\eqref{prob:lp_reform_nested:reverse} can be rewritten equivalently as 
 \begin{equation} \tag{\ref{prob:lp_reform_nested:reverse}}
% \max_{S \in \mathcal{S}} \left[ 
\begin{aligned}
& \; \underset{f, g}{\textnormal{maximize}} && \sum_{i,\kappa: (M,i,\kappa) \in {\mathfrak{V}}} \kappa  g_{M,i,\kappa}  \\
&\textnormal{subject to}&& \sum_{\kappa: (m,i,\kappa) \in {\mathfrak{V}}}  g_{m,i,\kappa}  = v_{m,i} && \forall m \in \mathcal{M}, \; i \in S_m \\
&&& \sum_{i',\kappa': ((m,i,\kappa),(m+1,i',\kappa')) \in {\mathfrak{E}}} f_{m,i,\kappa,i',\kappa'}  = g_{m,i,\kappa} && \forall (m,i,\kappa) \in {\mathfrak{V}}: m \in \{1,\ldots,M-1\} \\
&&& \sum_{i',\kappa': ((m-1,i',\kappa'),(m,i,\kappa)) \in {\mathfrak{E}}} f_{m-1,i',\kappa',i,\kappa}  = g_{m,i,\kappa} && \forall (m,i,\kappa) \in {\mathfrak{V}}: m \in \{2,\ldots,M\}\\ 
&&& g_{m,i,\kappa} = 0 && \forall  (m,i,\kappa) \in \mathfrak{N}(S)\\
 &&& f_{m,i,\kappa,i',\kappa'} \ge 0 && \forall ((m,i,\kappa),(m+1,i',\kappa')) \in {\mathfrak{E}}  \\
&&& g_{m,i,\kappa} \ge 0 && \forall (m,i,\kappa) \in {\mathfrak{V}},
\end{aligned} 
%\right]. 
\end{equation} 
where the definition of the set $\mathfrak{N}(S)$ can be found in the beginning of Appendix~\ref{appx:mip:intermediary}. 
It remains for us to show that the constraints of the linear optimization problem~\eqref{prob:lp_reform_nested:reverse} can be reformulated using linear constraints and mixed-integer decision variables. Indeed,  let $x_{i} \in \{0,1\}$  be a binary decision variable for each product $i \in \mathcal{N}_0$ that satisfies $x_i = 1$ if and only if product $i$ is in the assortment $S$. %We observe that the constraint that $S \in \mathcal{S}$ implies that the equality $x_0 = 1$ must hold. 
Under this equivalence between an assortment $S \in \mathcal{S}$ and the binary decision variables $x \in \{0,1\}^{\mathcal{N}_0}$, it follows from the definition of $\mathfrak{N}(\cdot)$ that the  following equality holds for all assortments $S \in \mathcal{S}$: 
\begin{align*}
\mathfrak{N}(S) \triangleq \left \{ (m,i,\kappa) \in {\mathfrak{V}}: \begin{gathered} 
\left[  x_i = 1  \textnormal{ and } i \in \mathcal{B}_m \textnormal{ and } \kappa \neq r_i \right]\\
 \textnormal{ or } \left[  x_i = 1 \textnormal{ and } i \notin \mathcal{B}_m \textnormal{ and } \kappa \in \left\{r_j: j \in \mathcal{B}_m \right \}  \right]\\
 \textnormal{ or } \left[ \textnormal{there exists } j \in \mathcal{B}_m \textnormal{ such that } \kappa = r_j \textnormal{ and } x_j = 0 \right ]
\end{gathered} \right \}. 
\end{align*} 
Hence, it is easy to verify that the constraints in the linear optimization problem~\eqref{prob:lp_reform_nested:reverse} of the form 
\begin{align*}
&&& g_{m,i,\kappa} = 0 \quad \forall  (m,i,\kappa) \in \mathfrak{N}(S)
\end{align*}
will be satisfied if and only if 
\begin{align*}
 &&& g_{m,i,\kappa} \le 1 -  x_i  && \forall  (m,i,\kappa) \in \mathfrak{V}: i \in \mathcal{B}_m \textnormal{ and } \kappa \neq r_i\\
  &&& g_{m,i,\kappa} \le 1 -  x_i  && \forall  (m,i,\kappa) \in \mathfrak{V}: i \notin \mathcal{B}_m \textnormal{ and } \kappa \in \left \{ r_j: j \in \mathcal{B}_m \right \}\\
    &&& g_{m,i,\kappa} \le  x_j  && \forall  (m,i,\kappa) \in \mathfrak{V} \textnormal{ and } j \in \mathcal{B}_m: \kappa = r_j. 
\end{align*}
% where the above definition uses the fact that $x_i = 1$ if and only if product $i$ is in the assortment $S$. 
We thus conclude from Claim~\ref{claim:lp_reform_nested:reverse} that the best-case expected revenue $\max_{\lambda \in \mathcal{U}} \mathscr{R}^\lambda(S)$ for each assortment $S \in \mathcal{S}$  is equal to the optimal objective value of the following linear optimization problem:
 \begin{equation}\label{prob:nested:compact:final:reverse:reform}
 %\tag{\ref{prob:lp_reform_nested:reverse}}
% \max_{S \in \mathcal{S}} \left[ 
\begin{aligned}
& \; \underset{f, g}{\textnormal{maximize}} && \sum_{i,\kappa: (M,i,\kappa) \in {\mathfrak{V}}} \kappa  g_{M,i,\kappa}  \\
&\textnormal{subject to}&& \sum_{\kappa: (m,i,\kappa) \in {\mathfrak{V}}}  g_{m,i,\kappa}  = v_{m,i} && \forall m \in \mathcal{M}, \; i \in S_m \\
&&& \sum_{i',\kappa': ((m,i,\kappa),(m+1,i',\kappa')) \in {\mathfrak{E}}} f_{m,i,\kappa,i',\kappa'}  = g_{m,i,\kappa} && \forall (m,i,\kappa) \in {\mathfrak{V}}: m \in \{1,\ldots,M-1\} \\
&&& \sum_{i',\kappa': ((m-1,i',\kappa'),(m,i,\kappa)) \in {\mathfrak{E}}} f_{m-1,i',\kappa',i,\kappa}  = g_{m,i,\kappa} && \forall (m,i,\kappa) \in {\mathfrak{V}}: m \in \{2,\ldots,M\}\\ 
 &&& g_{m,i,\kappa} \le 1 -  x_i  && \forall  (m,i,\kappa) \in \mathfrak{V}: i \in \mathcal{B}_m \textnormal{ and } \kappa \neq r_i\\
  &&& g_{m,i,\kappa} \le 1 -  x_i  && \forall  (m,i,\kappa) \in \mathfrak{V}: i \notin \mathcal{B}_m \textnormal{ and } \kappa \in \left \{ r_j: j \in \mathcal{B}_m \right \}\\
    &&& g_{m,i,\kappa} \le  x_j  && \forall  (m,i,\kappa) \in \mathfrak{V} \textnormal{ and } j \in \mathcal{B}_m: \kappa = r_j\\
 &&& f_{m,i,\kappa,i',\kappa'} \ge 0 && \forall ((m,i,\kappa),(m+1,i',\kappa')) \in {\mathfrak{E}}  \\
&&& g_{m,i,\kappa} \ge 0 && \forall (m,i,\kappa) \in {\mathfrak{V}}.
\end{aligned} 
%\right]. 
\end{equation} 
Using the linear optimization problem~\eqref{prob:nested:compact:final:reverse:reform} to reformulate the objective function of \eqref{prob:optimistic}, we conclude for the case of $\eta = 0$ that the optimization problem~\eqref{prob:optimistic} is equivalent to the following mixed-integer optimization problem:
 \begin{equation}\label{prob:optimistic:reform}
 %\tag{\ref{prob:lp_reform_nested:reverse}}
% \max_{S \in \mathcal{S}} \left[ 
\begin{aligned}
& \; \underset{f, g,x}{\textnormal{maximize}} && \sum_{i,\kappa: (M,i,\kappa) \in {\mathfrak{V}}} \kappa  g_{M,i,\kappa}  \\
&\textnormal{subject to}&& \sum_{\kappa: (m,i,\kappa) \in {\mathfrak{V}}}  g_{m,i,\kappa}  = v_{m,i} && \forall m \in \mathcal{M}, \; i \in S_m \\
&&& \sum_{i',\kappa': ((m,i,\kappa),(m+1,i',\kappa')) \in {\mathfrak{E}}} f_{m,i,\kappa,i',\kappa'}  = g_{m,i,\kappa} && \forall (m,i,\kappa) \in {\mathfrak{V}}: m \in \{1,\ldots,M-1\} \\
&&& \sum_{i',\kappa': ((m-1,i',\kappa'),(m,i,\kappa)) \in {\mathfrak{E}}} f_{m-1,i',\kappa',i,\kappa}  = g_{m,i,\kappa} && \forall (m,i,\kappa) \in {\mathfrak{V}}: m \in \{2,\ldots,M\}\\ 
 &&& g_{m,i,\kappa} \le 1 -  x_i  && \forall  (m,i,\kappa) \in \mathfrak{V}: i \in \mathcal{B}_m \textnormal{ and } \kappa \neq r_i\\
  &&& g_{m,i,\kappa} \le 1 -  x_i  && \forall  (m,i,\kappa) \in \mathfrak{V}: i \notin \mathcal{B}_m \textnormal{ and } \kappa \in \left \{ r_j: j \in \mathcal{B}_m \right \}\\
    &&& g_{m,i,\kappa} \le  x_j  && \forall  (m,i,\kappa) \in \mathfrak{V} \textnormal{ and } j \in \mathcal{B}_m: \kappa = r_j\\
 &&& f_{m,i,\kappa,i',\kappa'} \ge 0 && \forall ((m,i,\kappa),(m+1,i',\kappa')) \in {\mathfrak{E}}  \\
&&& g_{m,i,\kappa} \ge 0 && \forall (m,i,\kappa) \in {\mathfrak{V}}\\
&&& x_0 = 1\\
&&& x_{i} \in \{0,1\}&& \forall i \in \mathcal{N}_0.
\end{aligned} 
%\right]. 
\end{equation}
  Our proof of Theorem~\ref{thm:nested:optimistic} is thus complete. \halmos

\subsection{Proof of Theorem~\ref{thm:nested_reform_pareto}} \label{appx:mip:nested_reform_pareto}

Let Assumption~\ref{ass:nested} hold, let $S_1,\ldots,S_M \in\mathcal{S}$, and let $\theta$ be any constant that satisfies $\theta \le  \max_{S \in \mathcal{S}} \min_{\lambda \in \mathcal{U}} \mathscr{R}^\lambda(S)$.\footnote{It is easy to see that the requirement that $\theta \le  \max_{S \in \mathcal{S}} \min_{\lambda \in \mathcal{U}} \mathscr{R}^\lambda(S)$ is a necessary and sufficient condition for the optimization problem \eqref{prob:pareto} to have a feasible solution.} For any assortment $S \in \mathcal{S}$, let $x_{i} \in \{0,1\}$ be a binary decision variable for each product $i \in \mathcal{N}_0$ that satisfies $x_i = 1$ if and only if product $i$ is in the assortment $S$. Under this equivalence between an assortment $S \in \mathcal{S}$ and the binary decision variables $x \in \{0,1\}^{\mathcal{N}_0}$, it follows immediately from the proof of Theorem~\ref{thm:nested} that the set  $\left \{ S \in \mathcal{S}:  \min_{\lambda \in \mathcal{U}}\mathscr{R}^{\lambda}(S) \ge \theta\right \}$ 
can be reformulated using the binary decision variables $x \in \{0,1\}^{\mathcal{N}_0}$ and an addition of  $\mathcal{O}(\textnormal{poly}(n,M))$ auxiliary decision variables and $\mathcal{O}(\textnormal{poly}(n,M))$ linear constraints. Moreover, it follows immediately from the proof of Theorem~\ref{thm:nested:optimistic} that the level set  $\left \{ (S,t) \in \mathcal{S} \times \R:  \max_{\lambda \in \mathcal{U}}\mathscr{R}^{\lambda}(S) \ge t\right \}$ can be reformulated using the binary decision variables $x \in \{0,1\}^{\mathcal{N}_0}$ and an addition of  $\mathcal{O}(\textnormal{poly}(n,M))$ auxiliary decision variables and $\mathcal{O}(\textnormal{poly}(n,M))$ linear constraints. We thus obtain the following mixed-integer optimization reformulation of \eqref{prob:pareto}: %
{\footnotesize
 \begin{equation} \label{prob:pareto:mip}
\begin{aligned}
& \; \underset{f, g,\alpha,\beta,\gamma,x}{\textnormal{maximize}} && \sum_{i,\kappa: (M,i,\kappa) \in {\mathfrak{V}}} \kappa  g_{M,i,\kappa}  \\
&\textnormal{subject to}&& \sum_{\kappa: (m,i,\kappa) \in {\mathfrak{V}}}  g_{m,i,\kappa}  = v_{m,i} && \forall m \in \mathcal{M}, \; i \in S_m \\
 &&&\sum_{i',\kappa': ((m,i,\kappa),(m+1,i',\kappa')) \in {\mathfrak{E}}} f_{m,i,\kappa,i',\kappa'}  = g_{m,i,\kappa} && \forall (m,i,\kappa) \in {\mathfrak{V}}: m \in \{1,\ldots,M-1\} \\
&&& \sum_{i',\kappa': ((m-1,i',\kappa'),(m,i,\kappa)) \in {\mathfrak{E}}} f_{m-1,i',\kappa',i,\kappa}  = g_{m,i,\kappa} && \forall (m,i,\kappa) \in {\mathfrak{V}}: m \in \{2,\ldots,M\}\\ 
 &&& g_{m,i,\kappa} \le 1 -  x_i  && \forall  (m,i,\kappa) \in \mathfrak{V}: i \in \mathcal{B}_m \textnormal{ and } \kappa \neq r_i\\
  &&& g_{m,i,\kappa} \le 1 -  x_i  && \forall  (m,i,\kappa) \in \mathfrak{V}: i \notin \mathcal{B}_m \textnormal{ and } \kappa \in \left \{ r_j: j \in \mathcal{B}_m \right \}\\
    &&& g_{m,i,\kappa} \le  x_j  && \forall  (m,i,\kappa) \in \mathfrak{V} \textnormal{ and } j \in \mathcal{B}_m: \kappa = r_j\\
&&& \sum_{m \in \mathcal{M}, i \in S_m} v_{m,i} \alpha_{m,i}  \ge \theta \\
&&&\alpha_{m,i} - \beta_{m,i,\kappa} \mathbb{I} \left \{ m \in \{1,\ldots,M-1\} \right \} - \gamma_{m,i,\kappa} \mathbb{I} \left \{ m \in \{2,\ldots,M\} \right \}\\
&&& \quad \le \kappa \mathbb{I} \left \{ m = M \right \} \\
&&& \quad \quad +  r_n x_i \mathbb{I} \left \{ i \in \mathcal{B}_m \textnormal{ and } \kappa \neq r_i \right \} \\
&&& \quad \quad +  r_n x_i \mathbb{I} \left \{ i \notin \mathcal{B}_m \textnormal{ and } \kappa \in \{r_{j}: j \in \mathcal{B}_m \}  \right \} \\
&&&\quad \quad + \sum_{j \in \mathcal{B}_m} \mathbb{I} \left \{ \kappa = r_j \right \}r_n (1 - x_j) && \forall (m,i,\kappa) \in {\mathfrak{V}}\\
&&& \beta_{m,i,\kappa} + \gamma_{m+1,i',\kappa'}  \le 0&& \forall ((m,i,\kappa),(m+1,i',\kappa')) \in {\mathfrak{E}}\\
&&& x_0 = 1\\
&&& \alpha_{m,i} \in \R && \forall m \in \mathcal{M}, \; i \in S_m\\
&&& \beta_{m,i,\kappa}, \gamma_{m,i,\kappa} \in \R && \forall (m,i,\kappa) \in {\mathfrak{V}}\\
&&& x_{i} \in \{0,1\}&& \forall i \in \mathcal{N}_0\\
 &&& f_{m,i,\kappa,i',\kappa'} \ge 0 && \forall ((m,i,\kappa),(m+1,i',\kappa')) \in {\mathfrak{E}}  \\
&&& g_{m,i,\kappa} \ge 0 && \forall (m,i,\kappa) \in {\mathfrak{V}}.
\end{aligned} 
\end{equation}}

  Our proof of Theorem~\ref{thm:nested_reform_pareto} is thus complete. \halmos

%\end{proof}
 \end{APPENDICES}

\end{document}